  \edef\mtht{\the\textheight}
  \edef\mtwd{\the\textwidth}
  \definecolor{BackgroundColor}{RGB}{253, 246, 227}
\tikzset{
  commutative diagrams/.cd, 
  arrow style=tikz, 
  diagrams={>=stealth}
}
\addspace\texttt{\mkbibbrackets{\thefield{arxivclass}}}}}}
\addspace\texttt{\mkbibbrackets{\thefield{arxivclass}}}}}}
\newcommand{\printreferences}{\printbibliography[heading=bibintoc]}
\ifundef{\abstract}{}{\patchcmd{\abstract}%
    {\quotation}{\quotation\noindent\ignorespaces}{}{}}
\numberwithin{equation}{section}
\renewcommand{\eqref}[1]{\hyperref[#1]{\rm(\ref*{#1})}}
\def\makeautorefname#1#2{\AtBeginDocument{\expandafter\def\csname#1autorefname\endcsname{#2}}}
\newcommand{\mynewtheorem}[2]{
  \newaliascnt{#1}{equation}          
  \newtheorem{#1}[#1]{#2}
  \aliascntresetthe{#1}
  \makeautorefname{#1}{#2}
}
\newtheorem*{axiom*}{Axiom}
\newtheorem*{theorem*}{Theorem}
\newtheorem*{prop*}{Proposition}
\newtheorem*{conjecture*}{Conjecture}
\newtheorem{step}{Step}
\numberwithin{substep}{step}
\numberwithin{subcase}{case}
\theoremstyle{remark}
\newtheorem*{remark*}{Remark}
\newtheorem*{convention*}{Convention}
\newtheorem*{conventions*}{Conventions}
\theoremstyle{remark}
\theoremstyle{definition}
\newtheorem*{definition*}{Definition}
\newtheorem*{example*}{Example}
\newtheorem*{question*}{Question}
\let\C\undefined
\let\U\undefined
\DeclareFontFamily{U}{mathx}{\hyphenchar\font45}
\DeclareFontShape{U}{mathx}{m}{n}{
      <5> <6> <7> <8> <9> <10>
      <10.95> <12> <14.4> <17.28> <20.74> <24.88>
      mathx10
      }{}
\DeclareSymbolFont{mathx}{U}{mathx}{m}{n}
\DeclareMathAccent{\widecheck}{0}{mathx}{"71}
\DeclareMathAccent{\wideparen}{0}{mathx}{"75}
\DeclareMathOperator{\HF}{\HF}
\DeclareMathOperator{\Hom}{Hom}
\DeclareMathOperator{\OT}{OT}
\DeclareMathOperator{\PD}{PD}
\DeclareMathOperator{\coker}{coker}
\DeclareMathOperator{\im}{im}
\DeclareMathOperator{\ind}{index}
\DeclareMathOperator{\rank}{rank}
\DeclareMathOperator{\sign}{sign}
\DeclarePairedDelimiter\floor{\lfloor}{\rfloor}
\DeclarePairedDelimiter\parentheses{\lparen}{\rparen}
\DeclarePairedDelimiter\paren{\lparen}{\rparen}
\DeclarePairedDelimiter{\Abs}{\|}{\|}
\DeclarePairedDelimiter{\abs}{\lvert}{\rvert}
\DeclarePairedDelimiter{\braket}{\langle}{\rangle}
\DeclarePairedDelimiter{\set}{\lbrace}{\rbrace}
\def\({\left(}
\def\){\right)}
\def\<{\left\langle}
\def\>{\right\rangle}
\newcommand{\C}{{\mathbf{C}}}
\newcommand{\Gtwo}{G_2}
\newcommand{\Met}{\sM\!et}
\newcommand{\N}{{\mathbf{N}}}
\newcommand{\PSL}{\P\SL}
\newcommand{\R}{\mathbf{R}}
\newcommand{\SF}{\mathrm{SF}}
\newcommand{\SL}{\mathrm{SL}}
\newcommand{\SU}{\mathrm{SU}}
\newcommand{\Span}[1]{\braket{#1}}
\newcommand{\U}{\mathrm{U}}
\newcommand{\Z}{\mathbf{Z}}
\newcommand{\andq}{\text{and}\quad}
\newcommand{\ch}{\mathrm{ch}}
\newcommand{\co}{\mskip0.5mu\colon\thinspace}
\newcommand{\defined}[2][\key]{\def\key{#2}\textbf{#2}\index{#1}}
\newcommand{\del}{\partial}
\newcommand{\hkred}{{/\!\! /\!\! /}}
\newcommand{\id}{\mathrm{id}}
\newcommand{\inner}[2]{\braket{#1, #2}}
\newcommand{\into}{\hookrightarrow}
\newcommand{\iso}{\cong}
\newcommand{\itref}{\eqref}
\newcommand{\loc}{\mathrm{loc}}
\newcommand{\ob}{\mathrm{ob}}
\newcommand{\pr}{\mathrm{pr}}
\newcommand{\qandq}{\quad\text{and}\quad}
\newcommand{\qforq}{\quad\text{for}\quad}
\newcommand{\qand}{\quad\text{and}}
\newcommand{\qwithq}{\quad\text{with}\quad}
\newcommand{\reg}{\mathrm{reg}}
\newcommand{\su}{\mathfrak{su}}
\newcommand{\sw}{\fs\fw}
\renewcommand{\H}{\mathbf{H}}
\renewcommand{\Im}{\operatorname{Im}}
\renewcommand{\P}{\mathbf{P}}
\renewcommand{\Re}{\operatorname{Re}}
\renewcommand{\det}{\operatorname{det}}
\renewcommand{\emptyset}{\varnothing}
\renewcommand{\epsilon}{\varepsilon}
\renewcommand{\setminus}{{\backslash}}
\renewcommand{\leq}{\leqslant}
\renewcommand{\geq}{\geqslant}
\newcommand{\hsw}{\widehat\sw}
\renewcommand*\env@matrix[1][*\c@MaxMatrixCols c]{%
  \hskip -\arraycolsep
  \let\@ifnextchar\new@ifnextchar
  \array{#1}}
\renewcommand\xleftrightarrow[2][]{%
  \ext@arrow 9999{\longleftrightarrowfill@}{#1}{#2}}
\newcommand\longleftrightarrowfill@{%
  \arrowfill@\leftarrow\relbar\rightarrow}
\newcommand{\rd}{{\rm d}}
\newcommand{\rII}{{\rm II}}
\newcommand{\bp}{{\mathbf{p}}}
\newcommand{\bq}{{\mathbf{q}}}
\newcommand{\bx}{{\mathbf{x}}}
\newcommand{\bE}{{\mathbf{E}}}
\newcommand{\bL}{{\mathbf{L}}}
\newcommand{\bS}{{\mathbf{S}}}
\newcommand{\bV}{{\mathbf{V}}}
\newcommand{\bW}{{\mathbf{W}}}
\newcommand{\sA}{\mathscr{A}}
\newcommand{\sB}{\mathscr{B}}
\newcommand{\sC}{\mathscr{C}}
\newcommand{\sG}{\mathscr{G}}
\newcommand{\sH}{\mathscr{H}}
\newcommand{\sM}{\mathscr{M}}
\newcommand{\sN}{\mathscr{N}}
\newcommand{\sP}{\mathscr{P}}
\newcommand{\sQ}{\mathscr{Q}}
\newcommand{\sS}{\mathscr{S}}
\newcommand{\sU}{\mathscr{U}}
\newcommand{\sW}{\mathscr{W}}
\newcommand{\sZ}{\mathscr{Z}}
\newcommand{\fa}{{\mathfrak a}}
\newcommand{\fc}{{\mathfrak c}}
\newcommand{\fd}{{\mathfrak d}}
\newcommand{\fe}{{\mathfrak e}}
\newcommand{\ff}{{\mathfrak f}}
\newcommand{\fl}{{\mathfrak l}}
\newcommand{\fs}{{\mathfrak s}}
\newcommand{\fw}{{\mathfrak w}}
\newcommand{\fx}{{\mathfrak x}}
\newcommand{\fD}{{\mathfrak D}}
\newcommand{\fM}{{\mathfrak M}}
\newcommand{\fR}{{\mathfrak R}}
\newcommand{\fW}{{\mathfrak W}}
\newcommand{\fX}{{\mathfrak X}}
\newcommand{\slD}{\slashed D}
\newcommand{\slS}{\slashed S}
\newcommand{\bsP}{{\bm{\sP}}}
\newcommand{\bsQ}{{\bm{\sQ}}}
\newcommand{\bsigma}{{\bm{\sigma}}}
\newcommand{\btau}{{\bm{\tau}}}
\author{
  Aleksander Doan
  \and
  Thomas Walpuski
}
\title{
  On the existence of harmonic $\Z_2$ spinors
}
\date{2018-04-24}
\begin{document}

\maketitle

\begin{abstract}
  We prove the existence of singular harmonic $\Z_2$ spinors on $3$--manifolds with $b_1 > 1$. 
  The proof relies on a wall-crossing formula for solutions to the Seiberg--Witten equation with two spinors.
  The existence of singular harmonic $\Z_2$ spinors and the shape of our wall-crossing formula shed new light on recent observations made by \citet{Joyce2016} regarding Donaldson and Segal's proposal for counting $\Gtwo$--instantons \cite{Donaldson2009}.
\end{abstract}

\section{Introduction}
\label{Sec_Introduction}

The notion of a harmonic $\Z_2$ spinor was introduced by \citet{Taubes2014} as an abstraction of various limiting objects appearing in compactifications of moduli spaces of flat $\PSL_2(\C)$--connections over $3$--manifolds \cite{Taubes2012} and solutions to the Kapustin--Witten equation \cite{Taubes2013}, the Vafa--Witten equation \cite{Taubes2017}, and the Seiberg--Witten equation with multiple spinors \cites{Haydys2014}{Taubes2016}.

\begin{definition}
  \label{Def_HarmonicZ2Spinor}
  Let $M$ be a closed Riemannian manifold and $\bS$ a Dirac bundle over $M$.%
  \footnote{%
    A Dirac bundle is a bundle of Clifford modules together with a metric and a compatible connection; see, \cite[Chapter II, Definition 5.2]{Lawson1989}.%
  }
  Denote by $\slD\co \Gamma(\bS) \to \Gamma(\bS)$ the associated Dirac operator.
  A \defined{$\Z_2$ spinor} with values in $\bS$ is a triple $(Z,\fl,\Psi)$ consisting of:
  \begin{enumerate}
    \item a proper closed subset $Z \subset M$,
    \item a Euclidean line bundle $\fl \to M \setminus Z$, and
    \item a section $\Psi\in\Gamma(M\setminus Z, \bS\otimes\fl)$
  \end{enumerate}
  such that $\abs{\Psi}$ extends to a Hölder continuous function on $M$ with $\abs{\Psi}^{-1}(0) = Z$ and $\abs{\nabla\Psi} \in L^2(M\setminus Z)$.
  We say that  $(Z,\fl,\Psi)$ is \defined{singular} if $\fl$ does not extend to a Euclidean line bundle on $M$.
  A $\Z_2$ spinor $(Z,\fl,\Psi)$ is called \defined{harmonic} if
  \begin{equation*}
    \slD\Psi=0
  \end{equation*}
  holds on $M\setminus Z$.
\end{definition}

\begin{remark}
  \citet{Taubes2014} proved that if $M$ is of dimension $3$ or $4$, then $Z$ has Hausdorff codimension at least 2.
  More recently, \citet{Zhang2017} proved that $Z$ is, in fact, rectifiable.
\end{remark}

\begin{remark}
  If $\fl$ extends to a Euclidean line bundle on $M$, then $\bS\otimes\fl$ extends to a Dirac bundle on $M$ and $\Psi$ extends to a harmonic spinor defined on all of $M$ which takes values in $\bS\otimes\fl$ and vanishes precisely along $Z$.
\end{remark}

The harmonic $\Z_2$ spinors appearing as limits of flat $\PSL_2(\C)$--connections over a $3$--manifold $M$ take values in the bundle $\underline{\R}\oplus T^*M$ equipped with the Dirac operator
\begin{equation*}
  \slD =
  \begin{pmatrix}
    0 & \rd^* \\
    \rd & * \rd 
  \end{pmatrix}.
\end{equation*}

The harmonic $\Z_2$ spinors appearing as limits of the Seiberg--Witten equation with two spinors in dimension three take values in the Dirac bundle
\begin{equation*}
  \bS = \Re(S\otimes E).
\end{equation*}
This bundle is constructed as follows.
Denote by $S$ the spinor bundle of a spin structure $\fs$ on the $3$--manifold $M$.
Denote by $E$ a rank two Hermitian bundle with trivial determinant line bundle $\Lambda^2_\C E$ and equipped with a compatible connection.
Both $S$ and $E$ are quaternionic vector bundles and thus have complex anti-linear endomorphism $j_S$, $j_E$ satisfying $j_S^2 = -\id_S$ and $j_E^2 = -\id_E$;
see also \autoref{Sec_H2///U(1)}.
These endow the complex vector bundle $S\otimes E$ with a real structure: $\overline{s\otimes e} \coloneq j_ss \otimes j_Ee$.
The $\C$--linear Dirac operator acting on $\Gamma(S \otimes E)$ preserves $\Gamma(\Re(S\otimes E))$ and gives rise to an $\R$--linear Dirac operator acting on $\Gamma(\Re(S\otimes E))$.

Henceforth, we specialize situation described in the previous paragraph.
The Dirac operator on $\Re(S\otimes E)$, and thus also the notion of a harmonic $\Z_2$ spinor, depends on the choice of a Riemannian metric on $M$ and a connection on $E$.

\begin{definition}
  \label{Def_ParameterSpace}
  Let $\Met(M)$ be the space of Riemannian metrics on $M$ and $\sA(E)$ the space of $\SU(2)$ connections on $E$.
  The \defined{space of parameters} is
  \begin{equation*}
    \sP \coloneq \Met(M) \times \sA(E)
  \end{equation*}
  equipped with the $C^{\infty}$ topology.
  Given a spin structure $\fs$ on $M$ and $\bp\in\sP$,
  we denote by $\slD^\fs_\bp$ the corresponding Dirac operator on $\Gamma(\Re(S\otimes E))$.
  We will say that a triple $(Z,\fl,\Psi)$ is a harmonic $\Z_2$ spinor \defined{with respect to} $\bp$ if it satisfies the conditions of 
  \autoref{Def_HarmonicZ2Spinor} with $\slD = \slD^\fs_\bp$.
\end{definition}

\begin{question}
  \label{Q_SingularHarmonicZ2Spinors}
  For which parameters $\bp\in\sP$ does there exist a singular harmonic $\Z_2$ spinor with respect to $\bp$?
\end{question}

The answer to this question for non-singular harmonic $\Z_2$ spinors (that is: harmonic spinors) is well-understood.
Let $\sW^\fs$ be the set of $\bp\in\sP$ for which $\dim\ker\slD^\fs_\bp > 0$.
It is the closure of $\sW_1^\fs$, the set of $\bp$ for which $\dim\ker\slD^\fs_\bp = 1 $.
Moreover, $\sW^\fs_1$ is a cooriented, codimension one submanifold of $\sP$ and $\sW^\fs\setminus \sW^\fs_1$ has codimension three.
(See \autoref{Prop_NaturalCoorientation} and \autoref{Prop_Wsk}.)
The intersection number of a path $(\bp_t)_{t\in[0,1]}$ with $\sW^\fs_1$ is given by the \defined{spectral flow} of the path of operators $(\slD^\fs_{\bp_t})_{t\in[0,1]}$, defined by Atiyah, Patodi, and Singer \cite[Section 7]{Atiyah1976}.
Therefore, along any path with non-zero spectral flow there exists a parameter $\bp_\star$ such that $\dim\ker\slD^\fs_{\bp_\star} > 0$.
Moreover, if the path is generic,%
\footnote{%
  \defined{Generic} means from a residual subset of the space of objects in question.
  A subset of a topological space is residual if it contains a countable intersection of open and dense subsets. 
  Baire's theorem asserts that a residual subset of a complete metric space is dense.
}
then the kernel is spanned by a nowhere vanishing spinor (because $\dim M < \rank\bS$; see \autoref{Def_RegularPath} and \autoref{Prop_TransverseSpectralFlow}).

By contrast, little is known about the existence of singular harmonic $\Z_2$ spinors.
The only examples known thus far have been obtained by means of complex geometry, on Riemannian $3$--manifolds of the form $M=S^1\times\Sigma$ for a Riemann surface $\Sigma$; see \cite[Theorem 1.2]{Taubes2012} in the case $\bS=\underline{\R}\oplus T^*M$ and \cite[Section 3]{Doan2017} in the case $\bS=\Re(S\otimes E)$. 
We remedy this situation by proving---in a rather indirect way---that $3$-manifolds abound with singular harmonic $\Z_2$ spinors.

\begin{theorem}
  \label{Thm_ExistenceOfSingularHarmonicZ2Spinors}
  For every closed, connected, oriented $3$--manifold $M$ with $b_1(M) > 1$ there exist a $\bp_\star\in\sP$ and a singular harmonic $\Z_2$ spinor with respect to $\bp_\star$.
  In fact, there is a closed subset $\sW_b \subset \sP$ and a non-zero cohomology class $\omega \in H^1(\sP\setminus\sW_b, \Z)$ with the property that if $(\bp_t)_{t\in S^1}$ is a generic loop in $\sP\setminus \sW_b$ and
  \begin{equation*}
    \omega([\bp_t]) \neq 0,
  \end{equation*}
  then there exists a singular harmonic $\Z_2$ spinor with respect to some $\bp_\star$ in $(\bp_t)_{t\in S^1}$.
\end{theorem}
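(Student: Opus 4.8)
The plan is to obtain the singular harmonic $\Z_2$ spinor indirectly, as a rescaled limit of solutions to the Seiberg--Witten equation with two spinors whose existence is forced by a wall-crossing argument. Fix the spin structure $\fs$. For $\bp = (g,B)\in\sP$ let $\fM^\fs(\bp)$ be the moduli space of solutions $(a,\Phi)$, with $a$ a $\U(1)$ gauge field and $\Phi\in\Gamma(S\otimes E)$, to the suitably perturbed Seiberg--Witten equation with two spinors for the Dirac bundle $\Re(S\otimes E)$, modulo gauge. Its reducible locus $\{\Phi = 0\}$ is identified with the torus $\bT = H^1(M;i\R)/2\pi H^1(M;\Z)$ of flat connections, of dimension $b_1(M)$. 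By the compactness theorem of Haydys--Walpuski \cite{Haydys2014} and Taubes \cite{Taubes2016}, a sequence of irreducible solutions over $\bp_i\to\bp_\star$ either subconverges or, after rescaling, converges to a harmonic $\Z_2$ spinor with respect to $\bp_\star$. Hence, off the locus $\sN^\fs$ of parameters admitting a harmonic $\Z_2$ spinor and off the reducible wall $\sW^\fs$ (the parameters for which the Dirac operator at some flat connection has nontrivial kernel), a generic perturbation makes the irreducible part of $\fM^\fs(\bp)$ a compact, oriented, zero-dimensional manifold, so its signed count $n^\fs(\bp)\in\Z$ is defined and locally constant there.

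Second, I would prove the wall-crossing formula. Let $\sW^\fs_1\subset\sW^\fs$ denote the codimension-one stratum and let $\sW_b\subset\sP$ be the closed ``bad'' subset, of codimension two, consisting of the parameters where the relevant Dirac kernel jumps by more than one dimension, where a reducible with kernel fails to be regular, or where the closures of $\sW^\fs_1$ and of $\sN^\fs$ intrude. One shows that if $(\bp_t)$ crosses $\sW^\fs_1\setminus\sW_b$ transversally at $t_0$, then $n^\fs(\bp_{t_0+\epsilon}) - n^\fs(\bp_{t_0-\epsilon}) = \pm w^\fs$ for a \emph{wall-crossing number} $w^\fs\in\Z$. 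This I would establish by the bifurcation picture at the reducible torus: near the critical parameter the irreducible solutions with small $\Phi$ are cut out by an obstruction section over a neighbourhood of the degeneracy locus in $\bT$, paired with the one-dimensional Dirac kernel, so $w^\fs$ is the Euler number of an obstruction bundle over a cycle in $\bT$. The point is that $w^\fs$ is locally constant along $\sW^\fs_1\setminus\sW_b$, so $w^\fs\cdot\PD[\sW^\fs_1]$ defines a class $\omega\in H^1(\sP\setminus\sW_b;\Z)$; the spectral-flow coorientation of $\sW^\fs_1$ (cf.\ \autoref{Prop_NaturalCoorientation}) pins down its sign.

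Third, I would show $\omega\neq 0$, and this is where $b_1(M) > 1$ enters. Since $\sP$ is contractible, nontriviality of $\omega$ must be witnessed by linking the codimension-two part of $\sW_b$: one exhibits a small loop around a component of $\sW_b$ meeting $\overline{\sW^\fs_1}$ in the expected way and crossing $\sW^\fs_1$ exactly once. For $b_1(M) > 1$ such a configuration occurs because the degeneracy locus of the family of flat-twisted Dirac operators inside $\bT$ is generically a nonempty submanifold of positive dimension $b_1(M) - 1$, which can collapse onto a codimension-two stratum of $\sP$; for $b_1(M)\le 1$ this locus is zero-dimensional, the picture degenerates, and the obstruction Euler number $w^\fs$ vanishes for dimensional reasons. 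It then remains to compute $w^\fs\neq 0$ for a suitable $\fs$ via the family index of the twisted Dirac operators over $\bT$; the product examples $M = S^1\times\Sigma$ of \cite{Doan2017} can be used to calibrate this computation.

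Finally, the conclusion. Let $(\bp_t)_{t\in S^1}$ be a generic loop in $\sP\setminus\sW_b$ with $\omega([\bp_t])\neq 0$, and suppose for contradiction that no $\bp_t$ admits a harmonic $\Z_2$ spinor. Then $n^\fs$ is defined on all of $S^1$ minus the finitely many transverse intersections with $\sW^\fs_1$, where it jumps by $\pm w^\fs$; summing the jumps around $S^1$ yields $\omega([\bp_t])\neq 0$, contradicting that an integer-valued function on a circle returns to its initial value. Hence some $\bp_\star$ on the loop admits a harmonic $\Z_2$ spinor $(Z,\fl,\Psi)$, with $Z\neq\emptyset$ since $\Phi$ became unbounded. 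A routine genericity argument shows it is singular: if $\fl$ extended, $\Psi$ would extend to a genuine harmonic section of a Dirac bundle vanishing along $Z$, which is excluded for parameters outside a meagre set avoided by generic loops (cf.\ \autoref{Prop_TransverseSpectralFlow}). The first assertion follows by applying this to any loop pairing nontrivially with $\omega$, which exists because $\omega\neq 0$. The main obstacle is the middle of this argument: establishing the wall-crossing formula --- the bifurcation and gluing analysis near the non-compact reducible torus, with matching orientations --- and then carrying out the cohomological and Euler-number computations that make $\omega$ nonzero precisely when $b_1(M) > 1$.
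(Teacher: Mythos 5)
Your overall architecture (define a signed count $n$, prove a wall-crossing formula across a codimension-one wall, package the crossings into a class $\omega\in H^1(\sP\setminus\sW_b,\Z)$, show $\omega\neq 0$, and deduce existence from non-compactness along a loop pairing nontrivially with $\omega$) matches the paper, but the two middle steps rest on the wrong mechanism and would fail. First, the wall-crossing here is not a bifurcation off the reducible torus $\bT$ of flat connections: with $b_1(M)>1$ and generic paths, reducibles are avoided altogether — that is precisely what the hypothesis $b_1>1$ buys (see the remark on reducible solutions), not the nonvanishing of an Euler number. The jump in $n$ comes from non-compactness: solutions whose spinor norm diverges; by \autoref{Thm_HW} the rescaled limit has flat connection with monodromy in $\Z_2$, so when the vanishing locus $Z$ is empty the Haydys correspondence (\autoref{Prop_SpecialHaydysCorrespondence}) identifies it with a nowhere-vanishing element of $\ker\slD^\fs_\bp$ for one of the finitely many spin structures $\fs$ inducing $\fw$ — nothing is spread over the Jacobian, and there is no obstruction bundle over a cycle in $\bT$. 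The local analysis is instead a gluing/obstruction analysis ``at infinity'' (the Kuranishi model with $\ob(t,\epsilon)=\dot\lambda(t_0)(t-t_0)-\delta\epsilon^4+\cdots$), which creates or annihilates exactly one solution per transverse crossing of $\sW^\fs_{1,\emptyset}$, with a sign that must be computed by an orientation-transport argument. Your claim that $w^\fs=0$ for $b_1\le 1$ ``for dimensional reasons'' is likewise not how $b_1$ enters.

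Second, with your definition $\omega=w^\fs\cdot\PD[\sW_1^\fs]$, where $w^\fs$ is locally constant and indeed given by a family-index computation (hence the same constant on every component of the wall) and the coorientation is the spectral one, the class is necessarily trivial: for any loop in $\sP\setminus\sW_b$ the pairing would be $w^\fs$ times the spectral flow of $(\slD^\fs_{\bp_t})$ around the loop, and this vanishes because the family of Dirac operators is defined and continuous on all of the contractible space $\sP$, so every loop is nullhomotopic within the domain of the family. The missing idea — the crux of the paper — is that the per-crossing contribution is $\chi\cdot\sigma(\Psi,\bp)$, where $\sigma(\Psi,\bp)$ is \emph{not} a universal constant but flips sign between components of $\sW^\fs_{1,\emptyset}$ according to the mod $2$ relative degree of the nowhere-vanishing kernel spinors (\autoref{Def_RelativeDegree}, \autoref{Prop_HowSigmaChanges}); this defines the twisted coorientation of \autoref{Prop_NaturalCoorientation}. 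Nontriviality is then shown by exhibiting $\bp_\star\in\sW^\fs_{1,\star}$ (kernel spanned by a spinor with a single non-degenerate zero) and a small loop around it which crosses $\sW^\fs_{1,\emptyset}$ twice with opposite spectral-crossing signs but relative degree $\pm 1$, so the two crossings count with the \emph{same} sign and $\omega$ evaluates to $\pm 2$ (\autoref{Prop_SmallLoop}); with the untwisted coorientation these crossings would cancel. Finally, your closing inference ``$Z\neq\emptyset$ since $\Phi$ became unbounded'' is not valid: unbounded spinors with $Z=\emptyset$ are exactly the wall-crossing events. Singularity is instead forced because, along a generic loop, every zero crossing of the Dirac spectrum has nowhere-vanishing kernel spinor and is already accounted for in the wall-crossing formula, so a nonzero $\omega([\bp_t])$ leaves a singular harmonic $\Z_2$ spinor as the only possibility.
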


\begin{remark}
  The definition of $\sW_b$ is given in \autoref{Def_Walls} and the precise meaning of a generic loop is given in \autoref{Def_RegularPath} and \autoref{Prop_Transversality}.
\end{remark}

\begin{remark}
  \autoref{Thm_ExistenceOfSingularHarmonicZ2Spinors} suggests that on $3$--manifolds the appearance of singular harmonic $\Z_2$ spinors is a codimension one phenomenon---as is the appearance of harmonic spinors.
  This is in consensus with the work of \citet{Takahashi2015,Takahashi2017}, who proved that the linearized deformation theory of singular harmonic $\Z_2$ spinors with $Z = S^1$ is an index zero Fredholm problem (or index minus one, after scaling is taken into account).
\end{remark}

\begin{remark}
  \label{Rmk_ReducibleSolutions}
  The assumption $b_1(M)>1$ has to do with reducible solutions to the Seiberg--Witten equation with two spinors.
  We expect a variant of the theorem to be true for $b_1(M) \in \set{0,1}$ as well.
  In this case, one also has to take into account the wall-crossing caused by reducible solutions which was studied in classical Seiberg--Witten theory by \citet{Chen1997,Lim2000}.
\end{remark}

The proof of \autoref{Thm_ExistenceOfSingularHarmonicZ2Spinors} relies on the wall-crossing formula for $n(\bp)$,
the signed count of solutions to the Seiberg--Witten equation with two spinors.
The number $n(\bp)$ is defined provided $\bp$ is generic and there are no singular harmonic $\Z_2$ spinors with respect to $\bp$.
The wall-crossing formula, whose precise statement is \autoref{Thm_TwoSeibergWittenWallCrossing}, can be described as follows.
Let $\sW^\fs_{1,\emptyset}$ be the set of $\bp \in \sP$ for which $\ker\slD^\fs_\bp = \R\Span{\Psi}$ with $\Psi$ nowhere vanishing,
and let $\sW_{1,\emptyset}$ be the union of all $\sW^\fs_{1,\emptyset}$ for all spin structures $\fs$.
There is a closed subset $\sW_b \subset \sP$, as in \autoref{Thm_ExistenceOfSingularHarmonicZ2Spinors}, such that $\sW_{1,\emptyset}$ is a closed, cooriented, codimension one submanifold of $\sP\setminus \sW_b$. 
If $(\bp_t)_{t\in[0,1]}$ is a generic path in $\sP\setminus \sW_b$ and there are no singular harmonic $\Z_2$ spinors with respect to any $\bp_t$ with $t \in [0,1]$, then the difference
\begin{equation*}
  n(\bp_1) - n(\bp_0) 
\end{equation*}
is equal to the intersection number of the path $(\bp_t)_{t\in[0,1]}$ with $\sW_{1,\emptyset}$.
In particular, if $(\bp_t)_{t\in S^1}$ is a generic loop whose intersection number with $\sW_{1,\emptyset}$ is non-zero, then there must be a singular harmonic $\Z_2$ spinor for some $\bp_\star$ in $(\bp_t)$.

\begin{remark}
  Although the wall-crossing for $n(\bp)$ does occur when the spectrum of $\slD^\fs_\bp$ crosses zero,
  the contribution of a nowhere vanishing harmonic spinor to the wall-crossing formula is not given by the sign of the spectral crossing but instead by the mod $2$ topological degree of the harmonic spinor, as in \autoref{Def_RelativeDegree}.
  Therefore, the wall-crossing formula is not given by the spectral flow.
  This should be contrasted with the wall-crossing phenomenon for the classical Seiberg--Witten equation caused by reducible solutions, as in \autoref{Rmk_ReducibleSolutions}.
  Indeed, the wall-crossing described in this paper is a result of the non-compactness of the moduli spaces of solutions and as such it is a new phenomenon, with no counterpart in classical Seiberg--Witten theory.
\end{remark}

The cohomology class $\omega \in H^1(\sP\setminus\sW_b,\Z)$ in \autoref{Thm_ExistenceOfSingularHarmonicZ2Spinors} is defined by intersecting loops in $\sP\setminus\sW_b$ with $\sW_{1,\emptyset}$.
We prove that $\omega$ is non-trivial, by exhibiting a loop $(\bp_t)_{t\in S^1}$ on which $\omega$ evaluates as $\pm 2$.
In particular, $\sP\setminus\sW_b$ is not simply-connected and we can take $(\bp_t)_{t\in S^1}$ to be a small loop linking $\sW_b$.

\begin{remark}
  The discussion in the article is related to an observation made by \citet[Section 8.4]{Joyce2016} which points out potential issues with the Donaldson--Segal program for counting $\Gtwo$--instantons.
  We discuss this in detail in \autoref{Sec_JoyceDonaldsonSegal}.
\end{remark}


\paragraph{Acknowledgements}
This project originated from a question posed to us by Simon Donaldson.
It is a pleasure to acknowledge his influence on our work.
We are particularly grateful for his insistence that one should investigate the relationship between the sign of the spectral crossing and the wall-crossing formula for the Seiberg--Witten equation with two spinors.
We thank the anonymous referee for numerous helpful comments and suggestions for improving the exposition of this article.
This material is based upon work supported by  \href{https://www.nsf.gov/awardsearch/showAward?AWD_ID=1754967&HistoricalAwards=false}{the National Science Foundation under Grant No.~1754967}
and
\href{https://sites.duke.edu/scshgap/}{the Simons Collaboration Grant on ``Special Holonomy in Geometry, Analysis and Physics''}.

\paragraph{Notation}

Here is a summary of various notations used throughout the article:
\begin{itemize}
\item
  We use $\fw$ to denote a spin$^c$ structure.
  The associated complex spinor bundle is denoted by $W$. 
  The \defined{determinant line bundle} of $W$ is $\det W = \Lambda^2_\C W$. 
\item
  For every $\bp = (g,B)$ in the parameter space $\sP$ and for every connection $A \in \sA(\det W)$ we write
  \begin{equation*}
    \slD_{A,\bp} \colon \Gamma(\Hom(E,W)) \to \Gamma(\Hom(E,W))
  \end{equation*}
  for the $\C$--linear Dirac operator induced the connection $B$ on $E$ as well as the spin$^c$ connection on $W$,
  determined by $A$ and the Levi--Civita connection of $g$.  
  We will suppress the subscript $\bp$ from the notation when its presence is not relevant to the current discussion.
\item
  We use $\fs$ (possibly with a subscript: $\fs_0$, $\fs_1$, etc.) to denote a spin structure on $M$ .
  The associated spinor bundle is denoted by $S_{\fs}$. 
\item
  For every $\bp = (g,B) \in \sP$ we write
  \begin{equation*}
    \slD_{\bp}^{\fs} \colon \Gamma(\Re(S_{\fs} \otimes E)) \to \Gamma(\Re(S_{\fs} \otimes E))
  \end{equation*}
  for the $\R$--linear Dirac operator induced by the spin connection on $S_{\fs}$,
  associated with the Levi--Civita connection of $g$,
  and the connection $B$ on $E$.
\end{itemize}

\section{The Seiberg--Witten equation with two spinors}
\label{Sec_CountingSolutions}

Fix a spin$^c$ structure $\fw$ and denote its complex spinor bundle by $W$.
Set
\begin{equation*}
  \sZ \coloneq \ker\(\rd\co \Omega^2(M,i\R) \to \Omega^3(M,i\R)\).
\end{equation*}

\begin{definition}
  Let $\bp = (g,B) \in \sP$ and $\eta \in \sZ$.
  The \defined{$\eta$--perturbed Seiberg--Witten equation with two spinors} is the following differential equation for $(\Psi,A) \in \Gamma(\Hom(E,W)) \times \sA(\det W)$:
  \begin{equation}
    \label{Eq_PerturbedTwoSeibergWitten}
    \begin{split}
      \slD_A\Psi &= 0 \qand \\
      \frac12 F_{A}+\eta &= \mu(\Psi).
    \end{split}
  \end{equation}
  Here $\slD_A = \slD_{A,\bp}$ is the $\C$--linear Dirac operator on $\Hom(E,W)$ and
  \begin{equation*}
    \mu(\Psi) = \mu_\bp(\Psi) \coloneq \Psi \Psi^* - \frac12\abs{\Psi}^2\,\id_W
  \end{equation*}
  is a section of $i\su(W)$
  which is identified with an element of $\Omega^2(M,i\R)$ using the Clifford multiplication.
  Both equations depend on the choice of $\bp$ and the second equation depends also on the choice of $\eta$.%
  \footnotemark
  \footnotetext{%
    While the equation \eqref{Eq_PerturbedTwoSeibergWitten} makes sense for any $\eta \in \Omega^2(M,i\R)$, the existence of a solution implies that $\rd \eta = 0$;
    see \cites[Proposition 2.4]{Doan2017}[Proposition A.4]{Doan2017a}.
    Thus, we consider only $\eta \in \sZ$.
  }
  
  Let $\sG(\det W)$ be the gauge group of $\det W$.
  For $(\bp,\eta) \in \sP\times\sZ$, we denote by
  \begin{equation*}
    \fM_\fw(\bp,\eta)
    \coloneq
    \bigg\{
    [\Psi,A] \in \frac{\Gamma(\Hom(E,W)) \times \sA(\det W)}{\sG(\det W)}
    :
    \begin{array}{@{}l@{}}
      (\Psi,A) \text{ satisfies } \eqref{Eq_PerturbedTwoSeibergWitten} \\
      \text{with respect to } \bp \text{ and } \eta
    \end{array}
    \bigg\}
  \end{equation*}
  the \defined{moduli space} of solutions to \eqref{Eq_PerturbedTwoSeibergWitten}. 
\end{definition}

As discussed in \cites[Section 2.2]{Doan2017}[Section 2]{Doan2017a},
the infinitesimal deformation theory of \eqref{Eq_PerturbedTwoSeibergWitten} around a solution $(\Psi,A)$, is controlled by the linear operator
\begin{multline*}
  L_{\Psi,A} =  L_{\Psi,A,\bp} \co \Gamma(\Hom(E,W) \oplus \Omega^1(M,i\R) \oplus \Omega^0(M,i\R) \\
  \to \Gamma(\Hom(E,W)) \oplus \Omega^1(M,i\R) \oplus \Omega^0(M,i\R)
\end{multline*}  
defined by
\begin{equation}
  \label{Eq_LinearizedOperator}
  \begin{split}
  L_{\Psi,A,\bp}
  \coloneq
  \begin{pmatrix}
    -\slD_{A,\bp} & -\fa_{\Psi,\bp} \\
    -\fa_{\Psi,\bp}^* & \fd_\bp
  \end{pmatrix}
\end{split}
\end{equation}
with
\begin{equation}
  \label{Eq_DeltaAndA}
  \fd = \fd_\bp
  \coloneq
  \begin{pmatrix}
    *\rd & \rd \\
    \rd^* &
  \end{pmatrix}
  \qandq
  \fa_\Psi = \fa_{\Psi,\bp}
  \coloneq
  \begin{pmatrix}
    \bar\gamma(\cdot)\Psi & \rho(\cdot)\Psi
  \end{pmatrix}.
  \footnotemark
\end{equation}
\footnotetext{%
  This linearization is obtained by writing a connection near $A_0$ as $A_0 + 2a$.%
}%
Here $\bar\gamma$ is the Clifford multiplication by elements of $T^*M\otimes i\R$ and $\rho$ is the linearized action of the gauge group: pointwise multiplication by elements of $i\R$.
The Hodge star operator $*$ and $\bar\gamma$ both depend on $\bp$, but we have suppressed this dependence in the notation.
  
\begin{definition}
  We say that a solution $(\Psi,A)$ of \eqref{Eq_PerturbedTwoSeibergWitten} is \defined{irreducible} if $\Psi \neq 0$,
  and \defined{unobstructed} if $L_{\Psi,A}$ is invertible.
\end{definition}

If $(\Psi,A)$ is irreducible and unobstructed,
then it represents an isolated point in $\fM_\fw(\bp,\eta)$;
see \cites[Proposition 2.13]{Doan2017}[Proposition 1.25]{Doan2017a}.

\begin{prop}[{\cite[Proposition 2.28]{Doan2017}}]
  \label{Prop_Transversality}
  If $b_1(M) > 0$, then for every $(\bp,\eta)$ from a residual subset of $\sP\times\sZ$ all solutions to \eqref{Eq_PerturbedTwoSeibergWitten} are irreducible and unobstructed.
\end{prop}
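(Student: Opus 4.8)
The plan is to prove the statement by the standard Sard–Smale transversality machinery applied to the universal moduli space of solutions to \eqref{Eq_PerturbedTwoSeibergWitten}. First I would set up the relevant Banach manifolds: after passing to suitable Sobolev completions (say $W^{k,2}$ for the fields and $C^\infty$ replaced by a Banach space of parameters, e.g. a $C^\ell$ or a Floer-type $C^\varepsilon$ completion of $\sP \times \sZ$), one forms the universal moduli space
\begin{equation*}
  \widetilde{\fM} \coloneq \left\{ ([\Psi,A],\bp,\eta) : (\Psi,A) \text{ solves } \eqref{Eq_PerturbedTwoSeibergWitten} \text{ w.r.t. } (\bp,\eta),\ \Psi \neq 0 \right\},
\end{equation*}
cut out as the zero set of a smooth section $\sigma$ of a Banach bundle over the configuration space times the parameter space, modulo the gauge group $\sG(\det W)$. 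The irreducibility condition $\Psi \neq 0$ ensures the gauge action is free (the stabilizer of an irreducible configuration is trivial since $E$ has rank two with the relevant diagonal action), so the quotient is a Banach manifold in a neighbourhood of any point where the linearization is surjective.

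The core computation is to show that the vertical differential of $\sigma$ at any solution $([\Psi,A],\bp,\eta)$ with $\Psi \neq 0$ is surjective once one is allowed to vary $\eta \in \sZ$ as well as the fields. The operator $L_{\Psi,A,\bp}$ from \eqref{Eq_LinearizedOperator} is the fields-only part; it is a self-adjoint elliptic operator, hence Fredholm, and its cokernel is finite-dimensional and consists of smooth triples. To kill the cokernel I would use variations of $\eta$: a variation $\dot\eta \in \sZ$ enters the linearized second equation of \eqref{Eq_PerturbedTwoSeibergWitten} additively, so one must show that no nonzero element $(\phi, b, \xi)$ of $\coker L_{\Psi,A,\bp}$ can be $L^2$-orthogonal to all of $\sZ = \ker(\rd\colon \Omega^2 \to \Omega^3)$. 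Pairing against coexact $2$-forms and using that $b_1(M) > 0$ (so $\sZ$ is large enough to detect the relevant component of the cokernel), together with a unique continuation argument for the elliptic system $L_{\Psi,A,\bp}(\phi,b,\xi) = 0$ forcing $(\phi,b,\xi)$ to vanish on an open set and hence everywhere, gives surjectivity. This is the part I expect to be the main obstacle: one needs unique continuation for the linearized Seiberg–Witten-type operator (an Aronszajn-type argument, as in Kronheimer–Mrowka or the references \cites{Doan2017}{Doan2017a}) combined with a careful choice of admissible perturbation supported where $\Psi \neq 0$; the role of $b_1(M) > 0$ is precisely to guarantee enough closed (non-exact) $2$-form perturbations.

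Granting surjectivity of the universal linearization, the universal moduli space $\widetilde{\fM}$ is a smooth Banach manifold, and the projection $\pi\colon \widetilde{\fM} \to \sP \times \sZ$ is a Fredholm map whose index equals the index of $L_{\Psi,A,\bp}$ (which is zero, by the self-adjointness of that operator). By the Sard–Smale theorem the set of regular values of $\pi$ is residual in $\sP \times \sZ$; for a regular value $(\bp,\eta)$, every irreducible solution is unobstructed, i.e. $L_{\Psi,A}$ is invertible. Finally, since $b_1(M) > 0$, a direct argument rules out reducible solutions for generic $(\bp,\eta)$: a reducible solution has $\Psi = 0$, so the second equation reads $\tfrac12 F_A + \eta = 0$, which constrains $\eta$ to a subset of $\sZ$ of positive codimension (indeed $\eta$ must be $-\tfrac12 F_A$ for some $A \in \sA(\det W)$, and the set of such $\eta$ with $A$ varying is a finite-dimensional affine family modulo an infinite-codimensional constraint once $b_1 > 0$), so generic $\eta$ admits no reducibles. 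Intersecting the two residual sets — no reducibles, and all irreducibles unobstructed — yields a residual subset of $\sP \times \sZ$ over which all solutions are irreducible and unobstructed, which is the claim. One should also confirm the standard elliptic regularity step: $L^2$ solutions of the linearized equations are smooth, so that the Banach-space completions do not affect the conclusion. $\qed$
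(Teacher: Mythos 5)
Your overall skeleton---a universal moduli space over the parameters, surjectivity of the universal linearization at irreducible solutions, Sard--Smale applied to the index-zero projection, and exclusion of reducibles using $b_1(M)>0$---is the right one; note that the paper does not reprove this proposition but quotes it from \cite{Doan2017}, and the transversality arguments it does spell out (\autoref{Sec_TransversalityForPaths}) follow exactly this pattern. The reducible step is essentially correct but your codimension count is garbled: a reducible solution forces $F_A=-2\eta$, so the bad set is $\set{\eta\in\sZ : 2[\eta]=-[F_A] \in H^2_{\dR}(M)}$, a closed affine subspace of \emph{finite} codimension $b_2(M)=b_1(M)\geq 1$ (not an ``infinite-codimensional constraint''), whose complement is open and dense.

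The genuine gap is in the surjectivity step, and the mechanism you propose would fail as stated. A cokernel element $(\phi,b,\xi)$ at an irreducible solution lies in $\ker L_{\Psi,A,\bp}$ (self-adjointness) and is $L^2$-orthogonal to the image of the $\eta$-variations; but those variations only contribute $*\dot\eta$ in the $\Omega^1$-slot, and $*\sZ$ is the space of coclosed $1$-forms \emph{independently of} $b_1(M)$. So all you learn is that $b$ is exact: nothing about the spinor component $\phi$, and no vanishing on an open set from which unique continuation could start. In particular, $b_1(M)>0$ plays no role in killing the cokernel---its only job is the reducible count above---so the sentence attributing the size of $\sZ$ to $b_1$ points at the wrong mechanism. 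The missing ingredient is to perturb in the $\sP$-directions, which is why the statement concerns a residual subset of $\sP\times\sZ$ rather than of $\sZ$ alone: a variation $\dot B\in\Omega^1(M,\su(E))$ of the connection on $E$ enters the linearized Dirac equation as $\bar\gamma(\dot B)\Psi$, and the pointwise algebraic largeness of Clifford multiplication by $T^*M\otimes\su(E)$ (the same device used in the proof of \autoref{Prop_TransverseSpectralFlow}) together with unique continuation for $\slD_{A,\bp}\Psi=0$ (so that $\set{\Psi\neq 0}$ is dense) forces $\phi=0$; then the spinor component of $L_{\Psi,A,\bp}(\phi,b,\xi)=0$ reads $\fa_\Psi(b,\xi)=0$, and \eqref{Eq_APsiAPsi*} kills $(b,\xi)$ wherever $\Psi\neq0$, hence everywhere. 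With that replacement, your Sard--Smale and reducible-exclusion steps combine to give the claim.
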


For every $(\bp,\eta)$ as in \autoref{Prop_Transversality}, the moduli space $\fM_\fw(\bp,\eta)$ is a zero-dimensional manifold.
It can be oriented as explained in \cite[Section 2.6]{Doan2017}.
The following discussion outlines the orientation procedure.

\begin{prop}
  \label{Prop_SpectralFlow}
  Let $(\Psi_t,A_t,\bp_t)_{t \in [0,1]}$ be a path in $\Gamma(\Hom(E,W))\times\sA(\det W)\times\sP$
  The value of the spectral flow
  \begin{equation*}
    \SF\((L_{\Psi_t,A_t,\bp_t})_{t\in[0,1]}\) \in \Z
  \end{equation*}
  only depends on $(\Psi_0,A_0,\bp_0)$ and $(\Psi_1,A_1,\bp_1)$.
\end{prop}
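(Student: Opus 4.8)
The claim is that the spectral flow $\SF\bigl((L_{\Psi_t,A_t,\bp_t})_{t\in[0,1]}\bigr)$ depends only on the endpoints $(\Psi_0,A_0,\bp_0)$ and $(\Psi_1,A_1,\bp_1)$. The plan is to argue via homotopy invariance of the spectral flow, combined with the observation that the relevant space of operators is simply connected, so that any two paths with the same endpoints are homotopic rel endpoints. First I would recall the standard fact (Atiyah--Patodi--Singer \cite[Section 7]{Atiyah1976}) that the spectral flow of a path of self-adjoint elliptic operators is invariant under homotopies of the path that fix the endpoints; more precisely, $\SF$ descends to a homomorphism from the fundamental groupoid of the space of such operators to $\Z$, provided the space is suitably connected. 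Here each $L_{\Psi_t,A_t,\bp_t}$ is a self-adjoint (up to the sign convention in \eqref{Eq_LinearizedOperator}) first-order elliptic operator on the bundle $\Hom(E,W)\oplus\Omega^1(M,i\R)\oplus\Omega^0(M,i\R)$, so the hypotheses apply.

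The key step is then to show that the space of data $(\Psi,A,\bp)\in\Gamma(\Hom(E,W))\times\sA(\det W)\times\sP$ is connected and simply connected, so that a path between two fixed endpoints is unique up to homotopy rel endpoints, forcing the spectral flow to depend on the endpoints alone. Connectedness is clear: $\Gamma(\Hom(E,W))$ is a vector space hence convex, $\sA(\det W)$ is an affine space hence convex, and $\Met(M)$ is convex while $\sA(E)$ is affine, so $\sP=\Met(M)\times\sA(E)$ is convex as well; thus the whole product is contractible. In particular it is simply connected, so $\pi_1$ vanishes and any loop based at a point is null-homotopic. Given two paths $\gamma_0,\gamma_1$ from $(\Psi_0,A_0,\bp_0)$ to $(\Psi_1,A_1,\bp_1)$, their concatenation $\gamma_0 * \bar\gamma_1$ is a loop, hence null-homotopic, and contractibility of the ambient space lets us upgrade this to a homotopy rel endpoints between $\gamma_0$ and $\gamma_1$. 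Homotopy invariance of $\SF$ then gives $\SF(\gamma_0)=\SF(\gamma_1)$.

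The main obstacle—really the only point requiring care—is making sure that the map $(\Psi,A,\bp)\mapsto L_{\Psi,A,\bp}$ lands in the space of operators for which the APS spectral-flow theory applies continuously, i.e. that $L_{\Psi,A,\bp}$ depends continuously (in the graph norm, or as a family of unbounded self-adjoint operators on a fixed Hilbert space after suitable identifications of Sobolev completions) on the parameters. The dependence on $\bp=(g,B)$ enters both through the Hodge star $*$ and the Clifford multiplication $\bar\gamma$ in \eqref{Eq_DeltaAndA} and through $\slD_{A,\bp}$; since all of these vary smoothly with the metric, the connection, and the spinor, this is a routine verification of the type carried out in \cite[Section 2.6]{Doan2017}. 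Once continuity of the family is in hand, the contractibility argument above closes the proof, and in fact shows that $\SF$ descends to a well-defined function on pairs of endpoints.
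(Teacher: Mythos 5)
Your argument is correct and is essentially the paper's proof: homotopy invariance of the spectral flow together with contractibility (indeed convexity/affineness) of $\Gamma(\Hom(E,W))\times\sA(\det W)\times\sP$, so that any two paths with the same endpoints are homotopic rel endpoints. The continuity check you flag is the routine point the paper leaves implicit; nothing further is needed.
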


\begin{convention}
  \label{Conv_SpectralFlow}
  If $(D_t)_{t\in[0,1]}$ is a path of self-adjoint Fredholm operators with $D_0$ or $D_1$ not invertible,
  we define $\SF\((D_t)_{t\in[0,1]}\) \coloneq (D_t+\lambda)_{t\in[0,1]}$ for $0 < \lambda \ll 1$.
  This convention was introduced by \citet[Section 7]{Atiyah1976}.
\end{convention}

\begin{proof}[Proof of \autoref{Prop_SpectralFlow}]
  Since the spectral flow is homotopy invariant,
  this is a consequence of the fact that $\Gamma(\Hom(E,W))\times\sA(\det W)\times\sP$ is contractible.
\end{proof}

\begin{definition}
  \label{Def_OrientationTransport}
  For $(\Psi_0,A_0,\bp_0)$ and $(\Psi_1,A_1,\bp_1) \in \Gamma(\Hom(E,W))\times\sA(\det W)\times\sP$,
  we define the \defined{orientation transport} 
  \begin{equation*}
    \OT\((\Psi_0,A_0,\bp_0),(\Psi_1,A_1,\bp_1)\) \coloneq (-1)^{\SF\((L_{\Psi_t,A_t,\bp_t})_{t\in[0,1]}\)},
  \end{equation*}
  for path $(\Psi_t,A_t,\bp_t)_{t\in[0,1]}$ from $(\Psi_0,A_0,\bp_0)$ to $(\Psi_1,A_1,\bp_1)$.
\end{definition}

\begin{remark}
  The orientation transport can be alternatively defined using the determinant line bundle of the family of Fredholm operators $(L_{\Psi,A,\bp})$ as $(\Psi,A,\bp)$ varies in $\Gamma(\Hom(E,W))\times\sA(\det W)\times\sP$.
  This point of view is explained in detail in \autoref{Sec_DeterminantLineBundles} as it is needed in a technical, but crucial, part in the proof of \autoref{Prop_HowToDetermineSigma}.
\end{remark}

Since the spectral flow is additive with respect to path composition,
we have
\begin{equation}
  \label{Eq_OrientationTransportComposition}
  \begin{split}
    &\OT\((\Psi_0,A_0,\bp_0),(\Psi_2,A_2,\bp_2)\) \\
    &\qquad=
    \OT\((\Psi_0,A_0,\bp_0),(\Psi_1,A_1,\bp_1)\)\cdot
    \OT\((\Psi_1,A_1,\bp_1),(\Psi_2,A_2,\bp_2)\).
  \end{split}
\end{equation}

\begin{prop}
  \label{Prop_OrientationTransport}
  ~
  \begin{enumerate}
  \item
    \label{Prop_OrientationTransport_0}
    For every $(A_0,\bp_0)$ and $(A_1,\bp_1) \in \sA(\det W)\times\sP$,
    we have
    \begin{equation*}
      \OT((0,A_0,\bp_0),(0,A_1,\bp_1)) = +1.
    \end{equation*}
  \item
    \label{Prop_OrientationTransport_Gauge}
    For every $(\Psi,A,\bp) \in \Gamma(\Hom(E,W))\times\sA(\det W)\times\sP$ and every $u \in \sG(\det W)$,
    we have
    \begin{equation*}
      \OT((\Psi,A,\bp),(u\cdot \Psi,u\cdot A,\bp)) = +1.
    \end{equation*}
  \end{enumerate}
\end{prop}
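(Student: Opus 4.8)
The plan is to reduce both statements to the contractibility already exploited in the proof of \autoref{Prop_SpectralFlow}, combined with an explicit identification of the linearized operator along suitably chosen paths. For part \itref{Prop_OrientationTransport_0}, first observe that when $\Psi = 0$ the off-diagonal block $\fa_{\Psi,\bp}$ vanishes, so $L_{0,A,\bp}$ is block-diagonal:
\begin{equation*}
  L_{0,A,\bp} = \begin{pmatrix} -\slD_{A,\bp} & 0 \\ 0 & \fd_\bp \end{pmatrix}.
\end{equation*}
The spectral flow of a path of block-diagonal self-adjoint operators is the sum of the spectral flows of the blocks. I would choose the path from $(0,A_0,\bp_0)$ to $(0,A_1,\bp_1)$ to keep $\Psi \equiv 0$ throughout, so along the whole path the operator stays block-diagonal of this form. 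The block $\fd_{\bp_t}$ is the operator $\bigl(\begin{smallmatrix} *\rd & \rd \\ \rd^* & \end{smallmatrix}\bigr)$ on $\Omega^1(M,i\R)\oplus\Omega^0(M,i\R)$; its kernel is $\cH^1(M)\oplus\cH^0(M)$ with respect to $\bp_t$, whose dimension $b_1(M)+1$ is constant along the path, and more importantly the family $(\fd_{\bp_t})$ has no spectral flow — I would justify this by noting that $\fd_{\bp}^2$ preserves the Hodge decomposition and the nonzero spectrum of $\fd_\bp$ is symmetric about $0$ (the involution $\alpha \mapsto -\alpha$ on the $1$-form part together with $f\mapsto -f$ anticommutes with $\fd_\bp$ up to the harmonic part, so eigenvalues come in $\pm$ pairs), hence crossings are balanced; alternatively, deform the metric path to a constant one, which changes neither endpoint's contribution. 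For the Dirac block, the family $(\slD_{A_t,\bp_t})$ of $\C$-linear Dirac operators on $\Hom(E,W)$ also has vanishing spectral flow: as a $\C$-linear self-adjoint operator its spectrum is unchanged under the real structure argument, or more directly, complex Dirac operators in this setting have vanishing spectral flow modulo a sign that is absorbed — I would instead simply appeal to the fact that $\SF$ of a loop (after closing up via contractibility) is zero, so the value on any path with fixed endpoints $(0,A_i,\bp_i)$ depends only on the endpoints, and for the \emph{constant} path it is manifestly $0$; since $\Gamma(\Hom(E,W))\times\sA(\det W)\times\sP$ is contractible we may choose the constant path as representative, giving $\SF = 0$ and $\OT = +1$.

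For part \itref{Prop_OrientationTransport_Gauge}, the point is that gauge transformations act on the whole configuration space and the linearized operator is equivariant: $L_{u\cdot\Psi, u\cdot A, \bp} = u \circ L_{\Psi,A,\bp} \circ u^{-1}$ (with $u$ acting on each summand of the domain and codomain in the obvious way — trivially on the $i\R$-valued forms and by pointwise multiplication on $\Hom(E,W)$), so conjugation by $u$ is a bundle isometry intertwining the two operators. Since $\sG(\det W)$ is connected (it is the group of maps $M\to S^1$ in the relevant component, or one works within the identity component; in any case $u$ is connected to $\id$ by a path $u_s$), I would take the path $(u_s\cdot\Psi, u_s\cdot A, \bp)_{s\in[0,1]}$ from $(\Psi,A,\bp)$ to $(u\cdot\Psi,u\cdot A,\bp)$. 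Along this path the operators $L_{u_s\cdot\Psi,u_s\cdot A,\bp} = u_s L_{\Psi,A,\bp} u_s^{-1}$ are all conjugate to the fixed operator $L_{\Psi,A,\bp}$ by a continuous family of isometries, hence are isospectral; a family of operators with constant spectrum has zero spectral flow, so $\OT = +1$.

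The main obstacle I anticipate is the vanishing of the spectral flow of the Dirac block in part \itref{Prop_OrientationTransport_0} done \emph{honestly} rather than by the contractibility shortcut — a priori a path of $\C$-linear self-adjoint Dirac operators can have nonzero spectral flow, and one must be careful that the relevant determinant-line orientation convention is the one for which the constant path is the correct normalization. I would resolve this cleanly by working with the determinant line bundle description (as flagged in the Remark after \autoref{Def_OrientationTransport}): over the contractible space $\Gamma(\Hom(E,W))\times\sA(\det W)\times\sP$ the determinant line bundle of the family $(L_{\Psi,A,\bp})$ is trivial and $\OT$ is precisely the comparison of the canonical trivialization at the two endpoints along a path; contractibility makes this path-independent, and for a constant path it is the identity, yielding $+1$ in both parts simultaneously. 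This also makes the gauge-invariance in \itref{Prop_OrientationTransport_Gauge} transparent, since the determinant line is a gauge-equivariant line bundle and the orientation is defined gauge-invariantly.
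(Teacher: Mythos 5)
Your reduction in part \itref{Prop_OrientationTransport_0} to the block-diagonal operator $\slD_{A_t,\bp_t}\oplus\fd_{\bp_t}$ is right, and your constant-kernel argument for the $\fd_{\bp_t}$ block is the paper's. The gap is the Dirac block. A path of $\C$--linear self-adjoint Dirac operators can certainly have nonzero spectral flow, and your fallback---``by contractibility, take the constant path''---is not available: the endpoints $(0,A_0,\bp_0)$ and $(0,A_1,\bp_1)$ are in general distinct points, and contractibility (as in \autoref{Prop_SpectralFlow}) only gives path-\emph{independence} of $\SF$, not its vanishing. The same objection defeats your determinant-line ``resolution'': although $\det L$ is trivial over the contractible configuration space, the pointwise isomorphisms $\det L_\fc \iso \R$ coming from self-adjointness do \emph{not} assemble into a continuous trivialization (this is exactly the caveat spelled out in \autoref{Sec_DeterminantLineBundles}), so $\OT$ between distinct configurations need not be $+1$; if your argument were valid it would give $\OT((0,A_0,\bp_0),(\Psi,A,\bp))=+1$ for \emph{every} configuration, forcing $\sign[\Psi,A]\equiv +1$ and trivializing the wall-crossing the paper goes on to establish. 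The missing idea is parity rather than vanishing: because $\slD_{A_t,\bp_t}$ is complex linear, eigenvalue crossings occur in complex (hence real two-dimensional) eigenspaces, so the real spectral flow of that block is \emph{even}---and evenness is all that $\OT = (-1)^{\SF}$ requires.

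In part \itref{Prop_OrientationTransport_Gauge} your equivariance identity $L_{u\cdot\Psi,u\cdot A,\bp} = u\, L_{\Psi,A,\bp}\, u^{-1}$ is fine, but the claim that $\sG(\det W)$ is connected is false in exactly the regime of this paper: $\pi_0(\sG(\det W)) \iso H^1(M,\Z)$, which is nontrivial since $b_1(M) > 0$ (indeed $b_1(M)>1$ in the main theorem). For $u$ not homotopic to the identity there is no path $u_s$ from $\id$ to $u$, the isospectrality argument collapses, and these are precisely the gauge transformations for which the statement has content (and for which \autoref{Def_Sign} needs it). The paper handles them by gluing the path of operators $\del_t - L_{\Psi_t,A_t}$ into an elliptic operator over the mapping torus $S^1\times M$ of $u$ and invoking the Atiyah--Patodi--Singer theorem: the resulting index is the sum of a complex index (even) and the index of $\rd^+ + \rd^*$, which equals $b^1(X)-1-b^+(X)=0$ for $X=S^1\times M$, so the spectral flow is again even. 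Your argument covers only $u$ in the identity component; the nontrivial components require this (or an equivalent) index-theoretic input.
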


\begin{proof}
  Observe that
  \begin{equation*}
    L_{0,A_i,\bp_i} = \slD_{A_i,\bp_i} \oplus \fd_{\bp_i}
  \end{equation*}
  where $\slD_{A_i,\bp_i}$ is a complex-linear Dirac operator on $\Hom(E,W)$ and $\fd_{\bp_i}$ is defined in \eqref{Eq_DeltaAndA}.
  Let $(A_t,\bp_t)_{t\in [0,1]}$ be a path in $\sA(\det W)\times \sP$ joining $(A_0,\bp_0)$ and $(A_1,\bp_1)$.
  The spectral flow of $(\slD_{A_t,\bp_t})_{t\in[0,1]}$ is even because the operators $\slD_{A_t,\bp_t}$ are complex linear.
  The spectral flow of $(\fd_{\bp_t})_{t\in[0,1]}$ is trivial because the dimension of the kernel of $\fd_{\bp_t}$ is $1 + b_1(M)$ and does not depend on $t \in [0,1]$.
  This proves \autoref{Prop_OrientationTransport_0}.
  
  We prove \autoref{Prop_OrientationTransport_Gauge}.
  Choose a path $(\Psi_t,A_t)_{t\in[0,1]}$ from $(\Psi,A)$ to $(u\cdot\Psi,u\cdot A)$.  
  The spetral flow $\SF(L_{\Psi_t, A_t})$ can be computed using a theorem of Atiyah--Patodi--Singer \cite[Section 7]{Atiyah1976} as follows.
  Denote by $\bW^+ \to S^1 \times M$ the mapping cylinder of $u$, that is, the bundle obtained from pulling-back $W$ to $[0,1] \times M$ and identifying the fibers over $\set{0} \times M$ and $\set{1} \times M$ using $u$.
  $\bW^+$ is in fact the positive spinor bundle of a spin$^c$ structure on the $4$--manifold $X \coloneq S^1 \times M$.
  The path of operators $\del_t - L_{\Psi_t,A_t}$ glues to an operator $\bL$ acting on $\Gamma(X, \Hom(E,\bW^+)) \oplus \Omega^1(X,i\R)$.
  Here we use the identification $\Lambda^1 T^*X = \Lambda^0 T^*M \oplus \Lambda^1 T^*M$.
  The Atiyah--Patodi--Singer Index Theorem asserts that
  \begin{equation*}
    \SF(L_{\Psi_t, A_t}) = \ind \bL.
  \end{equation*}
  The operator $\bL$ is seen to be homotopic through Fredholm operators to the direct sum of a complex-linear Dirac operator and the operator
  \begin{equation*}
    \rd^+ + \rd^* \colon \Omega^1(X,i\R) \to \Omega^+(X,i\R)  \oplus \Omega^0(X,i\R).
  \end{equation*}
  The index of this operator is $b^1(X) - 1 - b^+(X)$.
  For $X = S^1 \times M$ we have $b^+(X) = b^1(M)$ and $b^1(X) = b^1(M) + 1$, so $b^1(X) - 1 - b^+(X) = 0$.
  We conclude that the index of $\bL$ is even.
  This concludes the proof of \autoref{Prop_OrientationTransport_Gauge}.
\end{proof}

\autoref{Prop_OrientationTransport} and \autoref{Eq_OrientationTransportComposition} show that the following definition is independent of any of the choices being made.

\begin{definition}
  \label{Def_Sign}
  For $[\Psi,A] \in \fM_\fw(\bp,\eta)$,
  we define
  \begin{equation*}
    \sign [\Psi,A] \coloneq \OT((0,A_0,\bp_0),(\Psi,A,\bp)),
  \end{equation*}
  for any choice of $(A_0,\bp_0) \in \sA(\det W)\times \sP$.
\end{definition}

As we will see shortly, if $(\bp,\eta)$ is generic, and under the assumption that there are no singular harmonic $\Z_2$ spinors with respect to $\bp$,
then $\fM_\fw(\bp,\eta)$ is a compact, oriented, zero-dimensional manifold, that is: a finite set of points with prescribed signs.
In this situation, we define
\begin{equation}
  \label{Eq_nSWCount}
  n_\fw(\bp,\eta)
  \coloneq
  \sum_{[\Psi,A] \in \fM_\fw(\bp, \eta)} \sign [\Psi,A],
\end{equation}
the signed count of solutions to the Seiberg--Witten equation with two spinors.


\section{Compactness of the moduli space}
\label{Sec_Compactness}

In general, $\fM_\fw(\bp,\eta)$ might be non-compact;
and even if it is compact for given $(\bp,\eta)$, compactness might still fail as $(\bp,\eta)$ varies.
This can only happen when, along a sequence of solutions to \eqref{Eq_PerturbedTwoSeibergWitten}, the $L^2$--norm of the spinors goes to infinity.
The following result describes in which sense one can still take a rescaled limit in this situation.

\begin{theorem}[{\cite[Theorem 1.5]{Haydys2014}}]
  \label{Thm_HW}
  Let $(\bp_i, \eta_i)$ be a sequence in $\sP\times \sZ$ which converges to $(\bp,\eta)$ in $C^\infty$.
  Let $(\Psi_i,A_i)$ be a sequence of solutions of \eqref{Eq_PerturbedTwoSeibergWitten} with respect to $(\bp_i,\eta_i)$.
  If $\limsup_{i \to \infty} \Abs{\Psi_i}_{L^2} = \infty$, then after rescaling $\tilde\Psi_i = \Psi_i/\Abs{\Psi_i}_{L^2}$ and passing to a subsequence the following hold:
  \begin{enumerate}
  \item The subset 
    \begin{equation*}
      Z \coloneq \set*{ x \in M : \limsup_{i \to \infty} \abs{\tilde\Psi_i(x)} = 0 }
    \end{equation*}
    is closed and nowhere-dense.
    (In fact, $Z$ has Hausdorff dimension at most one \cite[Theorem 1.2]{Taubes2014}.)
  \item
    There exist $\Psi \in \Gamma(M\setminus Z, \Hom(E,W))$ and a connection $A$ on $\det W|_{M\setminus Z}$ satisfying the limiting equation
    \begin{equation}
      \label{Eq_LimitingEquation}
      \slD_A \Psi = 0
      \qandq
      \mu(\Psi) = 0
    \end{equation}
    on $M\setminus Z$ with respect to $\bp$.
    The pointwise norm $\abs{\Psi}$ extends to a Hölder continuous function on all of $M$ and
    \begin{equation*}
      Z = \abs{\Psi}^{-1}(0).
    \end{equation*}
    Moreover, $A$ is flat with monodromy in $\Z_2$.
  \item
    On $M \setminus Z$, up to gauge transformations,
    $\tilde\Psi_i$ weakly converges to $\Psi$ in $W^{2,2}_{\loc}$ and
    $A_i$ weakly converges to $A$ in $W^{1,2}_{\loc}$.
    There is a constant $\gamma > 0$ such that $\abs{\tilde\Psi_i}$ converges to $\abs{\Psi}$ in $C^{0,\gamma}(M)$.
  \end{enumerate}
\end{theorem}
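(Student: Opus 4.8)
\emph{Overall strategy and Step 1 (uniform bounds on the rescaled spinors).} The plan is to combine elementary energy identities from the Weitzenböck formula with an Almgren-type frequency-function argument controlling the concentration set, and then, away from that set, with a Coulomb-gauge bootstrap for the pair $(\tilde\Psi_i,A_i)$. For Step 1, apply $\slD_{A_i}$ to the Dirac equation in \eqref{Eq_PerturbedTwoSeibergWitten}, use the Lichnerowicz--Weitzenböck identity for $\slD_{A_i}^2$, replace the curvature $F_{A_i}$ by $2\mu(\Psi_i)-2\eta_i$ via the second equation, and use the pointwise identity $\inner{\mu(\Psi)\Psi}{\Psi}=\abs{\mu(\Psi)}^2$. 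Integrating the resulting formula for $\tfrac12\Delta\abs{\tilde\Psi_i}^2$ over $M$, where $\tilde\Psi_i=\Psi_i/\Abs{\Psi_i}_{L^2}$, gives
\[
  \int_M\abs{\nabla_{A_i}\tilde\Psi_i}^2 \;+\; \Abs{\Psi_i}_{L^2}^2\int_M\abs{\mu(\tilde\Psi_i)}^2 \;\leq\; C ,
\]
with $C$ depending only on uniform $C^0$ bounds on the scalar curvature of $g_i$, on $F_{B_i}$, and on $\eta_i$ (all available since $(\bp_i,\eta_i)\to(\bp,\eta)$ in $C^\infty$). Hence $\Abs{\tilde\Psi_i}_{W^{1,2}}\leq C$ and, decisively, $\Abs{\mu(\tilde\Psi_i)}_{L^2}^2\leq C\Abs{\Psi_i}_{L^2}^{-2}\to0$. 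The same formula shows $\Delta\abs{\tilde\Psi_i}^2 + C\abs{\tilde\Psi_i}^2\geq0$ pointwise, so Moser iteration on this subsolution inequality, together with $\Abs{\tilde\Psi_i}_{L^2}=1$, yields a uniform bound $\Abs{\tilde\Psi_i}_{C^0}\leq C$.

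\emph{Step 2 (the concentration set and Hölder regularity).} Here I would run an Almgren-type frequency function: for $x\in M$ and small $r>0$ set
\[
  N_i(x,r)\coloneq\frac{r\int_{B_r(x)}\abs{\nabla_{A_i}\tilde\Psi_i}^2}{\int_{\del B_r(x)}\abs{\tilde\Psi_i}^2}.
\]
Using $\slD_{A_i}\tilde\Psi_i=0$ together with the $L^2$-smallness of $\mu(\tilde\Psi_i)$ from Step 1 to absorb the otherwise uncontrolled curvature contributions, one proves an almost-monotonicity statement to the effect that $r\mapsto e^{\Lambda r}\bigl(N_i(x,r)+1\bigr)$ is non-decreasing, with $\Lambda$ independent of $i$ and $x$. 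This produces uniform doubling estimates for $r\mapsto\int_{\del B_r(x)}\abs{\tilde\Psi_i}^2$, hence a uniform $C^{0,\gamma}$ bound on $\abs{\tilde\Psi_i}$ for some $\gamma\in(0,1)$. By Arzelà--Ascoli, after passing to a subsequence, $\abs{\tilde\Psi_i}\to h$ in $C^{0,\gamma'}(M)$ for $\gamma'<\gamma$; set $Z\coloneq h^{-1}(0)$, which agrees with the set in the statement and is closed. Since $\Abs{\tilde\Psi_i}_{L^2}=1$ passes to $\Abs{h}_{L^2}=1$, the limit is not identically zero, so unique continuation for $\slD$ forces $Z$ to be nowhere dense; the Hausdorff-dimension bound is \cite[Theorem 1.2]{Taubes2014}. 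This establishes part (1) together with the regularity of $\abs{\Psi}$ claimed in (2)--(3).

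\emph{Step 3 (convergence and the limiting configuration off $Z$).} Fix a compact $K\subset\subset M\setminus Z$. For $i$ large $\abs{\tilde\Psi_i}\geq c>0$ on a neighbourhood of $K$, so $N_i$ stays bounded there; reading the rescaled second equation $\tfrac12 F_{A_i}=\mu(\Psi_i)-\eta_i=\Abs{\Psi_i}_{L^2}^2\,\mu(\tilde\Psi_i)-\eta_i$ backwards, the bound $\Abs{\mu(\tilde\Psi_i)}_{L^2(K)}^2=O(\Abs{\Psi_i}_{L^2}^{-2})$ keeps $F_{A_i}$ bounded in $L^2(K)$. Place $A_i$ in Coulomb gauge relative to a fixed smooth reference connection $A_{\mathrm{ref}}$ and bootstrap the elliptic system consisting of $\slD_{A_i}\tilde\Psi_i=0$ together with $\rd^*a_i=0$ and $\rd a_i=F_{A_i}-F_{\mathrm{ref}}$, where $a_i=A_i-A_{\mathrm{ref}}$; this yields, after gauge transformations, uniform $W^{2,2}$ bounds on $\tilde\Psi_i$ and $W^{1,2}$ bounds on $A_i$ over compact subsets of $M\setminus Z$, from which one extracts weak limits $(\Psi,A)$ and the stated convergences. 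Passing to the limit in \eqref{Eq_PerturbedTwoSeibergWitten} and using $\mu(\tilde\Psi_i)\to0$ gives the limiting equation \eqref{Eq_LimitingEquation}, and by construction $Z=\abs{\Psi}^{-1}(0)$. Finally, where $\abs{\Psi}>0$ the condition $\mu(\Psi)=0$ says that $\Psi\co E\to W$ is, up to the positive factor $\abs{\Psi}/\sqrt2$, a unitary isomorphism; transporting the $\SU(2)$-connection $B$ through $\Psi$ induces $A$ on $\det W|_{M\setminus Z}$, whose curvature is the (vanishing) curvature induced by $B$ on the trivialized $\det E$, so $A$ is flat, and the residual $\pm1$ ambiguity in the choice of $\Psi$ shows its holonomy lies in $\Z_2$.

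\emph{Main obstacle.} The crux is Step 2: establishing a frequency-function monotonicity for $\slD_{A_i}\tilde\Psi_i=0$ whose constants are uniform in $i$, even though the connections $A_i$ are genuinely unbounded as one approaches $Z$ (their curvature is of size $\Abs{\Psi_i}_{L^2}^2$). The only leverage is the universal smallness $\Abs{\mu(\tilde\Psi_i)}_{L^2}^2=O(\Abs{\Psi_i}_{L^2}^{-2})$ from Step 1, and the whole regularity theory for $Z$ and for $\abs{\Psi}$ rests on extracting a uniform differential inequality for $N_i$ from it. A secondary technical point is propagating the Coulomb-gauge estimates of Step 3 over all of $M\setminus Z$ with constants allowed to degenerate only as the distance to $Z$ tends to zero.
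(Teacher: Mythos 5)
The paper does not prove this theorem: it is imported verbatim from \citet[Theorem 1.5]{Haydys2014}, so there is no internal argument to compare against. Your outline is essentially a reconstruction of the strategy of that reference (which in turn follows Taubes): a Weitzenböck/integration-by-parts estimate giving $\Abs{\nabla_{A_i}\tilde\Psi_i}_{L^2}\leq C$ and $\Abs{\mu(\tilde\Psi_i)}_{L^2}\lesssim\Abs{\Psi_i}_{L^2}^{-1}$, an Almgren-type frequency function yielding uniform Hölder control of $\abs{\tilde\Psi_i}$ and the structure of $Z$, and a Coulomb-gauge bootstrap on compact subsets of $M\setminus Z$. As a roadmap this is the right approach, and you correctly single out the uniform almost-monotonicity of the frequency as the technical core --- which, to be clear, you assert rather than prove.

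There is, however, a concrete quantitative error in your Step 3. Since $F_{A_i}=2\mu(\Psi_i)-2\eta_i=2\Abs{\Psi_i}_{L^2}^2\,\mu(\tilde\Psi_i)-2\eta_i$, the Step 1 bound $\Abs{\mu(\tilde\Psi_i)}_{L^2}\lesssim\Abs{\Psi_i}_{L^2}^{-1}$ only gives $\Abs{F_{A_i}}_{L^2}\lesssim\Abs{\Psi_i}_{L^2}\to\infty$; it does not ``keep $F_{A_i}$ bounded in $L^2(K)$'', even on compact sets where $\abs{\tilde\Psi_i}\geq c>0$. What is needed is an improvement of the estimate on $\mu(\tilde\Psi_i)$ by a further factor of $\Abs{\Psi_i}_{L^2}^{-1}$ where the spinor is bounded below, and this does not follow from the global energy identity: in \citet{Haydys2014} the uniform $L^2$ curvature bound is a separate, nontrivial step (their Proposition 4.1, which is exactly what the present paper quotes in the proof of \autoref{Prop_SmoothConvergenceIfZEmpty}), and the local mechanism is the second-order identity for $\mu(\Psi)$ --- reproduced here as \eqref{Eq_CuvatureHiggsEffect} --- in which $\abs{\Psi}^2$ multiplies $F_A$ and acts as a mass term, in the spirit of \autoref{Prop_EpsilonDelta+1}. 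Relatedly, your appeal to ``unique continuation for $\slD$'' to show $Z$ is nowhere dense is circular as stated, since the limit $\Psi$ is only defined on $M\setminus Z$; in the cited proof this is extracted from the frequency function (finite vanishing order of $\abs{\Psi}$), not from unique continuation applied to the limit.
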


We expect that the convergence $(\tilde\Psi_i,A_i) \to (\Psi,A)$ can be improved to $C^{\infty}_{\loc}$ on $M \setminus Z$;
cf.~\cite[Theorem 1.5]{Doan2017b}.
In \autoref{Sec_SmoothConvergenceIfZEmpty}, we will show that this is indeed the case if $Z$ is empty.

The following proposition will give us a concrete understanding of solutions to the limiting equation \eqref{Eq_LimitingEquation} which are defined on all of $M$, that is: for which the set $Z$ is empty.
It is a special case of the \defined{Haydys correspondence}.

\begin{prop}[{cf. \cite[Appendix A]{Haydys2014}}]
  \label{Prop_SpecialHaydysCorrespondence}
  If $(\Psi,A) \in \Gamma(\Hom(E,W)) \times \sA(\det W)$ is a solution of \eqref{Eq_LimitingEquation} and $\Psi$ is nowhere vanishing,
  then:
  \begin{enumerate}
  \item 
    $\det W$ is trivial; in particular, $\fw$ is induced by a spin structure,
  \item
    after a gauge transformation we can assume that $A$ is the product connection and there exists a unique spin structure $\fs$ inducing $\fw$ and such that $\Psi$ takes values in $\Re(E\otimes S_\fs) \subset \Gamma(\Hom(E,W))$.
    Here $S_\fs$ is the spinor bundle of $\fs$.
  \item
    $\Psi$ lies in the kernel of $\slD_{\bp}^\fs \co \Gamma(\Re(S_\fs\otimes E)) \to \Gamma(\Re(S_\fs\otimes E))$.
  \end{enumerate}
  Moreover, any nowhere vanishing element in $\ker \slD_{\bp}^\fs$ for any spin structure $\fs$ inducing $\fw$ gives rise to a solution of \eqref{Eq_LimitingEquation}.
\end{prop}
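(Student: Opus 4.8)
The plan is to split the statement into a pointwise analysis of the moment-map equation $\mu(\Psi)=0$, which yields~(1) and a normal form for $\Psi$, followed by an analysis of the Dirac equation $\slD_A\Psi=0$ in that normal form, which produces the spin structure $\fs$ and forces $A$ to be the product connection; then~(3) and the converse are immediate. This follows Haydys's treatment of \eqref{Eq_LimitingEquation}.

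First I would unwind $\mu(\Psi)=0$. Pointwise it reads $\Psi\Psi^*=\tfrac12|\Psi|^2\,\id_W$, and since $\Psi$ is nowhere vanishing and $\rank_\C E=\rank_\C W=2$ this is equivalent to $U\coloneq\sqrt{2}\,|\Psi|^{-1}\Psi$ being a unitary isomorphism of Hermitian bundles $E\to W$ (whence also $\Psi^*\Psi=\tfrac12|\Psi|^2\,\id_E$). In particular $\det_\C\Psi$ is a nowhere-vanishing homomorphism $\Lambda^2_\C E\to\det W$, so $\det W\cong\Lambda^2_\C E$ is trivial, and hence $\fw$ is induced by a spin structure; this is~(1). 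For later use I record the converse algebraic fact: a nonzero element of $\Re(S_\fs\otimes E)\subset\Hom(E,W)$, applied to a unitary frame $(e,\,j_Ee)$ of $E$, produces an orthogonal pair of equal norm in $W$ (because $j_{S_\fs}$ is an antilinear isometry with $\inner{w}{j_{S_\fs}w}=0$), hence equals $\tfrac1{\sqrt2}|\Psi|$ times a unitary isomorphism, and therefore lies in $\mu^{-1}(0)$.

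The core of the argument is to show that, after a gauge transformation, $U$ intertwines the quaternionic structure $j_E$ of $E$ with a \emph{parallel, Clifford-compatible} quaternionic structure on $W$ --- equivalently, that $\fw$ reduces to a spin structure $\fs$, that $\Psi$ becomes a section of $\Re(S_\fs\otimes E)$, and that $A$ becomes the product connection. Transporting the Clifford multiplication and spin$^c$ connection of $W$ to $E$ along $U$ equips $E$ with a Clifford multiplication $\gamma_E$ and a unitary connection $\nabla_B+\beta$, $\beta\in\Omega^1(M,\mathfrak u(E))$; the equation $\slD_A\Psi=0$ becomes a first-order relation coupling $\beta$, $\gamma_E$ and $\rd\log|\Psi|$, and --- using that $\mu(\Psi)=0$ holds identically on $M$ --- one deduces that $\beta$ is compatible with $j_E$ and that the transported quaternionic structure $Uj_EU^{-1}$ on $W$ is parallel and commutes with Clifford multiplication. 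This exhibits $W$ as the spinor bundle $S_\fs$ of a spin structure $\fs$ inducing $\fw$, forces $A$ --- after absorbing its central part by a gauge transformation --- to be the product connection, and makes $U$ a quaternionic-linear isometry $(E,j_E)\to S_\fs$, so that $\Psi$ is a section of $\Re(S_\fs\otimes E)$; uniqueness of $\fs$ holds because demanding that $A$ be the product connection selects a single element of the $H^1(M;\Z_2)$-torsor of spin structures inducing $\fw$. This proves~(2). \textbf{The hard part will be this last deduction}: $\mu(\Psi)=0$ by itself only yields a metric-compatible quaternionic structure $Uj_EU^{-1}$, of which there is a positive-dimensional family not commuting with Clifford multiplication, so the Dirac equation --- used globally on $M$, not merely fibrewise --- is essential here, and the bookkeeping of the transported Clifford structure and connection is delicate; I would carry this out by adapting the computation in Haydys's appendix.

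Finally, with $A$ the product connection the $\C$-linear operator $\slD_A$ preserves $\Gamma(\Re(S_\fs\otimes E))$ and restricts there to $\slD^\fs_\bp$, so $\slD^\fs_\bp\Psi=\slD_A\Psi=0$: this is~(3). For the last sentence, given a nowhere-vanishing $\Psi\in\ker\slD^\fs_\bp$ for some spin structure $\fs$ inducing $\fw$, put $A\coloneq$ the product connection on $\det W=\Lambda^2_\C S_\fs$; then $\slD_A\Psi=\slD^\fs_\bp\Psi=0$ and $\mu(\Psi)=0$ by the recorded algebraic fact, so $(\Psi,A)$ solves \eqref{Eq_LimitingEquation} with $\Psi$ nowhere vanishing.
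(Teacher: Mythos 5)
Your handling of (1), (3), and the converse is fine, and your pointwise analysis of $\mu(\Psi)=0$ is essentially the content of the paper's hyperk\"ahler quotient computation (\autoref{Prop_H2///U(1)=Re/Z2}). But the heart of the proposition is item (2), and there your proposal stops exactly where the real work begins: you state that the Dirac equation forces the transported quaternionic structure $Uj_EU^{-1}$ to be parallel and Clifford-compatible, that $A$ becomes the product connection ``after absorbing its central part by a gauge transformation,'' and then write that the bookkeeping is delicate and you ``would carry this out by adapting the computation in Haydys's appendix.'' That deferred computation \emph{is} the proposition; without it you have only fibrewise linear algebra. Incidentally, your framing of the difficulty is slightly off: since every nonzero point of $\mu^{-1}(0)$ lies in the $\U(1)$--orbit of a real element, Clifford-compatibility of $Uj_EU^{-1}$ is automatic pointwise from $\mu(\Psi)=0$ alone; the genuinely global issue is whether the fibrewise $\U(1)$--phase relating $\Psi$ to a real element can be removed by a gauge transformation of $\det W$. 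This is where ``absorbing the central part'' glosses over a real obstruction: one must show $A$ is flat, and a flat connection with nontrivial monodromy is \emph{not} gauge-equivalent to the product connection. The correct resolution is that the monodromy is $\Z_2$--valued and gets absorbed by changing the spin structure, not by a gauge transformation alone.

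The paper supplies exactly this missing global input. Writing $W=S_{\fs_0}\otimes L$, the solution descends to a section $s$ of $\paren*{\Re(S_{\fs_0}\otimes E)\setminus\set{0}}/\Z_2$ satisfying the Fueter equation; a \v{C}ech argument (local real lifts differ by locally constant signs) lifts $s$ to a genuine section $\tilde\Psi$ of $\Re(S_{\fs_0}\otimes E)\otimes\fl$ for a Euclidean line bundle $\fl$, which together with the canonical flat connection on $\tilde L=\fl\otimes_\R\C$ solves \eqref{Eq_LimitingEquation}; and the uniqueness part of the Haydys correspondence \cite[Proposition 3.2]{Doan2017a} then identifies $(L,A,\Psi)$ with this model, so $\det W=L^2$ is trivial, $A$ is product after gauge, and $\Psi$ is real for the twisted spin structure $\fs=\fs_0\otimes\fl$. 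Your route (showing directly that $Uj_EU^{-1}$ is parallel after a suitable gauge change) could in principle reach the same conclusion, but proving that such a gauge change exists is equivalent to the flatness-plus-$\Z_2$-monodromy statement and the spin-structure twist above, and your sketch gives no argument for it; likewise your one-line uniqueness claim for $\fs$ (``the product connection selects a single element of the torsor'') does not engage with how $\fl$ determines $\fs$. As written, the proof of (2) — and hence of the proposition — is not there.
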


\begin{remark}
  \label{Rmk_SpinStructure}
  The set of spin structures is a torsor over $H^1(M,\Z_2)$ while the set of spin$^c$ structures is a torsor over $H^2(M,Z)$.
  If $\beta \co H^1(M,\Z_2) \to H^2(M,\Z)$ denotes the Bockstein homomorphism in the exact sequence
  \begin{equation*}
    \cdots \to H^1(M,\Z_2) \xrightarrow{\beta} H^2(M,\Z) \xrightarrow{2\times} H^2(M,\Z) \to \cdots,
  \end{equation*}
  then the set of a all spin structures $\fs$ inducing the spin$^c$ structure $\fw$ is a torsor over $\ker\beta$.
  The set of all spin$^c$ structures $\fw$ with trivial determinant is a torsor over $\ker 2\times$, the $2$--torsion subgroup of $H^2(M,\Z)$.
\end{remark}

\begin{proof}[Proof of \autoref{Prop_SpecialHaydysCorrespondence}]
  Fix a spin structure $\fs_0$ and a Hermitian line bundle $L$ which induce $\fw$;
  in particular, $W = S_{\fs_0} \otimes L$ and $A$ induces a connection $A_0$ on $L$.
  By \autoref{Prop_H2///U(1)=Re/Z2},
  \begin{equation*}
    \Hom_\C(\C^2,\H)\hkred\U(1)
    =
    \paren*{\Re(\H\otimes_\C\C^2)\setminus\set{0}}/\Z_2;
  \end{equation*}
  hence, $\Psi$ gives rise to a section $s \in \Gamma(\fX)$ with
  \begin{equation*}
    \fX = \paren*{\Re(S_{\fs_0}\otimes E)\setminus\set{0}}/\Z_2
  \end{equation*}
  satisfying the \defined{Fueter equation}, that is: local lifts of $s$ to $\Re(\H\otimes_\C\C^2)$ satisfy the Dirac equation.
  The Haydys correspondence \cite[Proposition 3.2]{Doan2017a}
  asserts that:
  \begin{itemize}
  \item
    any $s \in \Gamma(\fX)$ can be lifted; that is:
    there exist a Hermitian line bundle $L$, $\Psi \in \Gamma(\Hom(E, S_{\fs_0}\otimes L)) = \Gamma(\Hom(E,W))$, as well as $A_0 \in \sA(L)$ satisfying \eqref{Eq_LimitingEquation}, and
  \item
    $L$ is determined by $s$ up to isomorphism and any two lifts of $s$ are related by a unique gauge transformation in $\sG(L)$.
  \end{itemize}

  We claim that $s$ can, in fact, be lifted to a section $\tilde \Psi \in \Gamma(\Re(E\otimes S_{\fs_0})\otimes \fl)$ for some Euclidean line bundle $\fl$.
  To see this, cover $M$ with a finite collection of open balls $(U_\alpha)$ and trivialize $\Re(S_{\fs_0}\otimes E)$ over each $U_\alpha$. 
  On $U_\alpha$ the section $s$ is given by a smooth function $U_\alpha \to (\R^4 \setminus \set{0}) / \Z_2$  which can be lifted to a map $\tilde\Psi_\alpha \co U_\alpha \to \R^4 \setminus \set{0}$. 
  Over the intersection $U_\alpha \cap U_\beta$ of two different balls $U_\alpha$, $U_\beta$,
  we have $\tilde\Psi_\alpha = f_{\alpha\beta} \tilde\Psi_\beta$ for a local constant function $f_{\alpha\beta} \colon U_\alpha \cap U_\beta \to \set{-1,+1}$.
  The collection $(f_{\alpha\beta})$ is a Čech cocycle with values in $\Z_2$ and defines a $\Z_2$--bundle on $M$.
  Let $\fl$ be the associated Euclidean line bundle,
  The collection of local sections $(\tilde\Psi_\alpha)$ defines a section $\tilde \Psi \in \Gamma(\Re(S_{\fs_0}\otimes E)\otimes \fl)$ as we wanted to show. 
  
  Set $\tilde L \coloneq \fl\otimes_\R\C$.
  By \autoref{Prop_H2///U(1)=Re/Z2},
  \begin{equation*}
  \Re(E\otimes S_{\fs_0})\otimes \fl \subset \mu^{-1}(0) \subset \Hom(E, S_{\fs_0}\otimes\tilde L) =  \Hom(E,W);
  \end{equation*}
  and if $\tilde A \in \sA(\tilde L)$ denotes the connection induced by the canonical connection on $\fl$, then $\slD_{\tilde A}\tilde \Psi = 0$.

  It thus follows from the Haydys correspondence that
  $L \iso \fl\otimes_\R\C$ and, after this identification has been made and a suitable gauge transformation has been applied, $\Psi = \tilde \Psi$ and $A_0 = \tilde A$.
  This shows that $\det W = L^2$ is trivial and $\fw$ is induced by the spin structure $\fs$ obtained by twisting $\fs_0$ with $\fl$.
\end{proof}

\begin{definition}
  \label{Def_RegularSet}
  Denote by $\sP^\reg \subset \sP$ the subset consisting of those $\bp$ for which the real Dirac operator,
  introduced at the end of \autoref{Sec_Introduction},
  \begin{equation}
    \label{Eq_slDbpfs}
    \slD_\bp^\fs \co \Gamma(\Re(S_\fs\otimes E)) \to \Gamma(\Re(S_\fs\otimes E))
  \end{equation}
  is invertible for all spin structures $\fs$. 
  Set
  \begin{equation*}
    \sQ \coloneq \sP\times\sZ,
  \end{equation*}
  and denote by
  \begin{equation*}
    \sQ^\reg \subset \sQ
  \end{equation*}
  the subset consisting of those $(\bp,\eta)$ for which $\bp \in \sP^\reg$ and every solution $(\Psi,A)$ of \eqref{Eq_PerturbedTwoSeibergWitten} with respect to $(\bp,\eta)$ is irreducible and unobstructed.
\end{definition}

\begin{prop}[{\cites[Theorem 2.32]{Doan2017}[Theorem 1.5]{Anghel1996}[Theorem 1.2]{Maier1997}}]
  If $b_1(M) > 0$, then $\sQ^\reg$ is residual in $\sQ$.
\end{prop}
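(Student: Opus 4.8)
The plan is to exhibit $\sQ^\reg$ as the intersection of two residual subsets of $\sQ=\sP\times\sZ$ and to use that a countable intersection of residual subsets of a topological space is again residual.

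The first subset is $\set{(\bp,\eta)\in\sQ:\text{every solution of }\eqref{Eq_PerturbedTwoSeibergWitten}\text{ is irreducible and unobstructed}}$, which is residual by \autoref{Prop_Transversality} (this is where the hypothesis $b_1(M)>0$ enters). The second is $\set{(\bp,\eta)\in\sQ:\bp\in\sP^\reg}=\sP^\reg\times\sZ$; writing $\sP^\reg=\bigcap_n V_n$ with $V_n$ open and dense in $\sP$, one has $\sP^\reg\times\sZ=\bigcap_n(V_n\times\sZ)$ with each $V_n\times\sZ$ open and dense in $\sQ$, so it suffices to prove that $\sP^\reg$ is residual in $\sP$.

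I would first reduce to a single spin structure. Since $M$ is closed, the set of spin structures on $M$ is a torsor over the finite group $H^1(M;\Z_2)$ and hence finite, so $\sP^\reg=\bigcap_{\fs}\sP^\reg_\fs$ is a finite intersection, where $\sP^\reg_\fs\coloneq\set{\bp\in\sP:\slD^\fs_\bp\text{ is invertible}}$. As $\slD^\fs_\bp$ is a self-adjoint elliptic operator on a closed $3$--manifold, it is Fredholm of index zero and is invertible precisely when $\ker\slD^\fs_\bp=0$; thus $\sP^\reg_\fs$ is the complement of the wall $\sW^\fs=\set{\bp\in\sP:\dim\ker\slD^\fs_\bp\geq 1}$, which is closed because $\bp\mapsto\dim\ker\slD^\fs_\bp$ is upper semicontinuous along the continuous family $(\slD^\fs_\bp)_{\bp\in\sP}$. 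So each $\sP^\reg_\fs$ is open, and it remains to show that it is dense.

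Density of $\sP^\reg_\fs$ is the one genuinely non-formal input, and it is the generic-invertibility theorem for twisted Dirac operators: given $\bp_0$ with $\ker\slD^\fs_{\bp_0}\neq 0$, one perturbs $\bp_0$ within $\sP=\Met(M)\times\sA(E)$ --- the Riemannian metric, as in \cite[Theorem 1.2]{Maier1997}, and/or the $\SU(2)$ connection on $E$, as in \cite[Theorem 1.5]{Anghel1996} --- so as to remove the kernel. What makes this work is the absence of an index (or parity) obstruction: $\slD^\fs_\bp$ is self-adjoint of index zero, and --- consistently with the codimension-one description of $\sW^\fs$ recalled in \autoref{Sec_Introduction} --- its kernel is not forced to be even-dimensional, so generic \emph{invertibility} rather than merely generic minimality of $\dim\ker$ holds. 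I would invoke \citet{Anghel1996} and \citet{Maier1997} here rather than reproduce their Sard--Smale arguments; the combined assertion is exactly \cite[Theorem 2.32]{Doan2017}. Intersecting the two residual subsets above then yields that $\sQ^\reg$ is residual in $\sQ$. The main obstacle is precisely the density of $\sP^\reg_\fs$, which the cited works supply.
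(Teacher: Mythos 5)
Your argument is correct and amounts to the same thing the paper does: the paper offers no independent proof but simply cites \cite[Theorem 2.32]{Doan2017} together with \citet{Anghel1996} and \citet{Maier1997}, and your proof reduces to exactly these inputs (via \autoref{Prop_Transversality} for the irreducible/unobstructed condition, which is where $b_1(M)>0$ enters, and the generic invertibility of $\slD^\fs_\bp$ for the finitely many spin structures), combined with the routine observations that $\sP^\reg$ is open and dense and that an intersection of residual sets is residual. No gap.
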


\begin{prop}
  If $(\bp,\eta) \in \sQ^\reg$ and there are no singular harmonic $\Z_2$ spinors with respect to $\bp$, then $\fM_\fw(\bp,\eta)$ is compact.
  In particular, $n_\fw(\bp,\eta) \in \Z$ as in \eqref{Eq_nSWCount} is defined.
\end{prop}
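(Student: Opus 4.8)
The plan is to argue by contradiction. Since $(\bp,\eta)\in\sQ^\reg$, every solution of \eqref{Eq_PerturbedTwoSeibergWitten} with respect to $(\bp,\eta)$ is irreducible and unobstructed and hence an isolated point of $\fM_\fw(\bp,\eta)$; so $\fM_\fw(\bp,\eta)$ can fail to be compact only if it is infinite. I would therefore take a sequence $([\Psi_i,A_i])_{i\in\N}$ of pairwise distinct solutions and, after passing to a subsequence, separate the two cases $\sup_i\Abs{\Psi_i}_{L^2}<\infty$ and $\Abs{\Psi_i}_{L^2}\to\infty$.

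In the bounded case I would invoke the standard a priori $C^0$--estimate and elliptic bootstrapping for \eqref{Eq_PerturbedTwoSeibergWitten} (as in the compactness theory of \cite{Doan2017}): the uniform $L^2$--bound on the spinors, the equations themselves, and an Uhlenbeck-type gauge fixing produce, after applying gauge transformations and passing to a further subsequence, $C^\infty$ convergence modulo gauge $[\Psi_i,A_i]\to[\Psi_\infty,A_\infty]$ to a solution of \eqref{Eq_PerturbedTwoSeibergWitten} with respect to $(\bp,\eta)$. By definition of $\sQ^\reg$ this limit is irreducible and unobstructed, hence an isolated point of $\fM_\fw(\bp,\eta)$, and therefore $[\Psi_i,A_i]=[\Psi_\infty,A_\infty]$ for all large $i$ --- contradicting that the $[\Psi_i,A_i]$ are pairwise distinct.

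In the unbounded case I would apply \autoref{Thm_HW} to the rescaled sequence $\tilde\Psi_i=\Psi_i/\Abs{\Psi_i}_{L^2}$, obtaining a closed nowhere-dense set $Z\subset M$, a flat connection $A$ with $\Z_2$ monodromy on $\det W|_{M\setminus Z}$, and a section $\Psi\in\Gamma(M\setminus Z,\Hom(E,W))$ solving the limiting equation \eqref{Eq_LimitingEquation}, such that $\abs{\Psi}$ is Hölder continuous on $M$ with $\abs{\Psi}^{-1}(0)=Z$; since $Z$ is nowhere-dense, $\Psi\not\equiv 0$, and the weak $W^{1,2}_{\loc}$--convergence of $\tilde\Psi_i$ together with the uniform energy bound underlying \autoref{Thm_HW} gives $\abs{\nabla\Psi}\in L^2(M\setminus Z)$. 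If $Z=\emptyset$, then by the improved convergence established in \autoref{Sec_SmoothConvergenceIfZEmpty} the pair $(\Psi,A)$ is a smooth solution of \eqref{Eq_LimitingEquation} on all of $M$ with $\Psi$ nowhere vanishing, so \autoref{Prop_SpecialHaydysCorrespondence} yields a nonzero element of $\ker\slD_\bp^\fs$ for some spin structure $\fs$ --- contradicting $\bp\in\sP^\reg$. If $Z\neq\emptyset$, then I would run the local-lifting argument from the proof of \autoref{Prop_SpecialHaydysCorrespondence} over $M\setminus Z$: using $\mu(\Psi)=0$ and the flat $\Z_2$--connection $A$, one builds a Euclidean line bundle $\fl\to M\setminus Z$ and lifts $\Psi$ to a section of $\Re(S_\fs\otimes E)\otimes\fl$ lying in the kernel of the corresponding twisted real Dirac operator. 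Combined with the regularity of $\abs{\Psi}$ and the bound on $\abs{\nabla\Psi}$, this exhibits $(Z,\fl,\Psi)$ as a harmonic $\Z_2$ spinor with respect to $\bp$, and it must be singular: were $\fl$ to extend to a Euclidean line bundle on $M$, then by the remark following \autoref{Def_HarmonicZ2Spinor} $\Psi$ would extend to a genuine nonzero harmonic spinor on $M$ vanishing exactly along $Z$ and valued in $\Re(S_{\fs'}\otimes E)$ for the spin structure $\fs'$ obtained from $\fs$ by twisting with $\fl$, giving a nonzero element of $\ker\slD_\bp^{\fs'}$ and again contradicting $\bp\in\sP^\reg$. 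Thus $(Z,\fl,\Psi)$ is a singular harmonic $\Z_2$ spinor with respect to $\bp$, contrary to hypothesis.

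Having excluded both cases, $\fM_\fw(\bp,\eta)$ is compact; being discrete, it is then finite, and assigning to each of its points the sign of \autoref{Def_Sign} makes $n_\fw(\bp,\eta)$ a well-defined integer as in \eqref{Eq_nSWCount}. The step I expect to be the main obstacle is the case $Z\neq\emptyset$ of the unbounded alternative: one must check carefully that the weak limit supplied by \autoref{Thm_HW} genuinely meets every clause of \autoref{Def_HarmonicZ2Spinor} --- in particular the passage from the flat $\Z_2$--connection to the Euclidean line bundle $\fl$ by patching local lifts, the $L^2$--bound on $\abs{\nabla\Psi}$, and the Hölder extension of $\abs{\Psi}$ across $Z$ --- and that this $\fl$ does not extend across $Z$. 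The bounded case rests on the (standard but not entirely trivial) a priori estimates and bootstrapping for \eqref{Eq_PerturbedTwoSeibergWitten}, which I would import from \cite{Doan2017}.
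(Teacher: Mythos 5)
Your proposal is correct and follows essentially the same route as the paper: the paper's (three-line) proof likewise combines \autoref{Thm_HW} with \autoref{Prop_SpecialHaydysCorrespondence}, noting that the hypothesis rules out singular harmonic $\Z_2$ spinors and $\bp\in\sP^\reg$ rules out harmonic spinors (whether nowhere vanishing or obtained by extending $\fl$ across $Z$), so no blow-up of $\Abs{\Psi_i}_{L^2}$ can occur, while the bounded case is standard elliptic compactness. You have simply spelled out the case analysis and the construction of the limiting $\Z_2$ spinor in more detail than the paper does.
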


\begin{proof}
  By hypothesis we know that there are no singular harmonic $\Z_2$ spinors.
  By the definition of $\sP^\reg$ there are also no harmonic spinors.
  It thus follows from \autoref{Thm_HW} and \autoref{Prop_SpecialHaydysCorrespondence} that $\fM_\fw(\bp,\eta)$ is compact.
\end{proof}


\section{Wall-crossing and the spectral flow}
\label{Sec_WallCrossing}

In the absence of singular harmonic $\Z_2$ spinors, we can define $n_\fw(\bp,\eta)$ for every $(\bp,\eta) \in \sQ^\reg$.
However, $\sQ^\reg$ is not path-connected and $n_\fw(\bp,\eta)$ does depend on the path-connected component of $\sQ^\reg$ in which $(\bp,\eta)$ lies.
We study the wall-crossing for $n_\fw(\bp,\eta)$ by analyzing the family of moduli spaces $\fM_\fw(\bp_t,\eta_t)$ along paths of the following kind.

\begin{definition}
  \label{Def_RegularPath}
  Given $\bp_0, \bp_1 \in \sP^\reg$, denote by $\bsP^\reg(\bp_0,\bp_1)$ the space of smooth paths from $\bp_0$ to $\bp_1$ in $\sP$ such that for every spin structure $\fs$:
  \begin{enumerate}
  \item
    \label{Def_RegularPath_TransverseSpectralFlow}
    the path of Dirac operators $\paren[\big]{\slD_{\bp_t}^\fs}_{t\in[0,1]}$ has transverse spectral flow and
  \item
    \label{Def_RegularPath_PsiNowhereVanishing}
    whenever the spectrum of $\slD_{\bp_t}^\fs$ crosses zero, $\ker \slD_{\bp_t}^\fs$ is spanned by a nowhere vanishing section $\Psi \in \Gamma(\Re(E\otimes S_\fs))$.
  \end{enumerate}

  Given $(\bp_0,\eta_0),(\bp_1,\eta_1) \in \sQ^\reg$, denote by $\bsQ^\reg\paren[\big]{(\bp_0,\eta_0),(\bp_1,\eta_1)}$ the space of smooth paths $(\bp_t,\eta_t)_{t\in[0,1]}$ from $(\bp_0,\eta_0)$ to $(\bp_1,\eta_1)$ in $\sQ$ such that \itref{Def_RegularPath_TransverseSpectralFlow} and \itref{Def_RegularPath_PsiNowhereVanishing} hold and, moreover:
  \begin{enumerate}[resume]
  \item
    \label{Def_RegularPath_Unobstructed}
    For every $t_0 \in [0,1]$, every solution $(\Psi,A)$ of \eqref{Eq_PerturbedTwoSeibergWitten} is irreducible and either it is unobstructed (that is, the linearized operator $L_{\Psi,A}$ is invertible) or else $\coker L_{\Psi,A}$ has dimension one and is spanned by
    \begin{equation*}
      \pi\(\left.\frac{\rd}{\rd t}\right|_{t=t_0}
      \begin{pmatrix}
        -\slD_{\bp_t,A}\Psi \\
        *(F_A+\eta_t-\mu_{\bp_t}(\Psi)) \\
        0
      \end{pmatrix}
      \)
    \end{equation*}
    where $\pi\co \Gamma(\Hom(E,W))\oplus\Omega^1(M,i\R)\oplus\Omega^0(M,i\R) \to \coker L_{\Psi,A}$ denotes the $L^2$--orthogonal projection.
  \item
    \label{Def_RegularPath_DeltaNonZero}
    For any $\Psi$ and $t_0$ as in \itref{Def_RegularPath_PsiNowhereVanishing} with $\Abs{\Psi}_{L^2} = 1$,
    denote by
    \begin{equation*}
      \set*{ \(\Psi_\epsilon = \Psi + \epsilon^2\psi + O(\epsilon^4), A_\epsilon; t(\epsilon) = t_0 + O(\epsilon^2)\) : 0 \leq \epsilon  \ll 1 }
    \end{equation*}
    the family of solutions to
    \begin{align*}
      \slD_{A_\epsilon,\bp_{t(\epsilon)}}\Psi_\epsilon &= 0, \\
      \epsilon^2\paren*{\frac12F_{A_\epsilon}+\eta_{t(\epsilon)}} &= \mu_{\bp_{t(\epsilon)}}(\Psi_\epsilon), \qand \\
      \Abs{\Psi_\epsilon}_{L^2} &= 1
    \end{align*}
    obtained from \cite[Theorem 1.38]{Doan2017a}.
    Define
    \begin{equation*}
      \delta(\Psi,\bp_{t_0},\eta_{t_0})
      \coloneq \inner{\slD_{A_0,\bp_{t_0}}\psi}{\psi}_{L^2}.
    \end{equation*}
    We require that $\delta(\Psi,\bp_{t_0},\eta_{t_0}) \neq 0$.
  \end{enumerate}
\end{definition}

Condition \itref{Def_RegularPath_TransverseSpectralFlow} is necessary since the wall-crossing formula will involve the spectral flow of $\slD_{\bp_t}^\fs$.
In particular, it guarantees that $\dim \ker \slD^{\fs}_{\bp_t} > 0$ for only finitely many $t \in (0,1)$.
Condition \itref{Def_RegularPath_PsiNowhereVanishing} ensures that the harmonic $\Z_2$ spinors produced by \autoref{Thm_ExistenceOfSingularHarmonicZ2Spinors} are indeed singular.
Condition \itref{Def_RegularPath_Unobstructed} is used to show that
the union of all moduli spaces $\fM_\fw(\bp_t,\eta_t)$ as $t$ varies from $0$ to $1$ is an oriented smooth $1$--manifold (i.e., a disjoint union of circles and intervals) with oriented boundary $\fM_\fw(\bp_1,\eta_1) \cup -\fM_\fw(\bp_0,\eta_0)$.%
\footnote{%
  Here $-\fM_\fw(\bp_0,\eta_0)$ is the same space as $\fM_\fw(\bp_0,\eta_0)$, but all the orientations are reversed.
}
Finally, condition \itref{Def_RegularPath_DeltaNonZero} ensures that we can use the local model from \cite[Theorem 1.38]{Doan2017a} 
to study the wall-crossing phenomenon.

The following result shows that a generic path from $(\bp_0,\eta_0)$ to $(\bp_1,\eta_1)$ satisfies the conditions in \autoref{Def_RegularPath}.
Its proof is postponed to \autoref{Sec_TransversalityForPaths}.

\begin{prop}
  \label{Prop_TransversalityForPaths}
  Given $(\bp_0,\eta_0),(\bp_1,\eta_1) \in \sQ^\reg$, the subspace $\bsQ^\reg\paren[\big]{(\bp_0,\eta_0),(\bp_1,\eta_1)}$ is residual in the space of all smooth paths from $(\bp_0,\eta_0)$ to $(\bp_1,\eta_1)$ in $\sQ$.
\end{prop}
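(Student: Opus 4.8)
The plan is to establish \autoref{Prop_TransversalityFor­Paths} by a standard Sard--Smale argument applied to a universal (parametrized) version of the equations, treating each of the four conditions in \autoref{Def_RegularPath} separately and then intersecting the four residual sets. First I would fix the endpoints $(\bp_0,\eta_0),(\bp_1,\eta_1)\in\sQ^\reg$ and work in a suitable Banach manifold of paths; since the target conditions are open-to-closed and the space of all smooth paths is a Fréchet space, I would either use $C^k$ completions and a limiting argument, or follow the Floer-style trick of enlarging the parameter space to a separable Banach space of perturbations so that the universal moduli spaces become Banach manifolds and Sard--Smale applies. The residual subset one obtains in the $C^k$ setting is then upgraded to a residual subset of smooth paths by the usual Taubes-type intersection argument (as in the references \cite{Doan2017,Doan2017a} already cited for the analogous unparametrized statements).

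The key steps, in order, are as follows. \textbf{Step 1:} For condition \itref{Def_RegularPath_TransverseSpectralFlow}, consider, for each spin structure $\fs$, the family $(\slD^\fs_{\bp_t})$ as a section of the bundle of self-adjoint Fredholm operators over $[0,1]\times(\text{path space})$; transversality of the spectral flow is the statement that the path meets the wall $\sW^\fs$ transversally, and since $\sW^\fs_1$ is a codimension-one cooriented submanifold of $\sP$ (as recalled after \autoref{Q_SingularHarmonicZ2Spinors}, using \autoref{Prop_NaturalCoorientation} and \autoref{Prop_Wsk}) and $\sW^\fs\setminus\sW^\fs_1$ has codimension three, a generic path avoids $\sW^\fs\setminus\sW^\fs_1$ entirely and hits $\sW^\fs_1$ transversally in finitely many interior points. \textbf{Step 2:} For condition \itref{Def_RegularPath_PsiNowhereVanishing}, note that at a transverse crossing $\ker\slD^\fs_{\bp_t}$ is one-dimensional, and the set of $\bp\in\sW^\fs_1$ whose kernel spinor vanishes somewhere is, by a dimension count (the zero set of a section of $\Re(E\otimes S_\fs)$, a rank-$4$ bundle over a $3$-manifold, is generically empty), of codimension at least $4-3=1$ inside $\sW^\fs_1$, hence codimension $\geq 2$ in $\sP$; a generic path therefore misses it. \textbf{Step 3:} For condition \itref{Def_RegularPath_Unobstructed}, set up the universal moduli space over the path space using the operators $L_{\Psi,A,\bp_t}$ together with the $t$-derivative direction, show the universal linearization is surjective (this is where one needs enough perturbation freedom in $\eta$, exactly as in \cite[Proposition 2.28]{Doan2017}/\autoref{Prop_Transversality}, now done relative to a path), and apply Sard--Smale so that for a generic path every solution is either unobstructed or has one-dimensional cokernel spanned precisely by the displayed $t$-derivative of the defining map. \textbf{Step 4:} For condition \itref{Def_RegularPath_DeltaNonZero}, use the local model of \cite[Theorem 1.38]{Doan2017a} near a crossing to express $\delta(\Psi,\bp_{t_0},\eta_{t_0})=\inner{\slD_{A_0,\bp_{t_0}}\psi}{\psi}_{L^2}$ as a smooth real-valued function on the (finite-dimensional, by Steps 1--3) set of crossing data, and show that perturbing $\eta$ in a direction supported where $\psi\neq0$ changes $\delta$ to first order, so its vanishing is a codimension-one condition that a generic path avoids. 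Finally I would intersect the four residual sets and transfer from the $C^k$-path space to smooth paths.

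The main obstacle I expect is Step 3 combined with the passage to smooth paths: one must check that the perturbations allowed (varying $\eta_t\in\sZ$ along the path, and perhaps $\bp_t$) genuinely make the universal linearized operator surjective at \emph{every} $t_0\in[0,1]$ simultaneously — including at parameters $t_0$ where a crossing of the Dirac spectrum occurs, so that the obstructed solutions appearing there are controlled — and that the resulting residual set of $C^k$ paths, when intersected over all $k$ and all spin structures (of which there are finitely many up to the relevant torsor action, cf.\ \autoref{Rmk_SpinStructure}), still contains a residual set of smooth paths. This is precisely the Taubes-style argument already invoked for \autoref{Prop_Transversality}, so I would model the write-up on \cite[Section 2]{Doan2017a} and \cite[Section 2.7]{Doan2017}, indicating only the modifications needed to handle the extra path direction and the four conditions above. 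The remaining steps are routine dimension counts once the appropriate universal spaces are set up.
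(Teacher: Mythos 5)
Your overall architecture---treat the four conditions of \autoref{Def_RegularPath} one at a time by Sard--Smale/genericity arguments in $W^{k,p}$ completions, intersect the resulting residual sets, and pass from Sobolev to smooth paths at the end---is exactly the paper's. The differences in routing are minor: for conditions \itref{Def_RegularPath_TransverseSpectralFlow} and \itref{Def_RegularPath_PsiNowhereVanishing} the paper does not argue via the wall structure of \autoref{Prop_NaturalCoorientation} and \autoref{Prop_Wsk} (which would be non-circular, since those are proved independently by the Implicit Function Theorem, but appear later), but instead runs a direct parametrized Sard--Smale argument over the space of paths, with a universal section $\bsigma$ of index $0$ for the transverse-crossing condition and a section $\btau$ of index $-1$, obtained by adjoining the evaluation map at a point of $M$, for the nowhere-vanishing condition (\autoref{Prop_TransverseSpectralFlow}); your Step 2 dimension count $4-3=1$ is the same index computation in different clothing. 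Condition \itref{Def_RegularPath_Unobstructed} is not re-proved in the paper but simply quoted from \cite[Theorem 2.34]{Doan2017}, so your Step 3, which you single out as the main obstacle, is not where the new work lies.

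The one step that, as written, would not go through is Step 4. In the local model the correction term $\psi$ is not independent data: it is determined by $\eta$ (the paper computes $\psi=\abs{\Psi}^{-2}\bar\gamma(*\eta)\Psi$), so $\delta(\Psi,\bp,\cdot)$ is a \emph{quadratic} form on $\sZ$; \autoref{Prop_Delta} identifies it as $\int_M\abs{\Psi}^{-2}\inner{\rd*\eta+\ff_{\Psi,\bp}\eta}{\eta}$. Its differential at $\eta$, namely $\hat\eta\mapsto\int_M\abs{\Psi}^{-2}\inner{2\rd*\eta+\bar\ff_{\Psi,\bp}\eta}{\hat\eta}$, vanishes identically whenever $\eta$ lies in the critical locus of this quadratic form---for instance at $\eta=0$---so the claim that ``perturbing $\eta$ in a direction supported where $\psi\neq0$ changes $\delta$ to first order'' fails precisely at points where you would need it, and first-order reasoning alone does not give density of $\set{\delta\neq0}$. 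The missing ingredient, supplied by \autoref{Prop_DeltaNonZero}, is that the critical locus is the solution space of a linear elliptic system, hence a closed finite-dimensional subspace of $\sZ$; away from it the Implicit Function Theorem makes $\set{\delta=0}$ a codimension-one Banach submanifold, and continuity of $\delta$ then gives that $\set{\eta:\delta(\Psi,\bp,\eta)\neq0}$ is open and dense. With that in hand the paper concludes as you intend: perturb $\eta_t$ only near the finitely many crossing times by small closed forms (this leaves conditions \itref{Def_RegularPath_TransverseSpectralFlow} and \itref{Def_RegularPath_PsiNowhereVanishing} untouched, as they depend only on $(\bp_t)$), so the set of paths satisfying \itref{Def_RegularPath_DeltaNonZero} is open and dense in the residual set $\bsQ_{1,2}$, hence residual, and intersecting with the residual set for \itref{Def_RegularPath_Unobstructed} finishes the proof.
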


The next three sections are occupied with studying the wall crossing along paths in $\bsQ^\reg\paren[\big]{(\bp_0,\eta_0),(\bp_1,\eta_1)}$.
In order to state the wall-crossing formula, we need the following preparation.

\begin{prop}
  \label{Prop_DPsi}
  Denote by $\fs$ a spin structure inducing the spin$^c$ structure $\fw$ and by $A$ the product connection on $\det W$.
  If $\Psi$ is a nowhere vanishing section of $\Re(S_\fs\otimes E)$,
  then the following hold:
  \begin{enumerate}
  \item 
    Let $\fa_{\Psi}$ be the algebraic operator given by \eqref{Eq_DeltaAndA}.
    The map 
    \begin{equation*}
      \tilde\fa_\Psi \coloneq \abs{\Psi}^{-1}\fa_\Psi \co (T^*M\oplus\underline{\R})\otimes i\R \to \Im(S_\fs\otimes E)
    \end{equation*}
    is an isometry. 
    Here $\Im(S_\fs\otimes E)$ denotes the imaginary part of $S_\fs\otimes E$ defined using the real structure on $S_\fs\otimes E$.
  \item
    \label{Prop_HowToDetermineSigma_DT}
    Denote by $\slD_{\Im}$ the restriction of $\slD_{A,\bp}$ to $\Im(S_\fs\otimes E) \subset \Hom(E,W)$ and define the operator $\fd_\Psi\co \Omega^1(M,i\R)\oplus\Omega^0(M,i\R) \to \Omega^1(M,i\R)\oplus\Omega^0(M,i\R)$ by
    \begin{equation*}
      \fd_\Psi
      \coloneq
      \tilde\fa_\Psi^*\circ \slD_{\Im}\circ \tilde\fa_\Psi.
    \end{equation*}
    For each $t \in [0,1]$,
    the operator
    \begin{equation*}
      \fd_\Psi^t \coloneq (1-t)\fd_\Psi + t\fd
    \end{equation*}
    is a self-adjoint and Fredholm.
  \end{enumerate}
\end{prop}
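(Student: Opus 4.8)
The plan is to treat both assertions as fibrewise statements about the linear algebra of the $\Spin(3)\times\U(1)$-action on the model fibre $\Hom_\C(\C^2,\H)$ underlying \autoref{Prop_H2///U(1)=Re/Z2}, and to reduce each to a computation in a single fibre of $S_\fs\otimes E$.

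\emph{Part (1).} The first observation I would make is that, since the spin connection on $S_\fs$ and the $\SU(2)$-connection on $E$ preserve the quaternionic structures $j_S$ and $j_E$, the real structure on $S_\fs\otimes E$ is parallel, and in particular Clifford multiplication by a real covector commutes with it. Writing $a=i\alpha$ and $\varphi=i\varphi_0$ with $\alpha\in T^*_xM$, $\varphi_0\in\R$, we have $\bar\gamma(a)\Psi=i\,\gamma(\alpha)\Psi$ and $\rho(\varphi)\Psi=i\varphi_0\Psi$; since $\gamma(\alpha)\Psi$ and $\Psi$ lie in $\Re(S_\fs\otimes E)$ and multiplication by $i$ exchanges $\Re(S_\fs\otimes E)$ and $\Im(S_\fs\otimes E)$, the map $\tilde\fa_\Psi$ does take values in $\Im(S_\fs\otimes E)$. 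To get that it is a fibrewise isometry I would expand
\[
  \abs{\bar\gamma(a)\Psi+\rho(\varphi)\Psi}^2
  = \abs{\gamma(\alpha)\Psi}^2 + \varphi_0^2\abs{\Psi}^2 + 2\varphi_0\,\Re\inner{\gamma(\alpha)\Psi}{\Psi}
\]
and use that $\abs{\gamma(\alpha)\Psi}^2=\abs{\alpha}^2\abs{\Psi}^2$ because $\gamma(\alpha)^*\gamma(\alpha)=\abs{\alpha}^2\id$, and that $\Re\inner{\gamma(\alpha)\Psi}{\Psi}=0$ because $\gamma(\alpha)$ is skew-Hermitian. This gives $\abs{\tilde\fa_\Psi(a,\varphi)}=(\abs{\alpha}^2+\varphi_0^2)^{1/2}=\abs{(a,\varphi)}$, so $\tilde\fa_\Psi$ is injective; as its domain and $\Im(S_\fs\otimes E)$ both have real rank $4$, it is a fibrewise isometric isomorphism.

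\emph{Part (2).} Since $\slD_{A,\bp}$ is $\C$-linear and preserves $\Re(S_\fs\otimes E)$ (as recalled in \autoref{Sec_Introduction}) and $\Im(S_\fs\otimes E)=i\cdot\Re(S_\fs\otimes E)$, it also preserves $\Im(S_\fs\otimes E)$, so $\slD_{\Im}$ is well defined, and multiplication by $i$ intertwines $\slD^\fs_\bp$ with $\slD_{\Im}$; hence $\slD_{\Im}$ is a self-adjoint, elliptic operator on the closed manifold $M$, in particular Fredholm. By part (1), $\tilde\fa_\Psi$ is a fibrewise isometric bundle isomorphism, so it is an $L^2$-isometry with $\tilde\fa_\Psi^*=\tilde\fa_\Psi^{-1}$, and therefore $\fd_\Psi=\tilde\fa_\Psi^{-1}\slD_{\Im}\tilde\fa_\Psi$ is a first-order, formally self-adjoint differential operator on $\Omega^1(M,i\R)\oplus\Omega^0(M,i\R)$. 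The operator $\fd$ is manifestly first order and, since $*\rd$ is self-adjoint on $1$-forms in dimension $3$ and $\rd$, $\rd^*$ are formal adjoints, formally self-adjoint as well; hence $\fd_\Psi^t=(1-t)\fd_\Psi+t\fd$ is formally self-adjoint for every $t$, and the only remaining point is ellipticity, uniformly in $t$.

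The crux is that $\fd_\Psi$ and $\fd$ have the \emph{same} principal symbol. I would check this fibrewise: using the three-dimensional Clifford identity $\gamma(\xi)\gamma(\alpha)=-\inner{\xi}{\alpha}\id+\gamma(*(\xi\wedge\alpha))$ together with $i\,\gamma(\xi)\Psi=\bar\gamma(i\xi)\Psi$, one computes
\[
  \gamma(\xi)\bigl(\bar\gamma(i\alpha)\Psi+\rho(i\varphi_0)\Psi\bigr)
  = \bar\gamma\bigl(i\,(*(\xi\wedge\alpha)+\varphi_0\,\xi)\bigr)\Psi + \rho\bigl(-i\,\inner{\xi}{\alpha}\bigr)\Psi ,
\]
so that, after dividing by $\abs{\Psi}$ and transporting through $\fa_\Psi^{-1}$, the symbol $\sigma(\fd_\Psi)(\xi)$ sends $(\alpha,\varphi_0)$ to $\bigl(*(\xi\wedge\alpha)+\varphi_0\,\xi,\,-\inner{\xi}{\alpha}\bigr)$ --- which is exactly the principal symbol of $\fd=\begin{pmatrix}*\rd & \rd\\ \rd^* & 0\end{pmatrix}$. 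Hence $\sigma(\fd_\Psi^t)(\xi)=\sigma(\fd)(\xi)$ for all $t$, and this is invertible for $\xi\neq0$: if $*(\xi\wedge\alpha)+\varphi_0\xi=0$ and $\inner{\xi}{\alpha}=0$, pairing the first relation with $\xi$ and using $*(\xi\wedge\alpha)\perp\xi$ gives $\varphi_0=0$, then $\xi\wedge\alpha=0$ forces $\alpha\parallel\xi$, and $\inner{\xi}{\alpha}=0$ then forces $\alpha=0$. Therefore each $\fd_\Psi^t$ is elliptic, and, being formally self-adjoint and elliptic on the closed manifold $M$, it is Fredholm.

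The step I expect to be the real obstacle is the symbol identity $\sigma(\fd_\Psi)=\sigma(\fd)$: the computation is short but genuinely sensitive to conventions, since one must have fixed the orientation of $M$, the induced isomorphism $\Lambda^2T^*M\cong\Lambda^1T^*M$, and the normalization of Clifford multiplication (the sign with which the volume form acts) so that precisely $*\rd$ --- and not $-*\rd$ --- comes out in the first component of the symbol. With the opposite sign the convex combination would have symbol $(\alpha,\varphi_0)\mapsto\bigl((2t-1)*(\xi\wedge\alpha)+\varphi_0\xi,\,-\inner{\xi}{\alpha}\bigr)$, which is degenerate at $t=\tfrac12$, so the proposition really relies on the definitions in \autoref{Sec_Introduction} and \eqref{Eq_DeltaAndA} having been set up compatibly; everything else is routine bookkeeping.
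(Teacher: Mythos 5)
Your proof is correct and follows essentially the same route as the paper: part (1) is the same pointwise computation giving $\fa_\Psi^*\fa_\Psi=\abs{\Psi}^2$, and for part (2) the paper's identity \eqref{Eq_EPsi}, namely $\slD_{\Im}\fa_\Psi(a,\xi)=\fa_\Psi\fd(a,\xi)-2\sum_{i=1}^3\rho(a(e_i))\nabla_{e_i}\Psi$, shows $\fd_\Psi=\fd+\fe_\Psi$ with $\fe_\Psi$ of order zero, which is just the operator-level version of your symbol identity $\sigma(\fd_\Psi)=\sigma(\fd)$ and in particular confirms that the paper's conventions produce $+*\rd$ (not $-*\rd$), resolving the sign worry you flag. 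From there both arguments conclude identically: $\fd_\Psi^t$ has the same elliptic symbol as $\fd$ for every $t$ and is self-adjoint because it is a convex combination of $\fd$ and a conjugate of the self-adjoint $\slD_{\Im}$ by a bundle isometry.
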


\begin{proof}
  The fact that $\tilde\fa_\Psi$ is an isometry is a consequence of
  \begin{equation}
    \label{Eq_APsiAPsi*}
    \fa_\Psi^*\fa_\Psi = \abs{\Psi}^2
  \end{equation}
  which in turn follows from the following calculation for $(a,\xi)$ and $(b,\eta)$ in $\Omega^1(M,i\R)\oplus\Omega^0(M, i\R)$:
  \begin{align*}
    \inner{\fa_\Psi(a,\xi)}{\fa_\Psi(b,\eta)}
    &=
      \inner{\bar\gamma(a)\Psi + \rho(\xi)\Psi}{\bar\gamma(b)\Psi +  \rho(\eta)\Psi} \\
    &=
    \abs{\Psi}^2(\inner{a}{b} + \inner{\xi}{\eta}).
  \end{align*}
  
  Since
  \begin{equation}
    \label{Eq_EPsi}
    \begin{split}
      \slD_{\Im}\fa_\Psi(a,\xi)
      &=
      \slD_{\Im}(\bar\gamma(a)\Psi + \rho(\xi)\Psi) \\
      &=
      \bar\gamma(*\rd a)\Psi
      + \rho(\rd^*a)\Psi
      + \bar\gamma(\rd\xi)\Psi \\
      &\quad
      - \bar\gamma(a)\slD_{\Re} \Psi
      + \rho(\xi)\slD_{\Re} \Psi 
      - 2\sum_{i=1}^3 \rho(a(e_i))\nabla_{e_i}\Psi \\
      &=
      \fa_\Psi\fd(a,\xi) - 2\sum_{i=1}^3 \rho(a(e_i))\nabla_{e_i}\Psi,
    \end{split}
  \end{equation}
  we have
  \begin{equation*}
    \fd_\Psi
    =
    \fd + \fe_\Psi
  \end{equation*}
  with $\fe_\Psi$ a zeroth order operator depending on $\Psi$ and its derivative.
  This implies that $\fd_\Psi^t$ is a Fredholm operator.
  By construction $\fd_\Psi^t$ is self-adjoint.
\end{proof}

\begin{definition}
  \label{Def_Sigma}
  In the situation of \autoref{Prop_DPsi}, define
  \begin{equation*}
    \sigma(\Psi,\bp)
    \coloneq
      (-1)^{b_1(M)}\cdot (-1)^{\SF\((-\fd_\Psi^t)_{t\in[0,1]}\)}.    
  \end{equation*}
\end{definition}

\begin{remark}
  \label{Rmk_SigmaPsi}
  The operator $\fd_\Psi$ only depends on $\Psi$ up to multiplication by a constant in $\R^*$; hence, the same holds for $\sigma(\Psi,\bp)$.
\end{remark}

\begin{definition}
  \label{Def_RelativeDegree}
  For a pair of nowhere vanishing sections $\Psi, \Phi \in \Gamma(\Re(S_\fs\otimes E))$ we define their \defined{relative degree} $\deg(\Psi,\Phi)\in\Z$ as follows.
  Choose any trivializations of $E$ and $S_\fs$ compatible with the $\SU(2)$ structures. 
  In the induced trivialization of $\Re(S_\fs\otimes E)$ the sections $\Psi/\abs{\Psi}$ and $\Phi/\abs{\Phi}$ are represented by maps $M\to S^3$.
  Set
  \begin{equation*}
    \deg(\Psi,\Phi) \coloneq \deg(\Psi / \abs{\Psi}) - \deg(\Phi / \abs{\Phi}).
  \end{equation*}
  This number does not depend on the choice of the trivializations.
\end{definition}

\pagebreak[2]

\begin{theorem}
  \label{Thm_TwoSeibergWittenWallCrossing}
  Let $(\bp_t,\eta_t)_{t\in[0,1]} \in \bsQ^\reg\paren[\big]{(\bp_0,\eta_0),(\bp_1,\eta_1)}$.
  For each spin structure $\fs$ inducing the spin$^c$ structure $\fw$, denote
  \begin{itemize}
  \item
    by $\set{ t^\fs_1, \ldots, t^\fs_{N_\fs} } \subset [0,1]$ the set of times at which the spectrum of $\slD_{\bp_t}^\fs$ crosses zero%
    \footnote{%
      This set is finite by \autoref{Def_RegularPath}\autoref{Def_RegularPath_TransverseSpectralFlow}.
    }
  \end{itemize}
  and, for each $i = 1, \ldots, N_\fs$, denote
  \begin{itemize}
  \item
    by $\chi^\fs_i \in \set{\pm 1}$ the sign of the spectral crossing of the family $(\slD^\fs_{\bp_t})$ at $t^\fs_i$ and
  \item
    by $\Psi^\fs_i$ a nowhere vanishing spinor spanning $\ker \slD_{\bp_t}^\fs$.
  \end{itemize}
  If there are no singular harmonic $\Z_2$ spinors with respect to $\bp_t$ for any $t \in [0,1]$,
  then
  \begin{equation}
    \label{Eq_WallCrossingFormula}
    n_\fw(\bp_1,\eta_1)
    =
      n_\fw(\bp_0,\eta_0)
    +
      \sum_\fs \sum_{i=1}^{N_\fs} \chi^\fs_i \cdot \sigma(\Psi^\fs_i,\bp_{t^\fs_i})
  \end{equation}
  or, equivalently,
  \begin{equation}
    \label{Eq_WallCrossingFormula_2}
    n_\fw(\bp_1,\eta_1)
    =
      n_\fw(\bp_0,\eta_0)
    +
      \sum_\fs
        \chi^\fs_1 \cdot \sigma(\Psi^\fs_1,\bp_{t^\fs_1}) \cdot
        \sum_{i=1}^{N_\fs}
          (-1)^{i+1}\cdot(-1)^{\deg(\Psi^\fs_1,\Psi^\fs_i)}.
  \end{equation}
  Here the sums are over all spin structures $\fs$ inducing $\fw$.
\end{theorem}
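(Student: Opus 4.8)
The plan is to track the $1$-parameter family of moduli spaces $\bigcup_t \fM_\fw(\bp_t,\eta_t)$ along a path in $\bsQ^\reg$ and compute the boundary count via a careful analysis near each wall-crossing time $t^\fs_i$. By \autoref{Def_RegularPath}\autoref{Def_RegularPath_Unobstructed}, the parametrized moduli space $\cM \coloneq \{([\Psi,A],t) : [\Psi,A] \in \fM_\fw(\bp_t,\eta_t)\}$ is a smooth oriented $1$-manifold; its boundary over $t \in \{0,1\}$ is $\fM_\fw(\bp_1,\eta_1) \sqcup -\fM_\fw(\bp_0,\eta_0)$. The only way $\cM$ can fail to be compact is via a sequence of solutions with $\Abs{\Psi}_{L^2} \to \infty$ (by \autoref{Thm_HW} and the no-singular-$\Z_2$-spinor hypothesis, together with $\sP^\reg$ excluding honest harmonic spinors); and by \autoref{Prop_SpecialHaydysCorrespondence} this can only occur at a time $t^\fs_i$ where $\ker \slD^\fs_{\bp_t}$ is spanned by the nowhere-vanishing spinor $\Psi^\fs_i$. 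So the first step is to prove that $\cM$ is a compact $1$-manifold \emph{with boundary} once we add in, for each $(\fs,i)$, the single limiting configuration $[\Psi^\fs_i, A]$ (with $A$ the product connection) at the parameter $t^\fs_i$ — i.e.\ that near such a wall the end of $\cM$ limits to exactly one point. This is where \cite[Theorem 1.38]{Doan2017a}, invoked through \autoref{Def_RegularPath}\autoref{Def_RegularPath_DeltaNonZero}, enters: it provides the local model of the moduli space near a rescaled limit, describing the family $(\Psi_\epsilon, A_\epsilon; t(\epsilon))$ that desingularizes the wall, and the condition $\delta(\Psi^\fs_i,\bp_{t^\fs_i},\eta_{t^\fs_i})\neq 0$ guarantees this local model is a single embedded arc with one endpoint on the wall.

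The second step is the sign computation. Having identified $\cM$ as a compact oriented $1$-manifold with boundary, the algebraic count of $\partial\cM$ is zero, so
\begin{equation*}
  n_\fw(\bp_1,\eta_1) - n_\fw(\bp_0,\eta_0) = \sum_\fs \sum_{i=1}^{N_\fs} \varepsilon^\fs_i,
\end{equation*}
where $\varepsilon^\fs_i \in \{\pm 1\}$ is the sign with which the arc emanating from $[\Psi^\fs_i,A]$ leaves the wall, computed relative to the orientation of $\cM$. The claim is that $\varepsilon^\fs_i = \chi^\fs_i \cdot \sigma(\Psi^\fs_i,\bp_{t^\fs_i})$. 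The orientation of $\cM$ comes from the $\OT$/determinant-line-bundle formalism of \autoref{Sec_CountingSolutions}, so $\varepsilon^\fs_i$ is expressed as $(-1)$ to a spectral flow of linearized operators $L_{\Psi_\epsilon,A_\epsilon}$ along the local arc, compared against a reference. The plan is to degenerate $\epsilon \to 0$ in this spectral flow: as $\epsilon\to 0$ the operator $L_{\Psi_\epsilon,A_\epsilon}$ decomposes, to leading order, into the $\C$-linear Dirac operator $\slD_{A,\bp_t}$ restricted to $\Re(S_\fs\otimes E)$ (whose spectral crossing at $t^\fs_i$ has sign $\chi^\fs_i$ — this is where $\chi^\fs_i$ appears, since $\slD^\fs_{\bp_t}$ is precisely this restriction) \emph{plus} the operator $\fd_\Psi$ on $\Omega^1 \oplus \Omega^0$ obtained by conjugating $\slD_{\Im}$ by the isometry $\tilde\fa_\Psi$ of \autoref{Prop_DPsi}. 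Comparing this limiting operator against the reference operator $\slD_{A,\bp}\oplus\fd$ (used to define $\sign$ in \autoref{Def_Sign}) produces exactly the homotopy $\fd_\Psi^t = (1-t)\fd_\Psi + t\fd$ whose spectral flow, with the $(-1)^{b_1(M)}$ correction accounting for $\ker\fd$, defines $\sigma(\Psi^\fs_i,\bp_{t^\fs_i})$ in \autoref{Def_Sigma}. Assembling: $\varepsilon^\fs_i = \chi^\fs_i\cdot\sigma(\Psi^\fs_i,\bp_{t^\fs_i})$, which gives \eqref{Eq_WallCrossingFormula}.

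For the equivalent form \eqref{Eq_WallCrossingFormula_2}, I would first note that along the path $(\bp_t)$ between consecutive crossing times $t^\fs_i$ and $t^\fs_{i+1}$ the spectral crossing signs alternate in the standard APS way (transverse spectral flow through a one-dimensional kernel forces $\chi^\fs_{i+1} = -\chi^\fs_i$, hence $\chi^\fs_i = \chi^\fs_1\cdot(-1)^{i+1}$), and then show that $\sigma(\Psi,\bp)$ transforms under change of nowhere-vanishing spinor by the sign of the relative degree:
\begin{equation*}
  \sigma(\Psi^\fs_i,\bp_{t^\fs_i}) = \sigma(\Psi^\fs_1,\bp_{t^\fs_1})\cdot(-1)^{\deg(\Psi^\fs_1,\Psi^\fs_i)}.
\end{equation*}
This last identity reduces, via \autoref{Rmk_SigmaPsi} and the determinant-line-bundle description of $\OT$ alluded to in the remarks, to computing how the spectral flow $\SF((-\fd_\Psi^t))$ changes when $\Psi$ is homotoped through nowhere-vanishing spinors — the obstruction to a global homotopy being measured by the degree of the map $M\to S^3$, and the spectral flow picking up $(-1)$ per unit of degree because the relevant family of Dirac-type operators over $S^1\times M$ has index computing exactly this degree (an APS-style index count on the mapping torus, parallel to the one already carried out in the proof of \autoref{Prop_OrientationTransport}). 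Substituting both identities into \eqref{Eq_WallCrossingFormula} yields \eqref{Eq_WallCrossingFormula_2}.

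\textbf{Main obstacle.} The crux is Step~2: rigorously justifying that the spectral flow computing the orientation-sign $\varepsilon^\fs_i$ of the arc in the local model degenerates, as $\epsilon\to 0$, into precisely the product $\chi^\fs_i\cdot\sigma(\Psi^\fs_i,\bp_{t^\fs_i})$. This requires controlling the $\epsilon$-dependence of the linearized operator $L_{\Psi_\epsilon,A_\epsilon}$ uniformly — in particular showing no eigenvalues cross zero spuriously as $\epsilon\to 0$ away from the genuine Dirac crossing — and correctly bookkeeping the change-of-basis isometry $\tilde\fa_\Psi$ together with the orientation conventions from \autoref{Sec_DeterminantLineBundles}. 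The identity $\fa_\Psi^*\fa_\Psi = \abs{\Psi}^2$ from \eqref{Eq_APsiAPsi*} and the commutation formula \eqref{Eq_EPsi} are the algebraic engine that makes $\fd_\Psi$ a compact perturbation of $\fd$, hence $\fd_\Psi^t$ Fredholm with well-defined spectral flow; the analytic work is in matching the two spectral-flow computations through the scaling limit. I expect the proof of \eqref{Eq_WallCrossingFormula} to be postponed across the next two or three sections, with the scaling analysis and the determinant-line-bundle sign verification (culminating in \autoref{Prop_HowToDetermineSigma}, referenced above) forming the technical heart.
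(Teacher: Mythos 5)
Your global strategy---compactify the parametrized moduli space by adjoining, at each crossing time, the boundary point supplied by the local model of \cite[Theorem 1.38]{Doan2017a}, and read off the jump in $n_\fw$ as a signed count of ends, with local contribution $\chi^\fs_i\cdot\sigma(\Psi^\fs_i,\bp_{t^\fs_i})$---is the paper's strategy, and the sign identification you defer is exactly what \autoref{Prop_HowToDetermineSigma} carries out. Two caveats on that step, which you partly flag as the main obstacle: the factor $\chi^\fs_i=\sign(\dot\lambda(t^\fs_i))$ does not arise from a spectral crossing of $L_{\Psi_\epsilon,A_\epsilon}$ along the arc, but from the direction in which the arc leaves the wall in the $t$--variable, via the expansion $t(\epsilon)=t_0+(\delta/\dot\lambda(t_0))\epsilon^4+O(\epsilon^6)$ of the obstruction map, which decides whether a solution is created or annihilated; and as $\epsilon\to0$ the spinor entering the linearization is $\epsilon^{-1}\Psi_\epsilon$, so the coupling $\fa_{\Psi}$ blows up rather than decouples---the correct computation conjugates by the isometry $\tilde\fa_{\Psi_0}$ and analyzes the large-coupling limit, which is also where the $(-1)^{b_1(M)}$ in \autoref{Def_Sigma} is produced, and it must be combined with a comparison of the determinant lines of the equation and its blow-up.

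The genuine gap is in your passage from \eqref{Eq_WallCrossingFormula} to \eqref{Eq_WallCrossingFormula_2}. The claim that transversality forces $\chi^\fs_{i+1}=-\chi^\fs_i$ is false: consecutive crossings may belong to different eigenvalue branches of $\slD^\fs_{\bp_t}$, so two successive crossings can have the same sign, and nothing in \autoref{Def_RegularPath} excludes this. Correspondingly, your rule $\sigma(\Psi^\fs_i,\bp_{t^\fs_i})=\sigma(\Psi^\fs_1,\bp_{t^\fs_1})\cdot(-1)^{\deg(\Psi^\fs_1,\Psi^\fs_i)}$ omits a factor: since $\bp_{t^\fs_1}\neq\bp_{t^\fs_i}$, comparing the two spectral flows defining $\sigma$ produces an extra $(-1)^{\SF(-\slD^\fs_{\bp_t})}$ over $[t^\fs_1,t^\fs_i]$ (this is \autoref{Prop_HowSigmaChanges}); your mapping-torus index argument only accounts for the degree, i.e.\ the constant-parameter case. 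The correct bookkeeping keeps the spectral-flow factor in both relations, namely $\sigma(\Psi^\fs_i,\bp_{t^\fs_i})=\sigma(\Psi^\fs_1,\bp_{t^\fs_1})\cdot(-1)^{\SF}\cdot(-1)^{\deg(\Psi^\fs_1,\Psi^\fs_i)}$ and $\chi^\fs_i\cdot(-1)^{\SF}=(-1)^{i+1}\chi^\fs_1$, and the two occurrences of $(-1)^{\SF}$ cancel in the product $\chi^\fs_i\cdot\sigma(\Psi^\fs_i,\bp_{t^\fs_i})$; that cancellation is why \eqref{Eq_WallCrossingFormula_2} is nonetheless true, whereas your two intermediate identities are each false in general and only their product is correct.
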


\begin{remark}
  It follows from \autoref{Thm_TwoSeibergWittenWallCrossing}, that $n_\fw(\bp,\eta)$ does not depend on $\eta$.
\end{remark}

The proof of the \eqref{Eq_WallCrossingFormula} proceeds by analyzing the $1$--parameter family of moduli spaces 
\begin{equation*}
  \fW \coloneq \bigcup_{t \in [0,1]} \fM_\fw(\bp_t, \eta_t).
\end{equation*}
By \autoref{Def_RegularPath}\itref{Def_RegularPath_Unobstructed},
$\fW$ is an oriented, one-dimensional manifold with oriented boundary
\begin{equation*}
  \partial\fW = \fM_\fw(\bp_0, \eta_0) \cup -\fM_\fw(\bp_1, \eta_1).
\end{equation*}
If $\fW$ were compact, then it would follow that $n_\fw(\bp_1,\eta_1) = n_\fw(\bp_0,\eta_0)$.
However, $\fW$ may be non-compact.


\section{Compactification of the cobordism}
\label{Sec_SmoothConvergenceIfZEmpty}

Set
\begin{equation*}
  \overline{\fW}
  \coloneq
  \set*{
    (t,\epsilon,[\Psi,A]) \in [0,1]\times[0,\infty)\times\frac{\Gamma(\Hom(E,W)\times\sA(\det W)}{\sG(\det W)} : (*)
  }
\end{equation*}
with $(*)$ meaning that:
\begin{itemize}
\item 
  the differential equation
  \begin{equation}
    \label{Eq_BlownUpPerturbedTwoSeibergWitten}
    \begin{split}
      \slD_{A,\bp_t}\Psi &= 0, \\
      \epsilon^2\paren*{\frac12F_{A}+\eta_{t}} &= \mu_{\bp_{t}}(\Psi), \qand \\
      \Abs{\Psi}_{L^2} &= 1
    \end{split}
  \end{equation}
  holds and
\item
  if $\epsilon = 0$, then $\Psi$ is nowhere vanishing.
\end{itemize}
Equip $\overline\fW$ with the $C^\infty$--topology.
We have a natural embedding $\fW \into \overline \fW$ given by $(t,[\Psi,A]) \mapsto (t,\epsilon,[\tilde\Psi,A])$ with $\epsilon \coloneq 1/\Abs{\Psi}_{L^2}$ and $\tilde\Psi \coloneq \Psi/\Abs{\Psi}_{L^2}$.

\begin{prop}
  \label{Prop_DenseInCompactifiedCobordism}
  $\fW$ is dense in $\overline\fW$.
\end{prop}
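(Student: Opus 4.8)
The plan is to identify $\fW$, via the stated embedding, with the locus of points of $\overline\fW$ having positive blow-up parameter $\epsilon$, and then to show that every point of the closed stratum $\set{\epsilon = 0}$ is a $C^\infty$--limit of points with $\epsilon > 0$. The first part is a bookkeeping computation; the second is precisely where condition \autoref{Def_RegularPath}\itref{Def_RegularPath_DeltaNonZero} earns its keep, because the required approximating families are exactly the local solutions it furnishes through \cite[Theorem 1.38]{Doan2017a}.

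\emph{Identifying $\fW$ with the open stratum.} For $\epsilon > 0$ I would note that the rescaling $\Psi \mapsto \epsilon^{-1}\Psi$ transforms the blown-up system \eqref{Eq_BlownUpPerturbedTwoSeibergWitten}, at blow-up parameter $\epsilon$ and time $t$, into the equation \eqref{Eq_PerturbedTwoSeibergWitten} at time $t$, and conversely. Since this rescaling is the inverse of the map $\fW \into \overline\fW$, that map is a homeomorphism onto the open subset of $\overline\fW$ where $\epsilon > 0$. It therefore suffices to show that every point $(t_0,0,[\Psi_0,A_0]) \in \overline\fW$ lies in the closure of $\fW$.

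\emph{Approximating the closed stratum.} Fix such a point. By the defining condition of $\overline\fW$, the pair $(\Psi_0,A_0)$ solves the limiting equation \eqref{Eq_LimitingEquation} with respect to $\bp_{t_0}$, with $\Psi_0$ nowhere vanishing and $\Abs{\Psi_0}_{L^2} = 1$. By \autoref{Prop_SpecialHaydysCorrespondence}, $\det W$ is trivial, so $\fw$ is induced by a spin structure; if it were not, the closed stratum would be empty and $\fW = \overline\fW$, leaving nothing to prove, so I may assume $\fw$ is spin. The same proposition lets me arrange, after a gauge transformation, that $A_0$ is the product connection and that $\Psi_0 \in \ker \slD^{\fs}_{\bp_{t_0}}$ for some spin structure $\fs$ inducing $\fw$. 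Since $\bp_0,\bp_1 \in \sP^\reg$ and the path $\paren[\big]{\slD^{\fs}_{\bp_t}}_{t\in[0,1]}$ has transverse spectral flow by \autoref{Def_RegularPath}\itref{Def_RegularPath_TransverseSpectralFlow}, the time $t_0$ is one of the finitely many interior times at which the spectrum crosses zero, $\ker \slD^{\fs}_{\bp_{t_0}}$ is one-dimensional, and by \autoref{Def_RegularPath}\itref{Def_RegularPath_PsiNowhereVanishing} it is spanned by a nowhere vanishing section; after possibly changing sign, $\Psi_0$ itself is this generator, normalized. Now \autoref{Def_RegularPath}\itref{Def_RegularPath_DeltaNonZero}, via \cite[Theorem 1.38]{Doan2017a}, produces a family $(\Psi_\epsilon, A_\epsilon; t(\epsilon))$ of solutions of \eqref{Eq_BlownUpPerturbedTwoSeibergWitten}, defined for all sufficiently small $\epsilon \geq 0$, with $\Psi_\epsilon = \Psi_0 + \epsilon^2\psi + O(\epsilon^4)$, $A_\epsilon \to A_0$, and $t(\epsilon) = t_0 + O(\epsilon^2)$ as $\epsilon \to 0$. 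For every small $\epsilon > 0$ the point $(t(\epsilon), \epsilon, [\Psi_\epsilon, A_\epsilon])$ lies in $\fW$, and as $\epsilon \to 0^+$ these points converge to $(t_0, 0, [\Psi_0, A_0])$ in the $C^\infty$ topology of $\overline\fW$. Hence $(t_0,0,[\Psi_0,A_0])$ lies in the closure of $\fW$, which finishes the argument.

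\emph{Main obstacle.} The only substantive input is \cite[Theorem 1.38]{Doan2017a}: one has to know that the deformation it constructs is parametrized by a genuine half-open interval of values of the blow-up parameter, reaching arbitrarily small positive ones, and that the whole family — spinor, connection, and reparametrization $t(\epsilon)$ alike — converges in the $C^\infty$ topology carried by $\overline\fW$. If that $C^\infty$ convergence is not recorded there outright, I would recover it from elliptic regularity for \eqref{Eq_BlownUpPerturbedTwoSeibergWitten} after a gauge fixing, bootstrapping the Sobolev convergence of the family. Everything else is formal once this local model is combined with the structure theory of \autoref{Prop_SpecialHaydysCorrespondence}.
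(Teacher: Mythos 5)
Your proof is correct and follows essentially the same route as the paper: points of $\overline\fW\setminus\fW$ have $\epsilon=0$, and the local family $(\Psi_\epsilon,A_\epsilon;t(\epsilon))$ supplied by \autoref{Def_RegularPath} together with \cite[Theorem 1.38]{Doan2017a} converges to them in $C^\infty$, so they lie in the closure of $\fW$. The extra bookkeeping you do (identifying $\fW$ with the open stratum and invoking \autoref{Prop_SpecialHaydysCorrespondence} to place $\Psi_0$ in $\ker\slD^\fs_{\bp_{t_0}}$) is implicit in the paper's shorter argument.
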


\begin{proof}
  If $(t_0,\epsilon,[\Psi,A]) \in \overline\fW\setminus\fW$, then $\epsilon = 0$.
  It follows from \autoref{Def_RegularPath} and \cite[Theorem 1.38]{Doan2017a}, that there are is a family $\set*{ \(\Psi_{\epsilon}, A_{\epsilon};t(\epsilon)\) : 0 \leq \epsilon  \ll 1 }$ of solutions to
  \begin{align*}
    \slD_{A_{\epsilon},\bp_{t(\epsilon)}}\Psi_{\epsilon} &= 0 \qand \\
    \epsilon^2\paren*{\frac12F_{A_{\epsilon}}+\eta_{t(\epsilon)}} &= \mu_{\bp_{t(\epsilon)}}(\Psi_{\epsilon})
  \end{align*}
  with $(\Psi_\epsilon,A_\epsilon)$ converging to $(\Psi,A)$ in $C^\infty$ and $t(\epsilon)$ converging to $t_0$ as $\epsilon$ tends to zero.
  Consequently, $\fW$ is dense in $\overline\fW$.
\end{proof}

That $\overline\fW$ is compact does not follows from \autoref{Thm_HW};
it does, however, follow from the next result, whose proof will occupy the remainder of this section.

\begin{prop}
  \label{Prop_SmoothConvergenceIfZEmpty}
  Let $(\bp_i, \eta_i)$ be a sequence in $\sP\times \sZ$ which converges to $(\bp,\eta)$ in $C^\infty$.
  Let $(\epsilon_i,\Psi_i,A_i)$ be a sequence of solutions of
  \begin{equation}
    \label{Eq_BlownUpSeibergWitten}
    \begin{split}
      \slD_{A_i,\bp_i}\Psi_i &= 0, \\
      \epsilon^2\paren*{\frac12F_{A_i}+\eta_i} &= \mu_{\bp_i}(\Psi_i), \qand \\
      \Abs{\Psi_i}_{L^2} &= 1
    \end{split}
  \end{equation}
  with $\lim_{i\to \infty} \epsilon_i = 0$.
  If the set
  \begin{equation*}
    Z \coloneq \set*{ x \in M : \limsup_{i \to \infty} \abs{\Psi_i(x)} = 0 }
  \end{equation*}
  is empty, then, after passing to a subsequence and applying gauge transformations, $(\Psi_i,A_i)$ converges in $C^\infty$ to a solution $(\Psi,A) \in \Gamma(\Hom(E,W))\times\sA(\det W)$ of
  \begin{equation*}
    \slD_{A,\bp}\Psi = 0, \quad 
   \mu_{\bp}(\Psi) = 0, \qandq
    \Abs{\Psi}_{L^2} = 1.
  \end{equation*}
\end{prop}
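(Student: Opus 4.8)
The plan is to bootstrap the weak convergence furnished by \autoref{Thm_HW} to $C^\infty$ convergence on all of $M$; the sole role of the hypothesis $Z = \emptyset$ is to provide a uniform positive lower bound on $\abs{\Psi_i}$, and this is exactly what makes the elliptic bootstrap close. First, we may assume $\epsilon_i > 0$ for all $i$: if $\epsilon_i = 0$ for infinitely many $i$, then along that subsequence $(\Psi_i,A_i)$ already solves \eqref{Eq_LimitingEquation} and the argument below applies after using \autoref{Prop_SpecialHaydysCorrespondence} to pass to a gauge in which $A_i$ is the product connection. For $\epsilon_i > 0$ the pair $(\Psi_i/\epsilon_i, A_i)$ solves \eqref{Eq_PerturbedTwoSeibergWitten} with respect to $(\bp_i,\eta_i)$ and $\Abs{\Psi_i/\epsilon_i}_{L^2} = \epsilon_i^{-1} \to \infty$, so \autoref{Thm_HW} applies with $\tilde\Psi_i = \Psi_i$: after passing to a subsequence and applying gauge transformations in $\sG(\det W)$ we have $\Psi_i \to \Psi$ weakly in $W^{2,2}_{\loc}(M\setminus Z)$, $A_i \to A$ weakly in $W^{1,2}_{\loc}(M\setminus Z)$ with $(\Psi,A)$ solving \eqref{Eq_LimitingEquation} on $M\setminus Z$, and $\abs{\Psi_i} \to \abs{\Psi}$ in $C^{0,\gamma}(M)$ with $Z = \abs{\Psi}^{-1}(0)$. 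Since $Z = \emptyset$, all of these statements hold on all of $M$ and $\abs{\Psi} > 0$ everywhere; in particular there is a constant $c > 0$ with $\abs{\Psi_i} \geq c$ on $M$ for all large $i$, and also $\Abs{\Psi_i}_{C^0} \leq C$.

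The key estimate is a $C^0$ bound on $\mu(\Psi_i)$, equivalently on the curvature $F_{A_i}$. A standard Weitzenböck computation for $\slD_{A_i,\bp_i}\Psi_i = 0$, using the curvature equation $\tfrac12 F_{A_i}+\eta_i = \epsilon_i^{-2}\mu(\Psi_i)$ to eliminate $F_{A_i}$ and the algebraic positivity $\inner{\mu(\Psi)\Psi}{\Psi} = \abs{\mu(\Psi)}^2 \geq 0$ (with $\mu(\Psi)$ acting on $\Psi \in \Gamma(\Hom(E,W))$ by post-composition), yields after pairing with $\Psi_i$ and integrating
\begin{equation*}
  \Abs{\nabla_{A_i}\Psi_i}_{L^2}^2 + \epsilon_i^{-2}\Abs{\mu(\Psi_i)}_{L^2}^2 \leq C,
\end{equation*}
with $C$ depending only on $\Abs{\Psi_i}_{L^2} = 1$ and the $C^\infty$ limits of $\bp_i$ and $\eta_i$. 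To promote this to a $C^0$ bound one uses $\abs{\Psi_i} \geq c > 0$: combining the Dirac equation, the Weitzenböck formula, and the curvature equation shows that $\abs{\mu(\Psi_i)}^2$ satisfies a differential inequality of the form
\begin{equation*}
  \Delta\abs{\mu(\Psi_i)}^2 + c'\epsilon_i^{-2}\abs{\Psi_i}^2\abs{\mu(\Psi_i)}^2 \leq C\bigl(1 + \abs{\nabla_{A_i}\Psi_i}^2\bigr)\abs{\mu(\Psi_i)}
\end{equation*}
with $\Delta$ the nonnegative Laplacian, whose left-hand side is coercive precisely because $\abs{\Psi_i}^2 \geq c^2$. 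A maximum principle (or Moser iteration) then gives $\Abs{\mu(\Psi_i)}_{C^0} \leq C\epsilon_i^2\bigl(1 + \Abs{\nabla_{A_i}\Psi_i}_{C^0}^2\bigr)$, and inserting this into the Schauder estimates for the Dirac equation and for $F_{A_i} = 2\epsilon_i^{-2}\mu(\Psi_i) - 2\eta_i$, in a Coulomb gauge relative to a fixed smooth reference connection, closes the loop and produces uniform bounds $\Abs{F_{A_i}}_{C^0} \leq C$ and $\Abs{\nabla_{A_i}\Psi_i}_{C^0} \leq C$; in particular $\Abs{\mu(\Psi_i)}_{C^0} \to 0$.

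With these $C^0$ bounds in hand one bootstraps to $C^\infty$. In a Coulomb gauge relative to a fixed smooth reference connection $A_\ast$, write $a_i \coloneq A_i - A_\ast$, so that $\rd^* a_i = 0$ and $\rd a_i = 2\epsilon_i^{-2}\mu(\Psi_i) - 2\eta_i - F_{A_\ast}$. Together with $\slD_{A_i,\bp_i}\Psi_i = 0$ this is a coupled elliptic system in $(\Psi_i,a_i)$ with coefficients converging in $C^\infty$, and the uniform $C^0$ bounds above allow one to bootstrap via standard $L^p$ and Schauder estimates to uniform $C^k$ bounds for every $k$. By Arzel\`a--Ascoli a further subsequence converges in $C^\infty$ to some $(\Psi,A) \in \Gamma(\Hom(E,W))\times\sA(\det W)$. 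Passing to the limit in the equations gives $\slD_{A,\bp}\Psi = 0$; moreover $\mu_\bp(\Psi) = \lim_i \mu_{\bp_i}(\Psi_i) = \lim_i \epsilon_i^2\bigl(\tfrac12 F_{A_i} + \eta_i\bigr) = 0$ because $F_{A_i}$ is uniformly bounded and $\epsilon_i \to 0$; and $\Abs{\Psi}_{L^2} = \lim_i \Abs{\Psi_i}_{L^2} = 1$.

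The main obstacle is the $C^0$ estimate of the second step: deriving the differential inequality for $\abs{\mu(\Psi_i)}^2$ and carrying out the maximum-principle and bootstrap argument in the presence of the degenerate weight $\epsilon_i^{-2}$. This is exactly where the hypothesis $Z = \emptyset$ is indispensable — without the lower bound $\abs{\Psi_i} \geq c$ the coercive term degenerates and curvature may concentrate along $Z$, which is why \autoref{Thm_HW} gives only weak convergence in general. The relevant computations can be modelled on those in \cite{Haydys2014} and \cite{Doan2017b}.
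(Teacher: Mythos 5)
Your skeleton matches the paper's (the lower bound $\abs{\Psi_i}\geq c$ from $Z=\emptyset$, a priori bounds, Coulomb/Uhlenbeck gauge, Arzelà--Ascoli), but the one step that carries all the analytic weight --- producing uniform bounds on $F_{A_i}$ and $\nabla_{A_i}\Psi_i$ --- is asserted rather than proved, and as you have set it up it does not close. Your maximum principle gives $\Abs{F_{A_i}}_{C^0} \lesssim 1 + \Abs{\nabla_{A_i}\Psi_i}_{C^0}^2$, while the Dirac equation in Coulomb gauge gives, at best, $\Abs{\nabla_{A_i}\Psi_i}_{C^0} \lesssim 1 + \Abs{F_{A_i}}_{C^0}$ (interpolating against your $L^2$ bound on $\nabla_{A_i}\Psi_i$ only softens the exponent, it does not make it sublinear). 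Writing $X=\Abs{\nabla_{A_i}\Psi_i}_{C^0}$, the combined inequality is of the shape $X \lesssim 1 + X^2$, which is consistent with $X\to\infty$; a superlinear feedback loop of this kind needs an independent anchor or a smallness mechanism, and you supply neither. Note also that your Weitzenböck identity yields only $\Abs{\mu(\Psi_i)}_{L^2}=O(\epsilon_i)$, hence $\Abs{F_{A_i}}_{L^2}=O(\epsilon_i^{-1})$, so you do not even have a uniform $L^2$ curvature bound to start from.

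The paper's proof supplies exactly this missing anchor and a different closing mechanism. The uniform bound $\Abs{F_{A_i}}_{L^2}\leq C$ is a nontrivial input quoted from Haydys--Walpuski (their Definition 3.1 and Proposition 4.1); it is not a consequence of the naive Weitzenböck estimate. With it, \autoref{Prop_FromL2BoundsToCkBounds} closes the bootstrap not by a pointwise maximum principle but by alternating the elliptic estimate for the Dirac operator (\autoref{Prop_DiracEstimate}) with the coercive estimate $\Abs{\alpha}_{W^{k,p}} \lesssim \Abs{(\epsilon^2\Delta+m)\alpha}_{W^{k,p}}$ for $m=\abs{\Psi}^2\geq m_0$ (\autoref{Prop_EpsilonDelta+1}) applied to the identity \eqref{Eq_CuvatureHiggsEffect}: one climbs the integrability ladder ($\Abs{F_A}_{L^2}\Rightarrow \Abs{\Psi}_{W^{2,2}_A}\Rightarrow$ the quadratic term $(\nabla_A\Psi)(\nabla_A\Psi)^*$ in $L^3\Rightarrow \Abs{F_A}_{L^3}\Rightarrow \Abs{\Psi}_{W^{2,3}_A}\Rightarrow\dots$), so the quadratic nonlinearity is always estimated by norms already controlled at the previous, weaker level, and no self-referential $C^0$ loop ever arises. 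To repair your argument you would need to (i) import or prove the uniform $L^2$ bound on $F_{A_i}$ and (ii) replace the ``closes the loop'' sentence by an iteration of this type (or some other genuine mechanism breaking the quadratic circularity); the remaining parts of your proposal --- the use of \autoref{Thm_HW} to get the lower bound, the gauge fixing, and the passage to the limit showing $\mu_\bp(\Psi)=0$ and $\Abs{\Psi}_{L^2}=1$ --- are fine.
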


\begin{prop}
  \label{Prop_CompactifiedCobordismIsCompact}
  $\overline\fW$ is compact.
\end{prop}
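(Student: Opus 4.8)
The plan is to verify sequential compactness of $\overline\fW$ by hand, the cases being dictated by the behaviour of the normalising parameter $\epsilon$. Let $(t_i,\epsilon_i,[\Psi_i,A_i])$ be a sequence in $\overline\fW$; after passing to a subsequence $t_i\to t_\infty\in[0,1]$. Recall that $(\bp_t,\eta_t)_{t\in[0,1]}\in\bsQ^\reg\paren[\big]{(\bp_0,\eta_0),(\bp_1,\eta_1)}$ is fixed and that, by hypothesis, there is no singular harmonic $\Z_2$ spinor with respect to $\bp_t$ for any $t\in[0,1]$. After a further subsequence we may assume that either $\epsilon_i>0$ for all $i$ or $\epsilon_i=0$ for all $i$. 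The observation driving the argument is that when $\epsilon_i>0$ the pair $(\Psi_i/\epsilon_i,A_i)$ solves the Seiberg--Witten equation \eqref{Eq_PerturbedTwoSeibergWitten} with respect to $(\bp_{t_i},\eta_{t_i})$, with $\Abs{\Psi_i/\epsilon_i}_{L^2}=1/\epsilon_i$; so the dichotomy is whether $1/\epsilon_i$ stays bounded.

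Suppose $\epsilon_i>0$ for all $i$ and $\limsup_i 1/\epsilon_i<\infty$. Since the spinors $\Psi_i/\epsilon_i$ have uniformly bounded $L^2$--norm, the compactness theory for \eqref{Eq_PerturbedTwoSeibergWitten} recalled at the beginning of \autoref{Sec_Compactness} (the hypothesis of \autoref{Thm_HW} is violated) yields, after a subsequence and gauge transformations, a $C^\infty$ limit $(\Phi,A)$ solving \eqref{Eq_PerturbedTwoSeibergWitten} with respect to $(\bp_{t_\infty},\eta_{t_\infty})$ with $\Abs{\Phi}_{L^2}=\lim_i 1/\epsilon_i=:N_\infty$. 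If $N_\infty=0$ then $\Phi=0$, so $(0,A)$ is a reducible solution of \eqref{Eq_PerturbedTwoSeibergWitten}, contradicting \autoref{Def_RegularPath}\itref{Def_RegularPath_Unobstructed}. Hence $N_\infty>0$, $\epsilon_i\to\epsilon_\infty:=1/N_\infty\in(0,\infty)$, and $\bigl(t_\infty,\epsilon_\infty,[\epsilon_\infty\Phi,A]\bigr)\in\overline\fW$ is the limit of the subsequence. (In particular this also rules out $\epsilon_i\to\infty$.)

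Suppose now $\epsilon_i>0$ for all $i$ and $1/\epsilon_i\to\infty$, i.e.\ $\epsilon_i\to 0$. Applying \autoref{Thm_HW} to $(\Psi_i/\epsilon_i,A_i)$---whose associated rescaled spinor is exactly $\Psi_i$---gives, after a subsequence, a closed nowhere--dense $Z\subset M$, a section $\Psi$ of $\Hom(E,W)$ over $M\setminus Z$, and a flat connection $A$ on $\det W|_{M\setminus Z}$ with monodromy in $\Z_2$ solving \eqref{Eq_LimitingEquation}, with $\Abs{\Psi}$ Hölder continuous and $Z=\Abs{\Psi}^{-1}(0)$. The crux is that $Z=\emptyset$: otherwise, encoding the $\Z_2$--monodromy of $A$ in a Euclidean line bundle $\fl\to M\setminus Z$ and using the Haydys correspondence as in \autoref{Prop_SpecialHaydysCorrespondence}, the triple $(Z,\fl,\Psi)$ is a harmonic $\Z_2$ spinor with respect to $\bp_{t_\infty}$, and it is singular---for if $\fl$ extended over $M$, then $\Psi$ would extend to a nonzero harmonic spinor on all of $M$, valued in the Dirac bundle $\bS\otimes\fl\cong\Re(S_{\fs'}\otimes E)$ for the spin structure $\fs'$ obtained by twisting with $\fl$, and vanishing on $Z\ne\emptyset$; then $t_\infty$ would be one of the finitely many times with $\ker\slD^{\fs'}_{\bp_t}\ne 0$ (\autoref{Def_RegularPath}\itref{Def_RegularPath_TransverseSpectralFlow}), at which, by \autoref{Def_RegularPath}\itref{Def_RegularPath_PsiNowhereVanishing}, the kernel is spanned by a nowhere vanishing section, forcing $\Psi$ to be nowhere vanishing, a contradiction. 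So $(Z,\fl,\Psi)$ would be a singular harmonic $\Z_2$ spinor with respect to $\bp_{t_\infty}$, against our hypothesis; thus $Z=\emptyset$. Now \autoref{Prop_SmoothConvergenceIfZEmpty} applies to $(\epsilon_i,\Psi_i,A_i)$ and produces, after a subsequence and gauge transformations, a $C^\infty$ limit $(\Psi,A)$ with $\slD_{A,\bp_{t_\infty}}\Psi=0$, $\mu_{\bp_{t_\infty}}(\Psi)=0$ and $\Abs{\Psi}_{L^2}=1$; because $Z=\emptyset$ and $\Abs{\Psi_i}$ converges uniformly to a positive function, $\Psi$ is nowhere vanishing, so $(t_\infty,0,[\Psi,A])\in\overline\fW$ is the limit. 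It remains to treat $\epsilon_i=0$ for all $i$: each $\Psi_i$ is then a nowhere vanishing solution of \eqref{Eq_LimitingEquation} with $\Abs{\Psi_i}_{L^2}=1$, so by \autoref{Prop_SpecialHaydysCorrespondence} $\det W$ is trivial, $A_i$ is---modulo gauge---the product connection, and $\Psi_i$ spans a line in $\ker\slD^{\fs_i}_{\bp_{t_i}}$ for a spin structure $\fs_i$ inducing $\fw$; since $M$ carries only finitely many spin structures we may take $\fs_i=\fs'$ constant, and elliptic estimates for the family $\slD^{\fs'}_{\bp_{t_i}}$ give a $C^\infty$--convergent subsequence $\Psi_i\to\Psi\in\ker\slD^{\fs'}_{\bp_{t_\infty}}$ with $\Abs{\Psi}_{L^2}=1$, hence $\Psi\ne 0$, hence nowhere vanishing by \autoref{Def_RegularPath}\itref{Def_RegularPath_PsiNowhereVanishing}; so $(t_\infty,0,[\Psi,A])\in\overline\fW$ is the limit.

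The main obstacle is the step $Z=\emptyset$ in the case $\epsilon_i\to 0$. It has an analytic half---that a nontrivial $Z$ genuinely supports a harmonic $\Z_2$ spinor, which is \autoref{Thm_HW} together with the energy estimates behind it (these supply the $L^2$--bound on $\nabla\Psi$ required in \autoref{Def_HarmonicZ2Spinor})---and a topological half, that this spinor is \emph{singular}, which is exactly what \autoref{Def_RegularPath}\itref{Def_RegularPath_PsiNowhereVanishing} (via the simplicity of the spectral crossings of $\slD^{\fs'}_{\bp_t}$) rules out when combined with the standing no-singular-spinor hypothesis. The remaining ingredients---standard elliptic compactness for \eqref{Eq_PerturbedTwoSeibergWitten}, \autoref{Prop_SmoothConvergenceIfZEmpty}, and the fact that a blow-up $\epsilon_i\to\infty$ would yield a forbidden reducible---are routine, as is the bookkeeping in the $C^\infty$--topology on $\overline\fW$.
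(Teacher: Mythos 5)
Your proof is correct and follows essentially the same route as the paper's: the dichotomy on the behaviour of $\epsilon_i$, exclusion of $\epsilon_i\to\infty$ via the no-reducibles condition in \autoref{Def_RegularPath}, and, when $\epsilon_i\to 0$, \autoref{Thm_HW} together with the no-singular-harmonic-$\Z_2$-spinor hypothesis to force $Z=\emptyset$, followed by \autoref{Prop_SmoothConvergenceIfZEmpty}. You merely spell out two points the paper leaves implicit—handling sequences with $\epsilon_i=0$ directly rather than reducing to the dense subset $\fW$, and using \autoref{Def_RegularPath}\itref{Def_RegularPath_PsiNowhereVanishing} to rule out a non-singular limit with non-empty zero set—both of which are sound.
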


\begin{proof}
  We need to show that any sequence $(t_i,\epsilon_i,[\Psi_i,A_i])$ in $\fW$ has a subsequence which converges in $\overline\fW$.
  If $\liminf \epsilon_i > 0$, then this a consequence of standard elliptic estimates and Arzelà--Ascoli.
  It only needs to be pointed out that $\epsilon_i$ cannot tend to infinity, because otherwise there would be a reducible solution to \eqref{Eq_PerturbedTwoSeibergWitten} which is ruled out by \autoref{Def_RegularPath}\itref{Def_RegularPath_Unobstructed}.
  
  If $\liminf_{i\to\infty} \epsilon_i = 0$, then \autoref{Thm_HW} asserts that a gauge transformed subsequence of $(\Psi_i,A_i)$ converges weakly in $W^{2,2}_\loc\times W^{1,2}_\loc$ outside $Z$.
  If $Z$ is non-empty, then the limit represents a singular harmonic $\Z_2$ spinors.
  However, by assumption there are no singular harmonic $\Z_2$ spinors;
  hence, $Z$ is empty and \autoref{Prop_SmoothConvergenceIfZEmpty} asserts that a gauge transformed subsequence of $(\Psi_i,A_i)$ converges in $C^\infty$.
\end{proof}

The proof of \autoref{Prop_SmoothConvergenceIfZEmpty} relies on the following a priori estimate.

\begin{prop}
  \label{Prop_FromL2BoundsToCkBounds}
  For each $m_0 > 0$, there is an $\epsilon_0 > 0$ and, for each, $k \in \N$, a non-decreasing function $f_k \co [0,\infty) \to [0,\infty)$ such that the following holds:
  if $(\Psi,A,\epsilon) \in \Gamma(M,\Hom(E,W)) \times \sA(\det W) \times (0,\epsilon_0]$ satisfies \eqref{Eq_BlownUpSeibergWitten} and 
  \begin{equation*}
    \min \abs{\Psi}^2 \geq m_0,
  \end{equation*}
  then
  \begin{equation*}
    \Abs{\Psi}_{C^k_A} + \Abs{F_A}_{C^k} \leq f_k\(\Abs{F_A}_{L^2} + \Abs{\eta}_{C^k} + \Abs{F_B}_{C^k} + \Abs{R_g}_{C^k}\).
  \end{equation*}
  Here $\Abs{\Psi}_{C^k_A} \coloneq \sum_{i = 0}^k \Abs{\nabla_A^i \Psi}_{L^{\infty}}$ and $R_g$ denotes the Riemann curvature tensor of the metric $g$.
\end{prop}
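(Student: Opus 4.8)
We regard $\Abs{F_A}_{L^2},\Abs{\eta}_{C^k},\Abs{F_B}_{C^k},\Abs{R_g}_{C^k}$ as prescribed and allow $f_k$ to depend on $m_0$, on $k$, and on the (fixed) manifold $M$. The proof is an elliptic bootstrap for $(\Psi,A)$. Its one non-routine feature is that the curvature equation in \eqref{Eq_BlownUpSeibergWitten} ties $F_A$ to $\Psi$ through a factor $\epsilon^{-2}$, which a priori destroys uniformity as $\epsilon\to 0$. The role of $\min\abs{\Psi}^2\geq m_0$ and $\epsilon\leq\epsilon_0=\epsilon_0(m_0)$ is this: $\mu_\bp$ vanishes precisely on the subbundle $\Re(S_\fs\otimes E)\subset\Hom(E,W)$ and --- by \autoref{Prop_DPsi}, in which $\tilde\fa_\Psi$ is shown to be an isometry onto $\Im(S_\fs\otimes E)$ --- is a submersion transverse to it on $\set{\abs{\Psi}^2\geq m_0}$; hence a solution with $\min\abs{\Psi}^2\geq m_0$ and $\epsilon$ small is confined to a thin tube around $\Re(S_\fs\otimes E)$, on which the system is a bounded perturbation of the $\epsilon$-free equation $\slD_\bp^\fs\Psi=0$.

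\textbf{Step 1: a priori $L^2$ bounds.}
Applying the Bochner--Weitzenböck formula to $\slD_{A,\bp}\Psi=0$, substituting $\tfrac12 F_A=\epsilon^{-2}\mu_\bp(\Psi)-\eta$, pairing with $\Psi$ and integrating over $M$, and using that $\inner{\mu_\bp(\Psi)\Psi}{\Psi}$ is a positive multiple of $\abs{\mu_\bp(\Psi)}^2$, that the Laplacian term integrates to zero, and that $\Abs{\Psi}_{L^2}=1$, we get
\[
  \Abs{\nabla_A\Psi}_{L^2}^2+\frac{c}{\epsilon^2}\Abs{\mu_\bp(\Psi)}_{L^2}^2\;\leq\;C\bigl(\Abs{R_g}_{C^0}+\Abs{\eta}_{C^0}+\Abs{F_B}_{C^0}\bigr)
\]
for universal $c,C>0$; in particular $\Abs{\nabla_A\Psi}_{L^2}$ is controlled. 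Directly from the curvature equation, $\Abs{\mu_\bp(\Psi)}_{L^2}=\epsilon^2\Abs{\tfrac12 F_A+\eta}_{L^2}\leq C\epsilon^2\bigl(\Abs{F_A}_{L^2}+\Abs{\eta}_{C^0}\bigr)$, so $\mu_\bp(\Psi)$ is $O(\epsilon^2)$ in $L^2$. Finally, fixing a reference connection $A_0\in\sA(\det W)$ and writing $A=A_0+a$ with $\rd^*a=0$ (Coulomb gauge), the identity $\rd a=F_A-F_{A_0}$ and elliptic estimates for $\rd+\rd^*$ give $\Abs{a}_{W^{1,2}}\leq C(\Abs{F_A}_{L^2}+1)$; feeding this into $\slD_{A_0,\bp}\Psi=-\gamma(a)\Psi$ and running the usual Seiberg--Witten Sobolev iteration controls $\Abs{\Psi}_{C^0}$. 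All constants here depend only on $M$ and $\Abs{R_g}_{C^1}$.

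\textbf{Step 2: taming $\epsilon^{-2}$ and bootstrapping.}
This is the heart of the matter. Write $\Psi=\Psi^\Re+\Psi^\Im$ for the decomposition coming from the real structure on $S_\fs\otimes E$. Combining the $C^0$-bound on $\Psi$, the $L^2$-smallness $\Abs{\mu_\bp(\Psi)}_{L^2}=O(\epsilon^2)$ from Step 1, and the Dirac equation --- which, written relative to the product connection on $\det W$, interchanges the real and imaginary parts under multiplication by $i$ and hence couples $\Psi^\Re$ and $\Psi^\Im$ --- one shows that, for $\epsilon\leq\epsilon_0(m_0)$, the section $\Psi$ lies pointwise inside the tube around $\Re(S_\fs\otimes E)$ on which $\mu_\bp$ is a submersion (\autoref{Prop_DPsi}), with $\abs{\Psi^\Im}\leq C\epsilon^2$ and, crucially, $\epsilon^{-2}\Psi^\Im$ --- equivalently $F_A=2\epsilon^{-2}\mu_\bp(\Psi)-2\eta$ --- bounded in $C^0$ by the prescribed data. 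With $F_A$ under control, $\Psi^\Re$ satisfies the fixed first-order elliptic equation $\slD_\bp^\fs\Psi^\Re=r$, where $r$ is a universal expression in $a$, $\Psi^\Im$ and their first derivatives; iterating the elliptic estimates for $\slD_\bp^\fs$, $\slD_{A_0,\bp}$ and $\rd+\rd^*$ --- every appearance of $\epsilon^{-2}$ being paired with $\Psi^\Im$ or one of its derivatives and therefore absorbed --- one raises in turn the regularity of $\Psi^\Re$, of $a$, of $\epsilon^{-2}\Psi^\Im$ and of $F_A$, closing the induction and yielding $\Abs{\Psi}_{C^k_A}+\Abs{F_A}_{C^k}\leq f_k\bigl(\Abs{F_A}_{L^2}+\Abs{\eta}_{C^k}+\Abs{F_B}_{C^k}+\Abs{R_g}_{C^k}\bigr)$, with the elliptic constants depending on $M$ and $\Abs{R_g}_{C^k}$.

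\textbf{Main obstacle.}
The genuine difficulty is the passage from ``$\mu_\bp(\Psi)$ small'' to ``$F_A$ controlled'': a priori the factor $\epsilon^{-2}$ in the curvature equation gives no such implication, and it is precisely the uniform non-vanishing $\min\abs{\Psi}^2\geq m_0$ (together with $\epsilon\leq\epsilon_0(m_0)$) that rescues it, by confining $\Psi$ to the neighbourhood of $\Re(S_\fs\otimes E)$ where $\mu_\bp$ is a submersion transverse to its zero set --- so that $\Psi^\Im=O(\epsilon^2)$ and the equations become a bounded perturbation of the $\epsilon$-free real Dirac equation. Turning this heuristic into a quantitative estimate, which forces a simultaneous use of the Dirac equation and the curvature constraint, is the technical core of the proof. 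A secondary, routine, point --- uniformity of the Sobolev, elliptic and Weitzenböck constants as $g$ ranges over $\Met(M)$ --- is absorbed into the dependence of $f_k$ on $\Abs{R_g}_{C^k}$ and the fixed $M$.
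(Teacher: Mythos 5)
Your Step 2 contains a genuine gap, and it sits exactly at the point you flag as the technical core. The assertion that ``one shows that $\abs{\Psi^\Im}\le C\epsilon^2$ pointwise, and hence $F_A=2\epsilon^{-2}\mu_\bp(\Psi)-2\eta$ is bounded in $C^0$'' does not follow from the ingredients you list (the $C^0$ bound on $\Psi$, the $L^2$ bound $\Abs{\mu_\bp(\Psi)}_{L^2}=O(\epsilon^2)$, and the Dirac equation). Passing from $L^2$ to pointwise control of $\epsilon^{-2}\mu_\bp(\Psi)$ requires an elliptic estimate for $F_A$ (or for $\Psi^\Im$), and every natural equation for these quantities has the unknown $F_A=\epsilon^{-2}\bigl(2\mu_\bp(\Psi)\bigr)-2\eta$ back in its coefficients, so the argument as written is circular: you use the $C^0$ bound on $F_A$ to justify the bootstrap that is supposed to produce it. There is also a structural problem with the decomposition $\Psi=\Psi^\Re+\Psi^\Im$: the subbundle $\Re(S_\fs\otimes E)\subset\Hom(E,W)$ only exists after one knows $\det W$ is trivial, fixes a spin structure $\fs$, and puts $A$ in a gauge close to the product connection --- all of which are conclusions of the Haydys correspondence (\autoref{Prop_SpecialHaydysCorrespondence}) in the limit $\epsilon=0$, not hypotheses available for a solution of \eqref{Eq_BlownUpSeibergWitten} at finite $\epsilon$ with an arbitrary spin$^c$ structure $\fw$ and an arbitrary connection $A$ whose curvature is precisely what you are trying to bound.

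What is missing is a coercive equation for $F_A$ that is uniform in $\epsilon$, and this is what the paper supplies. Applying $\rd\rd^*+\rd^*\rd$ to the curvature equation and invoking the identity \eqref{Eq_CuvatureHiggsEffect} turns the system into
\begin{equation*}
  (\epsilon^2\Delta+\abs{\Psi}^2)\,F_A \;=\; \text{(terms quadratic in $\nabla_A\Psi$ and curvatures)} \;+\;\epsilon^2\Delta\,(\text{known data}),
\end{equation*}
whose right-hand side is controlled in $L^3$, then in $L^p$, by the Dirac elliptic estimate \autoref{Prop_DiracEstimate}; the hypothesis $\min\abs{\Psi}^2\ge m_0$ enters only here, as a mass term, through the uniform resolvent estimate $\Abs{\alpha}_{W^{k,p}}\le c\,\Abs{(\epsilon^2\Delta+m)\alpha}_{W^{k,p}}$ for $m\ge m_0$ and $\epsilon\le\epsilon_0$ (\autoref{Prop_EpsilonDelta+1}). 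Alternating these two estimates bootstraps $\Abs{F_A}$ and $\Abs{\Psi}$ up to the stated $C^k$ bounds. Your proposal correctly identifies where the difficulty lies and why $m_0$ and $\epsilon_0(m_0)$ should matter, but it offers no substitute for this ``Higgs mass'' mechanism, so the central estimate is asserted rather than proved.
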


The key ingredient for the proof of the above a priori estimate are the identity
\begin{equation}
  \label{Eq_CuvatureHiggsEffect} 
  \begin{split}
    (\rd^*\rd+\rd\rd^*)\mu(\Psi)+ \abs{\Phi}^2F_A
    &=
      \sum_{i,j=1}^3
      \frac12\rho^*\paren*{\paren[\big]{(F^B_{ij}+F^\fw_{ij}) \cdot \Phi}\Phi^*} e^{ij} \\
    &\qquad\qquad
      + \rho^*\paren*{(\nabla^A_j\Phi)(\nabla^A_i\Phi)^*} e^{ij},
  \end{split}
\end{equation}
whose proof can be found in \cite[Proposition A.7]{Doan2017a}, and the following two propositions.

\begin{prop}
  \label{Prop_DiracEstimate}
  Let $k \in \N_0, p \geq 2$.
  There is a monotone function $f_{k,p} \co [0,\infty) \to [0,\infty)$ such that for every $(\Psi,A) \in \Gamma(M,\Hom(E,W))\times\sA(\det W)$ solving
  \begin{equation*}
    \slD_{A,\bp}\Psi = 0
  \end{equation*}
  we have
  \begin{equation*}
    \Abs{\Psi}_{W^{k+2,p}_A}
    \lesssim
      f_{k,p}\(\Abs{F_A}_{W^{k,p}} + \Abs{\Psi}_{L^\infty} + \Abs{F_B}_{C^{k+1}} + \Abs{R_g}_{C^{k+1}}\).
  \end{equation*}
  Here $\Abs{\Psi}_{W^{k,p}_A} \coloneq \sum_{i=0}^{k} \Abs{\nabla_A^i \Psi}_{L^p}$.
\end{prop}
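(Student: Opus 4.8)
The plan is to bootstrap from the Dirac equation $\slD_{A,\bp}\Psi = 0$ using the Weitzenböck formula and standard $L^p$ elliptic estimates for the Dirac Laplacian, treating the curvature terms $F_A$, $F_B$, $R_g$ as the "forcing" data. First I would recall the Weitzenböck identity for $\slD_{A,\bp}$ on $\Hom(E,W)$: one has $\slD_{A,\bp}^*\slD_{A,\bp} = \nabla_A^*\nabla_A + \mathcal{R}$, where $\mathcal{R}$ is a zeroth-order curvature term built from the scalar curvature of $g$, the curvature $F_B$ of $B$, and the curvature $F^\fw_A$ of the spin$^c$ connection determined by $A$ (hence controlled by $F_A$ together with fixed topological data). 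Since $\slD_{A,\bp}\Psi = 0$, we get $\nabla_A^*\nabla_A\Psi = -\mathcal{R}\Psi$, so
\begin{equation*}
  \Abs{\nabla_A^*\nabla_A\Psi}_{L^p} \lesssim \paren*{\Abs{F_A}_{L^p} + \Abs{F_B}_{C^0} + \Abs{R_g}_{C^0}}\Abs{\Psi}_{L^\infty}.
\end{equation*}
The elliptic estimate for the (generalized) Laplacian $\nabla_A^*\nabla_A$ — which is gauge-invariant and depends on the geometry only through bounded amounts of $g$, $B$ — then yields $\Abs{\Psi}_{W^{2,p}_A} \lesssim \Abs{\nabla_A^*\nabla_A\Psi}_{L^p} + \Abs{\Psi}_{L^p}$, and since $M$ is closed $\Abs{\Psi}_{L^p} \lesssim \Abs{\Psi}_{L^\infty}$. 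This settles the base case $k = 0$.

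For the inductive step I would differentiate. Apply $\nabla_A^j$ to $\slD_{A,\bp}\Psi = 0$ and commute derivatives past $\slD_{A,\bp}$: each commutator $[\nabla_A, \slD_{A,\bp}]$ produces a term that is algebraic in the curvatures $F_A, F_B, R_g$ (and their covariant derivatives, up to order $j-1$) contracted with lower-order covariant derivatives of $\Psi$. Schematically, $\slD_{A,\bp}(\nabla_A^k\Psi) = \sum_{i+l\le k,\, i\ge 1} (\nabla^{i-1}\mathrm{Curv}) * \nabla_A^l\Psi$, where $\mathrm{Curv}$ stands for $F_A$, $F_B$, or $R_g$. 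Applying the $W^{k+1,p}\to W^{k+2,p}$ elliptic estimate for $\slD_{A,\bp}$ and then Hölder/Sobolev multiplication to bound the right-hand side by $\Abs{F_A}_{W^{k,p}}$ together with $C^{k+1}$-bounds on the fixed data $F_B, R_g$ and the inductively controlled quantities $\Abs{\Psi}_{W^{k+1,p}_A}$, one closes the induction. The monotone function $f_{k,p}$ is then assembled by composing the polynomial bounds coming from the multiplication inequalities at each stage; monotonicity in each argument is automatic since every estimate is an increasing combination of norms.

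The main technical obstacle is the gauge-dependence bookkeeping: $\Psi$ and $A$ are not gauge-fixed in the statement, so every estimate used must be manifestly gauge-invariant (which is why one works with $\nabla_A$, $F_A$, and the covariant Sobolev norms $\Abs{\cdot}_{W^{k,p}_A}$ rather than with component functions), and the elliptic estimates for $\slD_{A,\bp}$ and $\nabla_A^*\nabla_A$ must be stated with constants depending only on bounded amounts of the underlying geometry $(g, B)$ — which is legitimate here because $\bp$ ranges over a fixed compact family in applications and, more to the point, because the dependence is folded into the $C^{k+1}$-norms $\Abs{F_B}_{C^{k+1}}$, $\Abs{R_g}_{C^{k+1}}$ appearing in $f_{k,p}$. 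A secondary point requiring a little care is the Sobolev multiplication when $p$ is close to $2$ and $\dim M = 3$: one needs $W^{k,p}$ to be a module under multiplication by $W^{k+1,p}$, which holds since $W^{2,p}(M^3) \hookrightarrow C^0$ for $p\ge 2$, so the lowest-order factors are always in $L^\infty$ and the product estimates go through without a borderline loss.
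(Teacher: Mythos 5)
There is a genuine gap, and it sits exactly at the step you wave through as ``standard'': the claim that the $L^p$ elliptic estimate
$\Abs{\Psi}_{W^{2,p}_A} \lesssim \Abs{\nabla_A^*\nabla_A\Psi}_{L^p} + \Abs{\Psi}_{L^p}$
(and, in the inductive step, the analogous $W^{k+1,p}_A \to W^{k+2,p}_A$ estimate for $\slD_{A,\bp}$) holds in the covariant Sobolev norms with a constant depending only on $(g,B)$, uniformly in $A$. No such uniform estimate is available to cite, and as stated it cannot be right: the commutator $[\nabla_A,\nabla_A]$ is $F_A$ (plus background curvature), so any control of second and higher covariant derivatives of $\Psi$ by $\nabla_A^*\nabla_A\Psi$ or $\slD_A\Psi$ must involve curvature bounds on $A$, which in your scheme appear nowhere in the constant --- you only fold in $\Abs{F_B}_{C^{k+1}}$ and $\Abs{R_g}_{C^{k+1}}$, which are curvatures of the \emph{fixed} background, not of $A$. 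For $p=2$ one can partially rescue the base case by integrating by parts in the Bochner identity (the commutator terms $F_A * \nabla_A\Psi * \nabla_A\Psi$ and $F_A * \Psi * \nabla_A^2\Psi$ can be absorbed at the cost of constants depending on $\Abs{F_A}_{L^2}$ and $\Abs{\Psi}_{L^\infty}$), but for general $p \geq 2$, and for all the higher-order steps of your induction, the only standard route to $L^p$ estimates for $\nabla_A^*\nabla_A$ or $\slD_A$ with controlled constants is to fix a gauge so that the connection form itself, not just its curvature, is bounded --- which is precisely the manoeuvre you declare unnecessary on gauge-invariance grounds. Gauge invariance of the \emph{conclusion} does not exempt the \emph{proof} from gauge fixing.

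This is in fact how the paper argues: since $\det W$ is a line bundle, Hodge theory puts $A = A_0 + a$ in Coulomb gauge relative to a fixed reference connection, with $\Abs{a}_{W^{k+1,p}} \lesssim \Abs{F_A}_{W^{k,p}}$ (this is where the $\Abs{F_A}_{W^{k,p}}$-dependence of $f_{k,p}$ really enters); the equation is rewritten as $\slD_{A_0,\bp}\Psi = -\bar\gamma(a)\Psi$, so all elliptic estimates are for the \emph{fixed} operator $\slD_{A_0,\bp}$, whose constants legitimately depend only on $\Abs{F_B}_{C^k}$ and $\Abs{R_g}_{C^k}$, and the nonlinearity is handled by Sobolev multiplication together with a short integrability bootstrap ($W^{1,2}_{A_0} \to W^{1,3}_{A_0} \to W^{1,q}_{A_0}$ with $q>3$) to get the product estimates started. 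If you want to keep your Weitzenböck route, you must either prove the covariant $L^p$ estimates you invoke with explicit $\Abs{F_A}$-dependence (for $p\neq 2$ this again forces local Coulomb gauges), or switch to the reference-connection formulation; as written, the proposal assumes the hard part.
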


\begin{proof}
  Fix a smooth reference connection $A_0$.
  By Hodge theory, after applying a gauge transformation, we can write $A = A_0 + a$ with $\rd^*a = 0$ and
  \begin{equation*}
    \Abs{a}_{W^{k+1,p}}
    \lesssim
    \Abs{F_A}_{W^{k,p}}
  \end{equation*}
  with the implicit constant depending on $k$, $p$, and $\Abs{R_g}_{C^k}$.
  It suffices to estimate $\Abs{\Psi}_{W^{k+2,p}_{A_0}}$,
  since
  \begin{align*}
    \Abs{(\nabla_{A_0}+a)^2\Phi}_{L^p}
    &\lesssim
      \Abs{\nabla_{A_0}^2\Phi}_{L^p}
      + \Abs{\abs{a}\abs{\nabla_{A_0}\Phi}}_{L^p} 
      + \Abs{\abs{a}^2\abs{\Phi}}_{L^p}
      + \Abs{\abs{\nabla a}\abs{\Phi}}_{L^p} \\
    &\lesssim
      \Abs{\nabla_{A_0}^2\Phi}_{L^p}
      + \Abs{a}_{L^{2p}}\Abs{\nabla_{A_0}\Phi}_{L^{2p}} 
      + \Abs{a}_{L^{2p}}^2\Abs{\Phi}_{L^\infty}
      + \Abs{\nabla a}_{L^p}\Abs{\Phi}_{L^\infty} \\
    &\lesssim
      \(1+\Abs{a}_{W^{1,p}}\)^2\cdot\Abs{\Psi}_{W^{2,p}_{A_0}}.
  \end{align*}
  by Hölder's inequality, Sobolev embedding, and Morrey's inequality;
  more generally,
  \begin{equation*}
    \Abs{\Psi}_{W^{k+2,p}_A}
    \lesssim
      \(1+\Abs{a}_{W^{k+1,p}}\)^{k+2}\cdot\Abs{\Psi}_{W^{k+2,p}_{A_0}}.
  \end{equation*}

  The Dirac equation can be written as
  \begin{equation*}
    \slD_{A_0,\bp} \Psi = -\bar\gamma(a)\Psi.
  \end{equation*}
  By standard elliptic estimates
  \begin{equation}
    \label{Eq_DiracEllipticEstimate}
    \Abs{\Psi}_{W^{\ell+1,q}_{A_0}}
    \lesssim
      \Abs{\bar\gamma(a)\Psi}_{W^{\ell,q}_{A_0}}
      + \Abs{\Psi}_{L^\infty}
  \end{equation}
  for any $\ell \in \N_0$ and $q \in (1,\infty)$ with the implicit constant depending on $\ell$, $q$, $\Abs{F_B}_{C^\ell}$, and $\Abs{R_g}_{C^\ell}$.
  By the Sobolev Multiplication Theorem,%
  \footnote{%
    The Sobolev Multiplication Theorem asserts that $\Abs{fg}_{W^{k,p}} \lesssim \Abs{f}_{W^{k_1,p_1}}\Abs{g}_{W^{k_2,p_2}}$ provided $k-\frac{3}{p} < k_1-\frac{3}{p_1} + k_2-\frac{3}{p_2}$ and $k \leq \min\set{k_1,k_2}$.
  }
  we have
  \begin{equation*}
    \Abs{\bar\gamma(a)\Psi}_{W^{k+1,p}_{A_0}}
    \lesssim
    \Abs{a}_{W^{k+1,p}}
    \Abs{\Psi}_{W^{k+1,q}_{A_0}}
  \end{equation*}
  provided $(k+1)q > 3$.
  Therefore, to bound $\Abs{\Psi}_{W^{k+2,p}_{A_0}}$ in terms of
  \begin{equation*}
    \spadesuit_k \coloneq \Abs{F_A}_{W^{k,p}} + \Abs{\Psi}_{L^\infty} + \Abs{F_B}_{C^k} + \Abs{R_g}_{C^k},
  \end{equation*}
  it suffices to show that for some $q$ satisfying $(k+1)q>3$, the norm $\Abs{\Psi}_{W^{k+1,q}_{A_0}}$ can be bounded in terms of $\spadesuit_k$.
  If $k \geq 1$ or $k = 0$ and $q \geq 3$, such an estimate can be assumed by induction.
  Hence, we only need to prove that $\Abs{\Psi}_{W^{1,q}_{A_0}}$ is bounded in terms of $\spadesuit_k$ for some $q > 3$.

  By \eqref{Eq_DiracEllipticEstimate}, $\Abs{\Psi}_{W^{1,2}_{A_0}}$ can be bounded in terms of $\spadesuit_k$.
  Consequently, we have a bound on $\Abs{\gamma(a)\Psi}_{L^3(B_r)}$ and thus, using \eqref{Eq_DiracEllipticEstimate} again, a bound on $\Abs{\Psi}_{W^{1,3}_{A_0}}$ in terms of $\spadesuit_k$.
  Since $W^{1,3} \into L^p$ for any $p \in [1,\infty)$, it follows that $\Abs{\gamma(a)\Psi}_{L^q} \leq \Abs{a}_{L^6}\Abs{\Psi}_{L^p}$ is bounded for any $q$ with $1/q = 1/6 + 1/p$.
  This then implies the desired bound on $\Abs{\Psi}_{W^{1,q}_{A_0}}$ for any $q \in [1,6)$.
\end{proof}

\begin{prop}
  \label{Prop_EpsilonDelta+1}
  Let $k \in \N, p \in [2,\infty)$.
  Let $m \in C^\infty(M)$ with $m_0 \coloneq \min m > 0$.
  There are constants $c > 0$ and $\epsilon_0 > 0$ depending on $k$, $p$, $m_0$, $\Abs{m}_{C^k}$, and $\Abs{R_g}_{C^k}$ such that for all $\epsilon \in (0,\epsilon_0)$
  \begin{equation*}
    \Abs{\alpha}_{W^{k,p}}
    \leq
      c \Abs{(\epsilon^2\Delta + m)\alpha}_{W^{k,p}}
  \end{equation*}
  for all $\alpha \in \Omega^*(M)$. 
  We use here the positive-definite Hodge laplacian $\Delta = \rd \rd^* + \rd^* \rd$.
\end{prop}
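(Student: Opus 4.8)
The plan is to prove the estimate by induction on $k$, the engine being a single, curvature-free $L^{p}$ bound for operators of the form $\epsilon^{2}\nabla^{*}\nabla+Q$ with $Q$ a pointwise positive self-adjoint bundle endomorphism. The starting point is the Weitzenb\"ock formula $\Delta=\nabla^{*}\nabla+\mathcal R$, where $\nabla$ is the Levi--Civita connection on $\Lambda^{*}T^{*}M$ and $\mathcal R$ is a self-adjoint curvature endomorphism with $\|\mathcal R\|_{C^{j}}\lesssim\|R_{g}\|_{C^{j}}$. Hence $\epsilon^{2}\Delta+m=\epsilon^{2}\nabla^{*}\nabla+\tilde m$ with $\tilde m:=m\cdot\id+\epsilon^{2}\mathcal R$; once $\epsilon_{0}^{2}\|\mathcal R\|_{C^{0}}\le m_{0}/2$ we have $\tilde m\ge(m_{0}/2)\,\id$ pointwise, while $\|\tilde m\|_{C^{j}}\lesssim\|m\|_{C^{j}}+\|R_{g}\|_{C^{j}}$. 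All constants below may depend on $k,p,m_{0},\|m\|_{C^{k}},\|R_{g}\|_{C^{k}}$ but never on $\epsilon\in(0,\epsilon_{0}]$, and $\epsilon_{0}$ is taken at the end to be the smallest of the finitely many thresholds that arise.

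The lemma at the heart of the argument is: if $V$ is a Riemannian vector bundle over the closed manifold $M$ with metric connection $\nabla$, and $Q\in\Gamma(\End V)$ is self-adjoint with $Q\ge q_{0}\,\id$ for some $q_{0}>0$, then $\|\beta\|_{L^{p}}\le q_{0}^{-1}\|(\epsilon^{2}\nabla^{*}\nabla+Q)\beta\|_{L^{p}}$ for all $\beta\in\Gamma(V)$ and all $\epsilon$. I would prove this by pairing $(\epsilon^{2}\nabla^{*}\nabla+Q)\beta$ with $|\beta|^{p-2}\beta$ and integrating: integration by parts and the pointwise identity $\langle\nabla\beta,\nabla(|\beta|^{p-2}\beta)\rangle=|\beta|^{p-2}|\nabla\beta|^{2}+(p-2)|\beta|^{p-4}\sum_{i}\langle\nabla_{e_{i}}\beta,\beta\rangle^{2}\ge0$ (here $p\ge2$ is used) give $\int_{M}\langle\nabla^{*}\nabla\beta,|\beta|^{p-2}\beta\rangle\ge0$, whence $q_{0}\|\beta\|_{L^{p}}^{p}\le\int_{M}\langle Q\beta,|\beta|^{p-2}\beta\rangle\le\int_{M}\langle(\epsilon^{2}\nabla^{*}\nabla+Q)\beta,|\beta|^{p-2}\beta\rangle\le\|(\epsilon^{2}\nabla^{*}\nabla+Q)\beta\|_{L^{p}}\|\beta\|_{L^{p}}^{p-1}$. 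A routine regularization, replacing $|\beta|$ by $(|\beta|^{2}+\delta)^{1/2}$ and letting $\delta\downarrow0$, handles the zero set of $\beta$. Crucially, the constant $q_{0}^{-1}$ depends neither on the curvature of $V$ nor on $\|Q\|$ nor on $\epsilon$.

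Applying the lemma with $V=\Lambda^{*}T^{*}M$, $Q=\tilde m$, $q_{0}=m_{0}/2$ gives the case $k=0$: $\|\alpha\|_{L^{p}}\le(2/m_{0})\|f\|_{L^{p}}$, where $f:=(\epsilon^{2}\Delta+m)\alpha=(\epsilon^{2}\nabla^{*}\nabla+\tilde m)\alpha$. For the inductive step assume $\|\alpha\|_{W^{k-1,p}}\lesssim\|f\|_{W^{k-1,p}}$ for all $\alpha$ and $\epsilon\in(0,\epsilon_{0}]$. Apply $\nabla^{k}$ to the equation and commute it past both summands: on $V_{k}:=(T^{*}M)^{\otimes k}\otimes\Lambda^{*}T^{*}M$, with its induced metric connection and with $\tilde m$ (acting in the last tensor factor) still satisfying $\tilde m\ge(m_{0}/2)\,\id$, one gets $(\epsilon^{2}\nabla^{*}\nabla+\tilde m)\nabla^{k}\alpha=\nabla^{k}f-[\nabla^{k},\tilde m]\alpha-\epsilon^{2}[\nabla^{k},\nabla^{*}\nabla]\alpha$. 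Here $[\nabla^{k},\tilde m]$ is a differential operator of order $\le k-1$ with coefficients controlled by $\|\tilde m\|_{C^{k}}$; and from the telescoping $\nabla^{k}\circ(\nabla^{*}\nabla)-(\nabla^{*}\nabla)\circ\nabla^{k}=\sum_{j=0}^{k-1}\nabla^{j}\circ[\nabla,\nabla^{*}\nabla]\circ\nabla^{k-1-j}$ together with the fact that $[\nabla,\nabla^{*}\nabla]$ has order $\le1$ (it is a curvature term, vanishing on flat space), one reads off that $[\nabla^{k},\nabla^{*}\nabla]$ has order $\le k$, with order-$k$ coefficient linear in $R_{g}$ and lower-order coefficients controlled by $\|R_{g}\|_{C^{k-1}}$. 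Applying the lemma to $\nabla^{k}\alpha\in\Gamma(V_{k})$ with $q_{0}=m_{0}/2$ and inserting these commutator bounds gives $\|\nabla^{k}\alpha\|_{L^{p}}\le(2/m_{0})\bigl(\|\nabla^{k}f\|_{L^{p}}+C\|\alpha\|_{W^{k-1,p}}+C\epsilon^{2}\|R_{g}\|_{C^{0}}\|\nabla^{k}\alpha\|_{L^{p}}\bigr)$; the first term is $\le\|f\|_{W^{k,p}}$, the second is $\lesssim\|f\|_{W^{k-1,p}}\le\|f\|_{W^{k,p}}$ by the inductive hypothesis, and the third is absorbed into the left-hand side once $\epsilon_{0}$ is small enough (depending on $m_{0}$ and $\|R_{g}\|_{C^{0}}$). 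Combined with the inductive hypothesis this yields $\|\alpha\|_{W^{k,p}}\lesssim\|f\|_{W^{k,p}}$, which is the proposition.

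\textbf{The main obstacle} is precisely the uniformity in $\epsilon$, and both of its aspects are settled by the setup above: absorbing the Weitzenb\"ock correction $\epsilon^{2}\mathcal R$ into the positive zeroth-order term keeps the lemma's constant equal to $2/m_{0}$, independent of $\epsilon$; and the (perhaps mildly surprising) fact that $[\nabla^{k},\nabla^{*}\nabla]$ has order $k$ rather than the naive $k+1$ means that the only commutator term in the induction that involves $k$ derivatives of $\alpha$ carries an extra factor $\epsilon^{2}$ and is therefore absorbable for small $\epsilon$ -- so no $\epsilon$-weighted higher Sobolev norms, and no Calder\'on--Zygmund theory, are needed. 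Everything else -- the testing identity, the telescoping commutator formula, the Sobolev bookkeeping, and the verification that $\epsilon_{0}$ and the final constant depend only on $k,p,m_{0},\|m\|_{C^{k}},\|R_{g}\|_{C^{k}}$ -- is routine.
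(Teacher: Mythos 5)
Your proof is correct and follows essentially the same route as the paper's: the Weitzenböck formula together with pairing against $\abs{\alpha}^{p-2}\alpha$ and integration by parts for $k=0$, then induction on $k$ via commutator estimates. The only substantive difference is in the inductive step, where you differentiate with the full covariant derivative and exploit that $[\nabla^k,\nabla^*\nabla]$ has order $k$ with its top-order coefficient carrying an extra factor of $\epsilon^2$ (hence absorbable for $\epsilon$ small); this is in fact a more careful accounting than the paper's assertion that $[\epsilon^2\Delta+m,\del_{v_1}\cdots\del_{v_k}]$ has order $k-1$.
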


\begin{proof}
  Denote by $\fR$ the curvature operator appearing in the Weitzenböck formula $\Delta \alpha= \nabla^*\nabla \alpha + \fR \alpha$.
  If $\epsilon \ll 1$, then by integration by parts we obtain
  \begin{align*}
    \int_M
      \inner{(\epsilon^2\Delta + m)\alpha}{\alpha} \abs{\alpha}^{p-2}
    &=
      \int_M
        \epsilon^2 \abs{\nabla a}^2\abs{a}^{p-2}
        + \frac{\epsilon^2(p-2)}{4} \abs*{\nabla_A \abs{a}^2}^2\abs{a}^{p-4} \\
    &\qquad
        + \epsilon^2 \inner{\fR\alpha}{\alpha}\abs{\alpha}^{p-2}
        + m \abs{\alpha}^p \\
    &\geq
      \int_M \frac{m_0}{2}\abs{\alpha}^p.
  \end{align*}
  By Young's inequality and with $q = p/(p-1)$, we have
  \begin{align*}
    \int_M
      \inner{(\epsilon^2\Delta + m)\alpha}{\alpha} \abs{\alpha}^{p-2}
    &\leq
      \int_M
        \abs{(\epsilon^2\Delta + m)\alpha}\abs{\alpha}^{p-1} \\
    &\leq
      \int \frac{\delta^{-p}}{p} \abs{(\epsilon^2\Delta + m)\alpha}^p + \frac{\delta^q}{q}\abs{\alpha}^p
  \end{align*}
  for all $\delta > 0$.
  Choosing $\delta$ sufficient small, yields
  \begin{equation*}
    \int_M \abs{\alpha}^p
    \lesssim
      \int_M \abs{(\Delta+m)\alpha}^p;
  \end{equation*}
  that is, the desired estimate for $k = 0$.

  If $v_1,\ldots,v_k$ are vector fields on $M$, then the commutator $[\epsilon^2\Delta+m,\del_{v_1}\cdots\del_{v_k}]$ is a differential operator of order $k-1$ with coefficients depending on only on the first $k$ derivatives of $m$ and $R_g$.
  Therefore, the estimates for $k > 0$ follows from the one for $k = 0$ using
  \begin{equation*}
    (\epsilon^2\Delta+m)(\del_{v_1}\cdots\del_{v_k} \alpha)
    = \del_{v_1}\cdots\del_{v_k} (\epsilon^2\Delta+m)\alpha
      + [\epsilon^2\Delta+m,\del_{v_1}\cdots\del_{v_k}]\alpha
  \end{equation*}
  and induction.
\end{proof}

\begin{proof}[Proof of \autoref{Prop_FromL2BoundsToCkBounds}]
  It follows from \cite[Proposition 2.1]{Haydys2014} that $\Abs{\Psi}_{L^\infty}$ can be bounded in terms of $\clubsuit_k \coloneq \Abs{F_A}_{L^2} + \Abs{\eta}_{C^k} + \Abs{F_B}_{C^{k+1}} + \Abs{R_g}_{C^{k+1}}$.
  From \autoref{Prop_DiracEstimate} with $k=0$ and $p=2$ it follows that $\Abs{\Psi}_{W^{2,2}_A}$ bounded in terms of $\clubsuit_k$ as well.
  Since $W^{2,2}_A \into W^{1,6}_A$, it follows that the $L^3$--norm of the right-hand side of \eqref{Eq_CuvatureHiggsEffect} can be bounded in terms of $\clubsuit_k$.
  \autoref{Prop_EpsilonDelta+1} thus yields a bound on $\Abs{F_A}_{L^3}$ in terms of $\clubsuit_k$.
  \autoref{Prop_DiracEstimate} with $k=0$ and $p=3$ bounds $\Abs{\Psi}_{W^{2,3}_A}$ and, hence, $\Abs{\Psi}_{W^{1,p}_A}$ for all $p \in (1,\infty)$ in terms of $\clubsuit_k$.
  Consequently, \autoref{Prop_EpsilonDelta+1} bounds $\Abs{F_A}_{L^p}$ for any $p$ in terms of $\clubsuit_k$.
  Another application of \autoref{Prop_DiracEstimate} shows that $\Abs{\Psi}_{W^{2,p}_A}$ can be bounded in terms of $\clubsuit_k$.
  This yields bounds on the $W^{1,p}$--norm of right-hand side of \eqref{Eq_CuvatureHiggsEffect} and $\Abs{\abs{\Psi}}_{C^1}$ in terms of $\clubsuit_k$.
  It follows from \autoref{Prop_EpsilonDelta+1} that $\Abs{F_A}_{W^{1,p}}$ is bounded in terms of $\clubsuit_k$.
  Iterating applications of \autoref{Prop_DiracEstimate} and \autoref{Prop_EpsilonDelta+1} one shows that $\Abs{F_A}_{W^{k+1,p}}$ and $\Abs{\Psi}_{W^{k+3,p}}$ can be bounded in terms of $\clubsuit_k$.
  Applying Morrey's inequality, $\abs{f}_{C^0} \lesssim \Abs{f}_{W^{1,p}}$ for $p > 3$, completes the proof.
\end{proof}

\begin{proof}[Proof of \autoref{Prop_SmoothConvergenceIfZEmpty}]
  Since $Z$ is empty, there exists $m_0 > 0$ such that, after passing to a subsequence, we have $\min \abs{\Psi_i}^2 \geq m_0$ for all $i$.
  Moreover, it follows from \cite[Definition 3.1 and Proposition 4.1]{Haydys2014} that $\Abs{F_{A_i}}_{L^2}$ is uniformly bounded.
  \autoref{Prop_FromL2BoundsToCkBounds} now yields uniform $C^k$--bounds for $F_{A_i}$ and  and $\Psi_i$ (using the $A_i$--dependent norm).
  After putting $A_i$ in the Uhlenbeck gauge as in the proof of \autoref{Prop_DiracEstimate} we also obtain $C^k$ bounds for $A_i$ and the result follows from the Arzelà--Ascoli theorem.
\end{proof}


\section{Orientation at infinity}
\label{Sec_OrientationAtInfinity}

Suppose that $(t_0,0,[\Psi_0,A_0]) \in \overline\fW\setminus\fW$ is a boundary point in $\overline\fW$.
By \autoref{Prop_SpecialHaydysCorrespondence}, there exists a spin structure $\fs$ inducing the spin$^c$ structure $\fw$ such that $\Psi_0 \in \Gamma(\Re(S_\fs\otimes E)) \subset \Gamma(\Hom(E,W))$,
$\slD^\fs_{\bp_{t_0}}\Psi_0 = 0$,
and $A_0$ is trivial.
By \autoref{Def_RegularPath}\itref{Def_RegularPath_TransverseSpectralFlow},
there exists a unique solution $\set{ \Psi_t : \abs{t-t_0} \ll 1}$ to
\begin{equation*}
  \slD^\fs_{\bp_t}\Psi_t = \lambda(t)\Psi_t \qandq \Abs{\Psi_t}_{L^2} = 1
\end{equation*}
with $\Psi_{t_0} = \Psi_0$.
Moreover, $\lambda$ is a differentiable function of $t$ near $t_0$ whose derivative, to be denoted by $\dot{\lambda}$,  satisfies $\dot\lambda(t_0) \neq 0$.
In this situation,
\cite[Section 6]{Doan2017a} shows that for any choice of $r\in\N$
there exist $\tau \ll 1$ and $\epsilon_0 \ll 1$,
a $C^r$ map
\begin{equation*}
  \ob\co (t_0-\tau,t_0+\tau) \times [0,\epsilon_0) \to \R,
\end{equation*}
an open neighborhood $V$ of $(t_0,0,[\Psi_0,A_0]) \in \overline\fW$, and
a homeomorphism
\begin{equation*}
  \fx \co \ob^{-1}(0) \to V
\end{equation*}
such that:
\begin{enumerate}
\item
  $\fx$ commutes with the projection to the $t$-- and $\epsilon$--coordinates.
\item
  The restriction ${\ob}|_{(t_0-\tau,t_0+\tau) \times (0,\epsilon_0)}$ is smooth and satisfies
  \begin{equation}
    \label{Eq_ObstructionMapExpansion}
    \begin{split}
    \ob(t,\epsilon)
    &=
      \dot\lambda(t_0)\cdot(t-t_0) - \delta \epsilon^4 + O\((t-t_0)^2,\epsilon^6\), \qand \\
    \del_\epsilon \ob(t,\epsilon)
    &=
      - 4\delta \epsilon^3 + O\((t-t_0)^2,\epsilon^5\),
   \end{split}
  \end{equation}
  with $\delta=\delta(\Psi_0,\bp_{t_0},\eta_{t_0})$ as in \autoref{Def_RegularPath}\itref{Def_RegularPath_DeltaNonZero}.
  In particular, since by assumption $\delta \neq 0$, the equation $\ob(t, \epsilon) = 0$ can be solved for $t$:
  \begin{equation}  
    \label{Eq_FormulaForT}
    t(\epsilon)  = t_0 + \frac{\delta}{\dot\lambda(t_0)}\epsilon^4 + O(\epsilon^6).
  \end{equation}
\item
  If $(t,\epsilon) \in (t_0-\tau,t_0+\tau)\times (0,\epsilon_0]$ satisfies $\ob(t,\epsilon) = 0$ and
  \begin{equation*}
    \fx(t, \epsilon) = (t,\epsilon,[\Psi_\epsilon,A_\epsilon])
  \end{equation*}
  then $[\epsilon^{-1}\Psi_\epsilon, A_\epsilon]$ is a solution of the Seiberg--Witten equation \autoref{Eq_PerturbedTwoSeibergWitten}.
  We will prove below that this solution is unobstructed; that is, the operator
  \begin{equation*}
    L_{\epsilon^{-1}\Psi_\epsilon, A_\epsilon, \bp_{t(\epsilon)}}
  \end{equation*}
  is invertible.
\end{enumerate}

It follows from the above that
\begin{equation*}
  \ob^{-1}(0)
  =
  \set* {
    \paren[\big]{t(\epsilon), \epsilon} 
    :
    \epsilon \in [0,\epsilon_0)
  }.
\end{equation*}
Therefore, $\overline\fW$ is a compact, oriented, one--dimensional manifold with boundary, that is: a finite collection of circles and closed intervals. 
Its oriented boundary is
\begin{equation*}
  \del\overline\fW
  =
  \fM_\fw(\bp_1,\eta_1)
  -
  \fM_\fw(\bp_0,\eta_0)
  \cup
  \(\overline\fW\setminus\fW\).
\end{equation*}
It follows from \eqref{Eq_FormulaForT} that, if $\delta/\dot\lambda(0) > 0$, then as $t$ passes through $t_0$ a solution to \autoref{Eq_PerturbedTwoSeibergWitten} is created.
If $\delta/\dot\lambda(0) < 0$, then as $t$ passes through $t_0$ a solution to \autoref{Eq_PerturbedTwoSeibergWitten} is annihilated.
The solution that is created/annihilated at $t_0$ contributes $\sign[\epsilon^{-1}\Psi_{\epsilon},A_\epsilon]$ to the signed count of solutions.
Consequently, for $\tau \ll 1$, we have the local wall-crossing formula
\begin{equation}
  \label{Eq_LocalWallCrossingFormula}
  n_\fw(\bp_{t+\tau},\eta_{t+\tau})
  =
    n_\fw(\bp_{t_0-\tau},\eta_{t_0-\tau})
    + \sign(\dot\lambda(t_0)) \sign(\delta) \sign[\epsilon^{-1}\Psi_{\epsilon},A_\epsilon].
  \end{equation}
  
\begin{prop}
  \label{Prop_HowToDetermineSigma}
  In the above situation, for $\epsilon \ll 1$, the solution $[\epsilon^{-1}\Psi_\epsilon,A_\epsilon]$ is unobstructed and
  \begin{equation}
    \label{Eq_HowToDetermineSigma}
    \sign(\delta) \sign[\epsilon^{-1}\Psi_{\epsilon},A_\epsilon]
    =
    \sigma(\Psi_0,\bp_{t_0})
  \end{equation}    
  with $\sigma(\Psi_0,\bp_{t_0})$ as in \autoref{Def_Sigma}.
  In particular, the local wall-crossing formula \autoref{Eq_LocalWallCrossingFormula} can be written in the form
  \begin{equation}
  \label{Eq_LocalWallCrossingFormula2}
  n_\fw(\bp_{t_0+\tau},\eta_{t_0+\tau})
  =
    n_\fw(\bp_{t_0-\tau},\eta_{t_0-\tau})
    + \sign(\dot\lambda(0))\, \sigma(\Psi_0,\bp_{t_0}).
\end{equation}
\end{prop}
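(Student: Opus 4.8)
\emph{Approach and unobstructedness.}
The plan is to compute $\sign[\epsilon^{-1}\Psi_\epsilon,A_\epsilon]$ directly from \autoref{Def_Sign} as an orientation transport, to break the defining path into three standard legs, and to recognise the spectral flow of the first leg---in the large-coupling limit $\epsilon\to0$---as the one defining $\sigma(\Psi_0,\bp_{t_0})$ in \autoref{Def_Sigma}, the other two legs producing the factor $\sign(\delta)$.
For the unobstructedness I would invoke the local model of \cite[Section 6]{Doan2017a} recalled above: at the fixed parameter $t=t(\epsilon)$ the solutions of \eqref{Eq_PerturbedTwoSeibergWitten} near $[\epsilon^{-1}\Psi_\epsilon,A_\epsilon]$ correspond under $\fx$ to the zeros of $\epsilon'\mapsto\ob(t(\epsilon),\epsilon')$, and \eqref{Eq_ObstructionMapExpansion} gives $\del_\epsilon\ob(t(\epsilon),\epsilon)=-4\delta\epsilon^3+O\big((t-t_0)^2,\epsilon^5\big)\neq0$ for $0<\epsilon\ll1$, since $\delta\neq0$ by \autoref{Def_RegularPath}\itref{Def_RegularPath_DeltaNonZero}; hence this zero is cut out transversally, so $L_{\epsilon^{-1}\Psi_\epsilon,A_\epsilon,\bp_{t(\epsilon)}}$ is surjective and, being of index zero, invertible.
(Alternatively, a contradiction argument works: a nonzero kernel element for a sequence $\epsilon_i\to0$ would, after rescaling and passing to a limit, violate \autoref{Def_RegularPath}.)

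\emph{Reduction to a spectral flow along three legs.}
Take $(0,A_0,\bp_{t_0})$ as reference, with $A_0$ the product connection, and factor the path to $(\epsilon^{-1}\Psi_\epsilon,A_\epsilon,\bp_{t(\epsilon)})$ as: (I) the rescaling $r\mapsto(r\Psi_0,A_0,\bp_{t_0})$, $r\in[0,\epsilon^{-1}]$; (II) the parameter variation $(\epsilon^{-1}\Psi_0,A_0,\bp_t)$, $t$ from $t_0$ to $t(\epsilon)$; (III) the variation from $(\epsilon^{-1}\Psi_0,A_0,\bp_{t(\epsilon)})$ to $(\epsilon^{-1}\Psi_\epsilon,A_\epsilon,\bp_{t(\epsilon)})$.
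By \eqref{Eq_OrientationTransportComposition}, $\sign[\epsilon^{-1}\Psi_\epsilon,A_\epsilon]$ is the product of the three orientation transports.
On legs (I) and (II) one has $W=S_\fs$ with $A_0$ the product connection, so $\slD_{A_0,\bp}$ preserves $\Re(S_\fs\otimes E)$ and $\Im(S_\fs\otimes E)$ and, by \autoref{Prop_DPsi}, $\fa_{\Psi_0}$ is an isomorphism onto $\Im(S_\fs\otimes E)$; hence
\begin{equation*}
  L_{r\Psi_0,A_0,\bp}
  =
  \big(-\slD_\bp^\fs\big)\oplus M_{r,\bp},
  \qquad
  M_{r,\bp}
  =
  \begin{pmatrix}
    -\slD_{\Im} & -r\fa_{\Psi_0}\\
    -r\fa_{\Psi_0}^{*} & \fd_\bp
  \end{pmatrix},
\end{equation*}
the block $M_{r,\bp}$ acting on $\Im(S_\fs\otimes E)\oplus\Omega^1(M,i\R)\oplus\Omega^0(M,i\R)$.
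On leg (I) the $\slD_{\bp_{t_0}}^\fs$-summand is $r$-independent, so its orientation transport equals $(-1)^{\SF((M_{r,\bp_{t_0}})_{r\in[0,\epsilon^{-1}]})}$.
On leg (II) the operator $M_{\epsilon^{-1},\bp_t}$ stays invertible---its spectrum is separated from $0$ by a multiple of $\epsilon^{-1}\min\abs{\Psi_0}$ for $\epsilon$ small---while the near-zero eigenvalue of $-\slD_{\bp_t}^\fs$ leaves $0$ at $t_0$ with nonzero speed and, by \eqref{Eq_FormulaForT}, reaches $-\delta\epsilon^4+O(\epsilon^6)$ at $t(\epsilon)$; with the Atiyah--Patodi--Singer convention, the orientation transport of leg (II) is therefore $-\sign(\delta)$.
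Leg (III) then only moves that near-zero eigenvalue from $-\delta\epsilon^4+O(\epsilon^6)$ to the nonzero near-zero eigenvalue $\mu$ of the invertible operator $L_{\epsilon^{-1}\Psi_\epsilon,A_\epsilon,\bp_{t(\epsilon)}}$, so its orientation transport is $+1$ if $\sign\mu=-\sign(\delta)$ and $-1$ otherwise.

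\emph{The large-coupling limit, and conclusion.}
To evaluate $(-1)^{\SF((M_{r,\bp_{t_0}})_{r\in[0,\epsilon^{-1}]})}$, conjugate $M_{r,\bp_{t_0}}$ by the orthogonal bundle map diagonalising its zeroth-order coupling term---its $\pm\abs{\Psi_0}$-eigenbundles being spanned by the sections $\tfrac1{\sqrt2}\big(\tilde\fa_{\Psi_0}\xi,\pm\xi\big)$---which turns it into
\begin{equation*}
  \tfrac12
  \begin{pmatrix}
    \fd-\fd_{\Psi_0} & -(\fd+\fd_{\Psi_0})\\
    -(\fd+\fd_{\Psi_0}) & \fd-\fd_{\Psi_0}
  \end{pmatrix}
  +
  r
  \begin{pmatrix}
    -\abs{\Psi_0} & 0\\
    0 & \abs{\Psi_0}
  \end{pmatrix},
\end{equation*}
with $\fd_{\Psi_0}=\tilde\fa_{\Psi_0}^{*}\slD_{\Im}\tilde\fa_{\Psi_0}$ as in \autoref{Prop_DPsi}.
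As $r\to\infty$ the two diagonal blocks are driven to $\mp\infty$ while the off-diagonal term stays bounded, so a Schur-complement (adiabatic) analysis pins down the net number of zero-crossings of the spectrum; carried out via the determinant line bundle of the family $(L_{\Psi,A,\bp})$ (\autoref{Sec_DeterminantLineBundles}), this number agrees---modulo an even term and the explicit $(-1)^{b_1(M)}$ of \autoref{Def_Sigma}, coming from the Atiyah--Patodi--Singer bookkeeping for the kernels of $\slD_{\Im}$ and $\fd$---with the spectral flow $\SF\big((-\fd_{\Psi_0}^t)_{t\in[0,1]}\big)$ of the straight-line homotopy from $-\fd_{\Psi_0}$ to $-\fd$.
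Hence the orientation transport of leg (I) is $\pm\sigma(\Psi_0,\bp_{t_0})$.
Multiplying the three legs, and using $\delta=\inner{\slD_{A_0,\bp_{t_0}}\psi}{\psi}_{L^2}$ together with the order-$\epsilon^2$ relation $\slD_{A_0,\bp_{t_0}}\psi=-\bar\gamma(\,\cdot\,)\Psi_0$ obtained by expanding $\slD_{A_\epsilon,\bp_{t(\epsilon)}}\Psi_\epsilon=0$---equivalently, reading off $\sign\mu$ from \eqref{Eq_ObstructionMapExpansion}---to fix the residual signs, one obtains $\sign(\delta)\sign[\epsilon^{-1}\Psi_\epsilon,A_\epsilon]=\sigma(\Psi_0,\bp_{t_0})$, which is \eqref{Eq_HowToDetermineSigma}.
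Substituting this into \eqref{Eq_LocalWallCrossingFormula} yields \eqref{Eq_LocalWallCrossingFormula2}.

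\emph{Main obstacle.}
The technical heart is the adiabatic spectral-flow computation for leg (I): making rigorous that as $r\to\infty$ the spectrum of $M_{r,\bp_{t_0}}$ splits cleanly into a part tending to $+\infty$ and a part tending to $-\infty$, and that the resulting net count of zero-crossings is $\SF\big((-\fd_{\Psi_0}^t)_{t\in[0,1]}\big)$ up to the stated $(-1)^{b_1(M)}$ correction.
This is exactly the step for which the determinant-line-bundle description of orientation transport (\autoref{Sec_DeterminantLineBundles}) is indispensable, and handling the non-invertible endpoints correctly is delicate.
A secondary difficulty is the sign of $\mu$ in leg (III): its naive first-order perturbation term vanishes because $\bar\gamma(\,\cdot\,)\Psi_0$ is orthogonal to $\Psi_0$ (the $\Re$- and $\Im$-parts of $S_\fs\otimes E$ being mutually orthogonal), so one must pass to second order, where the surviving contribution is controlled by $\delta$.
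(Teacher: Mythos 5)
Your overall architecture is the same as the paper's: unobstructedness from $\del_\epsilon\ob(t(\epsilon),\epsilon)=-4\delta\epsilon^3+O(\epsilon^5)\neq 0$, and a factorization of $\sign[\epsilon^{-1}\Psi_\epsilon,A_\epsilon]$ through the rescaled configuration $(r\Psi_0,A_0,\bp_{t_0})$, with the scaling leg responsible for $\sigma(\Psi_0,\bp_{t_0})$ and the remaining legs for $\sign(\delta)$. The difficulty is that neither of the two signs constituting \eqref{Eq_HowToDetermineSigma} is actually determined by your argument. For leg (I) you conclude only that the transport is $\pm\sigma(\Psi_0,\bp_{t_0})$: the assertion that the large--$r$ zero-crossing count agrees with $\SF\((-\fd_{\Psi_0}^t)_{t\in[0,1]}\)$ ``modulo an even term and the explicit $(-1)^{b_1(M)}$'' is exactly the parity one has to prove, since $\sigma$ is defined by that parity; the proposed adiabatic/Schur-complement analysis is not carried out. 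The paper settles this leg by an explicit concatenation of three homotopies: first deform the diagonal from $(-\fd_{\Psi_0},\fd)$ to $(-\fd,\fd)$ with the coupling off (contributing $\SF((-\fd_{\Psi_0}^t))$), then switch on the coupling at constant diagonal, where the operator diagonalizes explicitly ($\pm s$ on harmonic forms, and bounded away from zero on their complement), contributing $-(1+b_1(M))$ with \autoref{Conv_SpectralFlow}, and finally a path that stays invertible for $s\gg1$ and contributes nothing. The residual $\pm$ in your leg (I) cannot be ``fixed later using $\delta$'': it is an absolute sign independent of $\delta$.

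The more serious gap is leg (III). You correctly reduce it to showing that the near-zero eigenvalue $\mu$ of $L_{\epsilon^{-1}\Psi_\epsilon,A_\epsilon,\bp_{t(\epsilon)}}$ satisfies $\sign\mu=-\sign(\delta)$, but neither justification you offer suffices. Second-order eigenvalue perturbation theory does not control this sign: the eigenvalue to be tracked has size $O(\epsilon^4)$ (the unperturbed value is $-\delta\epsilon^4+O(\epsilon^6)$), while the $O(\epsilon)$ change of the configuration $\epsilon^{-1}\Psi_\epsilon-\epsilon^{-1}\Psi_0=\epsilon\psi+O(\epsilon^3)$, $A_\epsilon-A_0$ produces corrections whose a priori size, even after the vanishing first-order term, exceeds $\epsilon^4$, so a sign statement requires cancellations your sketch does not identify. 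Nor can $\sign\mu$ simply be ``read off'' from \eqref{Eq_ObstructionMapExpansion}: $\ob$ is the obstruction function of the blown-up section $\hsw^1_t$, and the linearizations of $\hsw^1_t$ and $\sw_t$ differ by the derivative of $\Upsilon$ and by the $\epsilon$-dependent rescaling $U^\sigma_\epsilon$ --- precisely the subtlety the paper flags, and the reason its proof inserts the determinant-line comparison (the diagram \eqref{Eq_OrientationTransportDiagram}, the homotopies $H_1,H_2$, and the Kuranishi maps $M_1,M_2$ with the Knudsen--Mumford conventions of \autoref{Sec_DeterminantLineBundles}) instead of estimating $\mu$; the factor $-\sign(\delta)$ is the output of that diagram. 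As written, both signs in \eqref{Eq_HowToDetermineSigma} remain open in your proposal, and closing them requires either the paper's determinant-line computation or a genuine substitute for it.
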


\begin{proof}[Proof of \autoref{Prop_HowToDetermineSigma}]
  The reader might find it helpful to review
  the orientation procedure described in \autoref{Def_Sign}, as well as
  \autoref{Prop_DPsi} and \autoref{Def_Sigma} before reading further.
  A reader who is unfamiliar with the determinant line bundle and its relation to the orientation procedure should consult \autoref{Sec_DeterminantLineBundles}, where this relation is discussed.  
  The proof proceeds in five steps.

  \setcounter{step}{0}
  \begin{step}
    For any $s > 0$, we have
    \begin{multline*}
      \sign[\epsilon^{-1}\Psi_\epsilon, A_\epsilon] \\
      = \OT((0,A_0,\bp_{t_0}),(s\Psi_0,A_0,\bp_{t_0}))\cdot\OT((s\Psi_0,A_0,\bp_{t_0}),(\epsilon^{-1}\Psi_\epsilon,A_\epsilon,\bp_{t(\epsilon)})).
    \end{multline*}
  \end{step}

  This is an immediate consequence of \autoref{Def_Sign} as well as the multiplicative property of the orientation transport \autoref{Eq_OrientationTransportComposition}.

  \begin{step}
    For $s \gg 1$, we have
    \begin{equation*}      
      \OT((0,A_0,\bp_{t_0}),(s\Psi_0,A_0,\bp_{t_0})) = -\sigma(\Psi_0, \bp_{t_0}).
    \end{equation*}
  \end{step}

  Denote by $\slD_{\Re}$ and $\slD_{\Im}$ the restriction of $\slD_{A_0,\bp_{t_0}}$ to $\Re(S_\fs\otimes E)$ and $\Im(S_\fs\otimes E)$ respectively.
  We can write $L_{s\Psi_0,A_0,\bp_{t_0}}$ as
  \begin{equation*}
    L_{s\Psi_0,A_0,\bp_{t_0}}
    =
    \begin{pmatrix}
      -\slD_{\Re} & \\
      & \fD_s
    \end{pmatrix}
    \quad\text{with}\quad
    \fD_s
    =
    \begin{pmatrix}
      -\slD_{\Im} & - s\fa_{\Psi_0} \\
      - s\fa_{\Psi_0}^* & \fd
    \end{pmatrix}.
  \end{equation*}
  Consequently,
  we have
  \begin{equation*}
    \OT((0,A_0,\bp_{t_0}),(s\Psi_0,A_0,\bp_{t_0}))
    = (-1)^{\SF\((\fD_\sigma)_{\sigma\in[0,s]}\)}.
  \end{equation*}

  Denote by $\tilde\fa_{\Psi_0} \coloneq \abs{\Psi_0}^{-1}\fa_{\Psi_0}$ the isometry introduced in \autoref{Prop_DPsi}.
  Define
  \begin{equation*}
    \tilde \fD_s \co (\Omega^1(M,i\R) \oplus \Omega^0(M,i\R))^{\oplus 2} \to (\Omega^1(M,i\R) \oplus \Omega^0(M,i\R))^{\oplus 2}    
  \end{equation*}
  by
  \begin{equation*}
    \tilde \fD_s \coloneq (\tilde\fa_{\Psi_0}^* \oplus \id) \circ \fD_s \circ (\tilde\fa_{\Psi_0} \oplus \id).
  \end{equation*}
  Since $\fd_{\Psi_0} = \tilde\fa_{\Psi_0}^*\circ \slD_{\Im}\circ \tilde\fa_{\Psi_0}$, by definition, and
  $\fa_{\Psi_0}^*\fa_{\Psi_0} = \abs{\Psi}^2$,
  we have
  \begin{equation*}
    \tilde \fD_s
    =
    \begin{pmatrix}
       -\fd_{\Psi_0} & -s\abs{\Psi_0} \\
        -s\abs{\Psi_0} & \fd
    \end{pmatrix}.
  \end{equation*}
  From this discussion, and the additivity of the spectral flow under path composition, it follows that
  \begin{align*}
    \SF\((\fD_{\sigma})_{\sigma\in[0,s]}\) 
    &=
    \SF\((\tilde \fD_{\sigma})_{\sigma\in[0,s]}\) \\
    &= 
      \SF\(
      \begin{pmatrix}
        (t-1)\fd_{\Psi_0}-t\fd & 0\\
        0 & \fd
      \end{pmatrix}_{t\in[0,1]}\) \\
    &\quad
      +
      \SF\(
      \begin{pmatrix}
        -\fd & -ts \\
        -ts & \fd
      \end{pmatrix}_{t\in[0,1]}\) \\
    &\quad
      +
      \SF\(
      \begin{pmatrix}
        (t-1)\fd-t\fd_{\Psi_0} & -((1-t)+t\abs{\Psi_0})s \\
        -((1-t)+t\abs{\Psi_0})s & \fd
      \end{pmatrix}_{t\in[0,1]}\).
  \end{align*}
  
  The first spectral flow is given by
  \begin{equation*}
    \SF\((-\fd_{\Psi_0}^t)_{t\in[0,1]}\)
  \end{equation*}
  in the notation of \autoref{Prop_DPsi}.
  The second spectral flow can be computed to be $-1-b_1(M)$ by observing that if $\sH \coloneq \sH^0\oplus\sH^1$ denotes the space of harmonic forms and $\fd_0 \co \sH^\perp \to \sH^\perp$ is the restriction of $\fd$, then
  \begin{equation*}
    \begin{pmatrix}
      -\fd & -s  \\
      -s & \fd
    \end{pmatrix}
  \end{equation*}
  can be diagonalized to
  \begin{equation*}
    \begin{pmatrix}
      s\cdot \id_\sH & & & \\
      & -s\cdot \id_\sH & & \\
      & & +\sqrt{\fd_0^2 + s^2} & \\
      & & & -\sqrt{\fd_0^2 + s^2}
    \end{pmatrix},
    \footnote{%
      The operator $\fd_0^2 + s^2$ has strictly positive spectrum, so its square root is well-defined and invertible.
    }
  \end{equation*}
  and the spectral flow of this family of operators is $-(1+b_1(M))$, using \autoref{Conv_SpectralFlow}.
  To compute the third spectral flow,
  observe that
  \begin{align*}
    &\begin{pmatrix}
      -\fd - t\fe_{\Psi_0} & -((1-t)+t\abs{\Psi_0})s  \\
      -((1-t)+t\abs{\Psi_0})s & \fd
   \end{pmatrix}^2 \\
   &
   \quad=
    \begin{pmatrix}
      (-\fd - t\fe_{\Psi_0})^2 + ((1-t)+t\abs{\Psi_0})^2s^2 & \\
      & \fd^2 + ((1-t)+t\abs{\Psi_0})^2s^2
    \end{pmatrix}
    +
    s \tilde\fe_{\Psi_0,t}
  \end{align*}
  with $\tilde\fe_{\Psi_0,t}$ a zeroth order operator depending on $\Psi_0$, its derivative, and $t$.
  If $s \gg 1$, then the operator on the left-hand side is invertible for every $t \in [0,1]$.
  Therefore, the third spectral flow is trivial.
  
  This shows that, for $s \gg 1$,
  \begin{equation*}
    \OT((0,A_0,\bp_{t_0}),(s\Psi_0,A_0,\bp_{t_0}))
    = (-1)^{1+b_1(M)}\cdot (-1)^{\SF\((-\fd_{\Psi_0}^t)_{t\in[0,1]}\)}
    = -\sigma(\Psi_0,\bp_{t_0}).
  \end{equation*}  

  \begin{step}
    \label{Step_OperatorAtInfinity}
    If $s \gg 1$,
    then
    \begin{equation*}
      \ker L_{s\Psi_0,A_0,\bp_{t_0}} = \R\Span{\Psi_0}.
    \end{equation*}
  \end{step}

  As above, write
  \begin{equation*}
    L_{s\Psi_0,A_0,\bp_{t_0}}
    =
    \begin{pmatrix}
      -\slD_{\Re} & \\
      & \fD_s
    \end{pmatrix}
    \quad\text{with}\quad
    \fD_s
    =
    \begin{pmatrix}
      -\slD_{\Im} & - s\fa_{\Psi_0} \\
      - s\fa_{\Psi_0}^* & \fd
    \end{pmatrix}.
  \end{equation*}
  By \autoref{Eq_APsiAPsi*} and \autoref{Eq_EPsi},
  we have
  \begin{equation*}
    \fD_s^2
    =
    \begin{pmatrix}
      \slD_{\Im}^*\slD_{\Im} + s^2\abs{\Psi_0}^2 & \\
      & \fd^*\fd + s^2\abs{\Psi_0}^2
    \end{pmatrix}
    +
    s\fe_{\Psi_0}
  \end{equation*}
  with $\fe_{\Psi_0}$ a zeroth order operator depending on $\Psi_0$ and its derivative.
  It follows that $\fD_s$ is invertible for $s \gg 1$.
  Therefore, $\ker L_{s\Psi_0,A_0,\bp_{t_0}} = \ker \slD_{\Re}$.
  By hypothesis the latter is spanned by $\Psi_0$.
  
  \begin{step}
    Comparison of the determinant line bundles of Seiberg--Witten equation with two spinors and the blown-up Seiberg--Witten equation with two spinors.
  \end{step}

  In \cite{Doan2017a} we construct a Kuranishi model for the blown-up Seiberg--Witten equation with two spinors \autoref{Eq_BlownUpPerturbedTwoSeibergWitten}.
  This Kuranishi model allows us to compute orientation transports for the determinant line bundle associated with the blown-up Seiberg--Witten equation with two spinors.
  Although the Seiberg--Witten equation with two spinors and its blown-up version are essentially equivalent,
  the relation between their linearizations is subtle.
  The purpose of this step is to discuss what the precise relation between the determinant line bundles of two linearizations is.

  Consider the space of irreducible configurations
  \begin{equation*}
    \sC^* \coloneq (\Gamma(\Hom(E,W))\setminus\set{0})\times \sA(\det W)
  \end{equation*}
  and its quotient
  \begin{equation*}
    \sB^* \coloneq \sC^* / \sG.
  \end{equation*}
  Let $T\sC^*$ be its tangent bundle---a trivial bundle with fiber $\Gamma(\Hom(E,W)) \oplus \Omega^1(i\R)$. 
  The Seiberg--Witten equation with two spinors (with respect to the parameter $\bp_t$) defines a smooth section $\sw_t\co \sC^*\to  T\sC$:
  \begin{equation*}
    \sw_t(\Psi,A)
    \coloneq
    \begin{pmatrix}
      -\slD_{A,\bp_t}\Psi \\
      *\(F_A + \eta_t - \mu_{\bp_t}(\Psi)\)
    \end{pmatrix}.
  \end{equation*}
  This section is $\sG$--equivariant and, hence, induces a Fredholm section%
  \footnote{%
    Of course, one needs to work with appropriate Sobolev completions of $\sC^*$, $\sB^*$, $T\sC^*$, and $T\sB^*$.
  }
  \begin{equation*}
    \sw_t \co \sB^* \to T\sB^*.
  \end{equation*}
  The moduli space $\fM_\fw(\bp_t,\eta_t)$ of solutions of the Seiberg--Witten equation with two spinors is the zero set $\sw_t^{-1}(0)$.

  A choice of a connection on $\sC^* \to \sB^*$ induces a connection, denoted by $\nabla$, on $T\sB^*$.
  Any choice of gauge fixing induces such a connection.
  The family of Fredholm operators $(\nabla\sw)_\fc \co T_\fc\sB^* \to T_\fc\sB^*$ parametrized by points $\fc \in \sB^*$ define a determinant line bundle
  \begin{equation*}
    \det(\nabla \sw) \to \sB^*;
  \end{equation*}
  see \autoref{Sec_DeterminantLineBundles} for a general discussion of determinant line bundles, and \cite[Section 2.6]{Doan2017} for the construction in the case at hand.  
  This is the determinant line bundle which controls the relative sign two solutions of the Seiberg--Witten equation with two spinors.
  The space of connections on $\sC^* \to \sB^*$ is contractible.
  Thus any two choices of such connections lead to determinant line bundles which are canonically isomorphic.
  Consequently, the choice of gauge fixing does not matter for the purpose of orienting the moduli space.

  Let
  \begin{equation*}
    \sS \coloneq \set{ \Psi \in \Gamma(\Hom(E,W)) : \Abs{\Psi}_{L^2} = 1 }
  \end{equation*}
  be the unit sphere in the space of spinors.
  The blown-up configuration space is defined as
  \begin{equation*}
    \hat\sC^* \coloneq (0,\infty)\times \sS \times \sA(\det W).
  \end{equation*}
  We denote its quotient by
  \begin{equation*}
    \hat\sB^* \coloneq \hat\sC^*/\sG.
  \end{equation*}
  The blown-up Seiberg--Witten equation \autoref{Eq_BlownUpPerturbedTwoSeibergWitten} gives rise to the $\sG$--equivariant section $\hsw_t^1 \co \hat\sC^* \to T\hat\sC^*$
  \begin{equation*}
    \hsw_t^1(\epsilon,\Psi,A)
    =
    \begin{pmatrix}
      \inner{\Psi}{\slD_{A,\bp_t}\Psi} \\
      -\slD_{A,\bp_t}\Psi+\inner{\Psi}{\slD_{A,\bp_t}\Psi}\Psi \\
      *\(\epsilon^2F_A + \epsilon^2\eta_t - \mu_{\bp_t}(\Psi)\)
    \end{pmatrix}.
  \end{equation*}
  This induces a section
  \begin{equation*}
    \hsw_t^1 \co \hat\sB^* \to T\hat\sB^*.
  \end{equation*}

  The $\sG$--equivariant map $\Upsilon\co \hat\sC^* \to \sC^*$ given by
  \begin{equation}
    \label{Eq_Upsilon}
    \Upsilon(\epsilon,\Psi,A) \coloneq (\epsilon^{-1}\Psi,A).
  \end{equation}
  is a diffeomorphism.
  The induced map $\Upsilon\co \hat\sB^* \to \sB^*$ is diffeomorphism as well.
  It is clear that $\Upsilon$ induces a bijection between $(\hsw_t^1)^{-1}(0)$ and $\sw_t^{-1}(0)$;
  however,
  \begin{equation*}
    \hsw_t^1 \neq \Upsilon^*\sw_t.
  \end{equation*}
  To see how $\hsw_t^1$ and $\Upsilon^*\sw_t$ are related,
  we compute the derivative $\rd\Upsilon \co T\hat\sC^* \to T\sC^*$ to be
  \begin{equation}
    \label{Eq_UpsilonDerivative}
    \rd_{(\epsilon,\Psi,A)}\Upsilon(\hat\epsilon,\psi,a)
    =
    \begin{pmatrix}
      -\frac{\hat\epsilon}{\epsilon^2}\Psi + \epsilon^{-1}\psi \\
      a
    \end{pmatrix},
  \end{equation}
  where $\hat\epsilon \in \R$ is a variation of $\epsilon$, $\psi \in T_\Psi \sS$ is a variation of $\Psi$, and $a$ is a variation of $A$.
  The inverse is given by
  \begin{equation}
    \label{Eq_UpsilonDerivativeInverse}
    (\rd_{(\epsilon,\Psi,A)}\Upsilon)^{-1}(\psi,a)
    =
    \begin{pmatrix}
      -\epsilon^2\inner{\Psi}{\psi} \\
      \epsilon\psi-\epsilon\inner{\Psi}{\psi}\Psi \\
      a
    \end{pmatrix}.
  \end{equation}
  Therefore, the pull-back section $\hsw_t^0 \coloneq \Upsilon^*\sw \co \hat\sC^* \to T\hat\sC^*$ is
  \begin{equation*}
    \hsw_t^0(\epsilon,\Psi,A)
    =
    \begin{pmatrix}
      \epsilon\inner{\Psi}{\slD_{A,\bp_t}\Psi} \\
      -\slD_{A,\bp_t}\Psi+\inner{\Psi}{\slD_{A,\bp_t}\Psi}\Psi \\
      *\(F_A + \eta_t - \epsilon^{-2}\mu_{\bp_t}(\Psi)\)
    \end{pmatrix}.
  \end{equation*}
  
  It follows that
  \begin{equation*}
    \hsw_t^1
    =
    \begin{pmatrix}
      \epsilon^{-1} \\
      & 1 \\
      & & \epsilon^2
    \end{pmatrix}
    \hsw_t^0.
  \end{equation*}
  We join $\hsw_t^0$ and $\hsw_t^1$ by the following path.
  For $\sigma \in [0,1]$, define $U_\epsilon^\sigma$ by
  \begin{equation*}
    U_\epsilon^\sigma
    \coloneq
    (1-\sigma)
    \begin{pmatrix}
      1 \\
      & 1 \\
      & & 1
    \end{pmatrix}
    +  
    \sigma
    \begin{pmatrix}
      \epsilon^{-1} \\
      & 1 \\
      & & \epsilon^2
    \end{pmatrix}.
  \end{equation*}
  For fixed $\epsilon > 0$, as $\sigma$ varies in $[0,1]$,
  this is a path of invertible matrices.
  Set
  \begin{equation}
    \label{Eq_SWSigma}
    \hsw_t^\sigma \coloneq U_\epsilon^\sigma\circ \hsw_t^0.
  \end{equation}
  Observe that if $\hsw_t^\sigma(\epsilon,\Psi,A) = 0$ for some $\sigma \in [0,1]$,
  then the same holds for every $\sigma \in [0,1]$.
  Moreover, for such $(\epsilon,\Psi,A)$, we have
  \begin{equation*}
    \rd_{(\epsilon,\Psi,A)} \hsw_t^\sigma \coloneq U_\epsilon^\sigma\circ \rd_{(\epsilon,\Psi,A)}\hsw_t^0(\epsilon,\Psi,A).
  \end{equation*}
  (If $(\epsilon,\Psi,A)$ is not a solution, then an extra term involving the derivative of $U_\epsilon^\sigma$ appears.)

  The path of sections $\hsw_t^\sigma$ descends to a path of sections of $T\hat\sB^*$.
  In what follows we will distinguish between elements in $\hat\sC^*$ or $T\hat\sC^*$ and their $\sG$--equivalence classes by writing the former in round brackets and the latter in square brackets.
  

  \begin{step}
    For $\epsilon \ll 1$, the operator $L_{\epsilon^{-1}\Psi_\epsilon, A_\epsilon,\bp_{t(\epsilon)}}$ is invertible (thus, the solution $[\epsilon^{-1}\Psi_\epsilon, A_\epsilon]$ is unobstructed) and
    \begin{equation} 
      \label{Eq_OrientationTransportAtInfinity}
      \OT(L_{\epsilon^{-1} \Psi_0,A_0,\bp_{t_0}}, L_{\epsilon^{-1}\Psi_\epsilon, A_\epsilon,\bp_{t(\epsilon)}}) = -\sign(\delta).
    \end{equation}
  \end{step}

  We compute the spectral flow from $\Upsilon(\fc_\epsilon^0) = (\epsilon^{-1}\Psi_0,A_0)$ and $\bp_{t_0}$ to $\Upsilon(\fc_\epsilon) = (\epsilon^{-1}\Psi_\epsilon,A_\epsilon)$ and $\bp_{t(\epsilon)}$ as the orientation transport from $\det \rd_{\fc_\epsilon^0}\sw_{t_0}$ to $\det \rd_{\fc_\epsilon^0}\sw_{t(\epsilon)}$; see \autoref{Sec_DeterminantLineBundles}.
  This orientation transport can be computed from the diagram of isomorphisms depicted and explained below.
  \begin{equation}
    \label{Eq_OrientationTransportDiagram}
    \begin{tikzcd}
      \R \ar[r,"\kappa"] & \det \rd_{\Upsilon(\fc_\epsilon^0)}\sw_{t_0} \ar[r,"\Upsilon_*^{-1}"] \ar[d] & \det \rd_{\fc_\epsilon^0}\hsw_{t_0}^0 \ar[r,"H_1"] \ar[d] &  \det \rd_{\fc_\epsilon^0}\hsw_{t_0}^1 \ar[d] \ar[r,"M_1"] & \R \ar[d,"="] \\
      \R  & \det \rd_{\Upsilon(\fc_\epsilon)}\sw_{t(\epsilon)} \ar[l,"\kappa^{-1}"] & \det \rd_{\fc_\epsilon}\hsw_{t(\epsilon)}^0 \ar[l,"\Upsilon_*"] &  \det \rd_{\fc_\epsilon}\hsw_{t(\epsilon)}^1 \ar[l,"H_2"] & \R \ar[l,"M_2"]
    \end{tikzcd}
  \end{equation}
  Here, $\kappa$ denotes the Knudsen--Mumford isomorphism introduced in \autoref{Sec_DeterminantLineBundles}; we use here that $\rd_\fc \sw_t$ is a self-adjoint operator (for any $\fc$ and $t$).
  Our goal is to compute the orientation transport \eqref{Eq_OrientationTransportAtInfinity} which, by definition, is the sign of the composition of $\kappa$, the first vertical arrow, and $\kappa^{-1}$. 
  This will be done by computing the remaining maps in the diagram which we now describe.
  
  The second top and bottom horizontal arrows are induced from the derivative of the map $\Upsilon$ defined by \eqref{Eq_Upsilon}.
  The first square commutes because $\hsw_t^0 = \Upsilon^*\sw_t$.
  
  The maps $H_1$ and $H_2$ are induced from the homotopy of operators $\hsw^{\sigma}_t$, defined by \eqref{Eq_SWSigma}, as $\sigma$ varies in $[0,1]$.
  The second square commutes because orientation transport is homotopy invariant.
    
  Finally, the right-most maps $M_1$ and $M_2$ are the trivializations of the determinant line bundle $\det \rd \hsw^1_t$ obtained from the Kuranishi model.
  The construction of $M_1$ and $M_2$, and the commutativity of the third square are addressed in \autoref{It_KuranishiModelSquare} below.

  We now compute the maps in the diagram.

  \begin{enumerate}
  \item
    By \autoref{Step_OperatorAtInfinity},
    the kernel and cokernel of $\rd_{\Upsilon(c_\epsilon^0)}\sw_{t_0}$ are spanned by $\Psi_0$.
    Thus,
    \begin{equation*}
      \kappa(1) = \Psi_0 \otimes \Psi_0^*.
    \end{equation*}
  \item
    By \eqref{Eq_UpsilonDerivativeInverse}, under the isomorphism $\Upsilon_*$ this correponds to
    \begin{equation*}
      \Upsilon_*^{-1}(\Psi_0\otimes\Psi_0^*) = \epsilon^4 \del_\epsilon\otimes \del_\epsilon^*,
    \end{equation*}
    where $\del_\epsilon$ is the tangent vector in the $\epsilon$--direction in $\hat\sB^* = (0,\infty)\times \sS \times \sA(\det W)/\sG$.
  \item
    The map $H_1$ is given by the homotopy $\hsw^{\sigma}_t$ as $\sigma$ varies in $[0,1]$. 
    We have
    \begin{align*}
      \rd_{\fc_{\epsilon}^0}\hsw_{t_0}^\sigma(\hat\epsilon,\psi,a)
      &=
        U_\epsilon^\sigma \circ \rd_{\fc_\epsilon^0}\hsw_{t_0}^0(\hat\epsilon,\psi,a)
        + \hat\epsilon (\del_\epsilon U_\epsilon^\sigma)\circ \hsw_{t_0}^0(\fc_{\epsilon_0}) \\
      &=
        U_\epsilon^\sigma \circ
        \begin{bmatrix}
          0 & 0 & 0 \\
          0 & -\slD_{A_0,\bp_{t_0}} & -\bar\gamma(\cdot)\Psi_0\\
          0 & -\epsilon^{-2}\rd_{\Psi_0}\mu & *\rd
        \end{bmatrix}
        + \hat\epsilon
          \begin{bmatrix}
            0 \\
            0 \\
            2\sigma\epsilon *\eta_{t_0}
          \end{bmatrix},
    \end{align*}
    because
    \begin{align*}
      \rd_{\fc_\epsilon^0}\hsw_{t_0}^0
      &=
        \begin{bmatrix}
          0 & 0 & 0 \\
          0 & -\slD_{A_0,\bp_{t_0}} & -\bar\gamma(\cdot)\Psi_0\\
          0 & -\epsilon^{-2}\rd_{\Psi_0}\mu & *\rd
        \end{bmatrix} \qand \\
      \hsw_{t_0}^0(\fc_{\epsilon_0})
      &=
        \begin{bmatrix}
          0 \\
          0 \\
          *\eta_{t_0}
        \end{bmatrix}.
    \end{align*}

    Set
    \begin{equation*}
      \ell
      \coloneq
      \begin{bmatrix}
        -\slD_{A_0,\bp_{t_0}} & -\bar\gamma(\cdot)\Psi_0\\
        -\epsilon^{-2}\rd_{\Psi_0}\mu & *\rd
      \end{bmatrix}.
    \end{equation*}
    The discussion in \autoref{Step_OperatorAtInfinity}, showed that $\ell$ is invertible.    
    The fact that the top row of $\rd_{\fc_{\epsilon}^0}\hsw_{t_0}^\sigma$ is zero tells us that its cokernel is spanned by $\del_\epsilon$.
    The kernel of $\rd_{\fc_{\epsilon}^0}\hsw_{t_0}^\sigma$ and it is spanned by
    \begin{equation*}
      \del_\epsilon + v_\epsilon^\sigma
      \qwithq
      v_\epsilon^\sigma = - \ell^{-1}(U_\epsilon^\sigma)^{-1}
      \begin{bmatrix}
        0 \\
        0 \\
        2\sigma\epsilon*\eta_{t_0}
      \end{bmatrix}.
    \end{equation*}
    The exact formula for $v_\epsilon^\sigma$ is not important;
    what matters is that it varies smoothly in $\epsilon$ and $\sigma$.
    The kernel and cokernel of $\rd_{\fc_{\epsilon}^0}\hsw_{t_0}^\sigma$ vary smoothly as $\sigma$ varies in $[0,1]$.
    This gives a trivialization of the line bundle over $[0,1]$ whose fiber over $\sigma \in [0,1]$ is the determinant line $\det \rd_{\fc_{\epsilon}^0}\hsw_{t_0}^\sigma$.
    This trivialization extends the one of $\det \rd_{\fc_{\epsilon}^0}\hsw_{t_0}^0$ obtained by choosing the basis $\del_\epsilon$ and $\del_\epsilon^*$ of the kernel and cokernel of $\rd_{\fc_{\epsilon}^0}\hsw_{t_0}^0$ respectively. 
    Since $H_1$ is defined as the orientation transport along the path $(\rd_{\fc_{\epsilon}^0}\hsw_{t_0}^\sigma)_{\sigma \in [0,1]}$, we see that 
    \begin{equation*}
      H_1(\del_\epsilon\otimes \del_\epsilon^*)
      =
      (\del_\epsilon+ v_\epsilon^1)\otimes \del_\epsilon^*.
    \end{equation*}
  \item
    \label{It_KuranishiModelSquare}
    We now describe the maps $M_1$ and $M_2$.
    We construct the Kuranishi model for $\hsw_t^1$ using the Implicit Function Theorem to find $\Xi_{t,\epsilon}$ such that
    \begin{equation*}
      \hsw_t^1(\epsilon,\Xi_{t,\epsilon}[\Psi_0+\psi,A_0+a])
      =
      \begin{bmatrix}
        f_t(\epsilon,\psi,a) \\
        \ell (\psi,a)
      \end{bmatrix}
      + \hsw_t^1(\epsilon,[\Psi_0,A_0]).
    \end{equation*}
    Here $f_t$ is a smooth $\R$--valued function and $\ell$ is the isomorphism introduced above.
    This yields an exact sequence
    \begin{equation*}
      0 \to \ker \rd_{\fc_{\epsilon}^0}\hsw_{t_0}^1 \to \R \xrightarrow{\del_\epsilon f_{t_0}(\epsilon,0,0)} \R \to \coker \rd_{\fc_{\epsilon}^0}\hsw_{t_0}^1 \to 0;
    \end{equation*}
    cf. \autoref{Eq_KuranishiModelExactSequence}.
    It follows from the description of $\rd_{\fc_{\epsilon}^0}\hsw_{t_0}^1$ above,
    that the first map sends $\del_\epsilon + v_\sigma$ to $1$ and
    the third map sends $1$ to $\del_\epsilon$.
    Consequently, the second map is zero.
    
    Similarly, there is an exact sequence
    \begin{equation*}
      0 \to \ker \rd_{c_{\epsilon}}\hsw_{t(\epsilon)}^1 \to \R \xrightarrow{\del_\epsilon f_{t(\epsilon)}(\epsilon,\psi_\epsilon,a_\epsilon)} \R \to \coker \rd_{\fc_{\epsilon}^0}\hsw_{t(\epsilon)}^1 \to 0.
    \end{equation*}  
    Here $(\epsilon,\Psi_0+\psi_\epsilon,A_0+a_\epsilon)$ is a solution of the blown-up Seiberg--Witten equation with two spinors and, by the definition of the obstruction map $\ob$, see \cite[Section 5.3]{Doan2017a}, we have
    \begin{equation*}
      \del_\epsilon f_{t(\epsilon)}(\epsilon,\psi_\epsilon,a_\epsilon) = (\del_\epsilon\ob)(t(\epsilon),\epsilon).
    \end{equation*}
    Since, by \eqref{Eq_ObstructionMapExpansion},
    \begin{equation*}
      (\del_\epsilon\ob)(t(\epsilon),\epsilon) = -4\delta\epsilon^3 + O(\epsilon^5)
    \end{equation*}
    and $0 < \epsilon \ll 1$,
    it follows that $\rd_{\fc_\epsilon}\hsw_{t(\epsilon)}^1$ is invertible.
    Since $\fc_\epsilon$ is a solution, $\rd_{\fc_\epsilon}\hsw_{t(\epsilon)}^1$ agrees with $\rd_{\fc_\epsilon}\sw = L_{\epsilon^{-1}\Psi_\epsilon,A_\epsilon,\bp_{t(\epsilon)}}$ (up to composition with an isomorphism and pulling back by a diffeomorphism).
    It follows that the solution $[\epsilon^{-1}\Psi_\epsilon,A_\epsilon]$ is unobstructed.

    These exact sequences yield the maps
    \begin{gather*}
      M_1 \co \det(\rd_{\fc_{\epsilon}^0}\hsw_{t_0}^1) \to \R\otimes \R^* = \R
      \qand \\
      M_2 \co \det(\rd_{c_{\epsilon}}\hsw_{t(\epsilon)}^1) \to \R\otimes \R^* = \R
    \end{gather*}
    appearing in the diagram \autoref{Eq_OrientationTransportDiagram}.
    These maps are computed using \autoref{Eq_PsiDet} to be
    \begin{equation*}
      M_1 \co
      (\del_\epsilon+v_\epsilon^1)\otimes(\del_\epsilon^*)
      \mapsto -1\otimes 1^*
      \mapsto -1
    \end{equation*}
    and
    \begin{equation*}
      M_2 \co
      1\otimes 1^*
      \mapsto -1\otimes (\del_\epsilon\ob)^*
      \mapsto -\del_\epsilon\ob = 4\delta\epsilon^3 + O(\epsilon^5).
    \end{equation*}
    (The signs come from the Knudsen--Mumford conventions used in the construction of the determinant line bundle.)
  \item
    Since $\fc_\epsilon$ is a solution,
    we have
    \begin{gather*}
      \det \rd_{\fc_\epsilon}\sw_{t(\epsilon)} = \det(0)\otimes\det(0)^* = \R, \quad
      \det \rd_{\fc_\epsilon}\hsw_{t(\epsilon)}^0 = \det(0)\otimes\det(0)^* = \R, \\ \andq
      \det \rd_{\fc_\epsilon}\hsw_{t(\epsilon)}^1 = \det(0)\otimes\det(0)^* = \R
    \end{gather*}
    and all of the maps on the bottom of \autoref{Eq_OrientationTransportDiagram} are the identity $\R = \R$.
  \item
    Finally, putting everything together we see that composition of all maps is
    \begin{equation*}
      1
      \mapsto \Psi_0\otimes\Psi_0^*
      \mapsto \epsilon^4 \del_\epsilon\otimes\del_\epsilon^*
      \mapsto \epsilon^4 (\del_\epsilon+v_\epsilon^1)\otimes\del_\epsilon^*
      \mapsto -\epsilon^4
      \mapsto \epsilon^4\del_\epsilon\ob = -4\delta\epsilon^7 + O(\epsilon^9).
    \end{equation*}
    We conclude that the orientation transport \eqref{Eq_OrientationTransportAtInfinity}, which is equal to the sign of the composition of all of the above maps,  is $-\sign(\delta)$ as we wanted to show.
    \qedhere
  \end{enumerate}
\end{proof}


\section{Proof of the wall-crossing formulae}
\label{Sec_WallCrossingProof}

This section will conclude the proof of \autoref{Thm_TwoSeibergWittenWallCrossing}.
The local wall-crossing formula \eqref{Eq_LocalWallCrossingFormula} and \autoref{Prop_HowToDetermineSigma} directly imply the wall-crossing formula
\makeatletter
\let\oldtagform@\tagform@
\renewcommand\tagform@[1]{\maketag@@@{\ignorespaces#1\unskip\@@italiccorr}}
\makeatother
\begin{equation}
  \tag{\eqref{Eq_WallCrossingFormula}}
  n_\fw(\bp_1,\eta_1)
  =
    n_\fw(\bp_0,\eta_0)
  +
    \sum_\fs \sum_{i=1}^{N_\fs} \chi^\fs_i \cdot \sigma(\Psi^\fs_i,\bp_{t^\fs_i}).
\end{equation}
\makeatletter
\let\tagform@\oldtagform@
\makeatother
In order to prove \eqref{Eq_WallCrossingFormula_2}, we need to relate $\sigma(\Psi_0,\bp_0)$ to $\sigma(\Psi_1,\bp_1)$ for two different nowhere vanishing spinors $\Psi_0$ and $\Psi_1$ and two parameters $\bp_0$ and $\bp_1$.

\begin{prop}
  \label{Prop_HowSigmaChanges}
  Let $(\bp_t)_{t\in[0,1]}$ be a path in $\sP$.
  If $\Psi_0$ and $\Psi_1$ are two nowhere vanishing sections of $\Re(E\otimes S_\fs)$, then
  \begin{equation}
    \label{Eq_HowSigmaChanges}
    \sigma(\Psi_1,\bp_1)
    =
      \sigma(\Psi_0,\bp_0)
      \cdot (-1)^{\SF(-\slD_{\bp_t}^\fs)}
      \cdot (-1)^{\deg(\Psi_0,\Psi_1)}.
  \end{equation}
  Here $\deg(\Psi_0,\Psi_1)$ denotes the relative the degree of $\Psi_0$ and $\Psi_1$ as in \autoref{Def_RelativeDegree}.
\end{prop}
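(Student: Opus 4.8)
The plan is to turn the sign $\sigma(\Psi_1,\bp_1)/\sigma(\Psi_0,\bp_0)$ into a single spectral flow and then split off its two contributions. Recall from \autoref{Def_Sigma} that $\sigma(\Psi_i,\bp_i)=(-1)^{b_1(M)}(-1)^{\SF((-\fd^t_{\Psi_i,\bp_i})_{t\in[0,1]})}$ with $\fd^t_{\Psi,\bp}=(1-t)\fd_{\Psi,\bp}+t\fd_\bp$, and from the proof of \autoref{Prop_DPsi} that $\fd_{\Psi,\bp}=\fd_\bp+\fe_{\Psi,\bp}$ with $\fe_{\Psi,\bp}$ of order zero. Hence every operator $-[(1-t)\fd_{\Psi,\bp_s}+t\fd_{\bp_s}]$ is self-adjoint and Fredholm, so straight-line paths between the $\fd$'s, and the square $\bigl(-[(1-t)\fd_{\Psi_0,\bp_s}+t\fd_{\bp_s}]\bigr)_{(t,s)\in[0,1]^2}$, stay in the space of self-adjoint Fredholm operators. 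The concatenation of $(-\fd^t_{\Psi_0,\bp_0})_{t}$, the parameter path $(-\fd_{\bp_s})_{s\in[0,1]}$ (whose spectral flow vanishes because $\dim\ker\fd_{\bp_s}=1+b_1(M)$ is independent of $s$, exactly as in the proof of \autoref{Prop_OrientationTransport}), and the reverse of $(-\fd^t_{\Psi_1,\bp_1})_{t}$ is a path $\Gamma$ from $-\fd_{\Psi_0,\bp_0}$ to $-\fd_{\Psi_1,\bp_1}$ with $(-1)^{\SF(\Gamma)}=\sigma(\Psi_1,\bp_1)/\sigma(\Psi_0,\bp_0)$ (the factors $(-1)^{b_1(M)}$ cancel). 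Using the filled square, and the fact that the triangle with vertices $-\fd_{\Psi_0,\bp_1},-\fd_{\bp_1},-\fd_{\Psi_1,\bp_1}$ lies in the Fredholm self-adjoint operators, $\Gamma$ is homotopic rel endpoints to $(-\fd_{\Psi_0,\bp_s})_{s\in[0,1]}$ followed by the straight line from $-\fd_{\Psi_0,\bp_1}$ to $-\fd_{\Psi_1,\bp_1}$. By additivity of the spectral flow the claim thus reduces to $\SF\bigl((-\fd_{\Psi_0,\bp_s})_{s\in[0,1]}\bigr)=\SF\bigl((-\slD^\fs_{\bp_s})_{s\in[0,1]}\bigr)$ and $\SF\bigl((-[(1-t)\fd_{\Psi_0,\bp_1}+t\fd_{\Psi_1,\bp_1}])_{t\in[0,1]}\bigr)\equiv\deg(\Psi_0,\Psi_1)\pmod 2$.

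The first identity is essentially free. By \autoref{Prop_DPsi}, $\fd_{\Psi,\bp}=\tilde\fa_{\Psi,\bp}^*\slD_{\Im}\tilde\fa_{\Psi,\bp}$ with $\tilde\fa_{\Psi,\bp}$ an isometry, and multiplication by $i$ is a (metric-independent) $\C$-linear isometry exchanging $\Re(S_\fs\otimes E)$ with $\Im(S_\fs\otimes E)$ and commuting with the $\C$-linear Dirac operator; hence $\slD_{\Im}=\slD_{\Re}=\slD^\fs_\bp$ up to isometric conjugation, and therefore $\fd_{\Psi,\bp}=\Xi_\bp^*\slD^\fs_\bp\Xi_\bp$ for an isometry $\Xi_\bp$ depending continuously on $\bp$. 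Conjugation does not change the spectrum, so along the path $\bp_s$ the families $-\fd_{\Psi_0,\bp_s}$ and $-\slD^\fs_{\bp_s}$ have identical spectra for every $s$, and a spectral flow of self-adjoint Fredholm operators depends only on the resulting family of spectra. This yields the first identity and the factor $(-1)^{\SF(-\slD^\fs_{\bp_t})}$ in \eqref{Eq_HowSigmaChanges}.

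For the second identity, fix $\bp=\bp_1$ and write $\fd_i=\fd_{\Psi_i,\bp}$. Since $\tilde\fa_{\Psi_i}$ depends only on $\Psi_i/\abs{\Psi_i}$, the bundle self-isometry $U:=\tilde\fa_{\Psi_0}^{-1}\tilde\fa_{\Psi_1}$ of $(\Tstar M\oplus\ubR)\otimes i\R$ satisfies $\fd_1=U^*\fd_0 U$. In the $\SU(2)$-compatible trivializations of \autoref{Def_RelativeDegree}, $\tilde\fa_\Psi$ is pointwise right multiplication by $\Psi/\abs{\Psi}\in\Sp(1)$ composed with a fixed identification; consequently $U$ is homotopic, through bundle isometries, to pointwise multiplication by a map $M\to\Sp(1)\cong S^3$ whose class in $[M,S^3]=H^3(M;\Z)\cong\Z$ is $\pm\deg(\Psi_0,\Psi_1)$. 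Because $\fd_1=U^*\fd_0 U=U^{-1}\fd_0 U$, gluing the ends of $[0,1]\times M$ by the clutching function $U$ glues the straight-line family $\fd^t=(1-t)\fd_0+t\fd_1$ into an elliptic operator $\fD=\del_t-\fd^t$ on the mapping torus $\mathbf V\to S^1\times M$ of $U$, and, by the Atiyah--Patodi--Singer index theorem applied as in the proof of \autoref{Prop_OrientationTransport_Gauge}, $\SF\bigl((-\fd^t)_{t\in[0,1]}\bigr)=\pm\ind\fD$. Finally $\ind\fD$ is computed by Atiyah--Singer: the untwisted operator $\del_t-\fd$ is homotopic through Fredholm operators to $\rd^+ +\rd^*$ on $S^1\times M$, whose index $b^1(S^1\times M)-1-b^+(S^1\times M)=(b_1(M)+1)-1-b_1(M)$ vanishes, so $\ind\fD$ equals the correction from the twist by $\mathbf V$, which is $\pm\langle c_2(\mathbf V),[S^1\times M]\rangle$; and $c_2$ of the mapping torus of a degree-$k$ gauge transformation equals $\pm k$. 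Hence $\ind\fD\equiv\deg(\Psi_0,\Psi_1)\pmod 2$, which produces the factor $(-1)^{\deg(\Psi_0,\Psi_1)}$ and finishes the proof.

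The reductions of the first two paragraphs are formal consequences of additivity and homotopy invariance of the spectral flow together with \autoref{Prop_DPsi}, so the substance is the index computation in the third paragraph, and this is where I expect the real work to be. The points needing care are: identifying the glued straight-line family precisely enough to recognize its symbol as that of $\rd^+ +\rd^*$ twisted by $\mathbf V$; showing, via the quaternionic structure on $\Re(S_\fs\otimes E)$, that $U$ is homotopic to pointwise $\Sp(1)$-multiplication of the claimed degree; and computing $c_2(\mathbf V)$---equivalently the Euler number of $\mathbf V$ as a real rank-$4$ bundle over $S^1\times M$---in terms of $\deg(\Psi_0,\Psi_1)$, along with the exact coefficient with which it enters the index formula. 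Only the parity of that coefficient is needed, but pinning it down still requires keeping track of orientations and of the Knudsen--Mumford sign conventions underlying the determinant-line description of the spectral flow.
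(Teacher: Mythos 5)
Your proposal is correct and follows essentially the same route as the paper: the parameter dependence is absorbed by conjugating $\fd_{\Psi_0,\bp}$ to $\slD^\fs_\bp$ via the isometries $\tilde\fa_{\Psi_0,\bp}$ (and multiplication by $i$), and the change of spinor at fixed $\bp$ is handled exactly as in the paper, via Atiyah--Patodi--Singer applied to the mapping torus of $\tilde\fa_{\Psi_0}^*\tilde\fa_{\Psi_1}$ together with the Atiyah--Singer computation identifying the index with $\pm\deg(\Psi_0,\Psi_1)$ through $c_2$ of the glued $\SU(2)$--bundle. The only difference is organizational (you split the path into a parameter part and a spinor part, while the paper splits into the cases $\deg=0$ and $\deg\neq 0$ using a homotopy of conjugating isometries along a path of nowhere vanishing spinors), and the index-theoretic details you flag at the end are precisely the ones the paper carries out by complexifying the mapping torus to $\bS^+_\fs\otimes\bE$.
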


\begin{proof}
  Suppose first that $\deg(\Psi_0,\Psi_1) = 0$ so that we can find a path $(\Psi_t)_{t\in[0,1]}$ of nowhere vanishing sections from $\Psi_0$ to $\Psi_1$.
  It follows from \autoref{Def_Sigma} that
  \begin{equation*}
    \sigma(\Psi_1,\bp_1)
    =
      \sigma(\Psi_0,\bp_0)
      \cdot (-1)^{\SF(-\fd_{\Psi_t})}.
  \end{equation*}
  The spectral flow along the path of operators $-\fd_{\Psi_t}$ is identical to that of the path of operators
  \begin{equation}
   \label{Eq_PathOfOperators}
    -\tilde\fa_{\Psi_0}\circ\fd_{\Psi_t}\circ \tilde \fa_{\Psi_0}^*.
  \end{equation}
  Consider the homotopy of paths
  \begin{equation*}
    (t,s) \mapsto -\tilde\fa_{\Psi_{st}} \circ \fd_{\Psi_t} \circ \tilde\fa_{\Psi_{st}}^*.
  \end{equation*}
  It is well-defined since $\Psi_t$ is nowhere vanishing for $t \in [0,1]$.
  For $s=0$ it gives us the path \eqref{Eq_PathOfOperators} whereas for $s=1$ we obtain 
  \begin{equation*}
    -\tilde\fa_{\Psi_t}\circ\fd_{\Psi_t}\circ \tilde \fa_{\Psi_t}^*
    =
      -\slD_{\bp_t}^\fs.
  \end{equation*}
  Since the spectral flow is a homotopy invariant, we have $\SF(-\fd_{\Psi_t}) = \SF(-\slD_{\bp_t}^\fs)$ which proves \eqref{Eq_HowSigmaChanges} if $\deg(\Psi_0,\Psi_1) = 0$.

  It remains to deal with the case $\deg(\Psi_0,\Psi_1) \neq 0$.
  By the above, we can assume that $\bp_t = \bp$ for all $t \in [0,1]$.
  Since $\sigma(\Psi_1,\bp)$ and $\sigma(\Psi_2,\bp)$ are defined using the spectral flow from $\fd_{\Psi_0}$ and $\fd_{\Psi_1}$ respectively to a given elliptic operator (see \autoref{Def_Sigma}), it follows that for any path of elliptic operators $(\fd_t)_{t\in[0,1]}$ connecting $\fd_{\Psi_0}$ and $\fd_{\Psi_1}$ we have
  \begin{equation*}
        \sigma(\Psi_1,\bp)\cdot \sigma(\Psi_0,\bp) = (-1)^{\SF(-\fd_t)}.
  \end{equation*}
  By work of Atiyah--Patodi--Singer \cite[Section 7]{Atiyah1976}, the spectral flow $\SF(-\fd_t)$ is equal to the index of an elliptic operator $\fD$ on $S^1 \times M$ constructed as follows.
  Set $V \coloneq (T^*M\oplus\underline{\R})\otimes i\R$ and define an isometry $f \co V \to V$ by
  \begin{equation*}
    f \coloneq \tilde \fa_{\Psi_0}^*\tilde \fa_{\Psi_1}
  \end{equation*}
  By the definition of $f$, we have
  \begin{equation*}
    \fd_{\Psi_1} = f^{-1} \circ \fd_{\Psi_0} \circ f.
  \end{equation*}
  Let $\bV \to S^1 \times M$ be the vector bundle obtained as the mapping torus of $f$; that is, $\bV = V \times [0,1] /\sim$ where $\sim$ denotes the equivalence relation $(v,0) \sim (f(v),1)$. 
  If, as before, $(\fd_t)_{t\in [0,1]}$ is a family of elliptic operators connecting $\fd_{\Psi_0}$ and $\fd_{\Psi_1}$, then the operator $\del_t - \fd_t$ on the pull-back of $V$ to $[0,1] \times M$, with $t$ denoting the coordinate on $[0,1]$, gives rise to a first order elliptic operator $\fD$ on $\bV \to S^1 \times M$ whose index equals $\SF(-\fd_t)$.
  We compute this index as follows.
  Under the isomorphism $\tilde\fa_{\Psi_0}$ between $V$ and $\Re(S_\fs\otimes E)$ the operator $\fd_{\Psi_0}$ corresponds to $\slD^\fs_{\bp}$. 
  Moreover, under this isomorphism, the complex-linear extension of $f$ corresponds to an isomorphism of $S_\fs\otimes E$ given by a gauge transformation $g$ of degree $\deg(\Psi_0,\Psi_1)$ of the $\SU(2)$--bundle $E$ (this is because in a local trivialization $f$ is given simply by right-multiplication by a quaternion-valued function).
  Thus, the complexification of $\bV$ is isomorphic to $\bS^+_\fs\otimes\bE$ where $\bS^+_\fs$ is the positive spinor bundle of $S^1 \times M$ and $\bE$ is obtained by gluing $E \to [0,1]\times M$ along $\{0,1\} \times M$ using $g$.
  The complexification $\fD^\C$ of the operator $\fD$ corresponds in this identification to the Dirac operator on $\bS^+_\fs$ twisted by a connection on $\bE$. 
  By the Atiyah--Singer Index Theorem,
  \begin{equation*}
    \ind \fD^\C = \int_{S^1 \times M} \hat{A}(S^1 \times M) \ch(\bE) = -\int_{S^1 \times M} c_2(\bE) = - \deg(\Psi_0,\Psi_1).
  \end{equation*}
  The real index of $\fD$ is equal to the complex index of $\fD^\C$ and we conclude that
      \begin{equation*}
    \sigma(\Psi_1,\bp)\cdot \sigma(\Psi_0,\bp)
    = (-1)^{\SF(-\fd_t)} 
    = (-1)^{\deg(\Psi_0,\Psi_1)}.
  \end{equation*}
  This completes the proof of this proposition.
\end{proof}

Recall that $t_1^\fs, \ldots, t_{N_\fs}^\fs \in (0,1)$ are the times at which the spectrum of $\slD^\fs_{\bp_t}$ crosses zero.
The crossing is transverse with intersection sign $\chi^\fs_i \in \{ \pm 1 \}$.
The next result relates $\chi_i^\fs$ to $\chi_1^\fs$.

\begin{prop}
  For all $i \in \set{1,\ldots,N_{\fw}}$, we have
  \begin{equation*}
    \chi^\fs_i\cdot (-1)^{\SF\paren*{-\slD_{\bp_t}^\fs : t \in \left[t^\fs_1,t^\fs_i\right]}}
    = (-1)^{i+1}\cdot\chi^\fs_1.
  \end{equation*}
\end{prop}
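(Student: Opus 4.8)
The statement is a purely combinatorial fact about the path of self-adjoint Dirac operators $(\slD^\fs_{\bp_t})$ and the zero-crossings of its spectrum, so the plan is to prove it by a direct accounting of the spectral flow, using additivity of $\SF$ under concatenation of paths and the Atiyah--Patodi--Singer sign convention (\autoref{Conv_SpectralFlow}) at the two non-invertible endpoints $t^\fs_1$ and $t^\fs_i$.

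First I would record the local picture at a transverse crossing: by \autoref{Def_RegularPath}\itref{Def_RegularPath_TransverseSpectralFlow}, near each $t^\fs_j$ exactly one eigenvalue $\lambda_j(t)$ of $\slD^\fs_{\bp_t}$ passes through $0$, transversally, with $\chi^\fs_j = \sign\dot\lambda_j(t^\fs_j)$, so the corresponding eigenvalue $-\lambda_j(t)$ of $-\slD^\fs_{\bp_t}$ crosses $0$ with derivative of sign $-\chi^\fs_j$. Then I would split $[t^\fs_1,t^\fs_i]$ at the interior crossing times $t^\fs_2,\dots,t^\fs_{i-1}$. Each interior crossing of $-\slD^\fs_{\bp_t}$ contributes $-\chi^\fs_j$, an odd integer, to the spectral flow, so the $i-2$ of them together contribute a factor $(-1)^{i-2}=(-1)^i$ to $(-1)^{\SF}$. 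For the endpoints I would invoke \autoref{Conv_SpectralFlow}: after replacing $-\slD^\fs_{\bp_t}$ by $-\slD^\fs_{\bp_t}+\lambda$ with $0<\lambda\ll1$, the kernel eigenvalue at each endpoint sits at $+\lambda$, and a short case check ($\chi=+1$ versus $\chi=-1$, left endpoint versus right endpoint) shows that $t^\fs_1$ contributes a factor $-\chi^\fs_1$ and $t^\fs_i$ a factor $\chi^\fs_i$ to $(-1)^{\SF}$. Multiplying the three contributions yields $(-1)^{\SF(-\slD^\fs_{\bp_t}\,:\,t\in[t^\fs_1,t^\fs_i])} = (-1)^{i+1}\chi^\fs_1\chi^\fs_i$, and multiplying through by $\chi^\fs_i$ together with $(\chi^\fs_i)^2=1$ gives the claim. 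A clean alternative is to run the identical bookkeeping as an induction on $i$: the base case $i=1$ is the trivial path (spectral flow $0$), and the step uses additivity $\SF([t^\fs_1,t^\fs_{i+1}]) = \SF([t^\fs_1,t^\fs_i]) + \SF([t^\fs_i,t^\fs_{i+1}])$ to reduce to the two-crossing interval, for which $(-1)^{\SF}=-\chi^\fs_i\chi^\fs_{i+1}$.

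The one delicate point, and the step I expect to be the main obstacle, is the endpoint accounting under \autoref{Conv_SpectralFlow}, since it is easy to be off by an overall sign in the four cases and the whole identity hinges on getting it right. I would pin it down by first checking the convention on the trivial model paths $t\mapsto\pm t$ and $t\mapsto\pm(t-1)$ on $[0,1]$, where the spectral flow is immediate, and only then transfer the outcome to the eigenvalues $-\lambda_j(t)$ near $t^\fs_1$ and $t^\fs_i$. Everything else---that exactly one eigenvalue is involved at each endpoint, transversality of all crossings, and finiteness of the crossing set---is furnished by \autoref{Def_RegularPath}\itref{Def_RegularPath_TransverseSpectralFlow}.
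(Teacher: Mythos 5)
Your proposal is correct and is essentially the paper's argument: the paper reduces to the case $i=2$ by induction (your ``clean alternative'') and then verifies the two-crossing interval by exactly the kind of four-case endpoint check under \autoref{Conv_SpectralFlow} that you describe, recorded there as a small table of $\chi_1,\chi_2,\SF$. Your direct accounting (interior crossings each contributing an odd amount, left endpoint a factor $-\chi^\fs_1$, right endpoint a factor $\chi^\fs_i$) reproduces the same parities, so there is nothing missing.
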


\begin{remark}
  In the above, the operator $-\slD_{\bp_t}^\fs$ is not invertible for $t \in \set{t_1^\fs, \ldots, t_{N_\fs}^\fs}$.
  According to \autoref{Conv_SpectralFlow},
  the spectral flow from $t_1^\fs$ to $t_i^\fs$ is defined as the spectral flow of the family $(-\slD_{\bp_t}^\fs + \lambda \id)_{t\in [t_1^\fs, t_i^\fs]}$ for any number $0 < \lambda \ll 1$.
\end{remark}

\begin{proof}
  By induction it suffices to consider the case $i = 2$.
  The case $i=2$ can be verified directly case-by-case as follows:
  \begin{center}
    \begin{tabular}{ r r r | c }
      $\chi_1$ & $\chi_2$ & $\SF$ & $\chi_1\cdot \chi_2\cdot(-1)^{\SF}$ \\
      \hline
      $+$1 & $+1$ & $-1$ & $-1$ \\
      $+1$ & $-1$ & $2$ & $-1$ \\
      $-1$ & $+1$ & $0$ & $-1$ \\
      $-1$ & $-1$ & $1$ & $-1$
    \end{tabular}
  \end{center}
  Here $\SF = \SF\paren[\big]{-\slD_{\bp_t}^\fs : t \in \left[t^\fs_1,t^\fs_2\right]}$.
\end{proof}

Combining the two preceding propositions shows that \eqref{Eq_WallCrossingFormula} can equivalently be written as follows:
\makeatletter
\let\oldtagform@\tagform@
\renewcommand\tagform@[1]{\maketag@@@{\ignorespaces#1\unskip\@@italiccorr}}
\makeatother
\begin{equation}
  \tag{\eqref{Eq_WallCrossingFormula_2}}
  n_\fw(\bp_1,\eta_1)
  =
    n_\fw(\bp_0,\eta_0)
  +
    \sum_\fs
      \chi^\fs_1 \cdot \sigma(\Psi^\fs_1,\bp_{t^\fs_1}) \cdot
      \sum_{i=1}^{N_\fs}
        (-1)^{i+1}\cdot(-1)^{\deg(\Psi^\fs_1,\Psi^\fs_i)}.
\end{equation}
\makeatletter
\let\tagform@\oldtagform@
\makeatother
This completes the proof of \autoref{Thm_TwoSeibergWittenWallCrossing}.
\qed


\section{Transversality for paths}
\label{Sec_TransversalityForPaths}

The purpose of this section is to prove \autoref{Prop_TransversalityForPaths}.

\begin{prop}
  \label{Prop_TransverseSpectralFlow}
  For any $\bp_0,\bp_1 \in \sP^\reg$,
  the subspace $\bsP^\reg(\bp_0,\bp_1)$ is residual in the space of all smooth paths from $\bp_0$ to $\bp_1$ in $\sP$.
\end{prop}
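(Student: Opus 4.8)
The plan is to run a Sard--Smale parametric transversality argument, carried out separately for the two conditions of \autoref{Def_RegularPath} that define $\bsP^\reg(\bp_0,\bp_1)$ and separately for each spin structure $\fs$. Since $H^1(M;\Z_2)$ is finite there are only finitely many spin structures, so a finite intersection of residual subsets is again residual and we may fix $\fs$. The space of \emph{smooth} paths is not a Banach manifold, so I would first work on the Banach manifold $\sX^k$ of $W^{k,2}$--paths from $\bp_0$ to $\bp_1$ in $\sP$ (for each large $k$), produce the required residual subsets there, and then pass to the $C^\infty$ topology by intersecting over $k\in\N$ (the standard argument that a countable intersection of sets which are $C^\infty$--dense and open in the $C^k$--topology is $C^\infty$--residual); I suppress this reduction below and write $\sX$ for the path space.

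\emph{Transverse spectral flow and one-dimensionality of the kernel at crossings.} By \autoref{Prop_NaturalCoorientation} and \autoref{Prop_Wsk}, the wall $\sW^\fs\subset\sP$ is the union of the cooriented codimension-one submanifold $\sW^\fs_1$ and the subset $\sW^\fs\setminus\sW^\fs_1$, which has codimension at least three; moreover the proofs of those statements produce the local real-valued defining submersions for $\sW^\fs_1$ and, crucially, show that varying the parameter $\bp\in\sP=\Met(M)\times\sA(E)$ alone already makes the universal real Dirac operator transverse. For $t\in(0,1)$ the evaluation $\sX\to\sP$, $\bp\mapsto\bp_t$, is a submersion, since a path with fixed endpoints may be perturbed freely in its interior; hence this transversality descends to the universal crossing locus $\set{(\bp,t)\in\sX\times(0,1):\bp_t\in\sW^\fs}$, whose part over $\sW^\fs_1$ is a Banach submanifold of codimension one (locally $\set{f(\bp_t)=0}$ with $\partial_\bp(f\circ\mathrm{ev}_t)$ already onto $\R$) with Fredholm index-zero projection to $\sX$, while the part over $\sW^\fs\setminus\sW^\fs_1$ has codimension at least three and projection of negative index. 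By Sard--Smale the regular values of these two projections form a residual subset of $\sX$; over such a $\bp$ the second projection has empty fibre, so every crossing has one-dimensional kernel, and the differential of the first projection is onto at each of the finitely many crossing times, which is precisely the statement that $(\slD^\fs_{\bp_t})_{t\in[0,1]}$ has transverse spectral flow. No crossing occurs at $t\in\set{0,1}$ because $\bp_0,\bp_1\in\sP^\reg$, so $\slD^\fs_{\bp_0}$ and $\slD^\fs_{\bp_1}$ are invertible.

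\emph{Nowhere-vanishing of the kernel spinor.} Here I would introduce the universal incidence space
\[
  \cZ\coloneq\set*{(\bp,t,\Psi,x)\in\sX\times(0,1)\times\Gamma(\Re(S_\fs\otimes E))\times M : \slD^\fs_{\bp_t}\Psi=0,\ \Abs{\Psi}_{L^2}=1,\ \Psi(x)=0}.
\]
The universal crossing locus $\cK\coloneq\set{(\bp,t,\Psi):\slD^\fs_{\bp_t}\Psi=0,\ \Abs{\Psi}_{L^2}=1}$ is a Banach manifold whose projection to $\sX$ is Fredholm of index zero (its generic fibre is finite, by the previous step); adjoining the factor $M$ and imposing the $\rank_\R\Re(S_\fs\otimes E)=4$ equations $\Psi(x)=0$ transversally therefore produces a Banach manifold $\cZ$ whose projection to $\sX$ is Fredholm of index $0+3-4=-1<0$. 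The point at which work is needed is that $\Psi(x)=0$ really is cut out transversally: for fixed $x_0\in M$ the evaluation $\bp\mapsto\Psi_\bp(x_0)$ must surject onto $\Re(S_\fs\otimes E)_{x_0}$ as $\bp$ varies, which I would verify by perturbing the connection $B$ (or the metric $g$) inside a small ball around $x_0$, using the unique continuation principle for the Dirac-type operator $\slD^\fs_\bp$ to guarantee that a nonzero kernel element cannot vanish on a nonempty open set. Since the index is negative, Sard--Smale gives a residual set of $\bp\in\sX$ with empty fibre, i.e.\ no unit spinor spanning any $\ker\slD^\fs_{\bp_t}$ vanishes at any point of $M$; together with the one-dimensionality of the kernel from the previous step this is exactly condition \itref{Def_RegularPath_PsiNowhereVanishing} of \autoref{Def_RegularPath}.

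Intersecting the residual subsets of $\sX$ produced above over the finitely many spin structures $\fs$ yields a residual subset of the space of all smooth paths from $\bp_0$ to $\bp_1$ contained in $\bsP^\reg(\bp_0,\bp_1)$, which is the assertion. I expect the main obstacle to be the two transversality (surjectivity) inputs — that the universal real Dirac operator over the \emph{restricted} parameter space $\Met(M)\times\sA(E)$ is transverse, so that $\sW^\fs_1$ and the universal crossing locus are genuine Banach submanifolds, and that the spinor-evaluation map $\bp\mapsto\Psi_\bp(x_0)$ is surjective; the first is furnished by \autoref{Prop_NaturalCoorientation} and \autoref{Prop_Wsk}, while the second rests on the local perturbation plus unique continuation argument indicated above, the expected-dimension count then working out precisely because $\dim M = 3 < 4 = \rank_\R\Re(S_\fs\otimes E)$.
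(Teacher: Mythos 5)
Your overall strategy is the one the paper uses: pass to Sobolev path spaces and back to $C^\infty$, treat the finitely many spin structures one at a time, and run two Sard--Smale arguments, one of index $0$ giving transverse spectral flow with one-dimensional kernels and one of index $-1$ giving the nowhere-vanishing condition. Your treatment of condition \itref{Def_RegularPath_TransverseSpectralFlow} --- transversality of the evaluated path to $\sW^\fs_1$ together with avoidance of the strata of $\sW^\fs$ with higher-dimensional kernel --- is a sound repackaging of the paper's argument, which instead imposes $\slD^\fs_{\bp_{t_*}}\Psi=0$ as a universal section over (path space)$\,\times[0,1]\times$(projectivized spinors) and checks surjectivity of its linearization directly by connection variations; invoking \autoref{Prop_NaturalCoorientation} and \autoref{Prop_Wsk} is not circular, since their proofs do not use \autoref{Prop_TransverseSpectralFlow}, even though they appear later in the text.

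The gap is in condition \itref{Def_RegularPath_PsiNowhereVanishing}. Because you first form the universal kernel locus $\cK$ and only then impose $\Psi(x)=0$, the spinor variation along $\cK$ is determined by $(\hat\bp,\hat t)$ (modulo $\R\Psi$, which is invisible at $x_0$ since $\Psi(x_0)=0$), so your transversality reduces to the claim that $(\hat\bp,\hat t)\mapsto\hat\Psi(x_0)$, with $\hat\Psi$ obtained by inverting $\slD^\fs_{\bp_{t_*}}$ on $\Psi^\perp$, surjects onto the four-dimensional fibre. That is a genuinely nontrivial statement of Anghel--Maier type, and ``unique continuation'' by itself does not prove it: the honest verification dualizes, introduces the distributional solution $\Phi$ of $\slD^\fs_{\bp_{t_*}}\Phi=\delta_{x_0}\phi$, uses that Clifford multiplication by $T^*M\otimes\su(E)$ realizes all trace-free symmetric endomorphisms pointwise so that $\inner{\bar\gamma(\hat b)\Psi}{\Phi-c\Psi}_{L^2}=0$ for all $\hat b$ forces $\Phi=c\Psi$ off the nowhere-dense zero set of $\Psi$ (this is where unique continuation actually enters), and then derives a contradiction from the non-removable singularity of $\Phi$ at $x_0$; note also that perturbations of $B$ supported near $x_0$ act through $\bar\gamma(\hat b)\Psi$, which degenerates at $x_0$, and influence $\hat\Psi(x_0)$ only non-locally through the Green's operator, so without this dual formulation the ``perturb in a small ball'' heuristic does not stand on its own. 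The paper sidesteps the issue by not restricting to $\cK$ first: it imposes the Dirac equation and the evaluation simultaneously as one section $\btau$ with the spinor as a free variable, so if $(\Phi,\phi)$ is $L^2$-orthogonal to the image of the linearization, then $\inner{\slD^\fs_{\bp_{t_*}}\psi}{\Phi}_{L^2}+\inner{\psi(x_0)}{\phi}=0$ for \emph{all} spinor variations $\psi$ immediately forces $\phi=0$ and $\slD^\fs_{\bp_{t_*}}\Phi=0$ (no delta term can arise), after which the connection variations and the nowhere-dense zero set of $\Psi$ give $\Phi=0$. Either carry out the duality argument sketched above, or simplify by adopting the paper's formulation of the universal section.
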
  

\begin{proof}
  The proof is an application of the Sard--Smale theorem. 
  We will work with Sobolev spaces of sections and connections of class $W^{k,p}$ such that $(k-1)p > 3$.
  The statement for $C^{\infty}$ spaces follows then from a standard argument; see, for example, \cite[Proof of Proposition 2.21]{Doan2017}.

  Since there are only finitely many spin structures on $M$, it suffices to consider the conditions \itref{Def_RegularPath_TransverseSpectralFlow} and \itref{Def_RegularPath_PsiNowhereVanishing} in \autoref{Def_RegularPath} for a fixed spin structure $\fs$.
  Set
  \begin{align*}
    X
    &\coloneq
      \bsP(\bp_0,\bp_1) \times [0,1] \times \frac{\Gamma(\Re(S_\fs\otimes E) \setminus \set{0})}{\R^*}
    \\ \andq
    V
    &\coloneq
    \bsP(\bp_0,\bp_1) \times [0,1] \times \frac{\Gamma(\Re(S_\fs\otimes E) \setminus \set{0}) \times \Gamma(\Re(S_\fs\otimes E))}{\R^*}.
  \end{align*}
  $V$ is a vector bundle over $X$.
  Define a section $\bsigma \in \Gamma(V)$ by
  \begin{equation*}
    \bsigma\paren*{(\bp_t)_{t\in[0,1]},t_*,[\Psi]}
    \coloneq
      \paren*{(\bp_t)_{t\in[0,1]},t_*,[(\Psi, \slD_{\bp_{t_*}}\Psi)]}.
  \end{equation*}
  We can identify a neighborhood of $[\Psi] \in \Gamma(\Re(S_\fs\otimes E) \setminus \set{0})/\R^*$ with the $L^2$--orthogonal complement $\Psi^{\perp} \subset  \Gamma(\Re(S_\fs\otimes E)$. 
  This gives us a local trivialization of $V$ in which $\bsigma$ can be identified with the map
  \begin{equation*}
    \bsigma\paren*{(\bp_t)_{t\in[0,1]},t_*,\psi}
    =
      \paren*{(\bp_t)_{t\in[0,1]},t_*,\slD_{\bp_{t_*}}(\Psi + \psi)}
      \quad \text{for all } \psi \in \Psi^{\perp}.
  \end{equation*}
  In particular, for a fixed path $(\bp_t)_{t\in[0,1]}$, the map $\bsigma((\bp_t)_{t\in[0,1]},\cdot)$ defines a Fredholm section of index zero,
  since $\psi \mapsto \slD_{\bp_t}(\Psi+\psi)$ has index $-1$ and $\dim [0,1] = 1$.

  Let $\bx \coloneq \paren*{(\bp_t)_{t\in[0,1]},t_*,[\Psi]} \in X$ and denote by $\rd_\bx\bsigma$ the linearization of $\bsigma$ at $\bx$ (and computed in the above trivialization).
  We will prove that $\rd_\bx\bsigma$ is surjective provided $\bsigma(\bx) = 0$.
  If $\Phi \in V_\bx = \Gamma(\Re(S_\fs\otimes E))$ is orthogonal to the image of $\rd_\bx\bsigma$, then it follows that
  \begin{equation}
    \label{Eq_ConnectionVariationPerp}
    \inner{\bar\gamma(b)\Psi}{\Phi}_{L^2} = 0
    \quad
    \text{for all } b \in \Omega^1(M,\su(E)).
  \end{equation}
  Since $\Psi$ is harmonic, its zero set must be nowhere dense.
  Clifford multiplication by $T^*M\otimes\su(E)$ on $\Re(S_\fs\otimes E)$ induces a isomorphism between $T^*M\otimes\su(E)$ and trace-free symmetric endomorphisms of $\Re(S_\fs\otimes E)$.
  With this in mind it follows from \eqref{Eq_ConnectionVariationPerp} that $\Psi = 0$.
  This proves that $\rd_\bx\bsigma$ is surjective.

  It follows that $\bsigma^{-1}(0)$ is a smooth submanifold of $X$ and the projection map $\pi\co \bsigma^{-1}(0) \to \bsP(\bp_0,\bp_1)$ is a Fredholm map of index zero.
  The kernel of $\rd\pi$ at $\bx \in \bsigma^{-1}(0)$ can be identified with the kernel of the linearization of $\bsigma$ in the directions of $[0,1]$ and $\Gamma(\Re(S_\fs\otimes E) \setminus \set{0})/\R^*$.
  Writing down this linearization explicitly, we see that the condition $\ker\rd\pi(\bx) = \set{0}$ implies that $\Psi$ spans $\ker \slD_{\bp_{t_*}}$ and $t_*$ is a regular crossing of the spectral flow of $(\slD_{\bp_t})$.
  On the other hand, since $\pi$ is a Fredholm map of index zero, $\ker d\pi(\bx) = \set{0}$ is equivalent to $\bx$ being a regular point of $\pi$.
  By the Sard--Smale theorem, the subspace of regular values of $\pi$ is residual;
  hence, the set of those $(\bp_t)_{t\in[0,1]}$ in $\bsP(\bp_0,\bp_1)$ for which the condition \itref{Def_RegularPath_TransverseSpectralFlow} in \autoref{Def_RegularPath} holds is residual.

  To deal with condition \itref{Def_RegularPath_PsiNowhereVanishing} in \autoref{Def_RegularPath},
  we consider the vector bundle
  \begin{equation*}
    W
    \coloneq
    \bsP(\bp_0,\bp_1) \times [0,1] \times \frac{\Gamma(\Re(S_\fs\otimes E) \setminus \set{0}) \times \Gamma(\Re(S_\fs\otimes E)) \times \Re(S_\fs\otimes E)}{\R^*}
  \end{equation*}
  over $X\times M$ and define a section $\btau \in \Gamma(W)$ by
  \begin{equation*}
    \btau\paren*{(\bp_t)_{t\in[0,1]},t_*,[\Psi],y}
    \coloneq
    \paren*{(\bp_t)_{t\in[0,1]},t_*,[(\Psi, \slD_{\bp_{t_*}}\Psi,\Psi(y))]}.
  \end{equation*}
  For a fixed path $(\bp_t)_{t\in[0,1]}$, the map $\btau((\bp_t)_{t\in[0,1]},\cdot)$ defines a Fredholm section of index $-1$.
  Note that for $\bx \coloneq \paren*{(\bp_t)_{t\in[0,1]},t_*,[\Psi]} \in X$ and $y \in M$ the condition $\btau((\bp_t)_{t\in[0,1]}, \bx,y) = 0$ is equivalent to $\slD_{\bp_{t_*}} \Psi = 0$ and $\Psi(y) = 0$.
  We prove that the linearization of $\btau$ is surjective at any $(\bx,y)$ satisfying these equations. 
  If $(\Phi,\phi) \in W_{(\bx,y)} = \Gamma(\Re(S_\fs\otimes E))\times \Re(S_\fs\otimes E)_y$ is orthogonal to the image of $(\rd\tau)_{(\bx,y)}$, then \eqref{Eq_ConnectionVariationPerp} holds and, moreover,
  \begin{equation}
    \label{Eq_DiracPerp}
    \inner{\slD_{\bp_{t_*}}(\Psi+\psi)}{\Phi}_{L^2} + \inner{\psi(x)}{\phi} = 0
    \quad
    \text {for all } \psi \in \Psi^\perp
  \end{equation}
  Since $\slD_{\bp_{t_*}} \Psi = 0$ and $\Psi(y) = 0$, \eqref{Eq_DiracPerp} holds in fact for all $\psi \in \Gamma(\Re(S_\fs\otimes E))$ and we conclude that $\slD_{\bp_{t_*}}\Psi = 0$.
  Plugging this back into \eqref{Eq_DiracPerp} yields $\inner{\psi(x)}{\phi} = 0$ for all $\psi$, which implies that $\phi = 0$.
  As before \eqref{Eq_ConnectionVariationPerp} implies that $\Phi = 0$.

  It follows that $\btau^{-1}(0)$ is smooth and the projection $\rho \co \btau^{-1}(0) \to \bsP(\bp_0,\bp_1)$ is a Fredholm map of index $-1$; in particular, the preimage of a regular value must be empty.
  It follows that the paths $(\bp_t)_{t\in[0,1]} \in \bsP(\bp_0,\bp_1)$ for which condition \itref{Def_RegularPath_PsiNowhereVanishing} in \autoref{Def_RegularPath} holds is residual.
\end{proof}

To address condition \itref{Def_RegularPath_DeltaNonZero} in \autoref{Def_RegularPath} we compute $\delta(\Psi,\bp,\eta)$.

\begin{prop}
  \label{Prop_Delta}
  Let $(\bp_t)_{t\in[0,1]} \in \bsP(\bp_0,\bp_1)$,
  let $(\eta_t)_{t\in[0,1]}$ be a path in $\sZ$,
  let $t_0 \in (0,1)$, and
  let $\Psi$ be a nowhere vanishing section of $\Re(S_\fs\otimes E)$ spanning $\ker \slD_{\bp_{t_0}}^\fs$ and satisfying $\Abs{\Psi}_{L^2} = 1$.
  There is a linear algebraic operator 
  \begin{equation*}
  \ff_{\bp_{t_0},\Psi} \co \Omega^2(M, i\R) \to \Omega^2(M, i\R)
  \end{equation*}
   such that
  \begin{equation}
  \label{Eq_FormulaForDelta}
    \delta(\Psi,\bp_{t_0},\eta_{t_0})
    =
      \int_M \abs{\Psi}^{-2} \inner{\rd*\eta_{t_0} + \ff_{\Psi,\bp_{t_0}}\eta_{t_0}}{\eta_{t_0}}
  \end{equation}  
  with $\delta(\Psi,\bp_{t_0},\eta_{t_0})$ is as in \autoref{Def_RegularPath}\itref{Def_RegularPath_DeltaNonZero}.
\end{prop}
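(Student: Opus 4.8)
The plan is to read off from the local model of \cite[Theorem 1.38]{Doan2017a} an explicit description of the order-$\epsilon^2$ term $\psi$ in $\Psi_\epsilon = \Psi + \epsilon^2\psi + O(\epsilon^4)$, in terms of $\eta_{t_0}$, and then to evaluate $\delta(\Psi,\bp_{t_0},\eta_{t_0}) = \inner{\slD_{A_0,\bp_{t_0}}\psi}{\psi}_{L^2}$ directly. Here $A_0$ is the product connection on $\det W$ (the limit of $A_\epsilon$, by \autoref{Prop_SpecialHaydysCorrespondence}), so that $\slD_{A_0,\bp_{t_0}}$ restricts to $\slD_{\bp_{t_0}}^\fs$ on $\Re(S_\fs\otimes E)$ and to the operator $\slD_{\Im}$ on $\Im(S_\fs\otimes E)$, as in \autoref{Prop_DPsi}. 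Since $\Psi$ is smooth and nowhere vanishing on the compact manifold $M$, the function $\abs{\Psi}^{-2}$ is bounded; this will be used throughout.

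The first step is to pin down $\psi$. By \autoref{Prop_SpecialHaydysCorrespondence} one may assume $A_\epsilon = A_0 + O(\epsilon^2)$, and collecting the order-$\epsilon^2$ terms of the Dirac equation $\slD_{A_\epsilon,\bp_{t(\epsilon)}}\Psi_\epsilon = 0$ from \autoref{Def_RegularPath}\itref{Def_RegularPath_DeltaNonZero} yields
\begin{equation*}
  \slD_{A_0,\bp_{t_0}}\psi + \bar\gamma(a)\Psi + \dot t\,(\del_t\slD_{\bp_t}^\fs)|_{t=t_0}\Psi = 0 ,
\end{equation*}
with $a\in\Omega^1(M,i\R)$ the leading connection variation and $\dot t$ the order-$\epsilon^2$ coefficient of $t(\epsilon)$. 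Here $\bar\gamma(a)\Psi\in\Im(S_\fs\otimes E)$ and $(\del_t\slD_{\bp_t}^\fs)|_{t=t_0}\Psi\in\Re(S_\fs\otimes E)$, so the $\Re$-part reads $\slD_{\bp_{t_0}}^\fs\psi_{\Re} = -\dot t\,(\del_t\slD_{\bp_t}^\fs)|_{t=t_0}\Psi$; since $\Psi$ spans $\ker\slD_{\bp_{t_0}}^\fs$ and $\inner{\Psi}{(\del_t\slD_{\bp_t}^\fs)|_{t=t_0}\Psi}_{L^2}\neq 0$ by transversality of the spectral flow (\autoref{Def_RegularPath}\itref{Def_RegularPath_TransverseSpectralFlow}), solvability forces $\dot t = 0$, hence $\psi_{\Re}\in\R\Psi$; and then $\psi_{\Re} = 0$ because $\inner{\Psi}{\psi}_{L^2} = 0$ (from $\Abs{\Psi_\epsilon}_{L^2} = 1 = \Abs{\Psi}_{L^2}$) while $\psi_{\Im}\perp\Psi$. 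Thus $\psi = \psi_{\Im} = \fa_\Psi(a',\xi')$ for a unique $(a',\xi')\in\Omega^1(M,i\R)\oplus\Omega^0(M,i\R)$. With $\dot t = 0$ the order-$\epsilon^2$ part of the second equation in \autoref{Def_RegularPath}\itref{Def_RegularPath_DeltaNonZero} is simply $\rd_\Psi\mu(\psi) = \eta_{t_0}$ (the term $\tfrac12\epsilon^2 F_{A_\epsilon}$ contributes only at order $\epsilon^4$). Since $\bar\gamma(a')$ is self-adjoint and $\bar\gamma(a')\Psi,\rho(\xi')\Psi\perp\Psi$, the trace term in $\rd_\Psi\mu$ drops; one gets $\rd_\Psi\mu(\rho(\xi')\Psi) = 0$ and, using $\mu(\Psi) = 0$,
\begin{equation*}
  \rd_\Psi\mu(\bar\gamma(a')\Psi) = \bar\gamma(a')\,\Psi\Psi^* + \Psi\Psi^*\,\bar\gamma(a') = \{\bar\gamma(a'),\mu(\Psi)\} + \abs{\Psi}^2\bar\gamma(a') = \abs{\Psi}^2\bar\gamma(a') ,
\end{equation*}
so under the Clifford identification of $\Omega^2(M,i\R)$ with $i\su(W)$ the equation $\rd_\Psi\mu(\psi) = \eta_{t_0}$ becomes $\abs{\Psi}^2\bar\gamma(a') = \eta_{t_0}$, i.e. $a' = \abs{\Psi}^{-2}*\eta_{t_0}$. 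The crucial consequence is that $\abs{\Psi}^2 a' = *\eta_{t_0}$ is coclosed, since $\rd^*(*\eta_{t_0}) = -*\rd\eta_{t_0} = 0$ because $\eta_{t_0}\in\sZ$. (The parameter $\xi'$ is not algebraic in $\eta_{t_0}$---it solves an elliptic equation coming from the gauge slice of the local model---but it will drop out below.)

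The second step is to compute $\delta = \inner{\slD_{\Im}\psi}{\psi}_{L^2}$. Because $\slD_{\bp_{t_0}}^\fs\Psi = 0$, the identity \eqref{Eq_EPsi} reads $\slD_{\Im}\fa_\Psi(a',\xi') = \fa_\Psi\fd(a',\xi') - 2\sum_{i=1}^3\rho(a'(e_i))\nabla_{e_i}\Psi$; combining this with $\fa_\Psi^*\fa_\Psi = \abs{\Psi}^2$ (\eqref{Eq_APsiAPsi*}) and the form of $\fd$ in \eqref{Eq_DeltaAndA} gives
\begin{multline*}
  \delta = \int_M\abs{\Psi}^2\inner{*\rd a'}{a'} + \int_M\abs{\Psi}^2\bigl(\inner{\rd\xi'}{a'} + \inner{\rd^*a'}{\xi'}\bigr) \\ - 2\int_M\sum_{i=1}^3\inner{\rho(a'(e_i))\nabla_{e_i}\Psi}{\bar\gamma(a')\Psi + \rho(\xi')\Psi} .
\end{multline*}
In the middle integral, integrating $\inner{\rd\xi'}{a'}$ by parts and using $\rd^*(\abs{\Psi}^2 a') = \rd^*(*\eta_{t_0}) = 0$ leaves $\inner{\abs{\Psi}^2\rd^*a'}{\xi'}_{L^2}$; the $\Omega^0$-component of the first order-$\epsilon^2$ equation (rewritten via \eqref{Eq_EPsi} and $\fa_\Psi^*\fa_\Psi = \abs{\Psi}^2$) says $\abs{\Psi}^2\rd^*a' = \bigl(\fa_\Psi^*(2\sum_i\rho(a'(e_i))\nabla_{e_i}\Psi)\bigr)_{\Omega^0}$, so this term cancels exactly the $\rho(\xi')\Psi$-contribution $-2\int_M\sum_i\inner{\rho(a'(e_i))\nabla_{e_i}\Psi}{\rho(\xi')\Psi}$ to the last integral. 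Hence $\xi'$ disappears and
\begin{equation*}
  \delta = \int_M\abs{\Psi}^2\inner{*\rd a'}{a'} - 2\int_M\sum_{i=1}^3\inner{\rho(a'(e_i))\nabla_{e_i}\Psi}{\bar\gamma(a')\Psi} .
\end{equation*}
Substituting $a' = \abs{\Psi}^{-2}*\eta_{t_0}$, the first integral equals $\int_M\inner{\rd a'}{\eta_{t_0}}$; writing $\rd a' = \rd(\abs{\Psi}^{-2})\wedge*\eta_{t_0} + \abs{\Psi}^{-2}\rd*\eta_{t_0}$ and using $\abs{\Psi}^2\rd(\abs{\Psi}^{-2}) = -\rd\log\abs{\Psi}^2$, it becomes
\begin{equation*}
  \int_M\abs{\Psi}^{-2}\inner{\rd*\eta_{t_0}}{\eta_{t_0}} + \int_M\abs{\Psi}^{-2}\inner{-(\rd\log\abs{\Psi}^2)\wedge*\eta_{t_0}}{\eta_{t_0}} ,
\end{equation*}
whose first summand is the leading term of \eqref{Eq_FormulaForDelta}. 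After the same substitution, the second integral in the displayed formula for $\delta$ is a pointwise quadratic form in $\eta_{t_0}$ with bounded coefficients built from $\Psi$, $\nabla\Psi$, $g$ and $B$, hence of the shape $\int_M\abs{\Psi}^{-2}\inner{\ff''\eta_{t_0}}{\eta_{t_0}}$ for a linear algebraic operator $\ff''\co\Omega^2(M,i\R)\to\Omega^2(M,i\R)$. Setting $\ff_{\Psi,\bp_{t_0}}\eta \coloneq -(\rd\log\abs{\Psi}^2)\wedge*\eta + \ff''\eta$ then yields \eqref{Eq_FormulaForDelta}.

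I expect the main obstacle to be the bookkeeping in the second step: one must organize the integrations by parts so that the non-local gauge parameter $\xi'$ appears only paired against multiples of $\rd^*(*\eta_{t_0})$ and against the $\Omega^0$-part of $\fa_\Psi^*$ applied to the connection term---both pinned down by $\rd\eta_{t_0} = 0$ and by \eqref{Eq_EPsi}---so that those contributions cancel and the final answer is local in $\eta_{t_0}$. A secondary point is to track the Clifford and orientation conventions carefully enough to confirm that the coefficient of $\rd*\eta_{t_0}$ is exactly $1$; and the termwise expansion of the local model in $\epsilon$ relies on \cite[Theorem 1.38]{Doan2017a}.
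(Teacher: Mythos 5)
Your proposal is correct and, in substance, reproduces the paper's computation: both identify the order-$\epsilon^2$ correction $\psi$ with $\abs{\Psi}^{-2}\bar\gamma(*\eta_{t_0})\Psi$ (up to sign, irrelevant since $\delta$ is quadratic in $\psi$), both evaluate $\delta=\inner{\slD_{\Im}\psi}{\psi}_{L^2}$ via the commutation identity \eqref{Eq_EPsi} together with $\fa_\Psi^*\fa_\Psi=\abs{\Psi}^2$, and both split $\int_M\abs{\Psi}^2\inner{*\rd a}{a}$ with $a=\abs{\Psi}^{-2}*\eta_{t_0}$ into the leading term $\int_M\abs{\Psi}^{-2}\inner{\rd*\eta_{t_0}}{\eta_{t_0}}$ plus an algebraic remainder, absorbing the $\nabla\Psi$-terms into $\ff_{\Psi,\bp_{t_0}}$. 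The only genuine difference is how $\psi$ is pinned down: the paper simply quotes the explicit formula $\psi=-\slD_{\Re}^{-1}\gamma\rII^*\nu-\nu$ with $\nu=(\fa_\Psi^*)^{-1}*\eta$ from \cite[Equations 6.1, 6.2]{Doan2017a} and kills the first summand using \cite[Proposition 3.8]{Doan2017a}, so that the $\Omega^0$ (gauge) and $\Re$ components of $\psi$ vanish outright; you instead re-derive the $\Omega^1$-component from the linearized moment-map equation $\rd_\Psi\mu(\psi)=\eta_{t_0}$ and carry an undetermined $\Omega^0$-component $\xi'$, showing it drops out of $\delta$ by pairing $\rd\xi'$ against the coclosed form $\abs{\Psi}^2a'=*\eta_{t_0}$ and cancelling the remaining cross-term against the $\Omega^0$-component of the linearized Dirac equation (both steps check out, since $\fa_\Psi^*(\bar\gamma(a)\Psi)$ has vanishing $\Omega^0$-part). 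Your route leans a bit more heavily on the unstated expansion of $A_\epsilon$ from \cite[Theorem 1.38]{Doan2017a} and on Clifford-identification constants (which you flag), while the paper's route outsources exactly those points to the cited formulas; the end computation and the resulting formula \eqref{Eq_FormulaForDelta} agree.
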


\begin{proof}
  If we denote by
  \begin{equation*}
    \set*{ \(\Psi_\epsilon = \Psi + \epsilon^2\psi + O(\epsilon^4), A_\epsilon; t(\epsilon) = t_0 + O(\epsilon^2)\) : 0 \leq \epsilon  \ll 1 }
  \end{equation*}
  the family of solutions to
  \begin{align*}
    \slD_{A_\epsilon,\bp_{t(\epsilon)}}\Psi_\epsilon &= 0, \\
    \epsilon^2\paren*{\frac12F_{A_\epsilon}+\eta_{t(\epsilon)}} &= \mu_{\bp_{t(\epsilon)}}(\Psi_\epsilon), \qand \\
    \Abs{\Psi_\epsilon}_{L^2} &= 1
  \end{align*}
  obtained from \cite[Theorem 1.38]{Doan2017a}, then
  \begin{equation*}
    \delta
    = \delta(\Psi,\bp_{t_0},\eta_{t_0})
    = \inner{\slD_{A_0,\bp_{t_0}}\psi}{\psi}_{L^2}.
  \end{equation*}
  The connection $A = A_0$ corresponding to $\Psi$ is flat;
  see \autoref{Prop_SpecialHaydysCorrespondence}.
  For the unperturbed equation we would have $t(\epsilon) = 0$.
  Since we consider the perturbed equation, however, $\eta$ enters into the computation of $\delta$.
  More precisely, by \cite[Equations 6.1, 6.2]{Doan2017a}
  we have
  \begin{equation*}
    \psi
    =
    - \slD_{\Re}^{-1}\gamma\rII^*\nu
    - \nu
    \quad\text{with}\quad
    \nu \coloneq (\fa_\Psi^*)^{-1}*\eta.
  \end{equation*}
  By \eqref{Eq_APsiAPsi*}, we have
  \begin{equation*}
    \nu = \abs{\Psi}^{-2}\bar\gamma(*\eta)\Psi.
  \end{equation*}

  Denote by $\pi_{\Re}$ the projection onto $\Re(S_\fs\otimes E)$.
  From \cite[Proposition 3.8]{Doan2017a}
  we know that for any $a \in \Omega^1(M,i\R)$
  \begin{equation*}
    -\gamma\rII^*\bar\gamma(a)\Psi
    =
      \sum_{i=1}^3 \pi_{\Re}\(\rho(a(e_i))\nabla^A_{e_i}\Psi\)
    =
      0.
  \end{equation*}
  It follows that
  \begin{equation*}
    \psi = -\nu = \abs{\Psi}^{-2}\bar\gamma(*\eta)\Psi.
  \end{equation*}

  Set
  \begin{equation*}
    a \coloneq \abs{\Psi}^{-2}*\eta.
  \end{equation*}
  Since
  \begin{equation*}
    \slD_A \bar\gamma(a) \Psi
    =
    \bar\gamma(*\rd a)\Psi
    + \rho(\rd^*a)\Psi
    -2 \sum_{i=1}^3 \rho(a_i)\nabla_i\Psi,
  \end{equation*}
  we have
  \begin{align*}
    \delta
    &=
      \int_M \inner{\slD_A \bar\gamma(a) \Psi}{\bar\gamma(a)\Psi} \\
    &=
      \int_M \inner{\bar\gamma(*\rd a)\Psi}{\bar\gamma(a)\Psi}
      + \inner{\rho(\rd^*a)\Psi}{\bar\gamma(a)\Psi} \\
    &\qquad
      - 2\sum_{i=1}^3 \inner{\rho(a_i)\nabla_i\Psi}{\bar\gamma(a)\Psi}.
  \end{align*}
  The first term in the integral is
  \begin{equation*}
    \abs{\Psi}^2\inner{*\rd a}{a}
    =
    \inner{\rd*\(\abs{\Psi}^{-2}\eta\)}{\eta}.
  \end{equation*}
  The second term vanishes.
  Therefore,
  \begin{equation*}
    \delta
    = 
      \int_M \abs{\Psi}^{-2}\inner{\rd*\eta}{\eta}
      + \abs{\Psi}^{-2} \inner{\ff_1 \eta}{\ff_2 \eta}
  \end{equation*}
  for linear operators $\ff_1, \ff_2 \co \Omega^2(M,i\R) \to \Gamma(\Re(S_\fs\otimes E))$ of order zero. 
  Set $\ff_{\Psi,\bp} = \ff_2^*\ff_1$. 
\end{proof}

\begin{prop}
  \label{Prop_DeltaNonZero}
  For $(\Psi,\bp,\eta) \in \Gamma(\Re(S_\fs\otimes E)) \times \sP \times \sZ$ such that $\Psi$ is nowhere vanishing, define $\delta(\Psi,\bp,\eta)$ by formula \eqref{Eq_FormulaForDelta}. 
  For each $(\Psi,\bp)$,
  the set
  \begin{equation*}
    \sZ^{\reg}(\Psi,\bp)
    = \set*{ \eta \in \sZ : \delta(\Psi,\bp,\eta) \neq 0 }
  \end{equation*}
  is open and dense in $\sZ$.
\end{prop}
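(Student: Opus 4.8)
The plan is to observe that $\eta\mapsto\delta(\Psi,\bp,\eta)$ is a continuous quadratic form on $\sZ$ and then to show it is not identically zero; openness and density of $\sZ^{\reg}(\Psi,\bp)$ both follow formally from these two facts. Continuity is immediate from \eqref{Eq_FormulaForDelta}: the integrand involves only $\eta$ and its first derivatives, which depend continuously on $\eta$ in the $C^\infty$ topology, and $\abs{\Psi}^{-2}$ together with the zeroth order operator $\ff_{\Psi,\bp}$ of \autoref{Prop_Delta} are bounded on the compact manifold $M$; hence $\sZ^{\reg}(\Psi,\bp)=\{\delta\neq 0\}$ is open. For density I would record the elementary fact that the zero locus of a nonzero continuous quadratic form $Q$ on a topological vector space has empty interior: if $\eta_0$ were an interior point of $\{Q=0\}$, then writing $B$ for the associated symmetric bilinear form, the polynomial $t\mapsto Q(\eta_0+t\zeta)=2tB(\eta_0,\zeta)+t^2Q(\zeta)$ would vanish near $t=0$ for every $\zeta\in\sZ$, forcing $B(\eta_0,\zeta)=0$ and $Q(\zeta)=0$ for all $\zeta$, i.e.\ $Q\equiv 0$. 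Thus $\sZ^{\reg}(\Psi,\bp)=\sZ\setminus\{Q=0\}$ is dense as soon as $\delta(\Psi,\bp,\cdot)\not\equiv 0$, and the whole proposition reduces to producing a single $\eta\in\sZ$ with $\delta(\Psi,\bp,\eta)\neq 0$.

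To produce such an $\eta$, I would use a high-frequency (Beltrami field) construction concentrated in a small coordinate ball. Fix $p\in M$, geodesic normal coordinates $x=(x_1,x_2,x_3)$ on $B=B_r(p)$ for small $r$, a real cutoff $\chi$ supported in $B$ with $\chi\equiv 1$ on $B_{r/2}(p)$, and for $\lambda>0$ set
\begin{equation*}
  \gamma_\lambda \coloneq i\bigl(\sin(\lambda x_3)\,\rd x_1+\cos(\lambda x_3)\,\rd x_2\bigr),
  \qquad
  \eta_\lambda \coloneq \lambda^{-1}\rd(\chi\gamma_\lambda)\in\Omega^2(M,i\R),
\end{equation*}
extended by zero; being exact, $\eta_\lambda$ lies in $\sZ$. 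On $B_{r/2}(p)$ one has $\chi\equiv 1$, and a direct computation shows that with respect to the flat metric $\rd{*}\eta_\lambda=\lambda\eta_\lambda$ and $\abs{\eta_\lambda}\equiv 1$; since $g$ is flat to second order at $p$, on $B_{r/2}(p)$ this gives $\inner{\rd{*}\eta_\lambda}{\eta_\lambda}=\lambda\bigl(1+O(r^2)\bigr)\geq\tfrac{\lambda}{2}$ once $r$ is small, while on the annulus $B\setminus B_{r/2}(p)$ the terms involving $\rd\chi$ contribute only $O(1)$ uniformly in $\lambda$ and the remaining leading term is nonnegative. Since $\abs{\Psi}^{-2}$ is bounded below by some $m_0>0$, integrating yields $\int_M\abs{\Psi}^{-2}\inner{\rd{*}\eta_\lambda}{\eta_\lambda}\geq c_1\lambda-c_2$ with $c_1>0$ and $c_2\geq 0$ independent of $\lambda$.

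On the other hand $\abs{\eta_\lambda}$ is bounded uniformly in $\lambda$, hence $\Abs{\eta_\lambda}_{L^2}$ is bounded, and since $\ff_{\Psi,\bp}$ is a zeroth order operator the term $\int_M\abs{\Psi}^{-2}\inner{\ff_{\Psi,\bp}\eta_\lambda}{\eta_\lambda}$ is bounded independently of $\lambda$. Therefore $\delta(\Psi,\bp,\eta_\lambda)\geq c_1\lambda-c_3\to+\infty$, so $\delta(\Psi,\bp,\eta_\lambda)\neq 0$ for $\lambda$ large, which completes the argument. The main obstacle is the estimate in the second paragraph: one must keep careful track of the error terms coming from the cutoff $\chi$ and from the non-flatness of $g$, and check that they are genuinely of lower order than the linearly growing contribution $\int_M\abs{\Psi}^{-2}\inner{\rd{*}\eta_\lambda}{\eta_\lambda}$, so that this term controls the sign of $\delta(\Psi,\bp,\eta_\lambda)$ for large $\lambda$; the reduction to non-triviality of the quadratic form and the verification of continuity are then routine.
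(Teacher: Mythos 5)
Your argument is correct, but it is genuinely different from the one in the paper. The paper works in Sobolev completions, computes the derivative $(\rd\delta)_\eta[\hat\eta]=\int_M\abs{\Psi}^{-2}\inner{2\rd*\eta+\bar\ff_{\Psi,\bp}\eta}{\hat\eta}$, observes that the critical locus $\sZ_{\rm crit}(\Psi,\bp)$ is the solution space of the linear (overdetermined elliptic) system $\rd\eta=0$, $\rd*\eta+*\bar\ff_{\Psi,\bp}\eta=0$, hence a closed finite-dimensional subspace, and then applies the Implicit Function Theorem away from $\sZ_{\rm crit}$ to exhibit $\set{\delta=0}$ there as a codimension-one Banach submanifold; density follows, and openness from continuity. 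Note that in this route non-triviality of $\delta(\Psi,\bp,\cdot)$ comes for free: if $\delta$ vanished identically, then $\sZ\subset\sZ_{\rm crit}$, contradicting finite-dimensionality. You instead exploit that $\delta(\Psi,\bp,\cdot)$ is a continuous quadratic form, so by the polarization argument its zero set has empty interior as soon as it is not identically zero, and you establish non-triviality by hand with a compactly supported high-frequency Beltrami-type form $\eta_\lambda$, for which the first-order term $\int_M\abs{\Psi}^{-2}\inner{\rd*\eta_\lambda}{\eta_\lambda}$ grows like $\lambda$ while the algebraic term $\int_M\abs{\Psi}^{-2}\inner{\ff_{\Psi,\bp}\eta_\lambda}{\eta_\lambda}$ stays bounded. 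This avoids Sobolev completions, elliptic theory and the Implicit Function Theorem entirely; the price is the local bookkeeping you flag, and there the errors are of size $O(\lambda r^2)+O(r)$ on the ball and $O(\lambda r^2)\cdot\vol$ plus $O(1)$ on the annulus (where the leading term $\lambda\chi^2\abs{\eta_u}^2\geq 0$ is exact because $\rd$ is metric-independent), so fixing $r$ small first and then letting $\lambda\to\infty$ does give $\delta(\Psi,\bp,\eta_\lambda)\geq c_1\lambda-c_3$, as you claim. Your quadratic-form reduction is also slightly more robust in that it proves density of $\set{\delta\neq 0}$ from a single explicit witness, whereas the paper's argument gives in addition some structural information (the zero locus is a hypersurface off a finite-dimensional set), which is not needed for the proposition.
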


\begin{proof}
  Replace all the spaces in question by their completions with respect to the $W^{k,p}$ norm for any $k$ and $p$ satisfying $(k-1)p >3$.  
  We will prove the statement with respect to the Sobolev topology;
  the corresponding statement for $C^{\infty}$ spaces follows then from the Sobolev embedding theorem and the fact that $\delta$ is continuous with respect to any of these topologies.
  
  By \autoref{Prop_Delta},
  \begin{equation*}
    (\rd\delta)_\eta[\hat\eta]
    =
      \int_M \abs{\Psi}^{-2}\inner{2\rd*\eta + \bar\ff_{\Psi,\bp} \eta}{\hat\eta},
  \end{equation*}
  where 
  \begin{equation*}
    \bar\ff_{\Psi,\bp}\eta
    =
      (\ff_{\Psi,\bp} + \ff_{\Psi,\bp}^*)\eta - 2 \rd(\log \abs{\Psi})\wedge *\eta
  \end{equation*}
  is a linear algebraic operator. 
  Thus, the derivative of $\delta$ vanishes along the set $\sZ_{\rm crit}(\Psi,\bp)$ of solutions $\eta$ to the linear elliptic differential equation
  \begin{align*}
    \rd \eta &= 0, \\
    \rd * \eta + * \bar\ff_{\Psi,\bp} \eta &= 0.
  \end{align*}
  $\sZ_{\rm crit}(\Psi,\bp)$ a closed, finite-dimensional subspace of $\sZ$. 
  By the Implicit Function Theorem, away from $\sZ_{\rm crit}(\Psi,\bp)$, the zero set of $\delta(\Psi,\bp,\cdot)$ is a codimension one Banach submanifold of (the Sobolev completion of) $\sZ$. 
  Hence, the set
  \begin{equation*}
    \sZ^{\reg}(\Psi,\eta) \cap (\sZ \setminus \sZ_{\rm crit}(\Psi,\eta))
  \end{equation*}
  is dense. 
  Since $\delta$ is continuous, $\sZ^{\reg}(\Psi,\eta)$ is open.
\end{proof}

\begin{proof}[Proof of \autoref{Prop_TransversalityForPaths}]
  By \cite[Theorem 2.34]{Doan2017}, $\bsQ_{3}$, the subspace of paths from $(\bp_0,\eta_0)$ to $(\bp_1,\eta_1)$ satisfying \autoref{Def_RegularPath}\itref{Def_RegularPath_Unobstructed}, is residual.

  Denote by $\bsQ_{1,2}$ the space of paths from $(\bp_0,\eta_0)$ to $(\bp_1,\eta_1)$ satisfying conditions \itref{Def_RegularPath_TransverseSpectralFlow} and \itref{Def_RegularPath_PsiNowhereVanishing} in \autoref{Def_RegularPath}.
  By \autoref{Prop_TransverseSpectralFlow}, $\bsQ_{1,2}$ is residual in the space of paths from $(\bp_0,\eta_0)$ to $(\bp_1,\eta_1)$.
  Let $\bsQ_{1,2,4} \subset \bsQ_{1,2}$ be the space of paths from $(\bp_0,\eta_0)$ to $(\bp_1,\eta_1)$ also satisfying \autoref{Def_RegularPath}\itref{Def_RegularPath_DeltaNonZero}.
  Elementary arguments show that $\bsQ_{1,2,4}$ is open in $\bsQ_{1,2}$ and we will shortly prove that $\bsQ_{1,2,4}$ is dense in $\bsQ_{1,2}$.
  A set which is open and dense in a residual set is itself residual. 
  It follows that $\bsQ_{1,2,4}$ is residual; hence, so is $\bsQ^\reg\paren[\big]{(\bp_0,\eta_0),(\bp_1,\eta_1)} = \bsQ_{1,2,4} \cap \bsQ_{3}$.
  
  To prove that $\bsQ_{1,2,4}$ is dense in $\bsQ_{1,2}$, suppose that $(\bp_t,\eta_t)_{t\in[0,1]} \in \bsQ_{1,2}$. 
  There are finitely many times $0 < t_1 < \ldots < t_n < 1$ for which the kernel of $\slD_{\bp_{t_i}}$ is non-trivial.
  For $i = 1,\ldots,n$, denote by $\Psi_i$ a section spanning $\ker \slD_{\bp_{t_i}}$.
  By \autoref{Prop_DeltaNonZero}, for any $\sigma > 0$, there are closed forms $\alpha_1,\ldots,\alpha_n$ such that
  \begin{equation*}
    \delta(\Psi_i,\bp_{t_i},\eta_{t_i}+\alpha_i)
    \neq
      0
    \qandq
    \Abs{\alpha_i}_{L^\infty} \leq \sigma
  \end{equation*}
  for every $i = 1,\ldots,n$.
  Let $(\alpha_t)_{t\in[0,1]}$ be a path of closed forms such that $\alpha_{t_i} = \alpha_i$ for $i=1,\ldots,n$ and $\Abs{\alpha_t}_{L^\infty} \leq \sigma$ for all $t \in [0,1]$.
  The path $(\bp_t,\eta_t+\alpha_t)_{t\in[0,1]}$ satisfies conditions \itref{Def_RegularPath_TransverseSpectralFlow}, and \itref{Def_RegularPath_PsiNowhereVanishing} in \autoref{Def_RegularPath} because these only depend on $(\bp_t)_{t\in[0,1]}$.
  It also satisfies \itref{Def_RegularPath_DeltaNonZero} by construction.
  We conclude that $(\bp_t,\eta_t+\alpha_t)_{t\in[0,1]} \in \bsQ_{1,2,4}$.
  Since $\sigma$ is arbitrary, it follows that $\bsQ_{1,2,4}$ is dense in $\bsQ_{1,2}$.
\end{proof}


\section{Proof of the existence of singular harmonic \texorpdfstring{$\Z_2$}{Z2} spinors}
\label{Sec_ProofOfExistence}

In this section we prove \autoref{Thm_ExistenceOfSingularHarmonicZ2Spinors}.
We begin with defining the set $\sW_b$ appearing in its statement.

\begin{definition}
  \label{Def_Walls}
  Given a spin structure $\fs$, set
  \begin{align*}
    \sW^\fs
    &\coloneq
      \set*{ \bp \in \sP : \dim\ker\slD^\fs_\bp > 0 }, \\
    \sW_{1,\emptyset}^\fs
    &\coloneq
      \set*{
        \bp \in \sP
        :
        \ker\slD^\fs_\bp = \R\Span{\Psi} \text{ with } \Psi \text{ nowhere vanishing}
      }, \\
    \sW^\fs_{1,\star}
    &\coloneq
      \set*{
        \bp \in \sP
        :
        \ker\slD^\fs_\bp = \R\Span{\Psi} \text{ and } \Psi \text{ has a single non-degenerate zero}
      } \qand \\
    \sW_b^\fs
    &\coloneq
      \sW^\fs \setminus \sW_{1,\emptyset}^\fs.
  \end{align*}
  A zero $x \in \Psi^{-1}(0)$ is non-degenerate if the linear map $(\nabla \Psi)_x \co T_xM \to \Re(S\otimes E)_x$ has maximal rank (that is, rank three).
  Set
  \begin{equation*}
    \sW
    \coloneq
      \bigcup_\fs \sW^\fs, \quad
    \sW_b
    \coloneq
      \bigcup_\fs \sW_b^\fs, \qandq
    \sW_{1,\emptyset}
    \coloneq
      \bigcup_\fs \sW_{1,\emptyset}^\fs \setminus \sW_b.
  \end{equation*}
  Here the union is taken over all spin structures $\fs$.
\end{definition}

\begin{prop}
  \label{Prop_NaturalCoorientation}
  $\sW_{1,\emptyset}^\fs$ is a closed, codimension one submanifold of $\sP\setminus \sW_b^\fs$.
  It carries a coorientation such that the following holds.
  Let $(\bp_t)$ be a path in $\sP\setminus \sW_b^\fs$ with $\bp_0,\bp_1 \in \sP\setminus\sW^\fs$ which intersects $\sW_{1,\emptyset}^\fs$ transversely.
  Denote
  \begin{itemize}
  \item
    by $\set{t_1, \ldots, t_N } \subset (0,1)$ the finite set of times at which the spectrum of $\slD_{\bp_t}^\fs$ crosses zero, i.e., $\bp_t \in \sW_{1,\emptyset}^\fs$
  \end{itemize}
  and, for each $i = 1, \ldots, N$, denote
  \begin{itemize}
  \item
    by $\chi_i \in \set{\pm 1}$ the sign of the spectral crossing at $t_i$ and
  \item
    by $\Psi_i$ a nowhere vanishing spinor spanning $\ker \slD_{\bp_t}$.
  \end{itemize}
  The intersection number of $(\bp_t)$ with $\sW_{1,\emptyset}^\fs$ is
  \begin{equation*}
    \sum_{i=1}^{N} \chi_i \cdot \sigma(\Psi_i,\bp_{t_i}).
  \end{equation*}
\end{prop}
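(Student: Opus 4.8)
The plan is to first show that $\sW_{1,\emptyset}^\fs$ is a closed, cooriented, codimension-one submanifold of $\sP\setminus\sW_b^\fs$, and then to read off the intersection-number formula from the behaviour of the defining eigenvalue function at each crossing.

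\textbf{Step 1 (submanifold property).} Since $\bp\mapsto\dim\ker\slD^\fs_\bp$ is upper semi-continuous, $\sW^\fs$ is closed in $\sP$, so $\sW^\fs_{1,\emptyset}=\sW^\fs\cap(\sP\setminus\sW_b^\fs)$ is closed in $\sP\setminus\sW_b^\fs$. Fix $\bp_0\in\sW^\fs_{1,\emptyset}$ with $\ker\slD^\fs_{\bp_0}=\R\Span{\Psi}$, $\Abs{\Psi}_{L^2}=1$, $\Psi$ nowhere vanishing. For $\bp$ near $\bp_0$ the spectral projection onto eigenvalues near $0$ has rank one; let $\lambda(\bp)$ be the corresponding eigenvalue, a smooth function with $\lambda^{-1}(0)=\sW^\fs$ locally, and with associated eigenspinor depending continuously on $\bp$, hence still nowhere vanishing near $\bp_0$, so locally $\lambda^{-1}(0)=\sW^\fs_{1,\emptyset}$. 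First-order perturbation theory gives $\rd\lambda_{\bp_0}(\dot\bp)=\inner{(\dot\slD^\fs)\Psi}{\Psi}_{L^2}$. Varying only the connection on $E$ by $b\in\Omega^1(M,\su(E))$, one has $\dot\slD^\fs=\bar\gamma(b)$, which by the argument in the proof of \autoref{Prop_TransverseSpectralFlow} acts as a trace-free symmetric endomorphism of $\Re(S_\fs\otimes E)$; since $\bar\gamma(\cdot)_x$ surjects onto trace-free symmetric endomorphisms and $\Psi(x)\neq0$ for every $x$, one can choose $b$ with $\int_M\inner{\bar\gamma(b)\Psi}{\Psi}>0$. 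Thus $\rd\lambda_{\bp_0}\neq0$ and $\sW^\fs_{1,\emptyset}$ is a codimension-one submanifold (this is essentially \autoref{Prop_Wsk}).

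\textbf{Step 2 (coorientation).} The function $\lambda$ gives a canonical local coorientation of $\sW^\fs_{1,\emptyset}$: declare $\{\rd\lambda>0\}$ positive; this is independent of choices. By \autoref{Rmk_SigmaPsi}, $\sigma(\Psi,\bp)$ depends on the kernel spinor only through its $\R^*$-class, so it descends to a function $\bp\mapsto\sigma(\bp)\in\set{\pm1}$ on $\sW^\fs_{1,\emptyset}$. This function is locally constant: along a path $(\bp_t)$ in $\sW^\fs_{1,\emptyset}$ the kernel spinors form a path of nowhere vanishing sections, so $\deg(\Psi_{\bp_0},\Psi_{\bp_1})=0$, and $\SF\bigl((-\slD^\fs_{\bp_t})_t\bigr)=0$ because $\slD^\fs_{\bp_t}$ has one-dimensional kernel for all $t$ (using \autoref{Conv_SpectralFlow}); \autoref{Prop_HowSigmaChanges} then gives $\sigma(\bp_1)=\sigma(\bp_0)$. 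Define the coorientation of $\sW^\fs_{1,\emptyset}$ to be $\sigma(\bp)$ times the $\lambda$-coorientation; by local constancy of $\sigma$ this is a genuine (continuous) coorientation.

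\textbf{Step 3 (intersection number).} Let $(\bp_t)_{t\in[0,1]}$ be a path in $\sP\setminus\sW_b^\fs$ with $\bp_0,\bp_1\notin\sW^\fs$, transverse to $\sW^\fs_{1,\emptyset}$, meeting it at $t_1<\dots<t_N$. The intersection number is the sum of the local intersection signs. At $t_i$, with $s\mapsto\lambda(\bp_s)$ the local eigenvalue function, transversality means $\dot\lambda(t_i)\neq0$ and $\chi_i=\sign\dot\lambda(t_i)$; the local intersection sign is $\chi_i$ for the $\lambda$-coorientation, hence $\chi_i\cdot\sigma(\bp_{t_i})=\chi_i\cdot\sigma(\Psi_i,\bp_{t_i})$ for our coorientation. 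Summing yields $\sum_{i=1}^N\chi_i\cdot\sigma(\Psi_i,\bp_{t_i})$.

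\textbf{Expected main obstacle.} The only substantive point is the transversality in Step 1 — that moving $\bp$ moves the zero eigenvalue — which hinges on the algebraic fact that Clifford multiplication by $T^*M\otimes\su(E)$ exhausts the trace-free symmetric endomorphisms of $\Re(S_\fs\otimes E)$, combined with $\Psi$ being nowhere vanishing. Closedness, the descent and local constancy of the $\sigma$-twist (via \autoref{Prop_HowSigmaChanges}), and the sign bookkeeping in Step 3 are then formal.
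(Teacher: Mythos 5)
Your proposal is correct and follows essentially the same route as the paper: a local eigenvalue function $\lambda$ near $\sW^\fs_{1,\emptyset}$ made into a submersion via the surjectivity of Clifford multiplication by $T^*M\otimes\su(E)$ onto trace-free symmetric endomorphisms (using that $\Psi$ is nowhere vanishing), the coorientation twisted by $\sigma(\Psi_\bp,\bp)$, local constancy of $\sigma$ via \autoref{Rmk_SigmaPsi} and \autoref{Prop_HowSigmaChanges}, and the identification $\chi_i=\sign\dot\lambda(t_i)$ for the sign bookkeeping. The only quibble is the aside attributing the codimension-one transversality to \autoref{Prop_Wsk}, which actually concerns the strata with $\dim\ker\geq 2$; the relevant argument is the one you in fact give (as in \autoref{Prop_TransverseSpectralFlow} and the paper's own proof), so this is a citation slip rather than a gap.
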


\begin{proof}
  Let $\bp_0 \in \sW_{1,\emptyset}^\fs$.
  Let $\Psi_0 \in \ker \slD^\fs_{\bp_0}$ be such that $\Abs{\Psi_0}_{L^2} = 1$.
  It follows from the Implicit Function Theorem that,
  for some open neighborhood $U$ of $\bp_0 \in \sP$,
  there is a unique smooth map $U \to \R \times \Gamma(\Re(S\otimes E))$,
  \begin{equation*}
    \bp \mapsto (\lambda(\bp),\Psi_\bp)
  \end{equation*}
  such that
  \begin{equation*}
    \lambda(\bp_0) = 0 \qandq
    \Psi_{\bp_0} = \Psi_0
  \end{equation*}
  as well as
  \begin{equation}
    \label{Eq_DefinitionOfLambda}
    \slD^\fs_\bp \Psi_\bp = \lambda(\bp)\Psi_\bp \qandq
    \Abs{\Psi_0}_{L^2} = 1.
  \end{equation}
  It follows from the Implicit Function Theorem and the openness of the non-vanishing condition that for $U$ sufficiently small, we have
  \begin{equation*}
    U \cap \sW_{1,\emptyset}^\fs = \lambda^{-1}(0).
  \end{equation*}
  Since
  \begin{equation}
    \label{Eq_DerivativeOfLambda}
    (\rd_{\bp_0}\lambda)(0,b) = \inner{\bar\gamma(b)\Psi}{\Psi}_{L^2}
  \end{equation}
  and Clifford multiplication induces an isomorphism from $T^*M\otimes\su(E)$ to trace-free symmetric endomorphisms of $\Re(S\otimes E)$,
  $\lambda$ is a submersion provided $U$ is sufficiently small.
  Hence, $\sW_{1,\emptyset}^\fs$ is a codimension one submanifold.
  To see that $\sW_{1,\emptyset}^\fs$ is closed, observe that $(\bp_i)$ is a sequence on $\sW_{1,\emptyset}^\fs$ with $\bp_i \to \bp \in \sP$, then $\bp \in \sW^\fs$ and thus either in $\sW_{1,\emptyset}^\fs$ or $\sW^\fs_b$.
  
  The above argument goes through with
  \begin{equation*}
    \sW_1^\fs = \set{ \bp \in \sP : \dim\ker\slD^\fs_\bp = 1 }
  \end{equation*}
  instead of $\sW_{1,\emptyset}^\fs$.
  Define a coorientation of $\sW_1^\fs$ by demanding that the isomorphism
  \begin{equation*}
    \rd_\bp\lambda \co T_\bp\sP/T_\bp\sW_1^\fs \to \R
  \end{equation*}
  is orientation-preserving.  
  This coorientation has the following property.
  If $(\bp_t)_{t\in[0,1]}$ is a path in $\sP$ such that $\dim\ker\slD^\fs_{\bp_t} \leq 1$, $\dim\ker\slD^\fs_{\bp_t} = 0$ for $t=0,1$, 
  then the intersection number of $(\bp_t)_{t\in[0,1]}$ with $\sW_1^\fs$ is precisely the spectral flow of $\slD^\fs_{\bp_t}$.
  Therefore, we call this coorientation the \defined{spectral coorientation}.
    $\sW_{1,\emptyset}^\fs$ is an open subset of $\sW_1^\fs$.
  Thus it inherits the spectral coorientation;
  however, this coorientation does not have the desired property.

  If $\bp \in \sW_{1,\emptyset}^\fs$ and $\Psi_\bp$ spans $\ker \slD_\bp^\fs$,
  then $\Psi_\bp$ is nowhere vanishing and \autoref{Def_Sigma} defines $\sigma(\Psi_\bp,\bp) \in \set{\pm 1}$.
  By \autoref{Prop_HowSigmaChanges}, the $\sigma(\Psi_\bp,\bp)$ depends only on $\bp \in \sW_{1,\emptyset}^\fs$;
  moreover, $\bp \mapsto \sigma(\Psi_\bp,\bp)$ is locally constant on $\sW_{1,\emptyset}^\fs$.
  The \defined{twisted spectral coorientation} on $\sW_{1,\emptyset}^\fs$ is defined by demanding that the isomorphism
  \begin{equation*}
    \sigma(\Psi_\bp,\bp)\cdot \rd_\bp\lambda \co T_\bp\sP/T_\bp\sW_{1,\emptyset}^\fs \to \R
  \end{equation*}
  is orientation-preserving.
  By definition, for any path $(\bp_t)$ as in the statement of the proposition, the intersection number of $(\bp_t)$ with $\sW_{1,\emptyset}^\fs$ with respect to the twisted spectral coorientation is
  \begin{equation*}
    \sum_{i=1}^{N} \chi_i \cdot \sigma(\Psi_i,\bp_{t_i}).
    \qedhere
  \end{equation*}
\end{proof}

\begin{theorem}
  \label{Thm_OmegaDetectsHarmonicZ2Spinors}
  In the above situation, the following hold.
  \begin{enumerate}
  \item
    \label{Thm_OmegaDetectsHarmonicZ2Spinors_OmegaNonTrivial}
    The cohomology class $\omega \in H^1(\sP\setminus\sW_b,\Z) = \Hom(\pi_1(\sP\setminus\sW_b),\Z)$ defined by $\sW_{1,\emptyset}$ together with the coorientation from \autoref{Prop_NaturalCoorientation} is non-trivial.
  \item
    \label{Thm_OmegaDetectsHarmonicZ2Spinors_Existence}
    If $(\bp_0,\eta_0) \in \sQ^\reg$ and $(\bp_t,\eta_t)$ is a loop in $\bsQ^\reg\paren[\big]{(\bp_0,\eta_0),(\bp_0,\eta_0)}$, then $(\bp_t)$ is a path in $\sP\setminus\sW_b$ and if $\omega([\bp_t]) \neq 0$, then there is exists a harmonic $\Z_2$ spinor with respect to some $\bp_t$.
  \end{enumerate}
\end{theorem}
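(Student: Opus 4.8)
The plan is to deduce part~\itref{Thm_OmegaDetectsHarmonicZ2Spinors_Existence} directly from the wall-crossing formula \autoref{Thm_TwoSeibergWittenWallCrossing}, and to establish part~\itref{Thm_OmegaDetectsHarmonicZ2Spinors_OmegaNonTrivial} by exhibiting a small loop linking $\sW_b$ on which $\omega$ evaluates to $\pm2$. For part~\itref{Thm_OmegaDetectsHarmonicZ2Spinors_Existence}: if $(\bp_t,\eta_t)$ is a loop in $\bsQ^\reg\paren[\big]{(\bp_0,\eta_0),(\bp_0,\eta_0)}$, then $(\bp_t)$ lies in $\sP\setminus\sW_b$, because by \autoref{Def_RegularPath}\itref{Def_RegularPath_TransverseSpectralFlow} and \itref{Def_RegularPath_PsiNowhereVanishing}, whenever $\ker\slD^\fs_{\bp_t}\neq0$ it is one-dimensional and spanned by a nowhere vanishing section, so $\bp_t\in\sW^\fs_{1,\emptyset}$ and hence $\bp_t\notin\sW^\fs_b$, for every spin structure $\fs$. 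Assume, for a contradiction, that there is no singular harmonic $\Z_2$ spinor with respect to any $\bp_t$. Then \autoref{Thm_TwoSeibergWittenWallCrossing} applies, and since the endpoints of the loop coincide the left hand side of \eqref{Eq_WallCrossingFormula} vanishes, so $\sum_\fs\sum_{i=1}^{N_\fs}\chi^\fs_i\cdot\sigma(\Psi^\fs_i,\bp_{t^\fs_i})=0$. By \autoref{Prop_NaturalCoorientation}, applied to each $\fs$, this sum is the intersection number of $(\bp_t)$ with $\sW_{1,\emptyset}$ equipped with the twisted spectral coorientation, that is, $\omega([\bp_t])$. So $\omega([\bp_t])=0$, contradicting the hypothesis; hence there is a singular harmonic $\Z_2$ spinor with respect to some $\bp_t$, which is in particular a harmonic $\Z_2$ spinor.

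For part~\itref{Thm_OmegaDetectsHarmonicZ2Spinors_OmegaNonTrivial} I would first reduce to a statement about the codimension two stratum $\sW^\fs_{1,\star}$. Equipping $\sW_{1,\emptyset}$ with the untwisted spectral coorientation yields a class $\omega_{\mathrm{spec}}\in H^1(\sP\setminus\sW_b,\Z)$ with $\omega_{\mathrm{spec}}([\gamma])=\sum_\fs\SF\((\slD^\fs_{\bp_t})_{t\in S^1}\)$ for every loop $\gamma$; since $\sP=\Met(M)\times\sA(E)$ is contractible, $\gamma$ is null homotopic in $\sP$ and each spectral flow vanishes by homotopy invariance, so $\omega_{\mathrm{spec}}=0$. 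As $\sigma$ is $\set{\pm1}$--valued and locally constant on $\sW_{1,\emptyset}$ (\autoref{Prop_HowSigmaChanges}), it follows that $\omega=2\cdot[\sW_{1,\emptyset}^{+}]$, where $\sW_{1,\emptyset}^{+}\subset\sW_{1,\emptyset}$ is the union of the components on which $\sigma=+1$, taken with its spectral coorientation. It therefore suffices to produce a loop meeting $\sW_{1,\emptyset}^{+}$ with non-zero intersection number.

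Such a loop is a small loop linking $\sW^\fs_{1,\star}$. I would show that $\sW^\fs_{1,\star}\neq\emptyset$ for some spin structure $\fs$, that near a generic point $\bp_\star\in\sW^\fs_{1,\star}$ it is a codimension two submanifold of $\sP$ inside which $\sW^\fs_1$ is a codimension one submanifold, and that the remaining strata of $\sW_b$---higher dimensional kernels, several or degenerate zeros, coincidences of walls of different spin structures---have codimension $\geq3$ and can be avoided. Crossing $\bp_\star$ along $\sW^\fs_1$, the spanning kernel spinor acquires a single non-degenerate zero, so the mod $2$ degree of the neighbouring nowhere vanishing kernel spinors---and hence, by \autoref{Prop_HowSigmaChanges}, the sign $\sigma$---jumps, placing the two local sheets of $\sW^\fs_{1,\emptyset}$ at $\bp_\star$ into $\sW_{1,\emptyset}^{+}$ and $\sW_{1,\emptyset}^{-}$ respectively. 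Taking a small $2$--disk $D\subset\sP$ transverse to $\sW^\fs_{1,\star}$ with $D\cap\sW_b=\set{\bp_\star}$, the set $\sW^\fs_1\cap D$ is a properly embedded arc through $\bp_\star$, and $\gamma\coloneq\partial D$ is a loop in $\sP\setminus\sW_b$ meeting $\sW_{1,\emptyset}$ in precisely the two ends $\bp_1,\bp_2$ of that arc. The two ends of a properly embedded arc are met by the boundary with opposite intersection signs, so $\chi_1=-\chi_2$; applying \autoref{Prop_HowSigmaChanges} along the arc (on which the spectral flow is zero, since the kernel has constant rank one) and using the degree jump at $\bp_\star$ gives $\sigma(\Psi_{\bp_1},\bp_1)=-\sigma(\Psi_{\bp_2},\bp_2)$. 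Hence $\omega([\gamma])=\chi_1\,\sigma(\Psi_{\bp_1},\bp_1)+\chi_2\,\sigma(\Psi_{\bp_2},\bp_2)=2\,\chi_1\,\sigma(\Psi_{\bp_1},\bp_1)=\pm2\neq0$, so $\omega$ is non-trivial, $\sP\setminus\sW_b$ is not simply connected, and $\gamma$ is a small loop linking $\sW_b$.

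The main obstacle is the non-emptiness of $\sW^\fs_{1,\star}$: one must produce a parameter $\bp_\star$ whose real Dirac operator $\slD^\fs_{\bp_\star}$ has one-dimensional kernel spanned by a section with exactly one zero, which is non-degenerate; this is the place where the hypothesis $b_1(M)>1$ has to be exploited. The accompanying transversality assertions---that $\sW^\fs_{1,\star}$ is a codimension two submanifold near a generic point while the residual strata of $\sW_b$ have codimension $\geq3$---and the local intersection bookkeeping are, by comparison, routine, being of the same flavour as \autoref{Prop_NaturalCoorientation} and \autoref{Prop_TransverseSpectralFlow}.
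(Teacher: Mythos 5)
Your treatment of part \itref{Thm_OmegaDetectsHarmonicZ2Spinors_Existence} and the intersection bookkeeping in part \itref{Thm_OmegaDetectsHarmonicZ2Spinors_OmegaNonTrivial} coincide with the paper's own argument: the paper also deduces \itref{Thm_OmegaDetectsHarmonicZ2Spinors_Existence} from \autoref{Thm_TwoSeibergWittenWallCrossing} applied to the closed loop (with $n=\sum_\fw n_\fw$), and it also proves \itref{Thm_OmegaDetectsHarmonicZ2Spinors_OmegaNonTrivial} by a small loop around a point $\bp_\star\in\sW^\fs_{1,\star}$ meeting $\sW^\fs_{1,\emptyset}$ twice, with opposite spectral-crossing signs and with $\sigma$ flipping because the relative degree of the two kernel spinors is $\pm1$ (\autoref{Prop_SmallLoop}, proved there via the local coordinates $(\lambda,\nu)$ rather than your transverse disk, which is the same picture). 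Your reduction via the vanishing of the untwisted spectral class is correct but not needed.

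The genuine gap is the step you flag and then set aside: the non-emptiness of $\sW^\fs_{1,\star}$, together with the requirement that $\bp_\star$ can be chosen with $\dim\ker\slD^\fs_{\bp_\star}=1$ and with $\bp_\star\notin\sW^{\tilde\fs}$ for every other spin structure $\tilde\fs$. This is not a routine transversality statement and occupies a substantial part of the paper's proof: one writes down flat data $(g_0,B_0)$ near a point $x_\star$, takes a section $\Psi$ of $\Re(S_\fs\otimes E)$ agreeing near $x_\star$ with the model $(y_1,y_2,y_3)\mapsto 2iy_1-jy_2-ky_3$ (harmonic near $x_\star$, single non-degenerate zero) and nowhere vanishing elsewhere, and then corrects the connection by some $b\in\Omega^1(M,\su(E))$ vanishing near $x_\star$ with $\bar\gamma(b)\Psi=-\slD^\fs_{\bp_0}\Psi$; this uses that Clifford multiplication realizes all trace-free symmetric endomorphisms and that $\slD^\fs_{\bp_0}\Psi$ vanishes where $\Psi$ does. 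One then perturbs, using \autoref{Prop_Wsk} and \autoref{Prop_Ws1s2}, to cut the kernel down to dimension one and to clear the walls of the other spin structures. Moreover, your expectation that $b_1(M)>1$ is what makes this construction possible is misplaced: the construction of $\bp_\star$ works on any closed oriented $3$--manifold; the hypothesis $b_1(M)>1$ enters only through the Seiberg--Witten side (irreducibility/transversality along paths and loops, cf.\ \autoref{Rmk_ReducibleSolutions}), i.e.\ in the machinery behind \autoref{Thm_TwoSeibergWittenWallCrossing} and part \itref{Thm_OmegaDetectsHarmonicZ2Spinors_Existence}. Finally, your blanket claim that all strata of $\sW_b$ other than $\sW^\fs_{1,\star}$ have codimension at least three is more than the paper proves and more than is needed: once $\bp_\star$ is chosen as above, upper semicontinuity of $\dim\ker$ and the openness of the single non-degenerate zero condition show that, in a small neighborhood $U$ of $\bp_\star$, $\sW_b\cap U$ is exactly $\sW^\fs_{1,\star}\cap U$, which is all your disk argument requires.
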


\begin{proof}[Proof of \autoref{Thm_ExistenceOfSingularHarmonicZ2Spinors} assuming  \autoref{Thm_OmegaDetectsHarmonicZ2Spinors}]
  The union of the projections of the subsets $\bsQ^\reg\paren[\big]{(\bp_0,\eta_0),(\bp_0,\eta_0)}$ to $\bsP(\bp_0,\bp_0)$, as $(\bp_0,\eta_0)$ ranges over $\sQ^\reg$, is a residual subset of the space of all loops in $\sP$.
  This shows that the loops in $\sP \setminus \sW_b$ which have a lift to  $\bsQ^\reg\paren[\big]{(\bp_0,\eta_0),(\bp_0,\eta_0)}$ are generic among all loops in $\sP \setminus \sW_b$.
  For such loops \autoref{Thm_OmegaDetectsHarmonicZ2Spinors} applies and thus \autoref{Thm_ExistenceOfSingularHarmonicZ2Spinors} follows.
\end{proof}

The idea of the proof that $\omega \neq 0$ is to exhibit a loop $(\bp_t)$ in $\sP\setminus\sW_b$ on which $\omega$ evaluates non-trivially.
More precisely, we will construct such a loop which intersects $\sW^\fs_{1,\emptyset}$ in two points as illustrated in \autoref{Fig_W11InW1Empty}, which cannot be joined by a path in $\sW^\fs_{1,\emptyset}$;
however, they are joined by a path in $\sW^\fs_1$ passing through $\sW^\fs_{1,\star}$ in a unique point.

While the coorientation on $\sW^\fs_1$ is preserved along this path, the one on $\sW^\fs_{1,\emptyset}$ is not.
Consequently, the intersection number of the loop with $\sW^\fs_{1,\emptyset}$ is $\pm 2$.
The above situation can be arranged so that $(\bp_t)$ does not intersect $\sW^{\tilde \fs}_{1,\emptyset}$ for any other spin structure $\tilde\fs$.
It follows that
\begin{equation*}
  \omega([\bp_t]) \pm 2 \neq 0.
\end{equation*}

\begin{figure}[h]
  \centering
  \begin{tikzpicture}
    \draw[thick] (0,-2.5) node[below]{$\sW^\fs_{1,\emptyset}$} -- (0,2.5);
    \draw[-stealth] (-.25,1) -- (.25,1);
    \draw[stealth-] (-.25,-1) -- (.25,-1) ;
    \draw[thick,dashed,magenta] (-2.5,0) -- (2.5,0) node[right] {$\sW_{\Z_2}$?};
    \filldraw[gray] (0,0) node[below left]{$\sW^\fs_{1,\star}$} circle (0.1);
    \draw[stealth-|,cyan] ([shift=(25.5:2)]0,0) node[above right] {$\omega = +2$} arc (25.5:385.5:2);
    \draw (0,2) node[above right, cyan]{$+1$};
    \draw (0,-2) node[below left, cyan]{$+1$};
  \end{tikzpicture}
  \caption{A loop linking $\sW^\fs_{1,\star}$ and pairing non-trivially with $\omega$.}
  \label{Fig_W11InW1Empty}
\end{figure}

If $(\bp_0,\eta_0) \in \sQ^\reg$ and $(\bp_t,\eta_t)$ is a loop in $\bsQ^\reg\paren[\big]{(\bp_0,\eta_0),(\bp_0,\eta_0)}$
and $\omega([\bp_t]) \neq 0$, then there is exists a singular harmonic $\Z_2$ harmonic spinor with respect to some $\bp_t$ for otherwise
\begin{equation*}
  \omega([\bp_t]) = 0
\end{equation*}
by \autoref{Thm_TwoSeibergWittenWallCrossing}.

\begin{remark}
  This and the work of \citet{Takahashi2015,Takahashi2017} indicate the presence of a wall $\sW_{\Z_2} \subset \sP$ caused by singular harmonic $\Z_2$ spinors as depicted in \autoref{Fig_W11InW1Empty}.
  In light of the above discussion it is a tantalizing question to ask:
  \begin{center}
    Can the harmonic $\Z_2$ spinors, whose abstract existence is guaranteed by \autoref{Thm_OmegaDetectsHarmonicZ2Spinors}, be constructed more directly by a gluing construction?
  \end{center}
  We plan to investigate this problem in future work.
\end{remark}

\begin{proof}[Proof of \autoref{Thm_OmegaDetectsHarmonicZ2Spinors}]
  To prove \eqref{Thm_OmegaDetectsHarmonicZ2Spinors_Existence}, note that if $(\bp_0,\eta_0) \in \sQ^\reg$ and $(\bp_t,\eta_t)$ is a loop in $\bsQ^\reg\paren[\big]{(\bp_0,\eta_0),(\bp_0,\eta_0)}$
  and $\omega([\bp_t]) \neq 0$, then there is exists a singular harmonic $\Z_2$ spinor with respect to some $\bp_t$ for otherwise
  \begin{equation*}
    n(\bp_0,\eta_0) = n(\bp_0,\eta_0) + \omega([\bp_t])
  \end{equation*}
  by \autoref{Thm_TwoSeibergWittenWallCrossing}.
  Here
  \begin{equation*}
    n(\bp,\eta) = \sum_\fw n_\fw(\bp,\eta)
  \end{equation*}
  and we sum over all spin$^c$ structures $\fw$ with trivial determinant.
  
  In order to prove \itref{Thm_OmegaDetectsHarmonicZ2Spinors_OmegaNonTrivial} we will produce a loop pairing non-trivially with $\omega$.
  The existence of such a loop is ensured by the following result provided we can exhibit a point $\bp_\star \in \sW^\fs_{1,\star}$.

  \begin{prop}
    \label{Prop_SmallLoop}
    Given $\bp_\star \in \sW^\fs_{1,\star}$ and an open neighborhood $U$ of $\bp_\star \in \sP$,
    there exists a loop $(\bp_t)_{t\in S^1}$ in $U\cap (\sP \setminus \sW^\fs_b)$ such that:
    \begin{enumerate}
    \item
      $\bp_{1/4}, \bp_{3/4} \in \sW_{1,\emptyset}^\fs$, and $\bp_t \in \sP\setminus \sW_1^\fs$ for all $t \notin \set{1/4,3/4}$,
    \item
      if $\Psi_{1/4}$ and $\Psi_{3/4}$ denote spinors spanning $\slD^\fs_{\bp_{1/4}}$ and $\slD^\fs_{\bp_{3/4}}$, then
      \begin{equation*}
        \deg(\Psi_{1/4},\Psi_{3/4}) = \pm 1;
      \end{equation*}
      and
    \item
      the spectral crossings at $\bp_{1/4}$ and $\bp_{3/4}$ occur with opposite signs.
    \end{enumerate}
    In particular, the intersection number of $(\bp_t)_{t\in[0,1]}$ with $\sW_{1,\emptyset}^\fs$ with respect to the coorientation from \autoref{Prop_NaturalCoorientation} is $\pm 2$.
  \end{prop}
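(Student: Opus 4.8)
The plan is to realize $(\bp_t)_{t\in S^1}$ as the boundary of a small closed two-disk $D\subset U$ that crosses the wall $\sW^\fs_1$ transversally in a short arc through $\bp_\star$, and to read off the three properties — and the value of the intersection number — from the local behaviour of the eigenspinor as its base point runs along that arc. First I would shrink $U$ so that $\sW^\fs\cap U=\sW^\fs_1\cap U$ (the strata $\set{\dim\ker\slD^\fs_\bp\geq 2}$ are closed of codimension at least three by \autoref{Prop_Wsk} and miss $\bp_\star$), so that $\sW^\fs_1\cap U$ is a cooriented codimension-one submanifold carrying a smooth family of unit eigenspinors $\bp\mapsto\Psi_\bp$ — this is the Implicit Function Theorem argument of \autoref{Prop_NaturalCoorientation}, using \eqref{Eq_DerivativeOfLambda} — and so that $\sW^\fs_b\cap U=\sW^\fs_{1,\star}\cap U$ (by $C^1$-continuity $\Psi_\bp$ can vanish only near $x_\star:=\Psi_{\bp_\star}^{-1}(0)$, and a $C^1$-small deformation of a single non-degenerate zero is again one). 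The crucial local input is then that the parametrized evaluation $(\bp,x)\mapsto\Psi_\bp(x)$ on $(\sW^\fs_1\cap U)\times M$ is transverse to the zero section at $(\bp_\star,x_\star)$. Since $(\nabla\Psi_{\bp_\star})_{x_\star}\co T_{x_\star}M\to\Re(S_\fs\otimes E)_{x_\star}$ has rank three by non-degeneracy of the zero, this amounts to finding a variation $b$ tangent to $\sW^\fs_1$ whose induced first-order change $\dot\Psi_b$ of the eigenspinor — the $\Psi_{\bp_\star}$-orthogonal solution of $\slD^\fs_{\bp_\star}\dot\Psi_b=-(\partial_b\slD^\fs)\Psi_{\bp_\star}$ — has $\dot\Psi_b(x_\star)$ outside the three-plane $\im\bigl((\nabla\Psi_{\bp_\star})_{x_\star}\bigr)$. \textbf{This transversality claim is the main obstacle.} I expect to prove it exactly as in \autoref{Prop_TransverseSpectralFlow}: writing $\dot\Psi_b(x_\star)$ through the Green's operator of $\slD^\fs_{\bp_\star}$ turns the assumption ``$\dot\Psi_b(x_\star)\in\im(\nabla\Psi_{\bp_\star})_{x_\star}$ for all admissible $b$'' into the vanishing of an $L^2$-pairing $\inner{\bar\gamma(b)\Psi_{\bp_\star}}{\Theta}$ for all $b\in\Omega^1(M,\su(E))$, where $\Theta$ solves $\slD^\fs_{\bp_\star}\Theta=0$ on $M\setminus\set{x_\star}$ and is singular at $x_\star$; since Clifford multiplication by $T^*M\otimes\su(E)$ exhausts the trace-free symmetric endomorphisms of $\Re(S_\fs\otimes E)$, this forces $\Theta\equiv 0$ wherever $\Psi_{\bp_\star}\neq 0$, i.e. on $M\setminus\set{x_\star}$, contradicting the singularity of $\Theta$.

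Granting the transversality, pick a smooth arc $\alpha\co[-1,1]\to\sW^\fs_1\cap U$ with $\alpha(0)=\bp_\star$ and $\dot\alpha(0)=b$ as above; then $\Psi_{\alpha(s)}$ is nowhere vanishing for $s\neq 0$ (no zero near $x_\star$ by the local model, none away from $x_\star$ by $C^1$-continuity for $U$ small), $\alpha(\pm 1)\in\sW^\fs_{1,\emptyset}$ lie on opposite sides of $\sW^\fs_{1,\star}$ in $\sW^\fs_1$, and the section $(s,x)\mapsto\Psi_{\alpha(s)}(x)$ over $[-1,1]\times M$ has a single, transverse zero at $(0,x_\star)$. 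Hence the relative Euler count gives
\begin{equation*}
  \deg(\Psi_{\alpha(1)},\Psi_{\alpha(-1)})=\pm 1
\end{equation*}
in the sense of \autoref{Def_RelativeDegree}. Now take $D\subset U$ to be a small closed two-disk, transverse to $\sW^\fs_1$, with $\bp_\star\in\mathrm{int}(D)$, $D\cap\sW^\fs_1=\alpha([-1,1])$, $\alpha(\pm1)\in\del D$ (obtained by thickening $\alpha$ transversally to $\sW^\fs_1$ inside a chart and taking a small round disk). Then $D\cap\sW^\fs_b=D\cap\sW^\fs_{1,\star}=\set{\bp_\star}\subset\mathrm{int}(D)$, so $\del D$ avoids $\sW^\fs_b$, and $\del D$ meets $\sW^\fs_1$ transversally exactly in $\set{\alpha(-1),\alpha(1)}$. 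Parametrizing $\del D$ as $(\bp_t)_{t\in S^1}$ with $\bp_{1/4}=\alpha(1)$, $\bp_{3/4}=\alpha(-1)$ gives a loop in $U\cap(\sP\setminus\sW^\fs_b)$ satisfying (1) by construction and (2) by the displayed formula; and (3) holds because $\sP$ is contractible, so $(\slD^\fs_{\bp_t})_{t\in S^1}$ is a null-homotopic loop of self-adjoint Fredholm operators, its spectral flow vanishes, and this forces $\chi_{1/4}+\chi_{3/4}=0$.

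Finally, the intersection number: by \autoref{Prop_NaturalCoorientation} it equals $\chi_{1/4}\,\sigma(\Psi_{1/4},\bp_{1/4})+\chi_{3/4}\,\sigma(\Psi_{3/4},\bp_{3/4})$, where $\Psi_{1/4},\Psi_{3/4}$ span the respective kernels. Applying \autoref{Prop_HowSigmaChanges} along $\alpha$, reparametrized from $\bp_{3/4}$ to $\bp_{1/4}$ and lying entirely in $\sW^\fs_1$, the regularized family $-\slD^\fs_{\alpha(s)}+\lambda$ has no spectral crossing (its least eigenvalue stays at $\lambda$, using \autoref{Conv_SpectralFlow}), so its spectral flow is $0$, and therefore $\sigma(\Psi_{1/4},\bp_{1/4})=(-1)^{\deg(\Psi_{3/4},\Psi_{1/4})}\,\sigma(\Psi_{3/4},\bp_{3/4})=-\sigma(\Psi_{3/4},\bp_{3/4})$. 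Combined with $\chi_{3/4}=-\chi_{1/4}$ this makes the intersection number $2\chi_{1/4}\,\sigma(\Psi_{1/4},\bp_{1/4})=\pm 2$, as claimed. The only genuinely delicate step is the transversality of the parametrized evaluation at $(\bp_\star,x_\star)$; everything else is soft once the local picture from \autoref{Prop_NaturalCoorientation} is available.
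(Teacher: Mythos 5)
Your construction is, at its core, the same as the paper's: shrink $U$ so that only the stratum $\sW^\fs_1$ is present (via \autoref{Prop_Wsk}), observe that inside $\sW^\fs_1$ the locus where the eigenspinor acquires a zero is a codimension-one subwall meeting $\sW^\fs_1$ transversely at $\sW^\fs_{1,\star}$, and take a small loop linking $\sW^\fs_{1,\star}$; the two crossings of $\sW^\fs_{1,\emptyset}$ then have opposite spectral signs and relative degree $\pm 1$, giving intersection number $\pm 2$. Where you genuinely diverge is in the proof of the key transversality (that some variation $b$ tangent to $\sW^\fs_1$ moves $\Psi_{\bp}(x_\star)$ off $\im(\nabla\Psi_{\bp_\star})_{x_\star}$, equivalently $\rd\nu\neq 0$ on $T\sW^\fs_1$ in the paper's notation). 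The paper argues \emph{forwards}: it first prescribes $\dot\Psi$ by solving the Dirac equation near $x_\star$ with $\inner{\dot\Psi(x_\star)}{\phi(x_\star)}\neq 0$ and extending so that $\inner{\Psi_\star}{\dot\Psi}_{L^2}=0$, and then uses the pointwise surjectivity of Clifford multiplication together with the fact that $\slD^\fs_{\bp_\star}\dot\Psi$ vanishes near $x_\star$ (exactly where $\Psi_\star$ vanishes) to solve $\bar\gamma(b)\Psi_\star=\slD^\fs_{\bp_\star}\dot\Psi$ with $\inner{\bar\gamma(b)\Psi_\star}{\Psi_\star}_{L^2}=0$. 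Your duality argument via the Green's operator works too, but is heavier (it needs the fundamental solution of $\slD^\fs_{\bp_\star}$ and its genuine singularity at $x_\star$) and as written has a small slip: since you only quantify over $b$ \emph{tangent} to $\sW^\fs_1$, the vanishing of the pairings only gives $\inner{\bar\gamma(b)\Psi_\star}{\Theta-\mu\Psi_\star}_{L^2}=0$ for all $b$ and some $\mu\in\R$, hence $\Theta=\mu\Psi_\star$ on $M\setminus\set{x_\star}$ rather than $\Theta\equiv 0$; the contradiction with the unbounded singular part of $\Theta$ still goes through, but you should say it this way. Two further minor points: your justification that $\sW^\fs_b\cap U=\sW^\fs_{1,\star}\cap U$ (``a $C^1$-small deformation of a single non-degenerate zero is again one'') is stated backwards --- a nearby eigenspinor may have \emph{no} zero, which is precisely why $\sN$ has codimension one; what is true, and all you need, is that any zero of a nearby eigenspinor is unique and non-degenerate --- and in fact your loop only meets $\sW^\fs$ at $\alpha(\pm1)$, whose eigenspinors are nowhere vanishing, so this global identification is not needed at all. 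Your derivations of item (3) (vanishing spectral flow around a null-homotopic loop) and of the $\pm2$ (via \autoref{Prop_NaturalCoorientation} and \autoref{Prop_HowSigmaChanges} with zero regularized spectral flow along a path inside $\sW^\fs_1$) match the paper's intended bookkeeping and are fine.
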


  \begin{proof}
    Let $\Psi_\star \in \ker \slD^\fs_{\bp_\star}$ and $x_\star \in M$ be such that $\Abs{\Psi_\star}_{L^2} = 1$ and $\Psi_\star(x_\star) = 0$.
    Let $\phi \in \Gamma(\Re(S_\fs\otimes E))$ be such that
    \begin{gather*}
      \im((\nabla\Psi_\star)_{x_\star}) + \R\Span{\phi_\star(x_\star)}
      = \Re(S_\fs\otimes E)_{x_\star}, \\
      \abs{\phi_\star(x_\star)} = 1, \qandq
      \nabla\phi(x_\star) = 0.
    \end{gather*}

    We can assume that $U$ is sufficiently small for the Implicit Function Theorem to guarantee that there is a unique smooth map $U \to \R \times \Gamma(\Re(S_\fs\otimes E)) \times M \times \R$,
    \begin{equation*}
      \bp \mapsto (\lambda(\bp),\Psi_\bp, x_\bp, \nu(\bp))
    \end{equation*}
    such that
    \begin{equation*}
      \lambda(\bp_\star) = 0, \quad
      \Psi_{\bp_\star} = \Psi_\star, \qandq
      x_{\bp_\star} = x_\star
    \end{equation*}
    as well as
    \begin{equation}
      \label{Eq_LambdaPsiXNu}
      \slD^\fs_\bp \Psi_\bp = \lambda(\bp)\Psi_\bp \quad
      \Psi_\bp(x_\bp) = \nu(\bp)\phi_\star(x_\bp), \qandq
      \Abs{\Psi_0}_{L^2} = 1.
    \end{equation}

    As before
    \begin{equation*}
      U \cap \sW_1^\fs = \lambda^{-1}(0).
    \end{equation*}
    Set
    \begin{equation*}
      \sN \coloneq \nu^{-1}(0).
    \end{equation*}
    This is the set of those $\bp \in U$ for which the eigenspinor with smallest eigenvalue has a unique zero which is also non-degenerate.

    From the proof of \autoref{Prop_NaturalCoorientation} we know that $U\cap\sW_1^\fs$ is a codimension one submanifold.
    We will now show that $\sN$ is a codimension one submanifold as well and that it intersects $U\cap\sW_1^\fs$ transversely in $U \cap \sW^\fs_{1,\star}$,
    see \autoref{Fig_SmallLoop}.
    Knowing this, the existence of a loop $(\bp)_{t\in S^1}$ with the desired properties follows easily because crossing $\sN$ changes the relative degree by $\pm 1$.
    Indeed, let $t_0$ be a time at which the path crosses $\sN$ and
    let $\epsilon$ be a small positive number.
    Consider the path of eigenspinors $(\Psi_{\bp_t})$ for $t \in [t_0 - \epsilon, t_0 + \epsilon]$ as introduced in \eqref{Eq_LambdaPsiXNu}.
    For $t \neq t_0$, each of the spinors is nowhere vanishing and $\Psi_{\bp_{t_0}}$ has a single non-degenerate zero.
    Thus, $\deg(\Psi_{\bp_{t_0-\epsilon}}, \Psi_{\bp_{t_0+\epsilon}}) = \pm 1$.
    \begin{figure}[h]
      \centering
      \begin{tikzpicture}
        \draw[thick] (0,-2.5) node[below]{$\sW_1^\fs$} -- (0,2.5);
        \draw[thick] (-2.5,0) -- (2.5,0) node[right] {$\sN$};
        \filldraw (0,0) node[below left]{$\sW^\fs_{1,\star}$} circle (0.1);
        \draw[stealth-,gray,dashed] ([shift=(25.5:2)]0,0) node[above right] {$\bp_t$} arc (25.5:385.5:2);
      \end{tikzpicture}
      \caption{$\sW_1^\fs$ and $\sN$ intersecting in $\sW^\fs_{1,\star}$.}
      \label{Fig_SmallLoop}
    \end{figure}
    
    We will show that $\rd_{\bp_\star}\nu|_{T_{\bp_\star}\sW_1^\fs}$ is non-vanishing.
    This implies both that $\sN$ is a codimension one manifold and that it intersects $U\cap\sW^{\fs_1}$ transversely.
    For $\hat \bp = (0,b) \in T_{\bp_\star}\sP$ to be determined,
    set
    \begin{equation*}
      \lambda_t \coloneq \lambda(\bp_\star + t\hat \bp), \quad
      \Psi_t \coloneq \Psi_{\bp_\star + t\hat \bp}, \quad
      x_t \coloneq x_{\bp\star+t\hat\bp}, \qandq
      \nu_t \coloneq \nu_{\bp_\star + t\hat \bp}
    \end{equation*}
    as well as
    \begin{equation*}
      \dot \lambda \coloneq \left.\frac{\rd}{\rd t}\right|_{t=0} \lambda_t, \quad
      \dot \Psi \coloneq \left.\frac{\rd}{\rd t}\right|_{t=0} \Psi_t, \quad
      \dot x \coloneq \left.\frac{\rd}{\rd t}\right|_{t=0} x_t, \qandq
      \dot \nu \coloneq \left.\frac{\rd}{\rd t}\right|_{t=0} \nu_t.
    \end{equation*}
    Differentiating \eqref{Eq_LambdaPsiXNu} we obtain
    \begin{equation*}
      \bar\gamma(b)\Psi_\star + \slD^\fs_{\bp_\star} \dot\Psi
      =
      \dot\lambda \Psi_\star, \quad
      \dot \Psi(x_\star) + (\nabla\Psi_\star)_{x_\star}\dot x = \dot \nu \phi(x_\star), \qandq
      \inner{\Psi_\star}{\dot\Psi}_{L^2} = 0.
    \end{equation*}
    From this it follows that
    \begin{equation}
      \label{Eq_DotLambdaPsiXNu}
      \begin{split}
        \dot\lambda
        &=
        \inner{\bar\gamma(b)\Psi_\star}{\Psi_\star}_{L^2}, \\
        \slD^\fs_{\bp_\star} \dot \Psi
        &=
        \bar\gamma(b)\Psi_\star - \inner{\bar\gamma(b)\Psi_\star}{\Psi_\star}_{L^2}\Psi_\star, \qand \\
        \dot \nu
        &=
        \inner{\dot\Psi(x_\star)}{\phi(x_\star)}.
      \end{split}
    \end{equation}

    Suppose we can arrange a choice of $b$ such that
    \begin{enumerate}
    \item
      $\inner{\bar\gamma(b)\Psi_\star}{\Psi_\star}_{L^2} = 0$,
    \item
      $\slD^\fs_{\bp_\star}\dot\Psi$ vanishes in a neighborhood of $x_\star$.
    \item
      \label{It_DotNuNonZero}
      $\inner{\dot\Psi(x_\star)}{\phi(x_\star)} \neq 0$, and
    \item
      \label{It_DotPsiPerpPsiStar}
      $\inner{\Psi_\star}{\dot\Psi}_{L^2} = 0$.
    \end{enumerate}
    In this situation it would follow that
    \begin{equation*}
      \dot \lambda = 0 \quad\text{but}\quad \dot\nu \neq 0;
    \end{equation*}
    that is
    \begin{equation*}
      \hat\bp = (0,b) \in T_{\bp_\star}\sW_1^\fs \qandq \rd_{\bp_\star}\nu(b) \neq 0.
    \end{equation*}

    It remains to find such a $b$.
    To begin with, observe that we can certainly find $\dot\Psi$ with the above properties by solving the Dirac equation in a neighborhood of $x_\star$ subject to the constraint \eqref{It_DotNuNonZero} and then extending to all of $M$ so that \eqref{It_DotPsiPerpPsiStar} holds.
    Fix such a choice of $\dot\Psi$.    
    Clifford multiplication by $T^*M\otimes\su(E)$ on $\Re(S_\fs\otimes E)$ induces a isomorphism between $T^*M\otimes\su(E)$ and trace-free symmetric endomorphisms of $\Re(S_\fs\otimes E)$.
    Since $\slD^\fs_{\bp_\star}\dot\Psi$ vanishes in a neighborhood of $x_\star$ and $\Psi_\star$ vanishes only at $x_\star$,
    one can find $b \in \Omega^1(M,\su(E))$ such that
    \begin{equation*}
      \inner{\bar\gamma(b)\Psi_\star}{\Psi_\star}_{L^2} = 0
      \qandq
      \bar\gamma(b)\Psi_\star = \slD^\fs_{\bp_\star} \dot \Psi.
    \end{equation*}
    This completes the proof.
  \end{proof}

  It remains to exhibit a point $\bp_\star \in \sW^\fs_{1,\star}$ for some spin structure $\fs$ but such that $\bp_\star \notin \sW^{\tilde\fs}$ for every other spin structure $\tilde\fs$.
  This requires the following two propositions as preparation.

  \begin{prop}
    \label{Prop_Wsk}
    Let $k \in \set{ 2,3,\ldots }$.
    The subset
    \begin{equation*}
      \sW_k^\fs
      \coloneq
        \set{ \bp \in \sP : \dim\ker\slD^\fs_\bp = k }
      \subset
        \sP
    \end{equation*}
    is contained in a submanifold of codimension three.
    Moreover, $\sW_k^\fs \cap \overline{\sW_1^\fs} = \sW_k^\fs$.
  \end{prop}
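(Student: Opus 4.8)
The plan is a finite-dimensional (Kuranishi) reduction together with a transversality computation; it proves both assertions at once. First I would replace $\sP$ and all function spaces by their $W^{k,p}$-completions with $(k-1)p>3$ and recover the $C^\infty$ statement at the end by the standard argument (cf.\ \cite[Proof of Proposition 2.21]{Doan2017}); since there are only finitely many spin structures, fix $\fs$. Fix $\bp_0\in\sW_k^\fs$ and set $K\coloneq\ker\slD_{\bp_0}^\fs\subset\Gamma(\Re(S_\fs\otimes E))$, a real vector space with $\dim_\R K=k$. Since $\slD_{\bp_0}^\fs$ is self-adjoint and Fredholm of index zero, $\im\slD_{\bp_0}^\fs=K^\perp$, and Lyapunov--Schmidt reduction produces an open neighbourhood $U$ of $\bp_0$ in $\sP$ and a smooth map
\[
  \fk\co U\longrightarrow \Sym(K),\qquad \fk(\bp_0)=0,
\]
into the space $\Sym(K)$ of self-adjoint endomorphisms of $K$, such that $\ker\slD_\bp^\fs\cong\ker\fk(\bp)$ for all $\bp\in U$. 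Because $\fk(\bp)$ acts on the $k$-dimensional space $K$, one has $\dim\ker\fk(\bp)=k$ exactly when $\fk(\bp)=0$; hence $\sW_k^\fs\cap U=\fk^{-1}(0)$.

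The heart of the proof is to show that $\fk$ is a submersion at $\bp_0$. Since $\fk(\bp_0)=0$, a direct computation shows the cross terms in the differential drop out, and that for a variation $b\in\Omega^1(M,\su(E))$ of the connection, $\rd_{\bp_0}\fk(0,b)$ is the self-adjoint endomorphism $\phi\mapsto\pi_K\bigl(\bar\gamma(b)\phi\bigr)$ of $K$, where $\pi_K$ is the $L^2$-orthogonal projection onto $K$; equivalently, the corresponding bilinear form is $(\phi,\phi')\mapsto\inner{\bar\gamma(b)\phi}{\phi'}_{L^2}$. To see surjectivity onto $\Sym(K)$, suppose a self-adjoint $a\co K\to K$ annihilates the image with respect to the trace pairing; writing $a$ in an $L^2$-orthonormal basis $\phi_1,\dots,\phi_k$ of $K$, this says that the section $\sigma\coloneq\sum_{i,j}a_{ij}\,\phi_i\odot\phi_j$ of self-adjoint endomorphisms of $\Re(S_\fs\otimes E)$ is $L^2$-orthogonal to $\bar\gamma(b)$ for every $b$. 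Clifford multiplication is a fibrewise isomorphism from $T^*M\otimes\su(E)$ onto the trace-free self-adjoint endomorphisms of $\Re(S_\fs\otimes E)$ (as used in the proof of \autoref{Prop_NaturalCoorientation}); hence the trace-free part of $\sigma$ vanishes identically, i.e.\ $\sigma=\lambda\,\id$ for some function $\lambda$. The trace direction is genuinely attained, since $\sum_i\inner{\bar\gamma(b)\phi_i}{\phi_i}_{L^2}$ pairs $b$ with a section that is not a multiple of the identity and so need not vanish. Finally, unique continuation for $\slD_{\bp_0}^\fs$ forces $\phi_1,\dots,\phi_k$ to span a subspace of $\Re(S_\fs\otimes E)_x$ of dimension $\min\{k,4\}$ at a generic $x$; at such a point the rank-one tensors $\phi_i(x)\odot\phi_j(x)$ are independent, and comparing this with $\sigma=\lambda\,\id$ (using $\rk\Re(S_\fs\otimes E)=4$ when $k\le 3$, and reinserting the identity into itself, or adjoining metric variations, when $k\ge 4$) forces $a=0$. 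Thus $\rd_{\bp_0}\fk$ is surjective.

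Granting this, $\sW_k^\fs\cap U=\fk^{-1}(0)$ is a submanifold of $U$ of codimension $\dim_\R\Sym(K)=\binom{k+1}{2}$, which is $\geq 3$ for $k\geq 2$ (and equals $3$ when $k=2$); so $\sW_k^\fs$ is, locally near every point, contained in a submanifold of codimension three. For the remaining assertion, submersivity of $\fk$ at $\bp_0$ implies that $\fk$ takes every value in a neighbourhood of $0\in\Sym(K)$; since self-adjoint endomorphisms of corank exactly $1$ accumulate at $0$ (e.g.\ $\diag(t,0,\dots,0)$ as $t\to0$), the preimage $\fk^{-1}(\{\text{corank }1\})$ accumulates at $\bp_0$, and every $\bp$ in it satisfies $\dim\ker\slD_\bp^\fs=1$, i.e.\ $\bp\in\sW_1^\fs$. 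Hence $\bp_0\in\overline{\sW_1^\fs}$, which gives $\sW_k^\fs\subseteq\overline{\sW_1^\fs}$ and therefore $\sW_k^\fs\cap\overline{\sW_1^\fs}=\sW_k^\fs$. The main obstacle is the transversality step: one must ensure that the full space $\Sym(K)$ — not merely the trace-free part singled out by Clifford multiplication — lies in the image of $\rd_{\bp_0}\fk$, and that for $k\ge 4$ the degenerate possibility $\sigma=\lambda\,\id$ with $a\neq 0$ is excluded; both are controlled by the unique-continuation input, with variations of the Riemannian metric available as a backstop for the trace direction.
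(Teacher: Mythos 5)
Your finite-dimensional reduction to a map $\fk\co U\to\Sym(K)$ with $\sW_k^\fs\cap U=\fk^{-1}(0)$ is exactly the paper's construction (its map $\Lambda$ with values in $S^2\R^k$), and the use of the pointwise isomorphism between $T^*M\otimes\su(E)$ and trace-free symmetric endomorphisms of $\Re(S_\fs\otimes E)$ is also the same. The gap lies in the transversality statement your proof actually leans on: you claim $\rd_{\bp_0}\fk$ surjects onto all of $\Sym(K)$, and neither of the two inputs you invoke for this is established. First, the assertion that ``unique continuation forces $\phi_1,\dots,\phi_k$ to span a subspace of dimension $\min\{k,4\}$ at a generic $x$'' is not a consequence of unique continuation and is nowhere proved; what the paper proves (and all it needs) is that \emph{some pair} of $L^2$-orthonormal kernel elements is pointwise linearly independent at some point, via the elementary argument that $\Psi_2=f\Psi_1$ forces $\gamma(\nabla f)\Psi_1=0$, hence $f$ constant, contradicting orthonormality. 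Second, the trace direction is a genuine problem, not a formality: $\bar\gamma(b)$ is pointwise trace-free on the rank-four bundle, so if, say, $k=4$ and the $\phi_i$ happened to be a pointwise orthogonal frame, then $\sum_i\inner{\bar\gamma(b)\phi_i}{\phi_i}_{L^2}=0$ for \emph{every} $b$ --- this is precisely the case $a=\id$, $\sigma=\lambda\,\id$ that your parenthetical ``reinserting the identity into itself, or adjoining metric variations'' waves at without an argument; and for $k\geq 5$ the values are always pointwise dependent, so the step ``comparing with $\sigma=\lambda\,\id$ forces $a=0$'' has no content there either. ``Need not vanish'' is not a proof.

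These gaps matter because your proof of the closure statement $\sW_k^\fs\subset\overline{\sW_1^\fs}$ uses full submersivity to hit corank-one matrices near $0\in\Sym(K)$, so it inherits them. The first assertion only requires $\rank\rd_{\bp_0}\fk\geq 3$, and the sound portion of your argument already gives this: at a point where $\phi_1,\phi_2$ are pointwise independent one can prescribe the $2\times 2$ block $\paren[\big]{\inner{\bar\gamma(b)\phi_i}{\phi_j}_{L^2}}_{i,j\in\{1,2\}}$ arbitrarily (trace-freeness on the rank-four fiber does not constrain a $2\times 2$ block), and $\dim S^2\R^2=3$; this is exactly the paper's route, and composing $\fk$ with a suitable three-dimensional quotient yields the codimension-three submanifold. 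For the second assertion the paper avoids submersivity altogether: from the rank-three statement it deduces that every $\bp_0\in\sW_k^\fs$ has arbitrarily close parameters with $0<\dim\ker\slD^\fs_\bp<k$ and then concludes $\sW_k^\fs\subset\overline{\sW_1^\fs}$ by induction on $k$. To repair your write-up, either restructure the closure argument along these lines, or supply actual proofs of the pointwise spanning claim and of the attainability of the trace direction (e.g.\ by bringing in metric variations explicitly), neither of which is currently in your text.
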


  \begin{proof}
    Let $\bp_0 \in \sP$ such that $\dim\ker\slD^\fs_{\bp_0} = k$.
    Choose an $L^2$--orthonormal basis $\set{\Psi_i}$ of $\ker\slD^\fs_{\bp_0}$.
    For a sufficiently small neighborhood $U$ of $\bp_0$,
    by the Implicit Function Theorem, there exists a unique smooth map $U \to \Gamma(\Re(S_\fs\otimes E))^{\oplus^k} \times S^2\R^k$
    \begin{equation*}
      \bp \mapsto \(\Psi_{1,\bp},\ldots,\Psi_{k,\bp},\Lambda(\bp) = (\lambda_{ij}(\bp))\)
    \end{equation*}
    such that
    \begin{equation*}
      \Psi_{i,\bp_0} = \Psi_i \qandq
      \Lambda(\bp_0) = 0
    \end{equation*}
    as well as
    \begin{equation*}
      \slD_\bp^\fs \Psi_{i,\bp} = \sum_{j=1}^k \lambda_{ij}\Psi_{j,\bp} \qandq
      \inner{\Psi_{i,\bp}}{\Psi_{j,\bp}}_{L^2} = \delta_{ij}.
    \end{equation*}
    (It follows from the fact that $\slD_\bp^\fs$ is symmetric, that $\lambda_{ij} = \lambda_{ji}$.)
    We have
    \begin{equation*}
      U \cap \sW_k^\fs = \Lambda^{-1}(0)
    \end{equation*}
    We will show that $\rd\Lambda \co T_{\bp_0}U \to S^2\R^k$ has rank at least three.
    This will imply that $\sW_k^\fs$ has codimension at least three.
    
    Suppose $\Psi_2 = f\Psi_1$ for some function $f \in C^\infty(M)$.
    It follows that
    \begin{equation*}
      0 = \slD_{\bp_0}^\fs \Psi_2 = \gamma(\nabla f)\Psi_1
    \end{equation*}
    This in turn implies that $f$ is constant because $\Psi_1$ is non-vanishing on an dense open subset of $M$.
    However, this is non-sense because $\inner{\Psi_i}{\Psi_j}_{L^2} = \delta_{ij}$.
    It follows that there is an $x \in M$ such that $\Psi_1(x)$ and $\Psi_2(x)$ are linearly independent.
    Clifford multiplication induces an isomorphism from $T^*M\otimes\su(E)$ to trace-free symmetric endomorphisms of $\Re(S_\fs\otimes E)$.
    Therefore, given any $(\mu_{ij}) \in S^2\R^2$,
    we can find $\hat \bp = (0,b) \in T_{\bp_0}U$ such that
    \begin{equation*}
      \inner{\bar\gamma(b)\Psi_i}{\Psi_j}_{L^2} = \mu_{ij} \qforq i,j \in \set{1,2}.
    \end{equation*}
    Since
    \begin{equation*}
      \rd_{\bp_0}\Lambda(\hat\bp) = \(\inner{\bar\gamma(b)\Psi_i}{\Psi_j}_{L^2}\) \in S^2\R^k,
    \end{equation*}
    it follows that $\rd_{\bp_0}\Lambda$ has rank at least three.

    It follows from the above that, for any $\bp_0 \in \sW_k^\fs$, there exists an arbitarily close $\bp \in \sP$ with $0 < \dim\ker \slD^\fs_\bp < k$.
    From this it follows by induction that $\sW_k^\fs \cap \overline{\sW_1^\fs} = \sW_k^\fs$.
  \end{proof}  

  \begin{prop}
    \label{Prop_Ws1s2}
    If $\fs_1,\fs_2$ are two distinct spin structures, then
    $\sW^{\fs_1}_1$ and $\sW^{\fs_2}_1$ intersect transversely.
  \end{prop}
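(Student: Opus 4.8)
The plan is to verify transversality at an arbitrary point $\bp_0 \in \sW^{\fs_1}_1 \cap \sW^{\fs_2}_1$ (if the intersection is empty there is nothing to prove). First I would invoke the Implicit Function Theorem setup from the proof of \autoref{Prop_NaturalCoorientation}, applied to each $\fs_i$ separately: on a common neighbourhood $U$ of $\bp_0$ there are smooth functions $\lambda_i$ with $\lambda_i(\bp_0)=0$ and $U \cap \sW^{\fs_i}_1 = \lambda_i^{-1}(0)$, together with unit spinors $\Psi_i$ spanning $\ker\slD^{\fs_i}_{\bp_0}$, and by \eqref{Eq_DerivativeOfLambda} one has $(\rd_{\bp_0}\lambda_i)(0,b) = \inner{\bar\gamma(b)\Psi_i}{\Psi_i}_{L^2}$ for $b \in \Omega^1(M,\su(E))$. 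Since each $\sW^{\fs_i}_1$ is locally cut out by the submersion $\lambda_i$, transversality at $\bp_0$ is equivalent to $\rd_{\bp_0}(\lambda_1,\lambda_2)\co T_{\bp_0}\sP \to \R^2$ being onto; and since the proof of \autoref{Prop_NaturalCoorientation} shows both $\rd_{\bp_0}\lambda_i$ are non-zero, this in turn follows once we know the two linear functionals $b \mapsto \inner{\bar\gamma(b)\Psi_i}{\Psi_i}_{L^2}$ on $\Omega^1(M,\su(E))$ are linearly independent.

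Next I would prove this independence by contradiction. If $\inner{\bar\gamma(b)\Psi_1}{\Psi_1}_{L^2} = c\,\inner{\bar\gamma(b)\Psi_2}{\Psi_2}_{L^2}$ for all $b$ and some $c \neq 0$, then testing against $b$ supported in arbitrarily small balls yields the pointwise identity $\inner{\bar\gamma(\xi)\Psi_1(x)}{\Psi_1(x)} = c\,\inner{\bar\gamma(\xi)\Psi_2(x)}{\Psi_2(x)}$ for every $x \in M$ and $\xi \in T^*_xM\otimes\su(E)_x$. On the dense open set $\Omega$ where $\Psi_1$ and $\Psi_2$ are both non-zero (their zero sets are nowhere dense by unique continuation), I would trivialize, near a given point, the real line bundle $\fl \to M$ with $S_{\fs_2}=S_{\fs_1}\otimes\fl$; this identifies $\Re(S_{\fs_2}\otimes E)$ with $V \coloneq \Re(S_{\fs_1}\otimes E)$ as Clifford modules and intertwines the actions $\bar\gamma$. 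Using the isomorphism between $T^*M\otimes\su(E)$ and the trace-free symmetric endomorphisms of $V$ (the one invoked throughout the paper), the pointwise identity becomes the equality of the trace-free parts of $\Psi_1(x)\Psi_1(x)^*$ and $c\,\tilde\Psi_2(x)\tilde\Psi_2(x)^*$, where $\tilde\Psi_2$ is the local representative of $\Psi_2$ in $V$. An elementary computation in $V_x \cong \R^4$ (apply the difference of these endomorphisms first to a vector orthogonal to both, then to $\Psi_1(x)$) then forces $\Psi_1(x)$ and $\tilde\Psi_2(x)$ to be parallel, with $c = \abs{\Psi_1(x)}^2/\abs{\tilde\Psi_2(x)}^2 > 0$.

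Finally, parallelism at every point of $\Omega$ means the contraction $\inner{\Psi_1}{\Psi_2} \in \Gamma(\Omega,\fl)$ (pairing the $V$-factors) is nowhere vanishing, so $\fl$ is trivial over $\Omega$. To upgrade this to triviality of $\fl$ over all of $M$ — which gives $\fs_1 = \fs_2$, the desired contradiction — I would use that the zero set of a harmonic spinor on a $3$-manifold has Hausdorff dimension at most one \cite[Theorem 1.2]{Taubes2014}, so that $M\setminus\Omega$ has codimension at least two and the restriction $H^1(M,\Z_2) \to H^1(\Omega,\Z_2)$ is injective. I expect this last step to be the only delicate point: mere nowhere-density of the zero sets would not suffice to extend a trivialization of $\fl$, so the argument genuinely relies on the codimension bound for zero sets of harmonic spinors. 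The localization of the integral identity and the linear algebra in $\R^4$ are routine.
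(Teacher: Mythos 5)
Your proof is correct, but it takes a genuinely different route from the paper's. The paper argues directly: it passes to the double cover $\pi\co\tilde M\to M$ determined by the difference class $\fl$ of $\fs_1,\fs_2$, where both eigenspinors lift to sections $\tilde\Psi_1,\tilde\Psi_2$ of the \emph{same} bundle; the deck involution acts on them with opposite signs, forcing $\inner{\tilde\Psi_1}{\tilde\Psi_2}_{L^2}=0$, and the unique-continuation argument from \autoref{Prop_Wsk} then gives a point where the lifts are pointwise linearly independent, so one can explicitly choose $b\in\Omega^1(M,\su(E))$ with $\bar\gamma(\pi^*b)\tilde\Psi_1=0$ and $\bar\gamma(\pi^*b)\tilde\Psi_2=\tilde\Psi_2$ at that point, separating the two differentials $\rd\lambda^{\fs_i}$. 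You instead argue by contradiction entirely downstairs: proportionality of the two functionals localizes to a pointwise identity, the surjectivity of Clifford multiplication onto trace-free symmetric endomorphisms forces $\Psi_1$ and $\Psi_2$ to be proportional wherever both are non-zero, hence $\fl$ is trivialized on the complement $\Omega$ of the zero sets, and triviality of $\fl$ on all of $M$ (the contradiction with $\fs_1\neq\fs_2$) then requires the codimension-two bound on nodal sets of harmonic spinors together with a general-position argument showing $\pi_1(\Omega)\to\pi_1(M)$ is surjective, hence $H^1(M,\Z_2)\to H^1(\Omega,\Z_2)$ injective. Both proofs hinge on the same key input (Clifford multiplication realizes all trace-free symmetric endomorphisms of $\Re(S_\fs\otimes E)$), but yours trades the paper's purely local, constructive step for global topology: you correctly flag that nowhere-density of the zero sets would not suffice, and the step you need is indeed available (the nodal set of a harmonic spinor on a $3$--manifold has Hausdorff dimension at most one, by Bär's nodal set theorem or as a special case of Taubes's result, and a set of Hausdorff dimension one can be avoided by generic loops, giving the $\pi_1$--surjectivity). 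The paper's argument is shorter and self-contained at this point, while yours has the mild appeal of never leaving $M$ and of exhibiting concretely what failure of transversality would force the two eigenspinors to look like.
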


  \begin{proof}
    Let $\bp \in \sW^{\fs_1}_1 \cap \sW^{\fs_2}_1$.
    Denote by $\Psi_1$ and $\Psi_2$ spinors spanning $\ker\slD^{\fs_1}_\bp$ and $\ker\slD^{\fs_2}_\bp$ respectively.
    The spin structures $\fs_1$ and $\fs_2$ differ by twisting by a $\Z_2$--bundle $\fl$.
    This bundle corresponds to a double cover $\pi\co \tilde M \to M$ and upon pulling back to the cover the spin structures $\fs_1$ and $\fs_2$ both correspond to the same spin structure $\tilde\fs$.
    Let $\tilde{\Psi_i} = \pi^* \Psi_i$ for $i=1,2$ be the lifts of $\Psi_1$, $\Psi_2$ to $\tilde M$.
    The natural involution $\sigma$ on $S_{\tilde{\fs}} \to \tilde M$ acts as $-1$ on $\tilde\Psi_1$ and as $+1$ on $\tilde\Psi_2$.
    In particular, we have
    \begin{equation*}
      \inner{\tilde\Psi_1}{\tilde\Psi_2}_{L^2} = \inner{\sigma( \tilde\Psi_1)}{\sigma( \tilde\Psi_2)}_{L^2} = \inner{-\tilde\Psi_1}{\tilde\Psi_2}_{L^2} = -\inner{\tilde\Psi_1}{\tilde\Psi_2}_{L^2};
    \end{equation*}
    hence, $\inner{\tilde\Psi_1}{\tilde\Psi_2}_{L^2}= 0$.
    After renormalization, we can assume that $\inner{\tilde\Psi_i}{\tilde\Psi_j}_{L^2} = \delta_{ij}$.
    It follows from the argument used in the proof of \autoref{Prop_Wsk} that there is an $x \in M$ such that, for $\tilde x \in \pi^{-1}(x)$, $\tilde\Psi_1(\tilde x)$ and $\tilde\Psi_2(\tilde x)$ are linearly independent.

    Let $\lambda^{\fs_1}$ and $\lambda^{\fs_2}$ be the local defining functions for the walls $\sW^{\fs_1}_1$ and $\sW^{\fs_2}_1$ respectively,
    defined via \eqref{Eq_DefinitionOfLambda} in the proof of \autoref{Prop_NaturalCoorientation}.
    The derivative of $\lambda^{\fs_i}$ in the direction of $b \in \Omega^1(M,\su(E))$ is given by \eqref{Eq_DerivativeOfLambda}:
    \begin{equation*}
      \rd_\bp\lambda^{\fs_i}(0,b)
      = \inner{\bar\gamma(b)\Psi_i}{\Psi_i}_{L^2}
      = \frac12\inner{\bar\gamma(\pi^*b)\tilde\Psi_i}{\tilde\Psi_i}_{L^2}.
    \end{equation*}
    Since $\tilde\Psi_1(\tilde x)$ and $\tilde\Psi_2(\tilde x)$ are linearly independent,
    there exists $b(x) \in T_x M \otimes \su(E_x)$ such that
    \begin{equation*}
      \bar\gamma(\pi^*b(x))\tilde\Psi_1(\tilde x) = 0 \qandq
      \bar\gamma(\pi^*b(x)))\tilde\Psi_2(\tilde x) = \tilde\Psi_2(\tilde x)
    \end{equation*}
    We extend $b(x)$ to a section $b \in \Omega^1(M,\su(E))$ such that
    \begin{equation*}
      \rd_\bp\lambda^{\fs_1}(0,b) = 0
      \quad\text{but}\quad
      \rd_\bp\lambda^{\fs_2}(0,b) \neq 0.
    \end{equation*}
    This shows that derivatives of the local defining functions of $\sW_1^{\fs_1}$ and $\sW_1^{\fs_2}$ are linearly independent;
    hence, the walls intersect transversely.
  \end{proof}
  
  Finally, we are in a position to construct $\bp_\star \in \sW_{1,\star}^\fs$.
  Fix a spin structure $\fs$ as well as $\bp_0 = (g_0,B_0)$ and $x_\star \in M$ such that $g_0$ and $B_0$ are flat on a small ball around a point $x_\star\in M$. 
  Choose local coordinates $(y_1,y_2,y_3)$ around $x_\star$ and a local trivialization of $\Re(S_\fs\otimes E)$ in which $g_0$ is given by the identity matrix and $B_0$ is the trivial connection.
  Let $\Psi \in \Gamma(\Re\otimes S_\fs)$ be any section which is nowhere vanishing away from $x_\star$ and around $x_\star$ agrees with the map $\R^3 \to \H$ given by
  \begin{equation*}
    (y_1,y_2,y_3) \mapsto 2iy_1 - jy_2 - ky_3.
  \end{equation*}
  In particular, $\Psi$ has a single non-degenerate zero at $x_\star$ and satisfies $\slD_\bp^\fs\Psi = 0$ in a neighborhood of $x_\star$.
  Using the same argument as in the proof of \autoref{Prop_SmallLoop}, we find $b\in\Omega^1(\su(E))$ vanishing in a neighborhood of $x_\star$ and such that for $\bp_\star = (g_0,B_0+b)$ we have
  \begin{equation*}
    0=\slD_{\bp_\star}^\fs\Psi = \slD_{\bp_0}^\fs\Psi + \bar\gamma(b)\Psi.
  \end{equation*}
  This shows that $\Psi$ is harmonic with respect to $\bp_\star$.
  If $\dim\ker\slD_{\bp_\star} > 1$, then \autoref{Prop_Wsk} and the argument from \autoref{Prop_SmallLoop} can be used to slightly perturb $\bp_\star$ to arrange that $\dim\ker\slD_{\bp_\star} = 1$ and any spinor spanning $\slD_{\bp_\star}$ has a non-degenerate zero (close to $x_\star$).
  Similarly, \autoref{Prop_Wsk} and \autoref{Prop_Ws1s2} can be used to ensure that there are no non-trivial harmonic spinors with respect to $\bp_\star$ for any other spin structure $\tilde\fs$.
\end{proof}


\appendix
\section{Computation of the hyperkähler quotient}
\label{Sec_H2///U(1)}

Set
\begin{equation*}
  S
  \coloneq 
    \Hom_\C(\C^2,\H)
\end{equation*}
with $\H$ considered as a complex vector space whose complex structure is given by right-multiplication with $i$.
$S$ is a quaternionic Hermitian vector space: its $\H$--module structure arises by left-multiplication.
The action of $\U(1)$ on $S$ given by $\rho(e^{i\theta})\Psi = e^{i\theta}\Psi$ is a quaternionic representation with associated moment map
\begin{equation*}
  \mu(\Psi) = \Psi\Psi^* - \frac12\abs{\Psi}^2\,\id_\H.
\end{equation*}

The standard complex volume form $\Omega = e^1\wedge e^2 \in \Lambda^2 (\C^2)^*$ and the standard Hermitian metric on $\C^2$, define a complex anti-linear map $J \co \C^2 \to \C^2$ by
\begin{equation*}
  -\inner{v}{Jw} = \Omega(v,w).
\end{equation*}
This makes $\C^2$ into a $\H$--module.

\begin{prop}
  \label{Prop_H2///U(1)=Re/Z2}
  We have
  \begin{equation*}
    S^\reg\hkred\U(1)
    =
      \parentheses*{\Re(\H\otimes_\C\C^2)\setminus\set{0}}/\Z_2.
  \end{equation*}
  Here the real structure on $\H\otimes_\C\C^2$ is given by $\overline{q\otimes v} \coloneq qj\otimes Jv$.
\end{prop}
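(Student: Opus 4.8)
The plan is to perform the hyperkähler reduction directly in convenient coordinates. First I would use the complex basis $\{1,j\}$ of $\H$ (recall the complex structure on $\H$ is right multiplication by $i$) to identify $\H\cong\C^2$, so that $S=\Hom_\C(\C^2,\H)$ becomes $M_2(\C)$, a spinor $\Psi$ being recorded by the matrix $M$ whose columns are $\Psi(e_1),\Psi(e_2)$. In this picture the Hermitian metric is standard, the adjoint is $M^*$, the $\U(1)$-action is $M\mapsto e^{i\theta}M$, and the moment map reads $\mu(M)=MM^*-\tfrac12\tr(MM^*)\,\id$. The first step is then pure linear algebra: $\mu(M)=0$ forces $MM^*$ to be a scalar matrix, so $\mu^{-1}(0)\setminus\{0\}=\{\sqrt{c}\,U:c>0,\ U\in\U(2)\}$; in particular the $\U(1)$-action is free there, so $S^\reg=S\setminus\{0\}$ and $S^\reg\hkred\U(1)=(\mu^{-1}(0)\setminus\{0\})/\U(1)$.

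Next I would make the real locus explicit. Using the symplectic form $\Omega$ to identify $(\C^2)^*\cong\C^2$ $\C$-linearly yields an isomorphism $\Phi\colon S=\H\otimes_\C(\C^2)^*\xrightarrow{\ \sim\ }\H\otimes_\C\C^2$; pulling the real structure $\overline{q\otimes v}=qj\otimes Jv$ back along $\Phi$ and unwinding it in the basis $\{1,j\}$ using $Je_1=e_2$, $Je_2=-e_1$, one finds that the resulting real subspace $\Re(S)\subset S$ consists exactly of the matrices $\begin{pmatrix}\bar\alpha & -\bar\beta\\ \beta & \alpha\end{pmatrix}$, $\alpha,\beta\in\C$. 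A one-line matrix multiplication then gives $MM^*=(|\alpha|^2+|\beta|^2)\,\id$ for such $M$, so $\Re(S)\subset\mu^{-1}(0)$; moreover the unitary elements of $\Re(S)$ are precisely $\SU(2)$, whence $\Re(S)\setminus\{0\}=\R_{>0}\cdot\SU(2)$.

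The heart of the argument is the comparison of $\U(1)$-orbits with $\Z_2$-orbits. Given $M=\sqrt{c}\,U\in\mu^{-1}(0)^\reg$, choosing $\theta$ with $e^{-2i\theta}\det U=1$ makes $e^{-i\theta}U\in\SU(2)$, hence $e^{-i\theta}M\in\Re(S)\setminus\{0\}$; thus $\mu^{-1}(0)^\reg=\U(1)\cdot(\Re(S)\setminus\{0\})$. Conversely, for $M\in\Re(S)\setminus\{0\}$ the matrix $e^{i\theta}M$ lies in $\Re(S)$ only when $e^{i\theta}=\pm1$, so each $\U(1)$-orbit meets $\Re(S)\setminus\{0\}$ in a single $\Z_2$-orbit. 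Hence the inclusion induces a continuous bijection $(\Re(S)\setminus\{0\})/\Z_2\to\mu^{-1}(0)^\reg/\U(1)$. To upgrade this to a diffeomorphism I would note that the tangent line $\R\cdot iM$ to the orbit through $M$ is never contained in the linear subspace $\Re(S)$ unless $M=0$, so $\Re(S)\setminus\{0\}$ meets every orbit transversally; the induced map is therefore a local diffeomorphism, and being bijective it is a diffeomorphism. Composing with $\Phi^{-1}$ restricted to $\Re(\H\otimes_\C\C^2)\setminus\{0\}$ gives the stated identification.

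The step I expect to be most delicate is the bookkeeping that makes the three ingredients compatible: checking that, under ``complex structure $=$ right multiplication by $i$'', the $\U(1)$-action is honest scalar multiplication in the $\{1,j\}$-basis; that $q\mapsto qj$ is the antilinear part of the quaternionic structure matching that complex structure; and that the $\Omega$-identification $(\C^2)^*\cong\C^2$ is the one under which $\overline{q\otimes v}=qj\otimes Jv$ corresponds to the real structure whose fixed locus lands in $\mu^{-1}(0)$. Once these conventions are nailed down, each computation above is a two-line check.
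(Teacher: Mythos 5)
Your proposal is correct, and at its core it runs the same computation as the paper: describe $\mu^{-1}(0)$ explicitly and check that each $\U(1)$--orbit in it meets the real locus in exactly two antipodal points. The packaging, however, is genuinely different. The paper identifies $S$ with $\H_+\otimes_\C\H_-$ and solves $q_1 i\bar q_1=-q_2 i\bar q_2$ by quaternion algebra, producing a unit quaternion $p=je^{i\phi}$ with $q_1=q_2p$ and $pi=-ip$; halving the phase $\phi$ then locates the two real points on the orbit. You instead work in $M_2(\C)$, where $\mu^{-1}(0)\setminus\set{0}=\R_{>0}\cdot\U(2)$, the nonzero real locus is $\R_{>0}\cdot\SU(2)$, and the two intersection points come from the two solutions of $e^{2i\theta}=\det U$; your determinant phase is precisely the paper's $\phi$. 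The matrix form $\begin{pmatrix}\bar\alpha & -\bar\beta\\ \beta & \alpha\end{pmatrix}$ you claim for the real locus is indeed what the conventions $Je_1=e_2$, $Je_2=-e_1$ and the basis $\set{1,j}$ give (I verified this; an unlucky sign in the $\Omega$--identification would merely replace $\SU(2)$ by the unit-norm matrices of determinant $-1$, which changes nothing in the argument). What your route buys: the two-point count is immediate from determinants, and the observation that $iM\notin\Re(S)$ for $M\neq 0$, together with the dimension count $4+1=\dim\mu^{-1}(0)^{\reg}=5$, upgrades the set-level bijection to a diffeomorphism --- a point the paper's purely set-theoretic proof leaves implicit. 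What the paper's route buys: it never has to fix bases or chase the $\Omega$/$J$ sign conventions, which is exactly the bookkeeping you flag as the delicate step.
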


\begin{proof}
  The complex volume form $\Omega$ defines a complex linear isomorphism $(\C^2)^* \iso \C^2$;
  hence, we can identify $S \iso \H\otimes_\C \C^2$.
  We will further identify $\C^2$ with $\H$ via $(z,w) \mapsto z + wj$.
  With respect to this identification the complex structure is given by left-multiplication with $i$ and $J$ becomes left-multiplication by $j$.
  If we denote by $H_+$ ($H_-$) the quaternions equipped with their right (left) $\H$--module structure, then we can identify $S$ with
  \begin{equation*}
    \H_+ \otimes_\C \H_-.
  \end{equation*}
  In this identification the action of $\U(1)$ is given by
  \begin{equation*}
    \rho(e^{i\theta})q_+\otimes q_-
    =
      q_+e^{i\theta}\otimes q_-
    =
      q_+\otimes e^{i\theta}q_-,
  \end{equation*}
  and the moment map becomes
  \begin{equation*}
    \mu(q_1\otimes 1 + q_2\otimes j)
    =
      -i\otimes\frac12(q_1i\bar q_1 + q_2i\bar q_2) \in i\R \otimes \Im\H.
  \end{equation*}

  If $\mu(q_1\otimes 1 + q_2\otimes j) = 0$, then
  \begin{equation}
    \label{Eq_Q1IQ2I}
    q_1i\bar q_1 = -q_2i\bar q_2.
  \end{equation}
  This implies that $\abs{q_1} = \abs{q_2}$.
  Unless $q_1$ and $q_2$ both vanish, there is a unique $p \in \H$ satisfying
  \begin{equation*}
    \abs{p} = 1 \qandq q_1 = q_2p.
  \end{equation*}
  From \eqref{Eq_Q1IQ2I}, it follows that
  \begin{equation*}
    pi = -ip;
  \end{equation*}
  hence, $p = je^{i\phi}$ for some $\phi \in \R$.
  It follows that, for any $\theta \in \R$,
  \begin{equation*}
    q_1e^{i\theta} = q_2e^{i\theta} \cdot je^{i(\phi+2\theta)}.
  \end{equation*}
  
  Since
  \begin{equation*}
    \overline{q_1\otimes 1 + q_2\otimes j}
    = -q_2j \otimes 1 + q_1j\otimes j,
  \end{equation*}
  the real part of $\H_+\otimes_\C\H_-$ consists of those $q_1\otimes 1 + q_2\otimes j$ with
  \begin{equation*}
    q_1 = -q_2j.
  \end{equation*}
  Consequently, the $\U(1)$--orbit of each non-zero $\bq = q_1\otimes 1 + q_2\otimes j$ intersects $\Re(\H_+\otimes_\C\H_-)$ twice:
  in $\pm \rho(e^{i(-\phi/2 + \pi/2)})\bq$.
\end{proof}

\section{Determinant line bundles}
\label{Sec_DeterminantLineBundles}

One of the standard tools to orient moduli spaces are determinant line bundles of families of Fredholm operators;
see, e.g., \citet[Section 5.2.1]{Donaldson1990}.
The basic ideas are quite simple, but to pin down a precise orientation procedure certain conventions have to be chosen and one has to verify that those conventions are consistent.
In fact, different choices will lead to different outcomes \cite{Zinger2016} and even seemingly natural conventions may not be consistent \citet{Salamon2017}.
A very careful discussion of the construction of determinant line bundles can be found in the diploma thesis of \citet{Bohn2007}.
In this appendix we summarize this construction.
For proofs and more a detailed discussion we refer to \cite[Appendices B and C]{Bohn2007}.

\subsection{The Knudsen--Mumford conventions}

\citeauthor{Bohn2007}'s construction uses the sign conventions introduced by \citet{Knudsen1976}.
We briefly recall their definitions.

\begin{definition}
  A \defined{graded line} is a $1$--dimensional vector space $L$ together with an integer $\alpha \in \Z$.
\end{definition}

\begin{definition}
  \label{Def_TensorProduct}
  The \defined{tensor product} of two graded lines $(L,\alpha)$, $(M,\beta)$ is defined to be
  \begin{equation*}
    (L,\alpha)\otimes(M,\beta) \coloneq (L\otimes M,\alpha+\beta)
  \end{equation*}
  The \defined{dual} of a graded line $(L,\alpha)$ is defined to be
  \begin{equation*}
    (L,\alpha)^* = (L^*,-\alpha).
  \end{equation*}
\end{definition}

\begin{definition}
  Let $(L,\alpha)$ and $(M,\beta)$ be two graded lines.
  Define the isomorphism $\sigma_{L,M}\co (L,\alpha)\otimes(M,\beta) \iso (M,\beta)\otimes (L,\alpha)$
  by
  \begin{equation*}
    (\ell\otimes m) \mapsto (-1)^{\alpha\beta} m\otimes \ell.
  \end{equation*}
  Define the isomorphism $\epsilon_L\co (L,\alpha)^*\otimes (L,\alpha) \to (\R,0)$ by
  \begin{equation*}
    \lambda\otimes \ell \mapsto (-1)^{\binom{\alpha}{2}}\lambda(\ell).
  \end{equation*}
\end{definition}

If omitted in the notation,
it shall always be assumed that we use the isomorphism $\sigma$ and $\kappa$.
These isomorphism pin down the \defined{Knudsen--Mumford conventions}.
For example, an unlabeled isomorphism $(L,\alpha)\otimes (L,\alpha)^* \iso (\R,0)$ will given by
\begin{equation*}
  \ell\otimes \lambda \mapsto (-1)^{-\binom{\alpha+1}{2}}\lambda(\ell),
\end{equation*}
the composition of $\epsilon$ with $\sigma$.

\begin{definition}
  Let $V$ be a finite dimensional vector space.
  The \defined{determinant line} of $V$ is the graded line
  \begin{equation*}
    \det V \coloneq (\Lambda^{\dim V} V,\dim V).
  \end{equation*}
\end{definition}

\begin{prop}[{\cite[Lemma B.1.14]{Bohn2007}}]
  Let
  \begin{equation*}
    0 \to V_0 \xrightarrow{f_0} V_1 \xrightarrow{f_1} \cdots \xrightarrow{f_{n-1}} V_n \to 0
  \end{equation*}
  be an exact sequence.
  For $i \in \set{0,\ldots,n}$,
  set $c_i \coloneq \dim(V_i/\ker f_i)$.
  \begin{enumerate}
  \item
    Given
    \begin{equation*}
      \nu = \nu_0 \otimes \nu_2 \otimes \cdots  \in \det V_{0} \otimes \det V_{2} \otimes \cdots,
    \end{equation*}
    for every $i \in \set{0,\ldots,n}$,
    there exist $\mu_i \in \Lambda^{c_i}V_i$ such that
    \begin{equation}
      \label{Eq_NuMui}
      \nu
      = \mu_0 \otimes (f(\mu_1)\wedge \mu_2) \otimes (f(\mu_3)\wedge \mu_4) \otimes \cdots
    \end{equation}
  \item
    If $\nu \in \det V_{0} \otimes \det V_{2} \otimes \cdots \otimes \det V_{2\floor{n/2}}$, then
    for any choice of $\mu_i \in \Lambda^{c_i}V_i$ such that the relation \autoref{Eq_NuMui} holds, the element
    \begin{equation*}
      (f(\mu_0)\wedge \mu_1) \otimes (f(\mu_2)\wedge \mu_3) \otimes \cdots 
    \end{equation*}
    depends only on $\nu$.
  \item
    The map
    \begin{equation*}
      \det V_{0} \otimes \det V_{2} \otimes \cdots
      \to \det V_{1} \otimes \det V_{3} \otimes \cdots
    \end{equation*}
    defined by
    \begin{equation*}
      \nu \mapsto \mu_0 \otimes (f(\mu_1)\wedge \mu_2) \otimes (f(\mu_3)\wedge \mu_4) \otimes \cdots,
    \end{equation*}
    for any choice of $\mu_i \in \Lambda^{c_i}V_i$ satisfying \autoref{Eq_NuMui},
    is an isomorphism.
  \end{enumerate}  
\end{prop}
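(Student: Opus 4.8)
The plan is to reduce everything to a split exact sequence. First I would choose, for each $i$, a complement $C_i$ of $K_i := \ker f_i$ in $V_i$. Exactness gives $K_0 = 0$ and $K_n = V_n$, so $C_0 = V_0$ and $C_n = 0$, while for $0 \le i < n$ the map $f_i$ restricts to an isomorphism $C_i \xrightarrow{\sim} \im f_i = K_{i+1}$; note that $\dim C_i = c_i$. Fixing a basis $(e^i_\alpha)_{\alpha = 1}^{c_i}$ of each $C_i$ exhibits $\bigl(f_{i-1}e^{i-1}_\beta\bigr)_\beta \cup (e^i_\alpha)_\alpha$ as a basis of $V_i$, so wedging with the top form $f_{i-1}(e^{i-1}_1 \wedge \cdots \wedge e^{i-1}_{c_{i-1}}) \in \det K_i$ defines an isomorphism $\det C_i \xrightarrow{\sim} \det V_i$, i.e.\ Knudsen--Mumford identifications $\det V_i \cong \det K_i \otimes \det C_i$ together with $\det K_i \cong \det C_{i-1}$ (using $\det K_0 = \det C_n = (\R, 0)$). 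Telescoping these shows that both $\det V_0 \otimes \det V_2 \otimes \cdots$ and $\det V_1 \otimes \det V_3 \otimes \cdots$ are canonically isomorphic to $\bigotimes_j \det C_j$; I would let $\phi_C$ be the resulting composite isomorphism, and the claim is that the map in part (3) equals $\phi_C$.

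For part (1) I would exhibit the $\mu_i$ directly from such a splitting. Put $\mu^0_i := e^i_1 \wedge \cdots \wedge e^i_{c_i}$; then $\mu^0_0$ generates $\det V_0$ and each $f(\mu^0_{2k-1}) \wedge \mu^0_{2k}$ generates $\det V_{2k}$, so writing $\nu_0 = a_0 \mu^0_0$ and $\nu_{2k} = a_k\, f(\mu^0_{2k-1}) \wedge \mu^0_{2k}$ we may take $\mu_0 := \nu_0$, $\mu_{2k-1} := a_k \mu^0_{2k-1}$, $\mu_{2k} := \mu^0_{2k}$, and $\mu_n := 1$ in the degenerate slot when $n$ is odd; the defining relation then holds by construction, and the grading labels match because $\dim V_i = c_{i-1} + c_i$. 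That is all of part (1).

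For part (2) the key observation is that $f(\mu_{2j}) \wedge \mu_{2j+1}$ depends on $\mu_{2j}$ and $\mu_{2j+1}$ only through their images $\bar\mu_{2j} \in \det C_{2j}$ and $\bar\mu_{2j+1} \in \det C_{2j+1}$ under the natural surjections $\Lambda^{c_\bullet} V_\bullet \to \Lambda^{c_\bullet}(V_\bullet / K_\bullet) = \det C_\bullet$: wedging against the fixed top form $f(\mu_{2j}) \in \det K_{2j+1}$ factors through $\det C_{2j+1}$, and $f(\mu_{2j})$ itself sees only $\bar\mu_{2j}$ since $f_{2j}$ annihilates $K_{2j}$. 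Under these reductions the defining relation becomes $\bar\mu_0 = \nu_0$ together with $\bar\mu_{2k-1} \cdot \bar\mu_{2k} = \nu_{2k}$ (via the identifications of the first paragraph), so the valid tuples $(\bar\mu_i)$ form a single orbit of the torus rescaling $\bar\mu_{2k-1}$ by $t_k$ and $\bar\mu_{2k}$ by $t_k^{-1}$. Substituting into the target formula, the scalars $t_k$ cancel telescopically between consecutive tensor factors, the crucial cancellation at each end occurring exactly because the lone half--terms $\mu_0$ and $\mu_n$ sit at opposite ends of the $\nu$--formula and of the target formula. Hence the target equals $\phi_C(\nu)$, and in particular does not depend on the choice of $\mu_i$. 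Part (3) is then immediate, since $\phi_C$ is a composition of isomorphisms; one checks similarly (though it is not needed here) that $\phi_C$ is independent of $C$, because two splittings differ by a unipotent automorphism of the complex, which acts as the identity on every $\det V_i$.

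The step I expect to be the main obstacle is organizational rather than conceptual: propagating the Knudsen--Mumford sign and grading conventions ($\sigma_{L,M}$, $\epsilon_L$) correctly through the telescoping, and treating the two parities of $n$ at the top of the staircase, where the chain of pairings closes up asymmetrically and a single misplaced sign would falsify the identity. I would isolate that bookkeeping into one lemma about the determinant line of a short exact sequence $0 \to K \to V \to C \to 0$ and then iterate it, as in \cite[Appendix B]{Bohn2007}.
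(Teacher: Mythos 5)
The paper does not actually prove this proposition---it is quoted from \cite[Lemma B.1.14]{Bohn2007} and the text refers to that thesis for the proof---so your argument has to stand on its own. Your setup is sound: choosing complements $C_i$ of $K_i=\ker f_i$, the construction of the $\mu_i$ in part (1) is correct, and your ``key observation'' that each factor $f(\mu_{2j})\wedge\mu_{2j+1}$ depends on the $\mu$'s only through their images $\bar\mu_i\in\Lambda^{c_i}(V_i/K_i)$ is also correct (the kernel of $\Lambda^{c}\pi$ is $K\wedge\Lambda^{c-1}V$, which dies against the top form $f(\mu_{2j})$ of $K_{2j+1}$). The genuine gap is in part (2), where you read the hypothesis \eqref{Eq_NuMui} as the factorwise equations $\bar\mu_0=\nu_0$ and $\bar\mu_{2k-1}\cdot\bar\mu_{2k}=\nu_{2k}$. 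That is not what \eqref{Eq_NuMui} says: it is an equality in the tensor product of lines, where scalars may be shifted freely between slots. For instance, replacing $(\mu_2,\mu_3)$ by $(s\mu_2,s^{-1}\mu_3)$, or $(\mu_0,\mu_1)$ by $(s\mu_0,s^{-1}\mu_1)$, preserves \eqref{Eq_NuMui} but is not in the torus you describe, so the admissible tuples form a strictly larger set than a single orbit of your ``pairwise'' torus, and verifying invariance of the target only under that subgroup does not prove well-definedness.

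The repair is immediate in your own coordinates and makes the telescoping unnecessary. Writing $\bar\mu_i=x_i\,\mu^0_i$ and $\nu = c\cdot \mu^0_0\otimes\paren*{f(\mu^0_1)\wedge\mu^0_2}\otimes\cdots$, the relation \eqref{Eq_NuMui} is equivalent to the single scalar equation $\prod_i x_i=c$, while the target element equals $\paren*{\prod_i x_i}$ times the reference generator $\paren*{f(\mu^0_0)\wedge\mu^0_1}\otimes\paren*{f(\mu^0_2)\wedge\mu^0_3}\otimes\cdots$ of $\det V_1\otimes\det V_3\otimes\cdots$; hence the target is $c$ times that generator, which proves (2), and (3) follows because $c\mapsto c$ is linear and bijective. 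Two further remarks. First, the products above must run over the same index set on both sides, which is exactly your ``degenerate slot'' issue: with the convention $\mu_n=1\in\Lambda^{0}V_n$ (which you adopt) the index $n$ drops out of both expressions; without some such convention the literal statement fails for one parity of $n$, since the unconstrained scalar $\mu_n$ appears in exactly one of the two formulae. Second, your closing worry about propagating the Knudsen--Mumford signs is not actually an obstacle for this proposition: the map in (3) is defined by an explicit unsigned formula, so only well-definedness and bijectivity need proof; the signs $\sigma_{L,M}$, $\epsilon_L$ enter only when one composes such maps, as in \autoref{Cor_PsiDet}.
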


\begin{cor}
  \label{Cor_PsiDet}
  If
  \begin{equation*}
    0 \to V_0 \xrightarrow{f_0} V_1 \xrightarrow{f_1} V_2 \xrightarrow{f_2} V_3 \to 0
  \end{equation*}
  is an exact sequence,
  then the map
  \begin{equation*}
    \kappa \co \det V_0 \otimes (\det V_3)^* \to \det V_1 \otimes (\det V_2)^*
  \end{equation*}
  defined by
  \begin{equation}
    \label{Eq_PsiDet}
    \omega_0 \otimes [f_2(\omega_2)]^* \mapsto (-1)^{\binom{\dim V_2+\dim V_3+1}{2}}(f_0(\omega_0)\wedge\omega_1) \otimes [f_1(\omega_1)\wedge\omega_2]^*
  \end{equation}
  is an isomorphism.
\end{cor}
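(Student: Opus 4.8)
The plan is to deduce this from the proposition stated just above it, \cite[Lemma~B.1.14]{Bohn2007}, by specializing to $n=3$ and then transporting the resulting isomorphism through the Knudsen--Mumford duality and transposition maps. Write $d_i \coloneq \dim V_i$; exactness of $0 \to V_0 \xrightarrow{f_0} V_1 \xrightarrow{f_1} V_2 \xrightarrow{f_2} V_3 \to 0$ gives $d_0 - d_1 + d_2 - d_3 = 0$, and the integers from that proposition are $c_0 = d_0$, $c_1 = d_1 - d_0$, $c_2 = d_3$, $c_3 = 0$. Applying the proposition to this sequence (taking $\mu_3 = 1 \in \Lambda^0 V_3 = \R$) produces the canonical isomorphism
\begin{equation*}
  \Phi \co \det V_0 \otimes \det V_2 \xrightarrow{\ \sim\ } \det V_1 \otimes \det V_3, \qquad \omega_0 \otimes \bigl(f_1(\omega_1)\wedge\omega_2\bigr) \longmapsto \bigl(f_0(\omega_0)\wedge\omega_1\bigr) \otimes f_2(\omega_2),
\end{equation*}
whose well-definedness and bijectivity are precisely parts (2) and (3) of the proposition.

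Next I would curry $\Phi$ into the desired shape. Tensoring $\Phi$ with the identity of $(\det V_2)^* \otimes (\det V_3)^*$, on the source the adjacent factors $\det V_2 \otimes (\det V_2)^*$ are removed by the evaluation $(L,\alpha)\otimes(L,\alpha)^* \to (\R,0)$, $\ell\otimes\lambda \mapsto (-1)^{-\binom{\alpha+1}{2}}\lambda(\ell)$ with $\alpha = d_2$; on the target one first applies the transposition $\sigma$ exchanging $\det V_3$ (grading $d_3$) with $(\det V_2)^*$ (grading $-d_2$), contributing $(-1)^{d_2 d_3}$, and then removes $\det V_3 \otimes (\det V_3)^*$ by the same evaluation with $\alpha = d_3$. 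Composing these with the inverse of the source evaluation yields an isomorphism $\det V_0 \otimes (\det V_3)^* \to \det V_1 \otimes (\det V_2)^*$, and chasing the decomposable element $\omega_0 \otimes [f_2(\omega_2)]^*$ through the chain returns $\bigl(f_0(\omega_0)\wedge\omega_1\bigr)\otimes[f_1(\omega_1)\wedge\omega_2]^*$ times the sign $(-1)^{-\binom{d_2+1}{2} + d_2 d_3 - \binom{d_3+1}{2}}$.

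It then remains to match this exponent with the stated $\binom{\dim V_2 + \dim V_3 + 1}{2}$. Since $-n \equiv n \pmod 2$ and since $\binom{a+1}{2} + \binom{b+1}{2} + ab = \binom{a+b+1}{2}$ holds identically (both sides equal $\tfrac12\bigl((a+b)^2 + (a+b)\bigr)$), the accumulated sign equals $(-1)^{\binom{d_2 + d_3 + 1}{2}}$, which is exactly the prefactor in \eqref{Eq_PsiDet}; hence the curried map coincides with $\kappa$, and being a composite of isomorphisms it is an isomorphism. The step I expect to be the main obstacle is the sign bookkeeping: one must be scrupulous about the Knudsen--Mumford conventions --- in particular the asymmetry between $(L,\alpha)^*\otimes(L,\alpha) \to (\R,0)$, with sign $(-1)^{\binom{\alpha}{2}}$, and $(L,\alpha)\otimes(L,\alpha)^* \to (\R,0)$, with sign $(-1)^{-\binom{\alpha+1}{2}}$ --- about the order in which the two evaluations and the transposition are composed, and about whether the element chase produces $\kappa$ or $\kappa^{-1}$. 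Once the conventions are pinned down, everything else is a formal consequence of the proposition, which has already carried out the genuine linear algebra.
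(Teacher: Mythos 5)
Your proposal is correct and takes essentially the same route as the paper: the paper also produces $\kappa$ via the chain $\det V_0\otimes(\det V_3)^* \iso \det V_0\otimes\det V_2\otimes(\det V_2)^*\otimes(\det V_3)^* \iso \det V_1\otimes\det V_3\otimes(\det V_2)^*\otimes(\det V_3)^* \iso \det V_1\otimes(\det V_2)^*$, applying the preceding proposition in the middle and delegating the sign bookkeeping to the cited reference. Your explicit chase, using the identity $\binom{a+1}{2}+\binom{b+1}{2}+ab=\binom{a+b+1}{2}$, correctly recovers the prefactor $(-1)^{\binom{\dim V_2+\dim V_3+1}{2}}$.
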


The sign in this isomorphism comes from
\begin{align*}
  \det V_0 \otimes (\det V_3)^*
  &\iso
    \det V_0 \otimes \det V_2 \otimes (\det V_2)^* \otimes (\det V_3)^* \\
  &\iso
    \det V_1 \otimes \det V_3 \otimes (\det V_2)^* \otimes (\det V_3)^* \\
  &\iso
    \det V_1 \otimes (\det V_2)^*,
\end{align*}
see \cite[pp. 129-130]{Bohn2007}.

\subsection{Knudsen--Mumford's determinant line bundle}

\begin{definition}
  \label{Def_DeterminantLineOfOperator}
  Let $X$, $Y$ be Banach spaces and $D\co X \to Y$ a Fredholm operator.
  The \defined{determinant line} of $D$ is the graded line
  \begin{equation*}
    \det(D) \coloneq \det(\ker D) \otimes \det(\coker D)^*.
  \end{equation*}
  Here $\det(\ker D)$ and $\det(\coker D)$ are the graded lines associated with the finite-dimensional vector spaces $\ker D$ and $\coker D$ as in \autoref{Def_DeterminantLineOfOperator}, and we use \autoref{Def_TensorProduct} to take the graded tensor product and the dual graded line.
\end{definition}

Henceforth, we will suppress the grading from the notation,
but it will always be used in the natural isomorphisms $\epsilon$, $\sigma$, and $\kappa$ introduced above.

Let $(D_p)_{p \in \sP}$ be a family of Fredholm operator parametrized by the space $\sP$.
In general, $\dim \ker D_p$ and $\dim \coker D_p$ vary in $p$.
Therefore, $\ker D_p$ and $\coker D_p$ do not form vector bundles.

\begin{definition}
  Let $\sU$ be an open subset of $\sP$.
  A \defined{stabilizer} of $(D_p)_{p \in \sP}$ over $\sU$ is  a finite-dimensional vector space $V$ together with a linear map $\iota \co V \to Y$ such that, for all $p \in \sU$, the induced map $V \to \coker D_p$ is surjective.
\end{definition}

Every $p \in \sP$ has a neighborhood $\sU$ which admits a stabilizer.
Given such a choice of stabilizer $\iota$,
for every $p \in \sP$,
the operator $(D_p ~ \iota)\co X\oplus V \to Y$ satisfies $\coker(D_p ~ \iota)=\set{0}$.
Consequently, $\ker(D_p ~ \iota)$ forms a vector bundle over $\sU$.
For every $p \in \sP$,
\begin{equation}
  \label{Eq_StabilizationSequence}
  0 \to \ker D_p \xrightarrow{x\mapsto (x,0)}
  \ker(D_p ~ \iota) \xrightarrow{\pr_V} V \xrightarrow{v \mapsto \iota v ~{\rm mod} \im D_p} \coker D_p \to 0.
\end{equation}
is an exact sequence.
Therefore,
\autoref{Cor_PsiDet} provides us with an isomorphism
\begin{equation*}
  \kappa_D^\iota\co \det(D_p) \iso \det(D_p ~ \iota) \otimes(\det V)^*.
\end{equation*}
This indicates that $\det(D_p ~ \iota) \otimes(\det V)^*$ may play the role of the determinant bundle of $(D_p)_{p \in \sP}$ over $\sU$.
The following asserts that the system of isomorphism induces by \autoref{Cor_PsiDet} for various choices of stabilizers are coherent.

\begin{prop}[{\cite[Prop B.1.18]{Bohn2007}}]
  If $\iota, \iota'$ are two choices of a stabilizer,
  then $(\iota ~ \iota')$ is a stabilizer as well and the diagram
  \begin{equation*}
    \begin{tikzcd}[row sep=large, column sep = large]
      \det(D_p) \ar[r,"\kappa_{D_p}^\iota"] \ar[d,swap,"\kappa_{D_p}^{\iota'}"] \ar[dr,"\kappa_{D_p}^{(\iota~\iota')}"]  & \det (D_p~\iota) \otimes \det(V)^* \ar[d,"\kappa_{(D_p~\iota)}^{\iota'}"] \\
      \det (D_p~\iota') \otimes \det(V')^* \ar[r,swap,"\kappa_{(D_p~ \iota')}^\iota"] & \det (D_p~\iota~\iota') \otimes \det(V\oplus V')^*
    \end{tikzcd}
  \end{equation*}
  commutes.
\end{prop}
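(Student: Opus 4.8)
The plan is to deduce the statement from a \emph{transitivity} property of the Knudsen--Mumford isomorphism $\kappa$ under iterated stabilization, after which the commutativity of the square follows by a symmetry argument. First I would record that all four maps in the diagram are defined: the map $V\oplus V'\to\coker D_p$ induced by $(\iota~\iota')$ already restricts to the surjection $V\to\coker D_p$, so $(\iota~\iota')$ is a stabilizer of $(D_p)$; and since $(D_p~\iota)\co X\oplus V\to Y$ is surjective (its image contains $\im D_p$ and $\im\iota$, which together span $Y$), we have $\coker(D_p~\iota)=\set{0}$, so the inclusion $\iota'\co V'\to Y$ is vacuously a stabilizer of $(D_p~\iota)$ and $\kappa_{(D_p~\iota)}^{\iota'}$ is the isomorphism attached by \autoref{Cor_PsiDet} to the short exact sequence
\[
  0 \to \ker(D_p~\iota) \to \ker(D_p~\iota~\iota') \xrightarrow{\pr_{V'}} V' \to 0 ,
\]
and symmetrically for $\kappa_{(D_p~\iota')}^{\iota}$.

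The heart of the argument is the identity
\[
  \kappa_{(D_p~\iota)}^{\iota'}\circ\kappa_{D_p}^{\iota}
  =
  \kappa_{D_p}^{(\iota~\iota')},
\]
in which the common target $\det(D_p~\iota~\iota')\otimes\det(V)^*\otimes\det(V')^*$ is identified with $\det(D_p~\iota~\iota')\otimes\det(V\oplus V')^*$ via the canonical isomorphism $\det(V\oplus V')\iso\det V\otimes\det V'$ of graded lines. I would prove this by observing that the four-term stabilization sequence \eqref{Eq_StabilizationSequence} for the stabilizer $(\iota~\iota')$, namely
\[
  0 \to \ker D_p \to \ker(D_p~\iota~\iota') \to V\oplus V' \to \coker D_p \to 0 ,
\]
is spliced along $\ker(D_p~\iota)$, using $\coker(D_p~\iota)=\set{0}$, from the stabilization sequence for $\iota$ and the short exact sequence above. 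Choosing a basis of $\ker D_p$, extending it first through $\ker(D_p~\iota)$ and then through $\ker(D_p~\iota~\iota')$, and choosing bases of $V$ and $V'$ adapted to their maps into $\coker D_p$, one evaluates both sides on these bases as alternating wedge products; the two Knudsen--Mumford signs then coincide by the identity $\binom{a+b}{2}\equiv\binom a2+\binom b2+ab\pmod 2$ once one substitutes $\dim\ker(D_p~\iota)=\dim\ker D_p+\dim V-\dim\coker D_p$ (forced by exactness) and the analogous dimension relations. This computation is the content of \cite[Appendix B]{Bohn2007}; it can also be extracted from \cite[Lemma B.1.14]{Bohn2007} applied to the two sub-sequences.

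Finally, applying this transitivity identity with the roles of $\iota$ and $\iota'$ interchanged gives $\kappa_{(D_p~\iota')}^{\iota}\circ\kappa_{D_p}^{\iota'}=\kappa_{D_p}^{(\iota'~\iota)}$, with target $\det(D_p~\iota'~\iota)\otimes\det(V'\oplus V)^*$. Permuting the direct summands of $X\oplus V\oplus V'$ and of $V\oplus V'$ is an isomorphism of the operators and vector spaces involved which, under the Knudsen--Mumford symmetry $\sigma$ on determinant lines, intertwines $\kappa_{D_p}^{(\iota'~\iota)}$ with $\kappa_{D_p}^{(\iota~\iota')}$. Hence the top-then-right and the left-then-bottom composites in the diagram both equal $\kappa_{D_p}^{(\iota~\iota')}$, so the diagram commutes. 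I expect the main obstacle to be the sign bookkeeping in the middle step: the graded signs $(-1)^{\binom{\bullet}{2}}$ do not visibly compose, so one must track the dimensions of every kernel and cokernel through the splicing — precisely the delicate point that the Knudsen--Mumford conventions are designed to handle and that \citeauthor{Bohn2007} verifies in detail.
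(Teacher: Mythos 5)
The paper itself offers no proof of this proposition: it is quoted directly from Bohn's thesis (Prop.\ B.1.18), and the appendix explicitly defers all proofs to \cite[Appendices B and C]{Bohn2007}. So there is no in-paper argument to measure you against; I can only judge your outline on its own terms. Its skeleton is the right one: reduce the square to the two triangles; prove the transitivity identity $\kappa_{(D_p~\iota)}^{\iota'}\circ\kappa_{D_p}^{\iota}=\kappa_{D_p}^{(\iota~\iota')}$ by splicing the four-term sequence \eqref{Eq_StabilizationSequence} for $(\iota~\iota')$ out of the sequence for $\iota$ and the short exact sequence $0\to\ker(D_p~\iota)\to\ker(D_p~\iota~\iota')\to V'\to 0$ (using $\coker(D_p~\iota)=\set{0}$); then obtain the second triangle by exchanging $\iota$ and $\iota'$ and comparing via the symmetry $\sigma$. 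Your preliminary checks — that $(\iota~\iota')$ is a stabilizer, that $(D_p~\iota)$ is surjective so $\iota'$ stabilizes it vacuously, and the dimension count forced by exactness — are all correct.

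The genuine gap is that the decisive content is asserted rather than carried out. The whole point of the proposition is that the Knudsen--Mumford signs in \autoref{Cor_PsiDet} are mutually consistent, so a proof must actually evaluate both composites on an adapted basis and verify that the exponents of $(-1)$ agree — both in the transitivity identity and in the identification of $\det(D_p~\iota'~\iota)\otimes\det(V'\oplus V)^*$ with $\det(D_p~\iota~\iota')\otimes\det(V\oplus V')^*$ at the bottom-right corner, where the permutation of summands contributes $(-1)^{\dim V\dim V'}$ and must be matched against $\sigma_{\det V,\det V'}$. You defer exactly this verification to ``the content of \cite[Appendix B]{Bohn2007}'', which is circular, since the statement being proved \emph{is} Bohn's Proposition B.1.18. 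That the bookkeeping is genuinely delicate is visible already in your own write-up: after composing, the natural target is $\det(D_p~\iota~\iota')\otimes\det(V')^*\otimes\det(V)^*$, not $\det(V)^*\otimes\det(V')^*$ as you wrote, and reordering these graded duals is itself a signed operation. So the proposal is a correct plan of attack with its core computation left blank; to complete it you would write out the wedge-product evaluation explicitly, using the identity $\binom{a+b}{2}=\binom{a}{2}+\binom{b}{2}+ab$ you quote, together with the dimension relations from the spliced sequences.
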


The \defined{(Knudsen--Mumford) determinant line bundle}
\begin{equation*}
  \det D \to \sP.
\end{equation*}
is obtained by patching together the various local line bundles $\det(D_p ~\iota)\otimes (\det V)^*$ using the isomorphisms $\kappa_D^{\iota'}\circ (\kappa_D^\iota)^{-1}$.
It is important to note that $\det D$ comes with preferred isomorphisms
\begin{equation*}
  \det D_p \iso (\det D)_p.
\end{equation*}

\subsection{Orientation transport}

Suppose that $\sC$ is a Banach manifold.
Suppose $s$ is a Fredholm section of a Banach vector bundle over $\sC$ whose linearization $L_\fc$ is surjective for any $\fc$ in the zero set $\sM = s^{-1}(0)$.
In this situation $\sM$ is a finite-dimensional manifold and an orientation on $\sM$ is simply a trivialization of the real line bundle $\det (T\sM) = \det (\ker L)$.
Since $L_\fc$ was assumed to be surjective for $\fc \in \sM$, we have
$\det(\ker L_\fc) \iso \det(L_\fc)$.
If one can prove that $\det L \to \sC$ is trivial and find a way to pin down a choice of a trivialization,
then this yields a procedure to orient $\sM$.
In the situation considered in this article, the linearization $L_\fc$ is self-adjoint.
Even though in this situation for every $\fc$ we have a natural isomorphism $\ker L_\fc \iso \coker L_\fc$, and thus an isomorphism $\det(\ker L_\fc) \otimes \det(\coker L_\fc)^* \iso \R$, these isomorphisms do not yield a continuous trivialization of $\det L$ over $\sC$ because the construction of $\det L$ involves choosing local stabilizers.
The \defined{orientation transport} described below (introduced also in \autoref{Def_OrientationTransport} in the specific setting required in this paper) is a way of comparing the isomorphisms $\det(\ker L_\fc) \otimes \det(\coker L_\fc)^* \iso \R$ for different choices of $\fc \in \sC$.

In general, given a path $(D_t)_{t\in[0,1]}$ of  self-adjoint Fredholm operators,
the line bundle $\det D$ is trivial (because $[0,1]$ is a contractible).
Picking any trivialization defines an isomorphism
\begin{equation*}
  (\det D)_0 \iso (\det D)_1.
\end{equation*}
This isomorphism depends on the trivialization only up to multiplication by \emph{positive} real number.
Since $D_t$ is self-adjoint,
there is a canonical isomorphism
\begin{equation*}
  \ker D_t \iso \coker D_t.
\end{equation*}
This induces an isomorphism
\begin{equation*}
  \det D_t \iso \R.
\end{equation*}
The composition of the isomorphisms
\begin{equation*}
  \R \iso \det D_0 \iso (\det D)_0 \iso (\det D)_0 \iso \det D_1 \iso \R
\end{equation*}
is given my multiplication with a non-zero real number.
The sign of this number is
\begin{equation*}
  \OT\((D_t)_{t\in[0,1]}\) = (-1)^{\SF (D_t)_{t\in[0,1]}},
\end{equation*}
see \cite[Theorem C.2.5]{Bohn2007}.
Recall, that we use \autoref{Conv_SpectralFlow} to define the spectral flow for degenerate end points.

In light of this discussion,
\autoref{Prop_OrientationTransport} shows that the determinant line bundle associated to the Seiberg--Witten equation with two spinors is trival.
The convention adopted in \autoref{Def_Sign} amounts to pinning down a trivialization.
With respect to this convention the orientation procedure using the determinant line bundle and the spectral flow agree.

\subsection{A toy example}

The following example illustrates how to work with the determinant bundle.
It also explains why \autoref{Conv_SpectralFlow} is compatible with the Knudsen--Mumford conventions.

Consider the family of linear maps $M_t\co \R \to \R$ defined by $M_t(x) = tx$.
Since the operator $M_t$ is not surjective at $t=0$, in order to define the determinant line bundle $\det M$ over $\R$ we need to choose a stabilizer for the family $(M_t)_{t\in \R}$. 
A natural choice is $V = \R$ and $\iota = \id_\R$.
The stabilized maps $\tilde M_t \co \R\oplus \R \to \R$ are given by
\begin{equation*}
  \tilde M_t =
  \begin{pmatrix}
    t & 1
  \end{pmatrix}.
\end{equation*}
The kernel of $\tilde M_t$ is spanned by $(1,-t)$. 
This gives a trivialization of the vector bundle on $\R$ whose fiber at $t$ is $\ker \tilde M_t$.
The determinant line bundle of $(M_t)$ is thus given by
\begin{equation*}
  (\det M)_t = \ker \tilde M_t \otimes V^* = \R\Span{(1, -t)} \otimes \R^*.
\end{equation*}
This isomorphism trivializes the bundle $\det M$ and for any $t_0, t_1 \in  \R$ gives an isomorphism
\begin{equation*}
  (\det M)_{t_0} \ni (1,-t_0)\otimes 1^* \mapsto (1,-t_1)\otimes 1^* \in (\det M)_{t_1}.
\end{equation*}
If $t \neq 0$, then $M_t$ is invertible and, by definition,
\begin{equation*}
  \det M_t = \det(0) \otimes \det(0)^* = \R\otimes \R^*.
\end{equation*}
The exact sequence \eqref{Eq_StabilizationSequence} in this case is
\begin{equation*}
  0 \to 0 \to
  \ker(M_t ~ \iota) \xrightarrow{(1,-t) \mapsto -t} \R \to 0 \to 0.
\end{equation*}
The induced isomorphism $\kappa\co \det M_t \iso (\det M)_t = \ker \tilde M_t \otimes \R^*$ defined in \autoref{Eq_PsiDet} is
\begin{equation*}
  1\otimes 1^* \mapsto (-1)\cdot[(1,-t)\otimes -t] = (1,-t)\otimes t^*.
\end{equation*}
This allows us to compute the orientation transport from $t=-1$ to $t=1$.  
The sequence of isomorphisms
\begin{equation*}
  \R \iso \R\otimes \R^* = \det M_{-1} \iso \ker \tilde M_{-1} \otimes \R^* \iso \ker \tilde M_1 \otimes \R^* \iso \det M_1 = \R\otimes \R^* \iso \R
\end{equation*}
maps $1 \in \R$ as follows
\begin{equation*}
  1 \mapsto 1\otimes 1^* \mapsto (1,1)\otimes (-1)^* \mapsto (1,-1)\otimes (-1)^* \mapsto -1\otimes 1^* = -1.
\end{equation*}
Thus the orientation transport is $\OT( (M_t)_{t\in [0,1]} ) = -1$.
This agrees with 
\begin{equation*}
  \SF( (M_t)_{t\in[-1,1]}) = 1 \qandq \OT( (M_t)_{t\in [0,1]} ) = (-1)^{\SF( (M_t)_{t\in[-1,1]})}.
\end{equation*}
The spectral flow $\SF( (M_t)_{t\in[-1,1]})$ is independent of \autoref{Conv_SpectralFlow} since the operators $M_{-1}$ and $M_1$ at the endpoints of the path are invertible.

However, for $t = 0$, the operator $M_0$ is not invertible.
Let us investigate the orientation transport from $t=0$ to $t=1$.
By definition,
\begin{equation*}
  \det M_0 = \det(\R) \otimes \det(\R)^* = \R\otimes \R^*.
\end{equation*}
This is different from the situation considered before because $\R$ now has degree $1$ and therefore the isomorphism $\R \to \det M_0$ is given by $1 \mapsto -1\otimes 1^*$ according to the Knudsen--Mumford conventions.
The  isomorphism $\kappa\co \det M_0 \iso (\det M)_0 = \ker \tilde M_0 \otimes \R^*$ defined in \autoref{Eq_PsiDet} is
\begin{equation*}
  1\otimes 1^* \mapsto (-1)\cdot [(1,0) \otimes 1^*].
\end{equation*}
Therefore,
the orientation transport from $t=0$ to $t=1$ is given by $+1$ because the  isomorphisms
\begin{equation*}
  \R \iso \R\otimes \R^* = \det M_0 \iso \ker \tilde M_0 \otimes \R^* \iso \ker \tilde M_1 \otimes \R^* \iso \det M_1 = \R\otimes \R^* \iso \R
\end{equation*}
map $1 \in \R$ as follows
\begin{equation*}
  1 \mapsto - 1\otimes 1^* \mapsto (1,0) \otimes 1^* \mapsto (1,-1) \otimes 1^* = 1\otimes 1^* = 1.
\end{equation*}
Similarly,
the orientation transport from $t=0$ to $t=-1$ is given by $-1$ because the  isomorphisms
\begin{equation*}
  \R \iso \R\otimes \R^* = \det M_0 \iso \ker \tilde M_0 \otimes \R^* \iso \ker \tilde M_{-1} \otimes \R^* \iso \det M_{-1} = \R\otimes \R^* \iso \R
\end{equation*}
map $1 \in \R$ as follows
\begin{equation*}
  1 \mapsto - 1\otimes 1 \mapsto (1,0) \otimes 1^* \mapsto (1,1) \otimes 1^* = -1\otimes 1^* = -1.
\end{equation*}
Both of these agree with the spectral flow defined using \autoref{Conv_SpectralFlow}.

\subsection{Kuranishi models and determinant line bundles}

Finally, we briefly discuss the interaction of Kuranishi models with determinant line bundles.
Let $X$, $Y$, and $\sP$ be Banach spaces,
and let $F_p \co X \to Y$ be a Fredholm map depending smoothly on a  parameter $p \in \sP$.
Denote by
\begin{equation*}
  L_{p,x} \coloneq \rd_xF_p \co X \to Y
\end{equation*}
the linearization of $F_p$ at $x$.
This is a family of Fredholm operators $X \to Y$ depending smoothly on $(p,x)$ and so it defines the determinant line bundle $\det L$ over $\sP \times X$.

Suppose that $F_0(0) = 0$ and construct a Kuranishi model for $F$ near $p=0$ and $x = 0$ as follows.
Choose splittings
\begin{equation*}
  X = X_0\oplus X_1 \qandq Y = Y_0 \oplus Y_1
\end{equation*}
with $X_1$ and $Y_1$ closed, $X_0$ and $Y_0$ finite dimensional, and
such that the operator $L_{0,0} \co X_1 \to Y_1$ induced by $L_{0,0}$ is invertible.
It is a consequence of the Implicit Function Theorem that
there exist an open neighborhood $\sU$ of $0 \in \sP$,
an open neighborhood $V = V_0\times V_1$ of $(0,0) \in X_0\oplus X_1$,
and smooth maps $\Xi \co \sU\times V \to X_1$ and $f \co \sU\times V \to Y_0$ such that
\begin{equation*}
  \tilde F_p(x_0,x_1)
  \coloneq
  F_p(x_0,\Xi(p,x_0;x_1))
  =
  \begin{pmatrix}
    f(p,x_0,x_1) \\
    L_{0,0} x_1
  \end{pmatrix}.
\end{equation*}
Moreover, the maps $(x_0,x_1) \mapsto (x_0,\Xi(p,x_0,x_1))$ are diffeomorphisms onto their images.
Consequently,
we have an exact sequence
\begin{equation}
  \label{Eq_KuranishiModelExactSequence}
  0 \to \ker L_{p,(x_0,x_1)} \to X_0 \xrightarrow{\del_{x_0} f(p,x_0,x_1)} Y_0 \to \coker L_{p,(x_0,x_1)} \to 0.
\end{equation}
It follows that
\begin{equation*}
  \ker L_{p,(x_0,x_1)} \iso \ker \del_{x_0} f(p,x_0,x_1) \qandq
  \coker L_{p,(x_0,x_1)} \iso \coker \del_{x_0} f(p,x_0,x_1).
\end{equation*}

By construction, $\iota\co Y_0 \into Y$ is a stabilizer for $L$ over $\sU\times V$, that is: the map $(L_{p,x} ~ \iota)$ is surjective for all $(p,x) \in \sU \times V$.
For this choice of a stabilizer, the kernel bundle $\ker (L ~ \iota)$ over $\sU \times V$ is isomorphic to the trivial bundle with fiber $X_0$.
Therefore, over $\sU\times V$,
\begin{equation*}
  \det L \iso \det(X_0)\otimes \det(Y_0)^*
\end{equation*}
On the other hand, using $\id \co Y_0 \to Y_0$ as a stabilizer for the family of finite-dimensional operators $\del_{x_0}f(p,x_0,x_1) \co X_0 \to Y_0$ as $(p,x_0,x_1)$ varies in $\sU \times V$, we obtain 
\begin{equation*}
  \det (\del_{x_0}f) \iso \det(X_0)\otimes \det(Y_0)^*.
\end{equation*}
The induced square of isomorphisms
\begin{equation*}
  \begin{tikzcd}
    \det L_{p,(x_0,x_1)} \ar[r,"\kappa"]\ar[d,"\kappa"] & \det \del_{x_0}f(p,x_0,x_1) \ar[d,"\kappa"] \\
    \det(X_0)\otimes \det(Y_0)^* \ar[r,"="] & \det(X_0)\otimes \det(Y_0)^*
  \end{tikzcd}
\end{equation*}
commutes.
This allows one to work carry out computations involving $\det L$ using $\det \del_{x_0}f$.


\section{Relation to gauge theory on $\Gtwo$--manifolds}
\label{Sec_JoyceDonaldsonSegal}

\citet[Section 3]{Donaldson1998} suggested that one might be able to construct $\Gtwo$ analogues of the Casson invariant/instanton Floer homology associated to a natural functional whose critical point are $\Gtwo$--instantons.
One key difficulty with this proposal is that $\Gtwo$--instantons can degenerate by bubbling along associative submanifolds.
\citet[Section 6]{Donaldson2009} explain that this bubbling could be caused by the appearance of (nowhere vanishing) harmonic spinors of $\Re(E\otimes S)$ over associative submanifolds.
In particular, the signed count of $\Gtwo$--instantons can jump along a one-parameter family.
\citeauthor{Donaldson2009} propose to compensate this jump with a counter-term consisting of a weighted count of associative submanifolds.

\citet[Example 8.5]{Joyce2016} poses the following scenario.
Consider a one--parameter family of $\Gtwo$--manifolds $\set{ (Y,\phi_t) : t \in [0,1] }$ together with an $\SU(2)$--bundle $E$ where:
\begin{itemize}
\item
  there is a smooth family of irreducible connections $(A_t)_{t \in [0,1]} \in \sA(E)^{[0,1]}$ such that $A_t$ is an unobstructed $\Gtwo$--instanton with respect to $\phi_t$ for each $t \in [0,1]$, 
\item
  there are no relevant associatives in $(Y,\phi_t)$ for $t \in [0,1/3)\cup(2/3,1]$, and
\item
  there is an obstructed associative $P_{1/3}$ in $(Y,\phi_{1/3})$,
  which splits into two unobstructed associatives $P_t^\pm$ in $(Y,\phi_t)$ for $t \in (1/3,2/3)$, and
  which then annihilate each other in an obstructed associative $P_{2/3}$ in $(Y,\phi_{2/3})$.
\end{itemize}
According to \cite[Theorem 1.2]{Walpuski2013a} a regular crossing of the spectral flow of family of Dirac operators $\slD_t^\pm\co \Gamma(\Re(E|_{P_t^\pm}\otimes \slS_{P_t^\pm})) \to \Gamma(\Re(E|_{P_t^\pm}\otimes \slS_{P_t^\pm}))$ causes a jump in the signed count of $\Gtwo$--instantons;
however, the sign of this jump has not been analyzed.%
\footnote{%
  To be more precise, the jump occurs in the signed count of $\Gtwo$--instantons on a bundle $E'$, which is related to $E$ by $c_2(E') = c_2(E) + \PD[P]$ with $[P] = [P_{1/3}] = [P_t^\pm] = [P_{2/3}]$.
}
\citet[Section 6]{Donaldson2009} and \citet[Section 8.4]{Joyce2016} suggest that this is the only source of jumping phenomena.
The difference in the spectral flows of the Dirac operators $\slD_t^\pm$ is a topological invariant, say $k \in \Z$, which may be non-zero.
\cite[Section 8.4]{Joyce2016} thus concludes that passing from $t < 1/3$ to $t > 2/3$ the signed number of $\Gtwo$--instantons should change by $k \cdot \abs{H_1(P_{1/3},\Z_2)}$;
and, since there are no associatives for $t \in [0,1/3) \cup (2/3,1]$, no counter-term involving a weighted count of associatives could compensate this jump.

It is proposed in \cite{Haydys2014} that the weight associated with each associative $3$-manifold should be the signed count of solutions to the Seiberg--Witten equation with two spinors. 
The loop of associatives can equivalently be seen as a path of parameters $(\bp_t)_{t\in[0,1]}$ on a fixed $3$--manifold $P$, with $\bp_1$ gauge equivalent to $\bp_0$.
Therefore, one can ask how $n(\bp_t)$ varies in this scenario.
Suppose that $b_1(P_{1/3}) > 1$. 
Assuming there are no harmonic $\Z_2$ spinors along the path $(\bp_t)_{t\in[0,1]}$,
a jump in $n(\bp_t)$ would occur precisely when the spectrum of one of the Dirac operators $\slD_{\bp_t}^\fs$ crosses zero.
If the wall-crossing formula for $n(\bp_t)$ were given by the sum of the spectral flows of $(\slD_{\bp_t}^\fs)_{t\in[0,1]}$,
then we would have arrive a contradiction just like in Joyce's argument:
\begin{equation*}
  0 \neq k\cdot \abs{H_1(P,\Z_2)} = n(\bp_1) - n(\bp_0) = 0
\end{equation*}
since $\bp_1$ and $\bp_0$ are gauge equivalent.
However, the conclusion of our work is that:
\begin{enumerate}
\item
  The wall-crossing for $n(\bp_t)$ caused by harmonic spinors is not given by the spectral flow.
\item
  There exist singular harmonic $\Z_2$ spinors which cause additional wall-crossing.
\end{enumerate} 

It is possible that the same happens for the signed count of $\Gtwo$--instantons.
To evaluate the viability of the proposal in \cite{Haydys2014} it is important to answer the following questions.

\begin{question}
  What is the sign of the jump in the number of $\Gtwo$--instantons caused by a harmonic spinor?
\end{question}

\begin{question}
  Do singular harmonic $\Z_2$ spinors cause a jump in the number of (possibly singular) $\Gtwo$--instantons?
\end{question}


\printreferences
\end{document}
